\documentclass{article}
\usepackage[utf8]{inputenc}
\usepackage[margin=1.2in]{geometry} 

\usepackage{amssymb}
\usepackage{mathtools}
\usepackage[autostyle=true]{csquotes}
\usepackage{mathrsfs}
\usepackage{graphicx}
\usepackage{dsfont}
\usepackage{subcaption}
\usepackage{appendix}
\usepackage{stmaryrd}
\usepackage{hyperref}

\usepackage[style=list,hyperfirst=false,toc]{glossaries}

\usepackage{enumitem}
\usepackage{accents}
\usepackage{framed}
\usepackage[amsmath,thmmarks,framed,hyperref]{ntheorem}
\usepackage{float}
\usepackage{color}
\usepackage{svg}
\usepackage[capitalise]{cleveref}
\usepackage[framemethod=TikZ]{mdframed}
\usepackage{multicol}

\hypersetup{
    colorlinks=true,
    linkcolor=blue,
    citecolor = red,
    filecolor=magenta,      
    urlcolor=cyan
}

\newcommand{\R}{\mathbb{R}}

\newcommand{\Z}{\mathbb{Z}}
\newcommand{\N}{\mathbb{N}}
\newcommand{\Q}{\mathbb{Q}}
\newcommand{\C}{\mathbb{C}}
\newcommand{\A}{\mathbb{A}}
\newcommand{\Sp}{\mathbb{S}}
\newcommand{\Di}{\mathbb{D}}
\newcommand{\M}{\mathbb{M}}
\newcommand{\I}{\mathbb{I}}
\newcommand{\T}{\mathbb{T}}
\newcommand{\V}{\mathbb{V}}
\newcommand{\U}{\mathbb{U}}
\newcommand{\W}{\mathbb{W}}
\newcommand{\B}{\mathbb{B}}

\newcommand{\intset}[2]{\llbracket #1 , #2 \rrbracket}
\newcommand{\croset}[1]{\left\lbrace #1 \right\rbrace}

\newcommand{\symp}[2]{\mathrm{Symp}^{#1}(#2)}
\newcommand{\sympan}[2]{\mathrm{Symp}_{#1}^{\omega}(#2)}
\newcommand{\sympo}[2]{\mathrm{Symp}^{#1}(#2; \partial\check{#2})}
\newcommand{\sympc}[2]{\mathrm{Symp}_c^{#1}(#2; \partial\check{#2})}
\newcommand{\symps}[2]{\mathrm{Symp}_{c}^{#1}(#2; \partial #2)}
\newcommand{\sympcomp}[2]{\mathrm{Symp}_c^{#1}(#2)}

\newcommand{\diffeo}[2]{\mathrm{Diff}_c^{#1}(#2)}

\newcommand{\hamc}[2]{\mathrm{Ham}_{c}^{#1}(#2)}
\newcommand{\hams}[2]{\mathrm{Ham}_{c}^{#1}(#2; \partial #2)}

\newcommand{\dilo}[4]{\mathrm{Dilo}_{#1}^{#2}(#3,#4)}
\newcommand{\diro}[4]{\mathrm{Diro}_{#1}^{#2}(#3,#4)}
\newcommand{\emb}[2]{\mathrm{Emb}^{#1}(#2)}

\newcommand{\isum}[2]{\displaystyle\sum_{#1}^{#2}}
\newcommand{\inint}[2]{\displaystyle\int_{#1}^{#2}}
\newcommand{\lconv}[2]{\underset{#1 \rightarrow #2}{\lim}}
\newcommand{\rconv}[2]{\underset{#1 \rightarrow #2}{\longrightarrow}}

\newcommand{\conj}[2]{#1^{-1}\circ #2 \circ #1}
\newcommand{\iconj}[2]{#1\circ #2 \circ #1^{-1}}

\newcommand{\rest}[2]{{
  #1 
  |_{#2} 
  }}

\DeclareMathOperator*{\diam}{diam}
\DeclareMathOperator{\Leb}{Leb}

\DeclareMathOperator*{\id}{id}
\DeclareMathOperator*{\supp}{supp}

\makeglossaries

\theorempreskip{0.2cm}
\theorempostskip{0.2cm}
\theoremstyle{plain}
\theoremseparator{.}
\newtheorem{theorem}{Theorem}[section]
\newtheorem{lemma}[theorem]{Lemma}

\crefname{sublemma}{Sub-lemma}{Sub-lemmas}
\newtheorem{prop}[theorem]{Proposition}
\crefname{prop}{Proposition}{Propositions}
\newtheorem{coro}[theorem]{Corollary}
\newtheorem{defin}[theorem]{Definition}
\newtheorem{remark}[theorem]{Remark}
\newtheorem{fact}[theorem]{Fact}
\newtheorem{notation}[theorem]{Notation}
\newtheorem{example}[theorem]{Example}
\newtheorem*{theorem-no}{Theorem}
\newtheorem*{prop-no}{Proposition}
\crefname{equation}{equation}{equations}
\crefname{fact}{Fact}{Facts}

\theoreminframepreskip{0cm}
\theoreminframepostskip{0cm}
\newtheorem{mainthm}{Theorem}

\renewenvironment{theorem-no}[1][Theorem]{%
\vspace{10pt}
  \par\noindent\textbf{Theorem #1.}\enspace  
  \itshape
}{\par 
\vspace{10pt} \normalfont}

\theoremstyle{nonumberplain}
\theoremheaderfont{\itshape}
\theorembodyfont{\normalfont}
\theoremsymbol{\ensuremath{\square}}
\newtheorem{proof}{Proof}

\numberwithin{equation}{section}

\renewenvironment{proof}[1][Proof]{%
\vspace{0.1cm}
  \par\noindent\textit{#1.}\enspace  
}{%
  \hfill\ensuremath{\square}\par  
  \vspace{0.2cm}
}

\title{Analytic symplectomorphisms displaying minimal ergodicity on the sphere, cylinder and disk}
\author{Yann Delaporte\footnote{IMJ-PRG, CNRS, Sorbonne University, Université Paris Cité, partially supported by the ERC project 818737 Emergence of wild differentiable dynamical systems.}}

\begin{document}

\maketitle
\date{}

\begin{abstract}

We construct analytic symplectomorphisms on the sphere, the disk and the cylinder which are minimally ergodic (only 3 ergodic measures). To achieve this, we apply and generalize a principle introduced by Berger, based on the Approximation by Conjugacy method of Anosov-Katok. 

\end{abstract}

\tableofcontents

\section{Introduction}

The main goal of this article is to produce analytic symplectomorphisms on the sphere, the disk and the cylinder which are \emph{minimally ergodic}: they have a minimal number of ergodic measures. 
The first example of such symplectomorphisms has been obtained by Shnirelman in \cite{shnirelman_example_1930}, this example is a symplectomorphism of the open disk with exactly two ergodic measures, it is further discussed in \cite{fayad_constructions_2004}. 
Later on, \cite{fathi_existence_1977} obtained the existence of smooth minimal symplectomorphism (only one ergodic measure) on manifold endowed with a free circle action. 
In our case, we will reach three ergodic measures on each surface (sphere, cylinder and disk). 
We will justify this number at the beginning of \cref{sec:finerg_cyl}.\\
To obtain such symplectomorphisms, we will base this work on Berger's generalization of the Approximation by Conjugacy (AbC) method of Anosov Katok. 
This method has been introduced in \cite{anosov_new_1970} and gives the existence of smooth, ergodic, and volume-preserving dynamics on a compact manifold endowed with a $\R/\Z$ action, such as the cylinder, the sphere, or the disk. 
The AbC method allows in particular to produce \emph{pseudo-rotations}, which are symplectomorphisms (area and orientation preserving maps) of surfaces with a finite number of periodic points (see \cite{beguin_pseudo-rotations_2006}). 
Note that a minimally ergodic symplectomorphism of the cylinder, the sphere or the disk is a pseudo-rotation as a periodic point gives rise to an ergodic measure.\\

The AbC method has already been used to produce minimally ergodic dynamics. 
In \cite{fayad_constructions_2004}, they produced smooth minimally ergodic dynamics on the disk, the sphere and the cylinder. 
Later on, in \cite{hofer_anosovkatok_2022} they also use the Anosov-Katok method to obtain smooth Hamiltonian pseudo-rotations with a minimal number of ergodic measures on toric symplectic manifolds. 
As pointed out \cite{fayad_constructions_2004}, to obtain minimal ergodicity one needs to control the behaviour of \emph{all} orbits, which is harder than ergodicity where one only needs to control the behaviour of \emph{almost every} orbit. 
Therefore, both articles obtain minimal ergodicity by showing the weak-$\star$ convergence of \emph{every} empirical measures into the convex hull of the desired ergodic measures.\\

However the AbC method does not fit naturally with the analytic setting. 
In \cite[§7.1]{fayad_constructions_2004}, they point out the difficulties to obtain analyticity with the AbC method as the convergence radius shrinks when we compose conjugacies. 
Some analytic examples have been obtained on tori with \enquote{block-slide} maps and Fourier series in \cite{banerjee_real-analytic_2019} or with a straightforward analytic construction in \cite[Theorem 2.1]{furstenberg_strict_1961}. 
Also, Fayad and Katok managed to use Anosov-Katok method to obtain analytic uniquely ergodic pseudo-rotations on the odd sphere in \cite{fayad_analytic_2014}. 
Recently, Berger tackled the case of the cylinder in \cite{berger_analytic_2022} where he works with entire functions to address the issue of the convergence radius. 
As a result, Berger disproved a conjecture of Birkhoff
\cite{birkhoff_unsolved_1941} by showing the existence of analytic pseudo-rotations of the cylinder which are ergodic. 
In addition, on the contrary, he shows the existence of analytic pseudo-rotations which exhibit high local emergence.\\

Later on, to address the question of the sphere and the disk, Berger introduced the \emph{AbC Principle}, which allows to perform analytic constructions with the AbC method on surfaces such as the disk, the cylinder and the sphere \cite{berger_analytic_2024}. 
It allowed him to produce analytic transitive pseudo-rotations of the sphere, the disk and the cylinder. Moreover, as heraised the problem, we could obtain analytic pseudo-rotations which are either ergodic or with high emergence on the sphere and the disk. 
Basically, the Principle states that if a property $(\mathcal{P})$ is realizable by an \emph{AbC scheme} in a $C^r$-topology, for $0 \leq r \leq \infty$, then there exists an analytic symplectomorphism satisfying $(\mathcal{P})$. 
See \cref{def:abc} for the AbC scheme's definition.\\

Where the original principle was enough for ergodicity and emergence since they are properties concerning \emph{almost every} orbits, as it is shown in \cite{delaporte_analytic_2025}, it faces difficulties when it comes to minimal ergodicity which concerns \emph{every} orbits. 
In fact, in Berger's scheme, at every step of the construction, we do not control the behaviour of an infinite amount of orbits. 
For instance, on the cylinder, the AbC principle does not control the dynamics nearby the boundary of the cylinder at each step, while such a neighbourhood contains an infinite amount of $\R/\Z$ orbits. 
This issue comes from the approximation Theorem used to prove the Principle. This approximation Theorem from \cite[Theorem 1.8]{berger_analytic_2022} allows to approach smooth Hamiltonian time-one maps by analytic ones. 
Yet, to obtain the analytic approximation in the theorem, Berger uses Runge's Theorem which brings a no-control area, which is defined as a neighbourhood of the boundary of the cylinder.\\

Then, to address this issue, we generalize the AbC scheme to the \emph{AbC$^\star$ scheme} stated in \cref{sec:star_scheme}. This AbC$^\star$ scheme also comes with an \emph{AbC$^\star$ Principle} (\cref{thm:star_princ}) which states that if a property $(\mathcal{P})$ is realizable by an AbC$^\star$ scheme in a $C^r$-topology, for $0 \leq r \leq \infty$, then there exists an analytic symplectomorphism satisfying $(\mathcal{P})$. This Principle is proved in \cref{sec:proof_princ} by using an approximation Theorem where the no-control area can be any union of two disjoint essential bands (see Subsection \ref{an:approx}), and we use the approximation Theorem \cite[Theorem 1.8]{berger_analytic_2022} to prove this one. Therefore, defining the no-control area by \enquote{winding} annuli allows to consider every orbit in the construction and reach minimal ergodicity, leading to our main theorem.

\begin{mainthm}\label{thm:an_finerg_schem}
There exist minimally ergodic analytic symplectomorphisms on the cylinder, the sphere and the disk.
\end{mainthm}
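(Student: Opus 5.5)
The plan is to apply the AbC$^\star$ Principle (\cref{thm:star_princ}). The property $(\mathcal{P})$ is ``having exactly three ergodic measures''. On the cylinder these are Lebesgue measure and one invariant measure on each boundary circle; on the sphere and the disk they are Lebesgue and the Dirac masses at the fixed points / boundary measures. The first step is to work on the cylinder $\A=\R/\Z\times[0,1]$. Once the cylinder case is done, we transfer to the sphere and the disk by the usual semi-conjugacies collapsing boundary circles. The AbC$^\star$ framework is designed to be compatible with these maps, as in Berger's original principle.

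\textbf{Building the scheme.} I would build a smooth AbC$^\star$ scheme in the usual form $f_n = h_n\circ R_{\alpha_n}\circ h_n^{-1}$, with $h_n = h_{n-1}\circ g_n$. Here $g_n$ commutes with $R_{\alpha_{n-1}}$, $\alpha_n=p_n/q_n$ is rational, and $\alpha_n\to\alpha$ fast. The conjugacy $g_n$ is the classical Fayad--Saprykina-type map, built as a time-one map of a Hamiltonian. On each fundamental domain $[0,1/q_{n-1}]\times[0,1]$ it shears the vertical segments so that each $R_{\alpha_n}$-orbit becomes $\varepsilon_n$-equidistributed in a large central region $[\delta_n,1-\delta_n]$. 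In the boundary collars $[0,\delta_n]$ and $[1-\delta_n,1]$, $g_n$ is kept close to a rotation, so orbits there have empirical measures close to measures supported near the boundary circles.

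The key quantitative requirement covers every point $z$ and every $N$ larger than some $N_n$. The empirical measure $\frac1N\sum_{k<N}\delta_{f_n^k z}$ must lie within $\varepsilon_n$, in a fixed weak-$\star$ metric, of the convex hull of $\{\Leb,\mu_0,\mu_1\}$, where $\mu_0$ and $\mu_1$ are the Lebesgue measures on the two boundary circles. This estimate must be stable under $C^0$-small perturbations on the time window $N\le M_n$, so that it survives the later steps. This is the Fayad--Katok scheme for minimal ergodicity, and I would take its estimates essentially as given.

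\textbf{Fitting the no-control area.} The new point is to fit this scheme into the AbC$^\star$ format. There, at step $n$, the analytic approximation leaves a no-control area which is a union of two disjoint essential bands. These bands must be chosen so that every orbit is still controlled. I would place them as thin ``winding'' annuli in the region where $h_n$ equidistributes orbits. Concretely, I would take them to be images under $h_n$ of thin horizontal bands $\R/\Z\times J_n^{\pm}$ with $J_n^\pm\subset(\delta_n,1-\delta_n)$. Every orbit of the $(n+1)$-st step then either stays in a controlled region or spends a proportion of time at most $\varepsilon_n$ inside these bands. The second case follows because those bands are themselves equidistributed by $h_n$ and have small area.

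With these choices, the limit $f$ inherits the following property: for each $z$, every limit point of the empirical measures lies in the convex hull of $\{\Leb,\mu_0,\mu_1\}$. Every invariant measure is a limit of averaged empirical measures, so each ergodic measure must be an extreme point of this hull. Hence $f$ has exactly these three ergodic measures, and Lebesgue is ergodic. Finally, the principle upgrades $f$ to an analytic symplectomorphism with the same property.

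\textbf{Main obstacle.} I expect the hardest step to be verifying that the placement of the no-control bands is compatible with the AbC$^\star$ axioms and still yields control for \emph{all} orbits uniformly. Orbits passing near or through the uncontrolled bands must not accumulate a non-negligible proportion of time there over long windows. My first attempt would make the bands of width $\ll\varepsilon_n/q_n$ inside the equidistributed region. I would then use the $C^0$-closeness on the finite window $[0,M_n]$, together with the summability of errors, to pass the estimate to the limit.
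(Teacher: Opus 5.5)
\textbf{The gap: your no-control bands are not winding in the coordinates that matter.} Your architecture is the paper's: a Fayad--Katok mixing scheme fed into \cref{thm:star_princ}, then showing every ergodic measure lies in $C$. But the one genuinely new step, the choice of the no-control bands, fails as you set it up.

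In the AbC$^\star$ axioms the bicurve $\gamma$ lives in the domain of the new conjugacy $H\approx\hat h=h\circ g$. There the rotation orbits are the circles $\T\times\croset{y}$. The $\iconj{H}{R_{\hat\alpha}}$-orbit of $H(\pi(\theta,y))$ spends asymptotically the proportion $\mu_y(B(\gamma,\kappa))$ of its time in the uncontrolled region. Pulled back by $h_n$ to these coordinates, your bands $h_n(\T\times J_n^\pm)$ are the horizontal annuli $\T\times J_n^\pm$, i.e.\ essentially $B(\gamma,\kappa)$ for a horizontal bicurve. They are unions of rotation-invariant circles; in phase space, $h_n(\T\times J)$ is a union of $f_n$-invariant circles. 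So $\mu_y(B(\gamma,\kappa))=1$ for $y\in J_n^\pm$, and those orbits spend \emph{all} their time in the no-control area, not a proportion $\le\varepsilon_n$.

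Your argument that the bands are spread out by $h_n$ and have small area only compares them with $\Leb$. It says nothing about orbits that run along them. Moreover, $U(h,\alpha)$ must be open in $\mathcal{T}_\gamma$, so it contains maps that agree with $\hat h$ off $B(\gamma,\kappa)$ but are arbitrary symplectic maps inside it. Such maps can, for instance, push $\mu_y$ ($y\in J_n^\pm$) towards a measure concentrated near a few points. Hence no bound on $\sup_y d_K(H_*\mu_y,C)$ holds on $U$. The stability under $C^0$-small perturbations that you invoke is not what the scheme provides: $H$ is controlled only off $B(\gamma,\kappa)$. Control of those circles would have to come from elsewhere, e.g.\ from the relation $\iconj{h_{n+1}}{R_{\alpha_n}}=\iconj{h_n}{R_{\alpha_n}}$, and your plan supplies none. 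Also, axiom b) requires $\hat h=h$ \emph{exactly} on $O(\gamma)$, not $g_n$ close to a rotation in the collars.

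\textbf{What the paper does instead.} The winding must be relative to the rotation circles in the domain, not relative to $\Leb$ in phase space. In \cref{lemma:finerg} the bicurve consists of the graphs of $\pm(1-\delta)+\delta'\cos(N\theta)$, pushed by $\pi$ and made $R_{1/q}$-invariant by a finite covering. It meets every circle $\T\times\croset{y}$ in finitely many points, so $\sup_y\mu_y(B(\gamma,\kappa))\to0$ as $\kappa\to0$.

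The mixing map $g$ is the identity on $O(\gamma)$ and sends boxes $A_l$ that hug the cosine profile onto a fine grid. Each circle crossing $\gamma$ therefore splits into two kinds of arcs. The arcs fixed in $O^\pm$ are close to $\mu_{\pm1}$, since $\delta,\delta'\ll\epsilon$. The arcs sent into the grid are close to $\Leb$. This gives $d_K(g_*\mu_y,C)<\epsilon'$ uniformly in $y$.

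The uniform smallness of $\mu_y(B(\gamma,\kappa))$ is exactly what makes $H\mapsto\sup_y d_K(H_*\mu_y,C)$ continuous for $\mathcal{T}^0_\gamma$ (\cref{fact:cont_sup}). Hence $U(h,\alpha)$ can be taken as a preimage of a small interval under this map, and it contains $\hat h=h\circ g$ because $h_*C=C$ and $h_*$ is uniformly continuous.

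\textbf{A smaller point.} The transfer to $\Sp$ and $\Di$ must happen at the level of the $C^0$ scheme. The paper builds the scheme directly on $\M$ with bicurves $\pi(\gamma)$ and applies the principle on each surface separately. Collapsing the boundary circles of an analytic cylinder map does not produce an analytic map of the sphere or the disk.
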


We obtain this result by proving in \cref{sec:finerg_cyl} that we can realize minimal ergodicity on the sphere, the disk, and the cylinder with an AbC$^\star$ scheme. This proof only involves the $C^0$-topology, yet, the Principle implies the existence of analytic minimally ergodic symplectomorphisms on these surfaces.\\

In this article, we start in \cref{sec:scheme} by presenting AbC schemes which enable us to build symplectomorphisms satisfying desired properties and are defined in a $C^r$-topology, for $0\leq r \leq +\infty$. Then, we introduce the AbC Principle in \cref{thm:abc_princ} which provides analytic symplectomorphisms satisfying a property realized by an AbC scheme. There is hidden behind the proof of this principle an approximation theorem whose trade-off is a no-control area due to the use of Runge's theorem in the approximation. In the AbC scheme, these areas are defined by neighbourhoods of the extremal parts of the surfaces (poles on the sphere, boundary on the cylinder, and the boundary and the center on the disk) and prevent us from realizing minimally ergodic symplectomorphisms with an AbC scheme. Therefore, we generalize this scheme as the AbC$^\star$ scheme, where these areas are allowed to be \enquote{winding} annuli on the surface in order to encompass minimal ergodicity. We will then introduce the  AbC$^\star$ Principle in \cref{thm:star_princ}, which gives the existence of analytic symplectomorphisms built by these schemes, therefore satisfying the desired properties. Moreover, we obtain from Proposition \ref{prop:gen_scheme} that the AbC$^\star$ Principle implies the AbC Principle.\\

Then, in Section \cref{sec:finerg_cyl}, we prove the $C^0$-AbC$^\star$ realization of minimal ergodicity. First, we set up the construction  from \cite[§3]{fayad_constructions_2004} of minimal ergodicity into a $C^0$-AbC$^\star$ scheme. Next, we prove that this scheme realizes minimal ergodicity in \cref{sec:finerg_scheme}. Where Fayad and Katok used the rough definition of the weak-$\star$ convergence to obtain minimal ergodicity, we use the Kantorovich distance which induce this convergence and allows us to quantify how far a symplectomorphism is from being minimally ergodic. Therefore, we obtain the $C^0$-AbC$^\star$ realization of minimal ergodicity on the sphere, the cylinder, and the disk and we can use the AbC$^\star$ Principle to obtain \cref{thm:an_finerg_schem}.\\

Finally, in Section \ref{sec:proof_princ}, we prove the AbC$^\star$ Principle by adapting the proof of Berger's AbC Principle. We first obtain an approximation Theorem in \cref{an:thm_main_approx} to approach analytically symplectomorphisms of the interior of the cylinder. After introducing the notion of complexification and complex and symplectic structures in \cref{sec:complex}, we use this previous theorem to obtain in \cref{sec:approx_mod} the approximation modulo deformation \cref{an:thm_main_approx}. It allows to approach a symplectomorphism by another one which slightly deforms the complex and symplectic structure of a complexification of the cylinder. We finish the section by plugging this approximation result into the AbC$^\star$ scheme on the cylinder, where the structure is simpler, then on the sphere and the disk. Therefore, since at every step of the scheme we just slightly deform our complex and symplectic structure, we obtain at the limit a complex and symplectic structure which remain invariant under the limit map. In particular, this map built by the scheme is an analytic symplectomorphism for these limit structures, which allows to conclude the proof of the Principle.\\

\textbf{Acknowledgement.} {\it I am very thankful to Pierre Berger who introduced me to the Anosov Katok method through several papers, especially his own about the AbC Principle which lead to its generalization in this article. I also thanks him for the supervision on this work and the several re-readings of this paper. I am grateful to Raphaël Krikorian for his encouragements on my work.}

\begin{center}
\textbf{An index of notations can be found page \pageref{index}.}
\end{center}
\vspace{-0.5cm}
\addcontentsline{toc}{section}{\bfseries{Notations}}
\section*{Notations}

\paragraph{Surfaces and spaces}\ \\

In this paper we will work on the following surfaces:

\newglossaryentry{01A}{sort = {1Space}, name = {$\A$}, description = {Cylinder}}
\newglossaryentry{01Di}{sort = {1Space}, name = {$\Di$}, description = {Disk}}
\newglossaryentry{01Sp}{sort = {1Space}, name = {$\Sp$}, description = {Sphere}}
\begin{enumerate}
\item \gls{01A} $:= \T \times \I$ the cylinder, where $\T$ is the torus $\R / \Z$ and $\I = [-1;1]$.
\item \gls{01Di} $:= \lbrace x \in \R^2 ; x_1^2 + x_2^2 \leq 1 \rbrace$ the unit closed disk.
\item\gls{01Sp}$:= \lbrace x \in \R^3, x_1^2 +x_2^2 +x_3^2 =1 \rbrace$ the sphere. 
\end{enumerate}

The circle $\T$ acts canonically by rotation on the disk and the cylinder, it also acts by rotation of axis $x_3$ on the sphere. We denote by $R_\alpha$ such a rotation for $\alpha \in \T$.\\

\newglossaryentry{02M}{sort = {1Space}, name = {$\M$}, description = {An element of $\croset{\A,\Sp,\Di}$}}
We consider \gls{02M}$\in \lbrace \A; \Sp; \Di \rbrace$ endowed with its canonical symplectic form $\Omega$. The form $\Omega$ defines a probability measure on the surface denoted $\Leb$. For $1 \leq r \leq +\infty$, we denote $\symp{r}{\M}$ the space of $C^r$-symplectomorphisms endowed with the $C^r$ topology. For $r=0$, $\symp{0}{\M}$ denotes the space of symplectic homeomorphisms, defined as the closure of $\symp{1}{\M}$ for the $C^0$ topology; in dimension 2 it coincides with the space of area-preserving homeomorphisms \cite[Theorem I]{oh_c0-coerciveness_2006}. Let $\symp{\omega}{\M}$ denote the set of analytic symplectomorphisms of $\M$.\\
When $0 \leq r \leq +\infty$, we denote by $d_{C^r}$ a distance inducing the $C^r$ topology.

We will work on these symplectic surfaces:

\newglossaryentry{03A_c}{sort = {1Space}, name = {$\check{\A}$}, description = {Cylinder without the boundary}}
\newglossaryentry{03Di_c}{sort = {1Space}, name = {$\check{\Di}$}, description = {Disk without the boundary and the center}}
\newglossaryentry{03Sp_c}{sort = {1Space}, name = {$\check{\Sp}$}, description = {Sphere without the North and South poles}}
\begin{enumerate}
\item \gls{03A_c}$:= \T \times (-1;1)$,
\item \gls{03Di_c}$:= \croset{x \in \Di \ : \ 0< x^2_1 + x_2^2  <1}$,
\item \gls{03Sp_c}$:= \croset{x \in \Sp \ : \ x \neq (0,0,\pm 1)}$. 
\end{enumerate}

We may observe that these surfaces are symplectomorphic via

\begin{equation}\label{eq:pi}
\begin{array}{rrcl}
\pi : &\check{\A} &\rightarrow &\check{\M}\\
&(\theta,y) &\mapsto &\left\lbrace
\begin{array}{cl}
(\theta,y) &\text{if } \M = \A\\
\left( \sqrt{1-y^2}\cos(2\pi \theta) , \sqrt{1-y^2}\sin (2\pi \theta ),y \right) &\text{if } \M = \Sp\\
\sqrt{\frac{1+y}{2}} \left( \cos (2\pi \theta) , \sin(2\pi\theta) \right) &\text{if } \M = \Di
\end{array}\right.
\end{array}.
\end{equation}

\newglossaryentry{04pi}{sort = {1Space}, name = {$\pi$}, description = {Surjective coninuous map from $\A$ to $\M$}}

We use the same notation $\pi$ for the three surfaces $\M \in \croset{\A; \Sp ; \Di}$. Note that \gls{04pi} might be extended to a surjective continuous map from $\A$ to $\M$.\\

\newglossaryentry{05M_c}{sort = {1Space}, name = {$\check{\M}$}, description = {An element of $\croset{\check{\A},\check{\Sp},\check{\Di}}$}}
\newglossaryentry{1Symp_c}{sort = {3Func Space}, name = {$\sympc{r}{\M}$}, description = {$C^r$ symplectomorphisms compactly supported in $\check{\M}$}}
\newglossaryentry{1Symp_o}{sort = {3Func Space}, name = {$\sympo{r}{\M}$}, description = {$C^r$ symplectomorphisms equal to the identity on $\partial \check{\M}$}}

Then, for $r \geq 0$, we consider the following subspaces of $\symp{r}{\M}$:
\begin{itemize}
\item the subspace \glsadd{1Symp_c}formed by the elements compactly supported in \gls{05M_c}:
$$\sympc{r}{\M} = \croset{f \in \symp{r}{\M} \, : \, \overline{\mathrm{supp}(f)} \subset \check{\M}},$$
\item the subspace \glsadd{1Symp_o}formed by elements equal to the identity on $\partial \check{\M} = \M \setminus \check{\M}$:
$$\sympo{r}{\M} =  \croset{f \in \symp{r}{\M} \, : \, \rest{f}{\partial \check{\M}} = \id }.$$
\end{itemize}

\newglossaryentry{06A_eta}{sort = {1Space}, name = {$\A(\eta)$}, description = {Sub-open-cylinder of $\A$}}
\newglossaryentry{06M_eta}{sort = {1Space}, name = {$\M(\eta)$}, description = {Image of $\A(\eta)$ by $\pi$}}
\glsadd{06A_eta}
\glsadd{06M_eta}

We consider for $\eta \in [0;1)$ the surfaces
$$\A(\eta) = \T \times \I(\eta) \; \text{ and } \; \M(\eta) = \pi(\A(\eta)),$$
where $\I(\eta) = (-1+\eta ; 1-\eta)$. In particular $\check{\M} = \M(0)$, and for $f \in \sympc{r}{\M}$ there exists $\eta >0$ such that the support of $f$ is included in $\M(\eta)$.

\paragraph{Kantorovich-Wasserstein distance}\ \\

\newglossaryentry{1M(M)}{sort = {3Measure}, name = {$\mathcal{M}(\M)$}, description = {Probability measures on $\M$}}
\newglossaryentry{2d_K}{sort = {3Measure}, name = {$d_K$}, description = {Kantorovich-Wassertein distance on $\mathcal{M}(\M)$}}
We will work with the space of probability measures on $\M$, denoted by \gls{1M(M)}, endowed with the Kantorovich-Wassertein distance: \glsadd{2d_K}
$$\forall \mu_1, \mu_2 \in \mathcal{M}(\M), \; d_K(\mu_1,\mu_2) := \inf \croset{\inint{\M^2}{} d(x_1,x_2) d\mu \ : \ \mu \in \mathcal{M}(\M^2) \text{ s.t. } {p_i}_*\mu = \mu_i \, \forall i \in \croset{1;2}}$$
 
where for $i \in \croset{1;2}$, $p_i : (x_1,x_2) \in \M^2 \mapsto x_i \in \M$ is the canonical projection. This distance induces the weak $\star$-topology on $\mathcal{M}(\M)$, which is compact. Moreover, by the duality theorem of Kantorovich and Rubinstein, we have the dual formulation:
$$d_K(\mu_1,\mu_2) = \sup \croset{\inint{\M}{}f d(\mu_1 - \mu_2) \; : \; f: \M \rightarrow \R \text{ 1-Lipschitz}}.$$
\newglossaryentry{2e^f}{sort = {3Measure}, name = {$e^f_n$}, description = {Empirical measures}}
\glsadd{2e^f}
Given a symplectomorphism $f$ of $\M$, we consider the empirical measures for $x$ in $\M$ and $n \geq 1$:
$$e^f_n (x) := \frac{1}{n} \isum{k=1}{n}\delta_{f^k(x)}.$$

\section{Schemes and Principles}\label{sec:scheme}

\subsection{AbC Principle}\label{sec:abc_scheme}

In this section we present the AbC principle introduced by P. Berger in \cite{berger_analytic_2024}.\\

\newglossaryentry{2T^r0}{sort = {3Func Space}, name = {$\mathcal{T}^r$}, description = {$C^r$ topology of the convergence on compact subsets of $\check{\M}$}}
First we define what an AbC scheme is, where AbC stands for Approximation by Conjugacy. Let us denote by \gls{2T^r0} the pull-back by the restriction $\rest{\cdot}{\check{\M}} : \sympo{r}{\M} \mapsto \symp{r}{\check{\M}}$ of the compact-open $C^r$-topology on $\symp{r}{\check{\M}}$, for $\M \in \croset{\A ; \Sp; \Di}$ and $0\leq r \leq +\infty$. A basis of neighbourhoods of $f \in \sympo{r}{\M}$ is:
$$\croset{g \in \sympo{r}{\M} \, : \, d_{C^s} (\rest{f}{\M(\eta)}, \rest{g}{\M(\eta)}) < \epsilon },$$
taken among $\eta,\epsilon >0$ and finite $s \leq r$.

\begin{defin}[AbC scheme]\label{def:abc}
A \underline{$C^r$-AbC scheme} is a map:
$$(h,\alpha) \in \sympo{r}{\M} \times \Q/\Z \mapsto \left (U(h,\alpha), \nu(h,\alpha)\right ) \in \mathcal{T}^r \times (0,\infty)$$
such that:
\begin{enumerate}[label = \alph*)]
\item Each open set $U(h,\alpha)$ contains a map $\hat{h}\in \sympo{r}{\M}$  satisfying
$$ \iconj{\hat{h}}{R_\alpha} = \iconj{h}{R_\alpha}.$$

\item  Given any sequences $(h_n)_n \in \sympo{r}{\M}^\N$ and $(\alpha_n)_n \in \left (\Q / \Z \right )^\N$ satisfying :
$$h_0 = id \, , \;\; \alpha_0 = 0,$$
and for $n \geq 0$:
$$ \iconj{h_{n+1}}{R_{\alpha_n}} = \iconj{h_n}{R_{\alpha_n}},$$
$$h_{n+1} \in U(h_n,\alpha_n ) \, \text{ and } \, 0< \lvert \alpha_{n+1} - \alpha_n \rvert < \nu(h_{n+1}, \alpha_n )$$
then the sequence $f_n := \iconj{h_n}{R_{\alpha_n}}$ converges to a map $f$ in $\symp{r}{\M}$.
\end{enumerate}
The map $f$ is said \underline{constructed from the $C^r$-scheme} $(U,\nu)$.
\end{defin}

The aim of such AbC schemes is to construct maps which satisfy a property $(\mathcal{P})$:

\begin{defin}
A property ($\mathcal{P}$) on $\symp{r}{\M}$ is an \underline{AbC realizable $C^r$-property} if:
\begin{enumerate}
\item The property ($\mathcal{P}$) is invariant by $\symp{r}{\M}$-conjugacy: if $f \in \symp{r}{\M}$ satisfies ($\mathcal{P}$) then so does $\iconj{h}{f}$ for any $h \in \symp{r}{\M}$.
\item There exists a $C^r$-AbC scheme such that any map $f\in \symp{r}{\M}$ constructed from it satisfies \nolinebreak($\mathcal{P}$).
\end{enumerate}
\end{defin}

For instance, Berger proved that transitivity is an AbC $C^0$-realizable property. It also has been shown that ergodicity or high emergence are realizable \cite{delaporte_analytic_2025}.\\

The use of open sets in the scheme allows to use density results, so that a realizable property is also realizable for stronger regularity as stated in the two next results.

\begin{prop}[\textnormal{\cite[Prop 2.6]{berger_analytic_2024}}]
For $+\infty \geq s \geq r \geq 0$, if $(\mathcal{P})$ is an AbC $C^r$-realizable property then it is an AbC $C^s$-realizable property.
\end{prop}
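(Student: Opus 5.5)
Given a $C^r$-AbC scheme $(U,\nu)$ realizing $(\mathcal{P})$, we build a $C^s$-AbC scheme $(U^s,\nu^s)$ whose constructed maps are also constructed from $(U,\nu)$. Invariance of $(\mathcal{P})$ under conjugacy by $\symp{s}{\M}\subset\symp{r}{\M}$ is automatic. The scheme is then defined as follows.

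Let $(h,\alpha)\in\sympo{s}{\M}\times\Q/\Z$. Since $\sympo{s}{\M}\subset\sympo{r}{\M}$, the set $U(h,\alpha)$ is defined. The $\mathcal{T}^s$ topology is finer than the $\mathcal{T}^r$ topology on $\sympo{s}{\M}$, because the $C^{s'}$-seminorms on $\M(\eta)$ dominate the $C^{r'}$ ones for $r'\le s'$. Hence $U(h,\alpha)\cap\sympo{s}{\M}$ is $\mathcal{T}^s$-open.

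We must show it contains some $\hat h\in\sympo{s}{\M}$ commuting appropriately with $R_\alpha$, i.e.\ $\hat h^{-1}h$ commuting with $R_\alpha$. Start from the $\hat h_r\in U(h,\alpha)$ given by a). Write $\hat h_r=h\circ g$ with $g\in\sympo{r}{\M}$ commuting with $R_\alpha$. Approximate $g$ in $\mathcal{T}^r$ by $\tilde g\in\sympo{s}{\M}$ commuting with $R_\alpha$, where $\alpha=p/q$. To do this, view $g$ as a lift of a map of the quotient $\M/\langle R_{p/q}\rangle$, which is the same type of surface, apply density of $C^\infty$ symplectomorphisms in $C^r$ ones fixing the boundary (compact-open), then lift. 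Alternatively, one can first apply density upstairs and then use that $R_\alpha$-equivariance is preserved by working on the $q$-fold quotient. Then $h\circ\tilde g\in U(h,\alpha)$ by openness and continuity of composition. The equivariant density step is the main technical point. In dimension $2$ with $r=0$, one would use that area-preserving homeomorphisms are $C^0$-limits of smooth symplectomorphisms (as in \cite{oh_c0-coerciveness_2006}), performed on the quotient surface.

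So $U^s(h,\alpha)$ is nonempty. We further shrink it so that b) holds in $C^s$. Choose $U^s(h,\alpha)\subset U(h,\alpha)\cap\sympo{s}{\M}$ still containing $h\circ\tilde g$, requiring additionally that every $h'\in U^s(h,\alpha)$ satisfies
$$d_{C^{s_n}}\bigl(h'R_\beta h'^{-1},\,hR_\beta h^{-1}\bigr)<2^{-n}$$
on $\M$ for the relevant rotations, where $s_n\uparrow s$. We also take $\nu^s(h,\alpha)\le\nu(h,\alpha)$ small enough that $d_{C^{s_n}}(hR_{\alpha'}h^{-1},hR_\alpha h^{-1})<2^{-n}$ whenever $|\alpha'-\alpha|<\nu^s$.

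One subtlety is that $\mathcal{T}^s$ only controls compact subsets of $\check{\M}$, while convergence in $\symp{s}{\M}$ is global. To overcome this, we mimic the proof of the original scheme: the original scheme's $U$ already forces convergence in $C^r$, and for $C^s$ we use that $f_{n+1}=h_{n+1}R_{\alpha_{n+1}}h_{n+1}^{-1}$ while $h_{n+1}R_{\alpha_n}h_{n+1}^{-1}=f_n$. Therefore
$$d_{C^{s_n}}(f_{n+1},f_n)\le \|h_{n+1}\|_{C^{s_n+1}}^{C}\,|\alpha_{n+1}-\alpha_n|,$$
which can be made $<2^{-n}$ by choosing $\nu^s(h_{n+1},\alpha_n)$ small depending on $h_{n+1}$. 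This handles b) with genuinely global estimates and needs no constraint on $U^s$ beyond openness.

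Any sequence admissible for $(U^s,\nu^s)$ is then admissible for $(U,\nu)$, so the sequence $f_n$ converges in $C^r$ to a map satisfying $(\mathcal{P})$. By the summable estimate it is Cauchy in every $C^{s'}$, so the limit lies in $\symp{s}{\M}$, and b) holds in $C^s$.
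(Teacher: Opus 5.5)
Your architecture is the one the paper uses for the AbC$^\star$ analogue (proof of \cref{prop:ext_real}). You intersect $U(h,\alpha)$ with $\sympo{s}{\M}$, obtain the point required by a) as $h\circ\tilde g$ with $\tilde g$ an $R_\alpha$-equivariant smooth approximation of $h^{-1}\circ\hat h_r$ built on the quotient by $R_\alpha$, and get $C^s$ convergence by shrinking $\nu$ via $f_n=h_{n+1}R_{\alpha_n}h_{n+1}^{-1}$.

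\textbf{The gap is in the convergence step.} A scheme is a map of the pair $(h,\alpha)$ only. You choose $\nu^s(h_{n+1},\alpha_n)$ so that the increment is $<2^{-n}$ in $C^{s_n}$. Both the tolerance $2^{-n}$ and the regularity index $s_n$ depend on the step index $n$, which is not determined by $(h,\alpha)$. So your $\nu^s$ is not a well-defined scheme. With any tolerance $\delta(h,\alpha)$ that really depends only on the pair, nothing forces $\sum_n\delta(h_{n+1},\alpha_n)<\infty$ along an arbitrary admissible sequence.

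The missing idea, used in the paper, is to let the denominator of $\alpha$ stand in for the index. Choose $\nu^s(h,p/q)\le\nu(h,p/q)$ so small that two things hold:
\begin{itemize}
\item every $\tilde\alpha=\tilde p/\tilde q\neq\alpha$ with $|\tilde\alpha-\alpha|<\nu^s$ has $\tilde q>2q$;
\item $d(hR_\alpha h^{-1},hR_{\tilde\alpha}h^{-1})\le 1/q$, for a metric $d$ inducing the $C^s$ topology (a Fr\'echet metric if $s=\infty$).
\end{itemize}
Since $q_0=1$, every admissible sequence has $q_n\ge 2^n$. Hence $d(f_{n+1},f_n)\le 1/q_n\le 2^{-n}$ and $(f_n)$ is Cauchy in $C^s$.

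For finite $s$, your Lipschitz bound in terms of $\|h_{n+1}\|_{C^{s_n+1}}$ is not available, since $h_{n+1}$ is only $C^s$. Continuity of $\beta\mapsto hR_\beta h^{-1}$ in $C^s$ is all you need.

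\textbf{Smaller points.}
\begin{itemize}
\item Your first attempt to shrink $U^s$ by a global condition on all of $\M$ is not $\mathcal{T}^s$-open, because $\mathcal{T}^s$ only controls compact subsets of $\check\M$. Drop it, as you eventually do, and take $U^s=U\cap\sympo{s}{\M}$.
\item For $\M=\Di$ or $\Sp$, the quotient $\M/\langle R_{p/q}\rangle$ is not \enquote{the same type of surface}. It has cone points at the centre or poles, and lifts of smooth maps of the quotient need not be smooth there. Since $\mathcal{T}^r$ only sees compact subsets of $\check\M$, the paper (\cref{fact:dens}) instead works on the open cylinder $\check\M/R_\alpha$. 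It takes the smooth approximant compactly supported there, so its lift extends by the identity to an element of $\sympo{\infty}{\M}$ commuting with $R_\alpha$.
\item Your \enquote{alternatively, apply density upstairs} variant does not produce equivariance and should be discarded.
\end{itemize}
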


In addition of this regularization result, Pierre Berger obtained in \cite[Theorem D]{berger_analytic_2024} that any realizable property can be achieved analytically, as stated in his following main theorem.

\begin{theorem}[AbC principle]\label{thm:abc_princ}
For every $0 \leq r \leq \infty$, any $AbC$ realizable $C^r$-property $(\mathcal{P})$ is satisfied by a certain $f \in \symp{\omega}{\M}$.
\end{theorem}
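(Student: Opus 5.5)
The plan follows Berger's strategy: build the analytic map by an AbC sequence in which every conjugacy $h_n$ is replaced by an analytic approximation that still commutes correctly with $R_{\alpha_n}$. The only exception is a thin no-control region near $\partial\check{\M}$, which the topology $\mathcal{T}^r$ ignores. Throughout we may take $r=\infty$ by the preceding Proposition (regularization of realizable properties). The key tool is the approximation theorem \cite[Theorem 1.8]{berger_analytic_2022}. Given a smooth Hamiltonian time-one map compactly supported in $\check{\A}$ and a compact subset $\A(\eta)$, it produces an entire Hamiltonian whose time-one map is $C^s$-close on $\A(\eta)$. Runge's theorem is what forces us to give up control near the boundary.

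The construction runs as follows. Start from $h_0=\id$, $\alpha_0=0$. At step $n$, given an analytic $h_n$ (analytic for suitably deformed complex and symplectic structures) and $\alpha_n$, use property a) of the scheme to pick $\hat h\in U(h_n,\alpha_n)$ with $\hat h R_{\alpha_n}\hat h^{-1}=h_nR_{\alpha_n}h_n^{-1}$. Write $\hat h=h_n\circ g$, where $g$ commutes with $R_{\alpha_n}$. Descend $g$ to the quotient cylinder $\check{\A}/R_{\alpha_n}$, which is again a cylinder since $\alpha_n=p/q$. There $g$ is close to a compactly supported Hamiltonian map, because $U$ is only a compact-open neighbourhood and we may modify $g$ near the boundary. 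Approximate $g$ on the quotient by an entire time-one map $\tilde g$ and lift it back. The lift automatically commutes with $R_{\alpha_n}$, so $h_{n+1}:=h_n\circ\tilde g$ satisfies the conjugacy relation of b) and lies in $U(h_n,\alpha_n)$. Then choose $\alpha_{n+1}$ with $0<|\alpha_{n+1}-\alpha_n|<\nu(h_{n+1},\alpha_n)$, and additionally small enough to keep analytic control of $f_{n+1}=h_{n+1}R_{\alpha_{n+1}}h_{n+1}^{-1}$ on a shrinking complex neighbourhood. Property b) then gives convergence in $\symp{\infty}{\M}$ to some $f$ satisfying $(\mathcal{P})$.

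The main obstacle is analyticity of the limit: composing infinitely many $\tilde g$'s shrinks the radius of convergence. The remedy I would try, as Berger does, is the following. The approximation degrades the holomorphic extension only near the boundary of the complexified cylinder. So at each step we view $h_{n+1}$ as a biholomorphism and symplectomorphism from a slightly deformed complexification onto the standard one. The deformed complex and symplectic structures $(J_n,\Omega_n)$ are arranged to converge, by taking the approximations summably close. The limit $f$ then preserves the limit structures $(J_\infty,\Omega_\infty)$ and is analytic for them. By uniformization on the annulus, sphere or disk (with the $\T$-symmetry controlling the poles and centre through $\pi$), we obtain a real-analytic symplectic chart $\phi$ to the standard structure. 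Finally $\phi f\phi^{-1}\in\symp{\omega}{\M}$, and it satisfies $(\mathcal{P})$ by invariance under conjugacy (property 1 of realizability). The delicate points are the summability estimates and the behaviour of $\pi$ at the poles and centre for $\Sp$ and $\Di$.
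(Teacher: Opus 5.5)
Your overall architecture matches the paper. There the statement is cited from Berger, and it is also recovered from \cref{prop:gen_scheme} together with the AbC$^\star$ principle \cref{thm:star_princ}: an AbC scheme is an AbC$^\star$ scheme with $\gamma_0=\pi(\partial\A)$ and $O(\gamma_0)=\emptyset$, and the principle is proved by running the scheme with deformed complex structures. However, your inductive step fails as written. The approximation theorem returns $\tilde g\in\sympan{\rho}{\A_\infty}$, a biholomorphism of $\A_\infty$ preserving $\Omega_0$ and $\T\times\R$. Nothing makes it preserve $\A=\T\times[-1,1]$. It is only $\rho^{-1}$-close to the identity near $\partial\A$, and by analyticity it cannot equal the identity there without being the identity everywhere. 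Consequently:
\begin{itemize}
\item $h_{n+1}:=h_n\circ\tilde g$ is in general not a self-map of $\M$, so it is not in $\sympo{\infty}{\M}$.
\item It therefore cannot lie in $U(h_n,\alpha_n)$ or be fed back into the scheme.
\item For $\M=\Sp,\Di$ it cannot even be transported through $\pi$, since it moves the circles $\T\times\{\pm1\}$ that $\pi$ collapses.
\end{itemize}
You also state the tool as giving only $C^s$-closeness on $\A(\eta)$. The feature that rescues the argument is the extra control $\sup_{K_\rho}|\tilde g-\id|<\rho^{-1}$ on the complex region $K_\rho\supset\T\times([-\rho-1,-1]\cup[1,\rho])$, and you never use it.

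The missing ingredient is the approximation modulo deformation, \cref{thm:app_mod_def}. Using the $K_\rho$-control and Cauchy estimates, interpolate the analytic approximant with the identity through a cut-off $\beta(\Re y)$ supported in $\{|\Re y|<1-\eta\}$. Then compose with the inverse of a Moser correction $\phi_r$ (\cref{lem:phi_r}). The result is a smooth map $h$, with $h^{-1}$ in place of your $\tilde g$, that:
\begin{itemize}
\item preserves $\A$ and is symplectic on $\A$;
\item is the identity for $|\Re y|\ge 1-\eta$, so it passes through $\pi$ (this is what the classes $\mathrm{Dilo}$, $\mathrm{Diro}$ encode for $\Sp,\Di$);
\item commutes with $\sigma$ and $R_{\alpha_n}$;
\item is holomorphic only for $h^*J_0$, with $(h^*J_0,h^*\Omega_0)$ $C^\infty$-close to $(J_0,\Omega_0)$.
\end{itemize}
Only this non-holomorphic $h$ produces a deformed structure, $J_{n+1}=(h_n^{-1})^*h^*J_0$. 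Summability comes from choosing $\mathcal W_J,\mathcal W_\Omega$ after $h_n$ is fixed. With $\tilde g$ genuinely entire, as in your construction, there is no deformation at all, so your ``remedy'' paragraph does not attach to the maps you actually build.

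Two further points need correcting.
\begin{itemize}
\item The complex domain must not shrink, otherwise the limit need not extend holomorphically. One keeps $f_n:\M_{\rho'}\to\M_\rho$ with $\rho'<\rho$ fixed, i.e.\ $R_{\alpha_n}h_n^{-1}(\A_{\rho'})\subset\operatorname{int}h_n^{-1}(\A_\rho)$. This persists because $h$ commutes with $R_{\alpha_n}$ and $\alpha_{n+1}$ is taken close to $\alpha_n$.
\item The last step is not uniformization, which concerns complex structures rather than real-analytic symplectic ones. You need:
\begin{itemize}
\item \cref{thm:lim_almost_comp}, for an integrable limit $J$ with $\Omega'$ $J$-holomorphic;
\item \cref{prop:real_an_symp}, to make $\rest{f}{\M}$ real-analytic symplectic for a $C^\infty$-compatible structure. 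For $\A,\Di$ this is a $\Z_2$-structure, which requires $J=J_0$ and $f$ a rotation off $\check{\M}_\infty$; the cut-off again supplies this.
\item the uniqueness results \cref{thm:str_sph} and \cref{thm:strct_equiv}, to conjugate back to the canonical structure.
\end{itemize}
\end{itemize}
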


Where this principle seems suitable for properties defined almost everywhere, it appears to face difficulties when it comes to property defined on every orbit such as minimal ergodicity. Therefore, answering the problem posed in \cite[Problem 2.10]{berger_analytic_2024}, we generalize this principle to the AbC$^\star$ principle in the next section to encompass minimal ergodicity. In particular, we will obtain with Proposition \ref{prop:gen_scheme} that the AbC$^\star$ principle implies the AbC principle.\\

\subsection{AbC$^\star$ Principle}\label{sec:star_scheme}

In order to generalize the above AbC principle, we first have to introduce the notion of bicurves on the surface $\M \in \croset{\Sp, \Di, \A}$. In fact, when we work with the conjugacy of a rotation $\iconj{h}{R_\alpha}$, its orbits are the images by $h$ of the orbits of $R_\alpha$. Therefore to control every orbit of the conjugacy, $h$ must be controlled on every $\R/\Z$-orbit of $\M$. However, in the previous scheme, the topology used was the one on $\check{\M}$ and so $h$ was not controlled on a neighbourhood of $\M \setminus \check{\M}$ which contains an infinite amount of $\R/\Z$ orbits. To overcome this difficulty, we introduce the notion of bicurves to work with winding bands built such that, for any $\R/\Z$ orbit, the measure of its intersection with a band uniformly goes to zero as the band is getting tighter.

\begin{defin}[Bicurve]\label{def:bicurve}
A \underline{bicurve} on the cylinder is the union $\gamma$ of two disjointed loops $\gamma_+$ and $\gamma_-$ embedded in $\A$ such that there exists a diffeotopy $(f_t)_t$ in $\diffeo{\infty}{\A}$ from $f_0 = \id$ to $f_1 \in \diffeo{\infty}{\A}$ which sends a pair of circles of the form $\T \times \croset{y_-, y_+}$ to $\gamma$, with $-1 \leq y_- < y_+ \leq 1$: 
$$ f_1(\T \times \croset{y_+} ) = \gamma_+ \quad \text{and} \quad  f_1(\T \times \croset{y_-} ) = \gamma_-.$$

We say that the bicurve is \underline{diffeotopic} to the straight bicurve $\T \times \croset{y_-,y_+} $.\\

We denote $\Gamma_2(\A)$ the set of bicurves on $\A$.\\
Then the set $\Gamma_2(\M)$ of bicurves on $\M$ is formed by the images by $\pi$ of those on $\A$ :
$$\Gamma_2(\M) = \croset{\pi (\gamma) \, : \, \gamma\in \Gamma_2(\A)}.$$
\end{defin}
\newglossaryentry{1gam}{sort = {2bicurve}, name = {$\gamma$}, description = {Bicurves}}
\newglossaryentry{2gam+-}{sort = {2bicurve}, name = {$\gamma_+$, $\gamma_-$}, description = {Upper and lower parts of a bicurve}}
\newglossaryentry{3Gam}{sort = {2bicurve}, name = {$\Gamma_2(\M)$}, description = {Set of bicurves in $\M$}}
\glsadd{1gam}\glsadd{2gam+-}\glsadd{3Gam}

\begin{remark}
In the case $\M = \Sp$, due to the non injectivity of $\pi$, a bicurve might be a pair of points: $\pi(\T \times \croset{-1;1}) = \croset{N,S}$.
\end{remark}

With this definition of bicurves we introduce the different interesting parts of $\M$ for a given bicurve $\gamma$. (see Figure \ref{fig:bicurve})
\newglossaryentry{4B(gam,kap)}{sort = {2bicurve}, name = {$B(\gamma,\kappa)$}, description = {$\kappa$-neighbourhood in $\M$ of a bicurve $\gamma$}}
\glsadd{4B(gam,kap)}
\newglossaryentry{5A(gam)}{sort = {2bicurve}, name = {$A(\gamma)$}, description = {Component of $\M\setminus \gamma$ between $\gamma_-$ and $\gamma_+$}}
\glsadd{5A(gam)}
\newglossaryentry{5A(gam,kappa)}{sort = {2bicurve}, name = {$A(\gamma,\kappa)$}, description = {Points of $A(\gamma)$ which are $\kappa$-distant from $\gamma$}}
\glsadd{5A(gam,kappa)}
\newglossaryentry{5O(gam)}{sort = {2bicurve}, name = {$O(\gamma)$}, description = {Components of $\M\setminus \gamma$ above $\gamma_+$ and below $\gamma_-$}}
\glsadd{5O(gam)}
\newglossaryentry{5O(gam,kappa)}{sort = {2bicurve}, name = {$O(\gamma,\kappa)$}, description = {Points of $O(\gamma)$ which are $\kappa$-distant from $\gamma$}}
\glsadd{5O(gam,kappa)}

\begin{notation}\label{not:bic}\ 
\begin{itemize}
\item The band $B(\gamma,\kappa) := \croset{x \in \M \, : \, d(x,\gamma) < \kappa}$ for any $\kappa >0$.
\item $A(\gamma)$ is the component of $\M \setminus \gamma$  whose boundary is $\gamma$, and $A(\gamma,\kappa) = A(\gamma) \setminus B(\gamma,\kappa)$.
\item $O(\gamma) := \M \setminus (\gamma \cup A(\gamma))$ and $O(\gamma,\kappa) = O(\gamma) \setminus B(\gamma,\kappa)$.
\end{itemize}
\end{notation}

\begin{figure}
\centering
\includegraphics[scale=0.7]{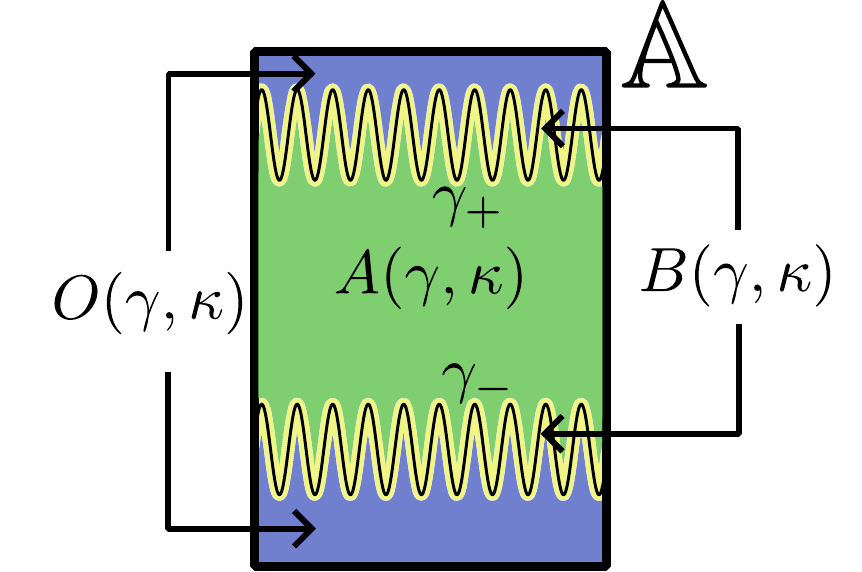}
\caption{Bicurve on the cylinder}
\label{fig:bicurve}
\end{figure}

\newglossaryentry{2T^r1_gam}{sort = {3Func Space}, name = {$\mathcal{T}_\gamma^r$}, description = {$C^r$ topology of the convergence on compact subsets of $\M\setminus\gamma$}}
\glsadd{2T^r1_gam}
\newglossaryentry{2T_r2Gam}{sort = {3Func Space}, name = {$\mathcal{T}_\Gamma^r$}, description = {Union of the $\mathcal{T}_\gamma^r$ over the bicurves of $\M$}}
\glsadd{2T_r2Gam}
Then to define our scheme, we define for $0 \leq r \leq +\infty$ a set $\mathcal{T}_\Gamma^r$ formed by open subsets of $\sympo{r}{\M}$. The set $\mathcal{T}_\Gamma^r$ is the union of the sets $\mathcal{T}^r_\gamma$ among $\gamma \in \Gamma_2(\M)$, where each $\mathcal{T}^r_\gamma$ is the topology of $\sympo{r}{\M}$ whose basis of open neighbourhoods of $f\in \sympo{r}{\M}$ is given by:
$$\croset{ g \in \sympo{r}{\M}, \, d_{C^s}( \rest{g}{\M \setminus B(\gamma,\kappa)},\rest{f}{\M \setminus B(\gamma,\kappa)} ) < \epsilon}$$
taken among $\epsilon >0$, finite $s \leq r$ and $\kappa > 0$.\\

\begin{remark}
Observe that the topology generated by the union of $\mathcal{T}_\gamma^r$ is the canonical $C^r$-topology of $\sympo{r}{\M}$, since taking the intersection of the previous open sets for two disjointed bicurves $\gamma$ and $\gamma'$, and $\kappa$, $\kappa'$ small enough gives the usual $\epsilon$-neighbourhood of $f$ in $\sympo{r}{\M}$. Hence, contrary to $\mathcal{T}^r$, the set $\mathcal{T}_\Gamma^r$ is not a topology.
\end{remark}

\begin{remark}
Taking $\gamma_0 := \pi ( \T \times \croset{-1;1} )$ gives us $\M \setminus \gamma_0 = \check{\M}$. i.e. $\mathcal{T}^r_{\gamma_0}$ is the topology $\mathcal{T}^r$ from the latter AbC scheme. Thus we can obtain the generalization of the AbC scheme by the in coming AbC$^\star$ scheme.
\end{remark}

Then let us introduce the AbC$^\star$ scheme.

\begin{defin}[AbC$^\star$ scheme]\label{def:abc_star}
A \underline{$C^r$-AbC$^\star$ scheme} is a map:
$$(h,\alpha) \in \sympo{r}{\M} \times \Q/\Z \mapsto \left (U(h,\alpha), \nu(h,\alpha)\right ) \in \mathcal{T}_\Gamma^r \times (0,\infty)$$
such that:
\begin{enumerate}[label = \alph*)]
\item Each open set $U(h,\alpha)$ contains a map $\hat{h}$ satisfying
$$ \iconj{\hat{h}}{R_\alpha} = \iconj{h}{R_\alpha}.$$

\item There exists $\gamma \in \Gamma_2(\M)$ invariant under $R_\alpha$ such that $U(h,\alpha) \in \mathcal{T}^r_\gamma$  and $\rest{\hat{h}}{O(\gamma)} = \rest{h}{O(\gamma)}$.

\item  Given any sequences $(h_n)_n \in \sympo{r}{\M}^\N$ and $(\alpha_n)_n \in \left (\Q / \Z \right )^\N$ satisfying :
$$h_0 = \id \, , \;\; \alpha_0 = 0,$$
and for $n \geq 0$:
$$ \iconj{h_{n+1}}{R_{\alpha_n}} = \iconj{h_n}{R_{\alpha_n}},$$
$$h_{n+1} \in U(h_n,\alpha_n ) \, \text{ and } \, 0< \lvert \alpha_{n+1} - \alpha_n \rvert < \nu(h_{n+1}, \alpha_n )$$
then the sequence $f_n := \iconj{h_n}{R_{\alpha_n}}$ converges to a map $f$ in $\symp{r}{\M}$.
\end{enumerate}
The map $f$ is said \underline{constructed from the $C^r$-scheme} $(U,\nu)$.
\end{defin}

Since this scheme aims to construct maps which satisfies a property $(\mathcal{P})$. We also define the following.

\begin{defin}[AbC$^\star$ realizable]
A property ($\mathcal{P}$) on $\symp{r}{\M}$ is an \underline{AbC$^\star$ realizable $C^r$-property} if:
\begin{enumerate}
\item The property ($\mathcal{P}$) is invariant by $\symp{r}{\M}$-conjugacy: if $f \in \symp{r}{\M}$ satisfies ($\mathcal{P}$) then so does $\iconj{h}{f}$ for any $h \in \symp{r}{\M}$.
\item There exists a $C^r$-AbC$^\star$ scheme such that every map $f\in \symp{r}{\M}$ constructed from it satisfies ($\mathcal{P}$).
\end{enumerate}
\end{defin}

In order to show \cref{thm:an_finerg_schem}, we will prove in \cref{sec:finerg_cyl} the following.

\begin{prop}\label{prop:real_minerg}
Being minimally ergodic is an AbC$^\star$ realizable $C^0$-property.
\end{prop}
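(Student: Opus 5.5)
The invariance under conjugacy is immediate, since conjugating $f$ by $h\in\symp{0}{\M}$ induces a bijection $\mu\mapsto h_*\mu$ between ergodic measures. The work is in building a $C^0$-AbC$^\star$ scheme. I would first fix the three target measures. On $\A$ they are $\Leb$ and the normalized length measures $m_\pm$ on $\T\times\croset{\pm1}$. On $\Sp$ they are $\Leb,\delta_N,\delta_S$. On $\Di$ they are $\Leb$, $\delta_0$ and the length measure on $\partial\Di$. In every case they are the pushforwards by $\pi$ of $\Leb$ and $m_\pm$. Let $\mathcal{C}$ be their convex hull.

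The criterion I would prove is the following. Suppose $f$ is a limit map with $f|_{\partial\check{\M}}=R_\alpha|_{\partial\check{\M}}$ for $\alpha$ irrational, and suppose that for every $x\in\M$ we have $d_K(e^f_m(x),\mathcal{C})\to0$. Then every invariant measure lies in $\mathcal{C}$, because invariant measures are weak-$\star$ limits of averages of empirical measures and $\mathcal{C}$ is closed and convex. The three generators are invariant and mutually singular, so they are exactly the extreme points of the invariant set. Hence $f$ is minimally ergodic. Irrationality of $\alpha=\lim\alpha_n$ will be forced by shrinking $\nu$, exactly as in the classical AbC method. Since $\mathcal{C}$ is convex, $\mu\mapsto d_K(\mu,\mathcal{C})$ is convex. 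So it suffices to control empirical measures over a window of times $m\in[m_n,M_n]$ at step $n$, with windows overlapping across steps; longer times are handled by splitting into blocks.

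For the scheme I would follow \cite[\S3]{fayad_constructions_2004}. Given $(h,\alpha=p/q)$, set $\hat h=h\circ g$, where $g\in\sympo{0}{\M}$ commutes with $R_{1/q}$. I would choose a bicurve $\gamma$ that is $R_\alpha$-invariant (for instance $R_{1/q}$-periodic) and lies very close to $\partial\check{\M}$, with $g=\id$ on $O(\gamma)$ so that condition b) holds. On $A(\gamma,\kappa)$, $g$ stretches each fundamental domain $[j/q,(j+1)/q)\times\I$ so that the image of every circle $T\times\croset{y}$ is $\epsilon$-equidistributed for $h_*^{-1}\Leb$-type targets; concretely, $h\circ g$ maps each circle orbit of $R_{\alpha'}$, with $\alpha'$ close to $\alpha$, to an $\epsilon$-equidistributed set. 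The open set $U(h,\alpha)\in\mathcal{T}^0_\gamma$ is then a small $C^0$-neighbourhood of $\hat h$ off $B(\gamma,\kappa)$. The parameter $\nu(h_{n+1},\alpha_n)$ is chosen small enough that three things hold: $f_{n+1}$ stays $C^0$-close to $f_n$ (giving convergence as in condition c)), $f_{n+1}^k$ stays close to $f_n^k$ for $k\le M_n$, and the orbits of $R_{\alpha_{n+1}}$ are $1/q_n$-dense on each circle within time $m_{n+1}$.

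The key estimate is then, for every $x$ and $m\in[m_n,M_n]$,
$$d_K\big(e^{f}_m(x),\mathcal{C}\big)\le \epsilon_n\to0 .$$
Write $x=h_n(z)$ with $z$ on a circle $\T\times\croset{y}$. If $y$ lies in the core $A(\gamma_n,\kappa_n)$, the empirical measure is close to $\Leb$. If $y$ lies in $O(\gamma_n,\kappa_n)$, then $h_n$ has only been modified in earlier steps, and (inductively, with $\gamma_n$ approaching $\partial\check{\M}$) the circle is mapped near the boundary, so the measure is close to $m_\pm$ or to the previous step's control. The main obstacle is the circles crossing the uncontrolled band $B(\gamma_n,\kappa_n)$. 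Here I would use the winding of $\gamma_n$: I would design $\gamma_n$ to oscillate (as a graph over $\T$ with $R_{1/q_n}$-periodic wiggles) so that every horizontal circle meets $B(\gamma_n,\kappa_n)$ in a set of relative length at most $\epsilon_n$, uniformly in $y$. The empirical measure is then a convex combination of a piece close to $\Leb$, a piece close to $m_\pm$ (or to the previous-step bound), and an error of mass at most $\epsilon_n$; convexity of $d_K(\cdot,\mathcal{C})$ together with $\diam\M\le C$ gives the bound. Making these choices compatible across steps, so that earlier controlled regions are not destroyed and the windows $[m_n,M_n]$ overlap, is where I expect the most delicate bookkeeping.
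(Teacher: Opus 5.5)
Your ingredients match the paper's. You set $\hat h=h\circ g$, with $g$ commuting with $R_{1/q}$ and equal to $\id$ on $O(\gamma)$ for an $R_{1/q}$-invariant winding bicurve hugging $\partial\check{\M}$. You use uniform smallness of $\mu_y(B(\gamma,\kappa))$ so that a $\mathcal{T}^0_\gamma$-neighbourhood suffices, and you take $\nu$ small. However, the time control you set up fails as stated, because the overlapping windows cannot exist.

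The argument ``write $x=h_n(z)$, so $e^{f_n}_m(x)\approx (h_n)_*\mu_y$'' needs the $R_{\alpha_n}$-segment of length $m$ to be equidistributed at scale $1/q_{n-1}$. At that scale $h_n\approx h_{n-1}\circ g_{n-1}$ oscillates by $O(1)$, which is exactly what $g_{n-1}$ is built to do. So the argument only applies for $m\gtrsim q_n$. For shorter times the pushed segment is not close to $(h_n)_*\mu_y$, and for $z\in B(\gamma_{n-1},\kappa_{n-1})$ it is not controlled at all. (Note that $h_n$ is controlled relative to $\gamma_{n-1}$, not $\gamma_n$.)

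On the other hand, keeping $f_n^k$ close to $f_{n-1}^k$ for all $k\le M_{n-1}$ requires $M_{n-1}\lvert\alpha_n-\alpha_{n-1}\rvert$ to be far below $1/q_{n-1}$. Since $\lvert\alpha_n-\alpha_{n-1}\rvert\ge 1/(q_{n-1}q_n)$, this forces $q_n\gg M_{n-1}$. So window $n$ always starts far beyond the end of window $n-1$.

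What rescues your approach is uniformity in $x$. At the single time $q_n$, $e^{f_n}_{q_n}(x)=e^{f_n}(x)$ is a periodic-orbit measure within $\Delta_{merg}(f_n)\le 2\epsilon_n$ of $\mathcal{C}$. The distance $d_K(e^f_{q_n}(x),e^{f_n}_{q_n}(x))$ is then bounded by summing ii) of \cref{prop:minerg_nu}, using $q_n\le q_j$ for $j\ge n$. This bound holds for every $x$, so blocks of length $q_n$ together with convexity of $d_K(\cdot,\mathcal{C})$ cover all $m\gg q_n$, and no overlap is needed. The paper is more economical still: by the ergodic decomposition only ergodic $\mu$ matter, a Birkhoff-generic $x$ has $e^f_m(x)\to\mu$, and the subsequence $m=q_n$ alone suffices.

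The second gap is your claim that, for a circle crossing the wiggles, the part of $\T\times\{y\}$ lying in $A(\gamma)$ is pushed close to $\Leb$. This is the whole content of \cref{lemma:finerg}. It does not follow from stretching fundamental domains so that every circle is equidistributed. For $y$ in the oscillation range, $\T\times\{y\}\cap A(\gamma)$ is a union of arcs whose lengths depend on $y$. The normalized measure on these arcs must be near $\Leb$ for every $y$ where they carry non-negligible mass.

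The paper gets this as follows:
\begin{itemize}
\item It takes $N=n^2\gg\epsilon^{-4}$ wiggles $\delta'\cos(N\theta)$ per fundamental domain, with vertical boxes $A_l$ aligned with their periods, so that every circle meets all the $A_l$ in congruent arcs of equal mass.
\item The map $g$ sends the $A_l$, via \cref{thm:folklo_symp}, into equal-area disks of diameter $O(n^{-1/2})$ spread uniformly in $A(\gamma)$, and \cref{an:dK_unif} then gives closeness to $\Leb$.
\item The $O(\gamma)$-part is evenly spread along the circle by periodicity, and it is near $\mu_{\pm1}$ because the wiggle zone lies within $\delta+\delta'$ of $\partial\check{\M}$.
\end{itemize}

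Your worry about keeping earlier controlled regions intact is then unnecessary. The pushforward $h_*$ is uniformly continuous on $\mathcal{M}(\M)$ (\cref{coro:kanto}) and satisfies $h_*\mathcal{C}=\mathcal{C}$. So a sufficiently precise $g$ gives $\sup_y d_K((h\circ g)_*\mu_y,\mathcal{C})\le\epsilon/4$, with $\epsilon=\sup_y d_K(h_*\mu_y,\mathcal{C})$, regardless of history. \cref{fact:cont_sup} makes $\{H:\sup_y d_K(H_*\mu_y,\mathcal{C})<\epsilon/2\}$ open in $\mathcal{T}^0_\gamma$. Hence $\epsilon_n=\sup_y d_K((h_n)_*\mu_y,\mathcal{C})$ simply halves at each step.
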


Then, as precise above, we can generalize the AbC scheme by this one.

\begin{prop}\label{prop:gen_scheme}
A $C^r$-AbC scheme $(U,\nu)$ is a $C^r$-AbC$^\star$ scheme, in particular if a property $(\mathcal{P})$ is an AbC realizable $C^r$-property, then it is an AbC$^\star$ realizable $C^r$-property.
\end{prop}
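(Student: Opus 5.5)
The proof is a direct check of the three conditions of \cref{def:abc_star} using the special bicurve $\gamma_0 := \pi(\T \times \croset{-1;1})$ from the remark above. Let $(U,\nu)$ be a $C^r$-AbC scheme. We will show that the same map $(h,\alpha)\mapsto (U(h,\alpha),\nu(h,\alpha))$ is a $C^r$-AbC$^\star$ scheme. The first point is that its values lie in $\mathcal{T}^r_\Gamma$. Indeed, $\gamma_0 \in \Gamma_2(\M)$: take the trivial diffeotopy $f_t=\id$ with $y_-=-1$, $y_+=1$. Moreover, $\M\setminus\gamma_0 = \check{\M}$, and the neighbourhoods of $\M\setminus\gamma_0$ away from $B(\gamma_0,\kappa)$ are of the form $\M(\eta)$ up to comparing $\kappa$ with $\eta$. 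Hence $\mathcal{T}^r_{\gamma_0} = \mathcal{T}^r$. The one check needed is that the basis $\{d_{C^s}(\rest{g}{\M\setminus B(\gamma_0,\kappa)},\rest{f}{\M\setminus B(\gamma_0,\kappa)})<\epsilon\}$ and the basis $\{d_{C^s}(\rest{g}{\M(\eta)},\rest{f}{\M(\eta)})<\epsilon\}$ are mutually cofinal. This holds because, for each $\kappa$, there is $\eta$ with $\M\setminus B(\gamma_0,\kappa)\subset \overline{\M(\eta)}$ (closure can be handled by slightly shrinking $\eta$, using continuity of $C^s$ distances on a compact set), and conversely. So $U(h,\alpha)\in\mathcal{T}^r_{\gamma_0}\subset \mathcal{T}^r_\Gamma$.

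Next come conditions a) and b) of \cref{def:abc_star}. Condition a) is literally condition a) of \cref{def:abc}, so it gives $\hat h \in U(h,\alpha)$ with $\iconj{\hat h}{R_\alpha}=\iconj{h}{R_\alpha}$. For b), take $\gamma=\gamma_0$, which is invariant under every rotation $R_\alpha$ since it is a union of $\R/\Z$-orbits (boundary circles, or the poles, or boundary circle and centre). We showed above that $U(h,\alpha)\in\mathcal{T}^r_{\gamma_0}$. We also need $\rest{\hat h}{O(\gamma_0)}=\rest{h}{O(\gamma_0)}$. Since $A(\gamma_0)=\check{\M}$, we get $O(\gamma_0)=\M\setminus(\gamma_0\cup\check{\M})=\emptyset$, so this requirement is vacuous. (If one prefers to read $O(\gamma_0)$ as $\partial\check{\M}$, both $h$ and $\hat h$ lie in $\sympo{r}{\M}$ and hence equal the identity there, so the equality holds anyway.)

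Condition c) is word for word condition b) of \cref{def:abc}, with the same hypotheses on $(h_n)$ and $(\alpha_n)$, so the convergence of $f_n$ to some $f\in\symp{r}{\M}$ is inherited. Thus $(U,\nu)$ is a $C^r$-AbC$^\star$ scheme. The statement about properties follows at once: conjugacy invariance is the same requirement in both definitions, and the maps constructed from $(U,\nu)$ are the same in both senses, so they all satisfy $(\mathcal{P})$.

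The only step needing care is the identification $\mathcal{T}^r_{\gamma_0}=\mathcal{T}^r$, that is, the cofinality of the $\kappa$-bands around $\gamma_0$ with the complements of the $\M(\eta)$. I would handle it by noting that $\pi$ is continuous on the compact set $\A$, and that $d(\pi(\theta,y),\gamma_0)$ is a continuous function of $|y|$ alone that vanishes exactly at $|y|=1$.
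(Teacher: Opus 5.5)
Your proof is correct and follows the paper's own argument. You identify $\mathcal{T}^r$ with $\mathcal{T}^r_{\gamma_0}$ for $\gamma_0=\pi(\partial\A)$, note that a) and c) of the AbC$^\star$ definition are verbatim a) and b) of the AbC definition, and obtain b) from the rotation-invariance of $\gamma_0$ together with $O(\gamma_0)=\emptyset$; your cofinality check supplies detail the paper only asserts.

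One harmless slip: on $\Di$ the distance from $\pi(\theta,y)$ to $\gamma_0$ is $\min(r,1-r)$ with $r=\sqrt{(1+y)/2}$, which is not a function of $|y|$ alone. It is, however, continuous in $y$ and vanishes only at $y=\pm1$, which is all the cofinality argument needs. Also, no closure is required, since $\M\setminus B(\gamma_0,\kappa)$ is a compact subset of $\check{\M}=\bigcup_\eta\M(\eta)$.
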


\begin{proof}
Let $(U,\nu)$ be a $C^r$-AbC scheme. Observe that the topology $\mathcal{T}^r$ is the topology $\mathcal{T}^r_{\gamma_0}$, with $\gamma_0 = \pi (\partial \A)$. Then conditions a) and c) of \cref{def:abc_star} are conditions a) and b) of \cref{def:abc}. Moreover, condition b) from \cref{def:abc_star} is satisfied because $\gamma_0$ is invariant under every rotations and the set $O(\gamma_0)$ is empty. Therefore $(U,\nu)$ defines an AbC$^\star$ scheme.
\end{proof}

The use of open sets in the scheme allows us to use density results, so that a realizable property is also realizable for stronger regularity as stated below.

\begin{prop}\label{prop:ext_real}
For $+\infty \geq s > r \geq 0$, if $(\mathcal{P})$ is an AbC$^\star$ $C^r$-realizable property then it is an AbC$^\star$ $C^s$-realizable property.
\end{prop}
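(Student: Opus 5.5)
Given a $C^r$-AbC$^\star$ scheme $(U,\nu)$ realizing $(\mathcal{P})$, I would build a $C^s$-AbC$^\star$ scheme $(U',\nu')$ by restriction, following the proof of \cite[Prop 2.6]{berger_analytic_2024}. Throughout I use that $\sympo{s}{\M}\subset\sympo{r}{\M}$ and that the $C^s$ topology is finer than the $C^r$ topology.

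For $(h,\alpha)\in\sympo{s}{\M}\times\Q/\Z$, let $\gamma$ be the $R_\alpha$-invariant bicurve given by condition b) for $U(h,\alpha)$. Every $\mathcal{T}^r_\gamma$-open set intersected with $\sympo{s}{\M}$ is $\mathcal{T}^s_\gamma$-open, since $C^{s'}$-closeness with $s'\le r\le s$ on $\M\setminus B(\gamma,\kappa)$ is among the allowed basic conditions. I would therefore define
$$U'(h,\alpha):=U(h,\alpha)\cap\sympo{s}{\M}\cap V(h,\alpha),$$
where $V(h,\alpha)\in\mathcal{T}^s_\gamma$ is a $C^s$-neighbourhood of the candidate $\hat h$ chosen so that the $C^s$ limit exists. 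Conditions a) and b) then require a map $\hat h'\in U'(h,\alpha)$ of class $C^s$ with $\iconj{\hat h'}{R_\alpha}=\iconj{h}{R_\alpha}$ and $\hat h'=h$ on $O(\gamma)$. The given $\hat h$ is only $C^r$, so this is the main obstacle: producing a $C^s$ element by density. Write $\alpha=p/q$ and $\hat h=h\circ g$. Then $g$ commutes with $R_{p/q}$, is a $C^r$ symplectomorphism in $\sympo{r}{\M}$, and equals the identity on $O(\gamma)$.

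I would approximate $g$ in $C^r$ on $\M\setminus B(\gamma,\kappa)$ by a $C^s$ symplectomorphism $g'$ that commutes with $R_{p/q}$ and equals $\id$ on $O(\gamma)$. To do this I pass to the quotient of $A(\gamma)$ by $R_{1/q}$, which is a smooth annulus, and use density of smooth symplectomorphisms among $C^r$ ones there. For $r\ge1$ this density comes from generating functions or Hamiltonian smoothing. For $r=0$ it comes from the density definition of $\symp{0}{\M}$ together with \cite{oh_c0-coerciveness_2006}.

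The boundary behaviour also needs care. One option is to replace $\gamma$ by a slightly shrunk invariant bicurve $\gamma'\subset A(\gamma)$ near $\gamma$, with $g$ close to $\id$ near $\gamma'$, and then cut off. Because $U$ is $\mathcal{T}^r_\gamma$-open, it suffices for $g'$ to be close to $g$ away from $B(\gamma,\kappa)$, so modifying $g$ inside $B(\gamma,\kappa)\cap A(\gamma)$ is harmless. Lifting back gives $\hat h':=h\circ g'\in U(h,\alpha)$, which is $C^s$ and satisfies a) and b).

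For c), I would take $\nu'(h,\alpha)\le\nu(h,\alpha)$. Any admissible sequence for $(U',\nu')$ is admissible for $(U,\nu)$, so $f_n$ converges in $C^r$ to some $f$. By the original scheme, $f$ satisfies $(\mathcal{P})$, and invariance under conjugacy is inherited because $\symp{s}{\M}\subset\symp{r}{\M}$.

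It remains to get $C^s$ convergence. Following Berger's argument, I would shrink $\nu'$ further. Since $h_{n+1}$ is $C^s$ and $f_{n+1}=\iconj{h_{n+1}}{R_{\alpha_{n+1}}}$ depends continuously on $\alpha_{n+1}$ in $C^s$, I can choose $\nu'(h_{n+1},\alpha_n)$ so that $d_{C^{s_n}}(f_{n+1},f_n)<2^{-n}$ for finite $s_n\uparrow s$. This makes the sequence Cauchy in $C^s$, so the limit $f$ lies in $\symp{s}{\M}$.
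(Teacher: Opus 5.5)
Your handling of a) and b) is essentially the paper's. The paper also writes $\hat h=h\circ g$ and passes to the quotient of the open annulus $A(\gamma)$ by $R_\alpha$. It then uses density of smooth, compactly supported symplectomorphisms in the compact-open topology (\cref{fact:dens}) to get a $C^\infty$ map $g'$ commuting with $R_\alpha$, extends it by the identity, and takes $h\circ g'$. The extra set $V(h,\alpha)$ is superfluous: $U(h,\alpha)\cap\sympo{s}{\M}$ already lies in $\mathcal{T}^s_\gamma$. As written it is also ill-posed, since it is a $C^s$-neighbourhood of a map that is only $C^r$, so you should drop it.

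The genuine gap is in c). The function $\nu'$ must be defined as a map on $\sympo{s}{\M}\times\Q/\Z$ before any sequence is chosen, and the pair $(h,\alpha)$ does not record the step index $n$. So ``choose $\nu'(h_{n+1},\alpha_n)$ so that $d_{C^{s_n}}(f_{n+1},f_n)<2^{-n}$ with $s_n\uparrow s$'' is not a legitimate definition, and as written nothing guarantees summable increments.

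You need an intrinsic quantity that is forced to grow along every admissible sequence. The paper uses the denominator $q$ of $\alpha$. Only finitely many points of $\Q/\Z$ have denominator at most $2q$, so one may shrink $\nu'(h,\alpha)\le\nu(h,\alpha)$ so that two things hold for every $\tilde\alpha\neq\alpha$ with $|\tilde\alpha-\alpha|<\nu'(h,\alpha)$. First, its denominator satisfies $\tilde q>2q$. Second, $d(h\circ R_\alpha\circ h^{-1},h\circ R_{\tilde\alpha}\circ h^{-1})\le 1/q$ for a fixed metric $d$ inducing the $C^s$ topology. Since $h_{n+1}\circ R_{\alpha_n}\circ h_{n+1}^{-1}=f_n$, this gives $d(f_n,f_{n+1})\le 1/q_n$ with $q_{n+1}>2q_n$. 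Hence $(f_n)_n$ is Cauchy in $C^s$. With that replacement the rest of your argument goes through, including $\nu'\le\nu$ and $U'\subset U$ giving $(\mathcal{P})$ for the limit.
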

We prove this proposition below likewise Berger did in \cite[Proposition 2.6]{berger_analytic_2024}.\\

Finally we can generalize the previous AbC Principle into the AbC$^\star$ principle to bring analyticity.

\begin{theorem}[AbC$^\star$ Principle]\label{thm:star_princ}
For every $0 \leq r \leq \infty$, any AbC$^\star$ realizable $C^r$-property is satisfied by a map $f$ in $\symp{\omega}{\M}$.
\end{theorem}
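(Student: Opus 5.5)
We reduce first to the case $r=\infty$ using \cref{prop:ext_real}, and then follow the architecture of Berger's proof of \cref{thm:abc_princ}. Given a $C^\infty$-AbC$^\star$ scheme $(U,\nu)$ realizing $(\mathcal{P})$, we will build inductively sequences $(h_n)$ and $(\alpha_n)$ obeying the constraints of condition c) of \cref{def:abc_star}. The difference from the smooth construction is that each $h_{n+1}$ will be chosen so that $f_{n+1}=\iconj{h_{n+1}}{R_{\alpha_{n+1}}}$ is analytic with respect to a slightly deformed complex and symplectic structure on a fixed complexification of $\M$. The limit $f$ then lies in $\symp{\infty}{\M}$ and satisfies $(\mathcal{P})$. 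If in addition the deformed structures converge to a limit structure preserved by $f$, then $f$ is analytic for that limit structure. Uniformization, namely a Moser-type argument in the analytic category, then produces an analytic symplectomorphism $\phi$ with $\iconj{\phi}{f}\in\symp{\omega}{\M}$, and invariance of $(\mathcal{P})$ under conjugacy concludes the proof.

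The key steps are as follows.

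\textbf{(1) Approximation theorem on the cylinder with a prescribed no-control bicurve.} For $\gamma\in\Gamma_2(\A)$ invariant under $R_\alpha$, any smooth Hamiltonian symplectomorphism of $\A$ can be approximated in $\mathcal{T}^r_\gamma$ by a map that is analytic, or entire in the appropriate complexified sense. To prove this, we conjugate by the diffeotopy $f_1$ sending $\T\times\croset{y_-,y_+}$ to $\gamma$. We then apply \cite[Theorem 1.8]{berger_analytic_2022} separately on the straight bands $A(\gamma)$ and $O(\gamma)$, where the no-control area is a neighbourhood of the two circles. Condition b), which gives $\hat h=h$ on $O(\gamma)$, is what allows us to leave the outer part untouched, or to handle it by a separate application.

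\textbf{(2) Approximation modulo deformation.} We upgrade (1) to the following statement. Given $h_n$, a structure $(J_n,\Omega_n)$ for which $f_n$ is analytic, and the open set $U(h_n,\alpha_n)\in\mathcal{T}^r_\gamma$, we find $h_{n+1}\in U(h_n,\alpha_n)$ with $\iconj{h_{n+1}}{R_{\alpha_n}}=f_n$, together with a structure $(J_{n+1},\Omega_{n+1})$ that is $\epsilon_n$-close to $(J_n,\Omega_n)$ on a fixed complex neighbourhood. This structure is chosen so that $\iconj{h_{n+1}}{R_{\beta}}$ is analytic for every $\beta$. Concretely, $h_{n+1}$ is written as $\hat h\circ g$, where $\hat h$ is given by condition a), $g$ commutes with $R_{\alpha_n}$, and $J_{n+1}$ is the pullback of the standard structure by the analytic approximant.

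\textbf{(3) Induction and choice of constants.} We choose $\alpha_{n+1}$ with $\lvert\alpha_{n+1}-\alpha_n\rvert<\nu(h_{n+1},\alpha_n)$ and $\epsilon_n$ summable, so that the structures converge and the limit remains nondegenerate.

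\textbf{(4) Sphere and disk.} We transfer the construction through $\pi$. Near the poles and the center, the bicurve may collapse to points, and the complexification must be adapted to these singular orbits, as in Berger's proof.

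The main obstacle is step (1): controlling the approximation \emph{up to} a winding bicurve rather than a straight boundary band. Runge-type approximation naturally leaves uncontrolled regions near horizontal circles, and after conjugation by the diffeotopy the complex domains shrink. I would first try to perform the approximation on the straight model using an entire approximant of the Hamiltonian. I would then absorb the diffeotopy into the deformed complex structure of step (2) rather than into the map, so that no loss of convergence radius accumulates along the induction.
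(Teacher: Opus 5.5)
Your overall architecture matches the paper's: reduce to $r=\infty$ via \cref{prop:ext_real}, use structures $J_n=(h_n^{-1})^*J_0$ deformed by summable amounts, pass to a limit structure for which $f$ is holomorphic, then invoke uniqueness of real analytic symplectic structures. The gap is in step (1), the only new ingredient, and your proposed fix does not work.

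\textbf{Non-symplectic straightening.} Let $\tilde h$ be the map you want to approximate, with $\hat h=h_n\circ\tilde h$. The diffeotopy $f_1$ from \cref{def:bicurve} is not symplectic. Hence $f_1^{-1}\circ\tilde h\circ f_1$ is not a symplectomorphism, and \cite[Theorem 1.8]{berger_analytic_2022} does not apply to it. The paper first needs \cref{an:lemma_symp_bi}: a straightening $\phi\in\symps{\infty}{\A}$ with $\phi(\T\times\croset{y_-,y_+})=\gamma$, obtained by Moser's trick after choosing $y_\pm$ so that the components of $\A\setminus\gamma'$ have the same areas as those of $\A\setminus\gamma$.

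\textbf{Non-analytic straightening.} Even a symplectic $\phi$ is only smooth, so $\phi\circ G\circ\phi^{-1}$ is not analytic. Your fallback of absorbing the straightening into the complex structure fails. Take a smooth extension $\hat\phi$ and set $h=\hat\phi\circ H\circ\hat\phi^{-1}$. Then $h^*J_0=(\hat\phi^{-1})^*H^*(\hat\phi^*J_0)$ is far from $J_0$ wherever $H$ is far from the identity, because $H$ is (nearly) $J_0$-holomorphic, not $\hat\phi^*J_0$-holomorphic. The scheme imposes a new bicurve at each step; for minimal ergodicity it oscillates $N\gg1$ times. So this jump is not small, and you lose the summability $d(J_{n+1},J_n)<2^{-n-1}$ required in step (3).

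\textbf{The missing idea: make the straightening itself entire.}
\begin{enumerate}
\item Since $\phi$ is compactly supported in $\check{\A}$ and $\gamma,\gamma'\subset\A(3\eta)$, Berger's theorem after affine rescaling (\cref{an:thm_approx}) gives $\Psi\in\sympan{\rho_1,\eta}{\A_\infty}$ approximating $\phi$ outside the straight bicurve $\T\times\croset{\pm(1-2\eta)}$.
\item For $\Psi$ close enough to $\phi$, the map $g=\Psi^{-1}\circ\tilde h\circ\Psi$ is compactly supported in $A(\gamma')$. It can therefore be approximated outside the straight bicurve $\gamma'$ by an entire $G$.
\item Then $F=\Psi\circ G\circ\Psi^{-1}$ is an entire symplectic biholomorphism approximating $\tilde h$ outside $\gamma$. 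Since everything is entire, no domain is lost.
\item \cref{lem:conj_ham} shows that $F$ stays $\rho^{-1}$-close to the identity on $K_\rho$.
\end{enumerate}

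This last property is exactly what step (2) needs. The deformation of $(J_0,\Omega_0)$ is not a pullback by the analytic approximant: pulling $J_0$ back by a biholomorphism just returns $J_0$. It comes from two operations:
\begin{itemize}
\item interpolating the approximant with the identity via a cutoff $\beta(\Re y)$ near $\lvert\Re y\rvert=1$, so that $h$ preserves $\A$ and is supported in $\lvert\Re y\rvert<1-\eta$;
\item a Moser correction (\cref{lem:phi_r}).
\end{itemize}
The deformation is small precisely because $F\approx\id$ in the cutoff region.

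\textbf{Lesser point.} For $\A$ and $\Di$, your uniformization step must be the $\Z_2$-equivariant one (\cref{thm:strct_equiv}). This relies on $J=J_0$ and $f$ being a rotation off $\check{\M}_\infty$, as required by \cref{prop:real_an_symp}.
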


To obtain this principle, we will improve an approximation theorem in \cref{an:approx} to encompass the bicurves. Then it remained to adapt Berger's work to this theorem and obtain our proof in the rest of the \cref{sec:proof_princ}.\\

As an application of the AbC$^\star$ Principle, we have:

\begin{proof}[Proof of Theorem \ref{thm:an_finerg_schem}]
It is a direct consequence of the AbC$^\star$ Principle and the realization of minimal ergodicity given by \cref{prop:real_minerg}.

\end{proof}

\begin{proof}[Proof of \cref{prop:ext_real}]
Let $(U,\nu)$ be a $C^r$-AbC$^\star$ scheme realizing property $(\mathcal{P})$. Let us define a $C^s$-AbC$^\star$ scheme $(\widetilde{U},\widetilde{\nu})$ which also realizes property $(\mathcal{P})$.\\
Since the space $\symp{s}{\M}$ is dense in $\symp{r}{\M}$ by \cite{zehnder_note_1977} and \cite{oh_c0-coerciveness_2006}, we define the map $\widetilde{U}$ as:
$$ \widetilde{U} : (h,\alpha) \in \sympo{s}{\M} \times \Q/\Z \mapsto U(h,\alpha) \cap \sympo{s}{\M} \in \mathcal{T}_\Gamma^s.$$
First, let us show that $\widetilde{U}$ satisfies a) and b) from \cref{def:abc_star}.\\

Let $h$ be in $\sympo{s}{\M}$ and $\alpha = \frac{p}{q} \in \Q/\Z$, with $p\wedge q = 1$. By \cref{def:abc_star} a) and b) for the AbC$^\star$ scheme $(U,\nu)$, there exists a bicurve $\gamma \in \Gamma_2(\M)$ invariant under $R_\alpha$ and $\hat{h}_0 \in U(h,\alpha)$ such that:
\begin{enumerate}[label = \arabic*)]
\item $\hat{h}_0$ satisfies $ \iconj{\hat{h}_0}{R_\alpha} = \iconj{h}{R_\alpha}$,
\item $U(h,\alpha)\in \mathcal{T}^r_\gamma$ and $\hat{h}_0$ coincides with $h$ on $O(\gamma)$.
\end{enumerate}

Note that 1) implies that the symplectomorphism $h^{-1} \circ \hat{h}_0$ commutes with $R_\alpha$ and by 2) it leaves invariant $A(\gamma) := \M \setminus (O(\gamma) \cup \gamma)$, for it coincides with the identity on $\partial O(\gamma)\cup (\M \setminus \check{\M})$ wich contains $\gamma$.

We can observe that, by definition of the topology $\mathcal{T}^s_\gamma$ and 2), the set $\widetilde{U}(h,\alpha) := U(h,\alpha)\cap \sympo{s}{\M}$ belongs to $\mathcal{T}^s_\gamma$. Therefore we aim to find $\hat{h}_1 \in \widetilde{U}(h,\alpha)$ such that:
\begin{enumerate}[label = \arabic*')]
\item $\hat{h}_1$ satisfies $ \iconj{\hat{h}_1}{R_\alpha} = \iconj{h}{R_\alpha}$,
\item $\hat{h}_1$ coincides with $h$ on $O(\gamma)$.
\end{enumerate}
This would provide a) and b) of \cref{def:abc_star} for $\widetilde{U}$. Let us use the following fact, proved below, to define $\hat{h}_1$.

\begin{fact}\label{fact:dens}
Let $S\subset \M$ be an open cylinder invariant under $R_\alpha$ and $h_0 \in \symp{r}{S}$ commuting with $R_\alpha$, with ${0 \leq r \leq +\infty}$. Then for any neighbourhood $\mathcal U$ of $h_0$ in the $C^r$ compact-open topology of $\mathrm{Symp}^r(S,\M)$, there exists $h_1 \in \sympcomp{\infty}{S} \cap \mathcal U$ commuting with $R_\alpha$.
\end{fact}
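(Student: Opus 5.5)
The plan is to pass to the quotient by the finite group generated by $R_\alpha$, approximate there by a compactly supported smooth symplectomorphism using the known density results, and lift back. Write $\alpha = p/q$ with $p\wedge q=1$. Since $S$ is an open cylinder invariant under $R_\alpha$, it is of the form $\pi(\T\times J)$ up to the identification $\pi$, or more generally an essential open annulus invariant under the cyclic group $\Z/q$ generated by $R_\alpha$. This group acts freely on $S$: on $\check{\M}$ the $\T$-action is free, and $S$ avoids the fixed points of $R_\alpha$, because an open cylinder cannot contain a pole or the center. The quotient $\Sigma := S/\langle R_\alpha\rangle$ is therefore again an open cylinder, with the induced symplectic form, and the projection $p: S\to\Sigma$ is a $q$-fold symplectic covering. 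Because $h_0$ commutes with $R_\alpha$, it descends to a $C^r$-symplectomorphism $\bar h_0$ of $\Sigma$.

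Conversely, a symplectomorphism $\bar h$ of $\Sigma$ lifts to a symplectomorphism of $S$ commuting with $R_\alpha$ provided its action on $\pi_1(\Sigma)\cong\Z$ preserves the subgroup $p_*\pi_1(S)=q\Z$. This always holds for homeomorphisms of the annulus, which act by $\pm1$. The lift is then unique once we choose it close to $h_0$ at one point: among the $q$ lifts, which differ by powers of $R_\alpha$, we pick the one $C^0$-close to $h_0$. If $\bar h$ is close to $\bar h_0$ in the compact-open $C^r$ topology of $\Sigma$, the chosen lift is close to $h_0$ in the compact-open $C^r$ topology of $S$, since $p$ is a finite local diffeomorphism and a compact $K\subset S$ projects to a compact $p(K)$.

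The problem is thus reduced to the case without symmetry: given $\bar h_0\in\symp{r}{\Sigma}$ and a compact $\bar K\subset\Sigma$, find $\bar h_1\in\sympcomp{\infty}{\Sigma}$ that is $C^s$-close to $\bar h_0$ on $\bar K$. We may enlarge $\bar K$ to a closed essential sub-annulus $\bar K'=\overline{\T}\times[a,b]$ in coordinates. The steps are as follows.

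First, approximate $\bar h_0$ by a $C^\infty$ symplectomorphism on a neighbourhood of $\bar K'$, using \cite{zehnder_note_1977} when $r\ge1$ and \cite{oh_c0-coerciveness_2006} when $r=0$.

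Second, cut it off to be compactly supported. This step is the main obstacle. An arbitrary symplectomorphism of the open annulus need not be isotopic to a compactly supported one: it might swap the ends, and its flux (the area between a core circle and its image) might be nonzero.

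I would handle it as follows. Composing with a fixed element that does not affect orientation issues is not needed, because on the compact piece we only need local agreement. Consider the image $\bar h(\bar K')$, an annulus in $\Sigma$ whose boundary curves are essential. Choose a slightly larger annulus $\bar K''\supset \bar K'\cup\bar h(\bar K')$. The restriction of $\bar h$ to $\bar K'$ is a symplectic embedding of an annulus into $\bar K''$. By the symplectic isotopy extension theorem, combined with the fact that two essential embedded annuli of the same area in an annulus are Hamiltonian-isotopic modulo the flux, it extends to a compactly supported symplectomorphism of $\Sigma$. Any flux discrepancy can be absorbed by a shear supported in the collar regions $\Sigma\setminus\bar K'$; there is room for this because $\Sigma$ has infinite "room" only in the sense of having collars, and the areas of both complementary regions are preserved since $\bar h$ is symplectic.

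If $\bar h_0$ swaps the ends, it is not isotopic to any compactly supported map, yet it can still be approximated on compacts: a compactly supported map can realise the end-swapping half-turn on a compact sub-annulus while the identity is restored near the ends. I would verify this case separately, and it is the delicate point. Finally, lift $\bar h_1$ to $h_1$ as described above. The lift is compactly supported, smooth and symplectic, commutes with $R_\alpha$, and lies in $\mathcal U$.
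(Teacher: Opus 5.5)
\textbf{The gap is the end-swapping case.} Your proposed resolution there is false, and no argument can repair it. You claim a compactly supported map can realise the half-turn on a compact sub-annulus while restoring the identity near the ends. This fails even for homeomorphisms.

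Let $g$ be a compactly supported homeomorphism of the open annulus $\Sigma$ and $C$ a core circle. Pick a parallel circle $C'$ in the region where $g=\mathrm{id}$. Since $C$ and $C'$ cobound an annulus, $[g(C)]=[g(C')]=[C]$ in $H_1(\Sigma)\cong\Z$. Now suppose $g$ is uniformly close to $\bar h_0$ on $C$ (closer than the injectivity radius near $\bar h_0(C)$). Then $g|_C$ and $\bar h_0|_C$ are homotopic in $\Sigma$ through short geodesics. So $[g(C)]=[\bar h_0(C)]=-[C]$ when $\bar h_0$ is orientation-preserving and swaps the ends. Hence no compactly supported map lies in that compact-open $C^0$ neighbourhood.

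Concretely, $h_0(\theta,y)=(-\theta,-y)$ on $S=\check\A$ is symplectic, commutes with $R_0$ and $R_{1/2}$, and contradicts the statement as written. The same sign issue appears in your lifting step. A lift $\tilde h$ of a map acting by $-1$ on $\pi_1(\Sigma)$ satisfies $\tilde h\circ R_\alpha=R_\alpha^{-1}\circ\tilde h$. So your ``always holds'' is true for existence of the lift, but not for commutation with $R_\alpha$ once $q>2$.

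\textbf{What is missing is a hypothesis, not an argument:} $h_0$ must preserve each end of $S$. This holds wherever the paper uses the fact. In the proofs of \cref{prop:ext_real} and \cref{lem:abc_real_ds}, $h_0$ is the restriction to $S=A(\gamma)$ of a map equal to the identity on $O(\gamma)\cup\gamma$.

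Under this hypothesis your argument works. The lift of $\bar h_1$ commutes with $R_\alpha$, because $\bar h_1$ acts on $\pi_1(\Sigma)$ as $\bar h_0$ does. The flux vanishes for the finite-area reason you give: the region below $\bar h(C)$ has the same area as the region below $C$. So extending $\bar h|_{\bar K'}$ to a symplectomorphism supported in a slightly larger annulus goes through, via Moser relative to the boundary with matching areas of the complementary pieces. The shear you mention is unnecessary.

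The paper follows the same outline: quotient by $R_\alpha$, then smooth approximation via Zehnder and Oh. It handles the last step by quoting density of $\hamc{\infty}{\check\A}$ in $\symp{\infty}{\check\A}$. That density also holds only on the end-preserving component, so the paper makes the same implicit restriction. Your step two is an actual proof of that density in the end-preserving case.
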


We obtain later with \cref{an:lemma_symp_bi} that $A(\gamma)$ is symplectomorphic, up to composition with $\pi$, to an open cylinder $\T \times (y_-,y_+)$. Therefore, we shall apply this fact with the open cylinder $S := A(\gamma)$ (which is $R_\alpha$-invariant likewise $\gamma$), $h_0 := \rest{h^{-1} \circ \hat{h}_0}{S}$ and $\mathcal U := \croset{\rest{(h^{-1}\circ g)}{S} \, , \, g \in U(h,\alpha)}$ ($\mathcal U$ is indeed a neighbourhood of $h_0$ in the $C^r$ compact-open topology of $\mathrm{Symp}^r(S,\M)$ by definition of the topology $\mathcal T^r_\gamma$). \cref{fact:dens} yields $h_1$ in $\sympcomp{\infty}{S}\cap \mathcal U$ commuting with $R_\alpha$. 

As $h_1$ is compactly supported in $S$, we extend it canonically as a symplectomorphism $\tilde{h}_1$ in $\sympo{\infty}{\M}$ by the identity on $\M \setminus S$. Note that $\tilde{h}_1$ also commutes with $R_\alpha$. Let $\hat{h}_1 := h\circ \tilde{h}_1$. By construction of $\mathcal U$, the map $\hat{h}_1$ belongs to $U(h,\alpha)$ and so:
$$\hat{h}_1  \in U(h,\alpha)\cap \sympo{s}{\M} = \widetilde{U}(h,\alpha).$$
Moreover, $\hat{h}_1$ satisfies 1') and 2'). Therefore the map $\widetilde{U}$ satisfies a) and b) from \cref{def:abc_star}.\\

Finally, to obtain c), we can notice that $\sympo{s}{\M}$ is a Fréchet space for a distance $d$. Then it suffices to reduce $\nu(h,\alpha)$ and define a new map $\widetilde{\nu}$ to ensure that, for every $\widetilde{\alpha}= \frac{\widetilde{p}}{\widetilde{q}}$ with $\widetilde{p}\wedge \widetilde{q} = 1$ such that $0 < \lvert \widetilde{\alpha} - \alpha \rvert < \widetilde{\nu}(h,\alpha)$, we have $\widetilde{q} > 2q$ and
$$d(\iconj{h}{R_\alpha},\iconj{h}{R_{\widetilde{\alpha}}}) \leq \frac{1}{q}.$$
Then any sequence $(h_n,\alpha_n)_n$ satisfying the scheme $(\widetilde{U},\widetilde{\nu})$ constructs a map $f\in \symp{s}{\M}$ which is the limit of the Cauchy sequence $(\iconj{h_n}{R_{\alpha_n}})_n$ for the distance $d$. Moreover, since this scheme is included in the scheme $(U,\nu)$ ($\widetilde{U} \subset U$ and $\widetilde{\nu} \leq \nu$), then $f$ is also constructed from the $C^r$-scheme $(U,\nu)$, therefore it satisfies $(\mathcal{P})$. 
\end{proof}

\begin{proof}[Proof of \cref{fact:dens}]
Let $S\subset \M$ be an open cylinder invariant under $R_\alpha$, $h_0 \in \symp{r}{S}$ commuting with $R_\alpha$ and let $\mathcal U$ be a neighbourhood of $h_0$ in the $C^r$ compact-open topology of $\mathrm{Symp}^r(S,\M)$. 
First, observe that as $h_0$ induces a symplectomorphism on the open cylinder $S/R_\alpha$, by building the symplectomorphism on this surface and then pull it back to $S$, we can assume that $\alpha =0$. Next, by \cite{zehnder_note_1977} and \cite{oh_c0-coerciveness_2006}, the space $\symp{\infty}{S}$ is dense is $\symp{r}{S}$. 
Moreover, as $S$ is symplectomorphic to the open cylinder $\check{\A}$, the density of $\hamc{\infty}{\check{\A}}$ in $\symp{\infty}{\check{\A}}$ put together with the previous density result provides for any $\eta >0$, $t\leq r$ finite and any compact subset $K$ of $S$ the existence of $h_1 \in \sympcomp{\infty}{S}$ which is $\eta$ $C^t$-close to $h$ on $K$. Then we choose $\eta$, $t$ and $K$ such that $h_1$ belongs to the neighbourhood $\mathcal U$.

\end{proof}

\section{Minimal ergodicity}\label{sec:finerg_cyl}

In this section, we aim to prove \cref{prop:real_minerg} which states that being minimally ergodic is an AbC$^\star$-realizable $C^0$ property.\\

We first recall the following definition of minimal ergodicity.

\begin{defin}
Let $f$ be a symplectomorphism of $\M \in \croset{\A ; \Sp ; \Di}$. It is said to be \underline{minimally ergodic} if $f$ has exactly $3$ ergodic measures.
\end{defin}

Here, $3$ is in fact the minimal number of ergodic measures on our surfaces. The lower bound on the cylinder is natural since we have at least one ergodic measure on each components of the boundary and one on an ergodic component of the interior. 
For the disk, we have at least one on an ergodic component of the interior and one on the boundary; moreover, as we work with area-preserving diffeomorphism, Brouwer plane translation theorem implies\footnote{As the open disk is homeomorphic to the plane $\R^2$, the homeomorphism $h$ of the open disk is conjugated to an homeomorphism $g$ of $\R^2$. And if $h$, has no fix point, then so does $g$ and by Brouwer plane translation theorem it has domains of translation, which yield wandering domains for $h$. However an area preserving diffeomorphism of the disk does not have wandering domains, hence $h$ has a fix point.} that we have at least a fix point in the interior, which makes our third ergodic measure. 
On the sphere, as every symplectomorphism is homotopic to the identity\footnote{As $\symp{}{\Sp}$ admits $SO(3)$ for strong rotation retract by \cite{smale_diffeomorphisms_1959}, the space is connected and every symplectomorphism is path connected to the identity} we have at least a fix point by Lefschetz fixed-point theorem, this gives one ergodic measure; in addition, the sphere without this point is symplectomorphic to the interior of the disk which carries at least two ergodic measures by what precedes, which makes $3$.\\
Moreover, \cite{fayad_constructions_2004} constructs smooth symplectomorphisms on any $\M \in \croset{\Di, \A, \Sp}$ with exactly $3$ ergodic measures, which justifies this definition.\\

We notice that any realization of a $C^0$-AbC$^\star$ scheme belongs to the following subset $\mathcal{F}$ of $\sympo{0}{\M}$:
$$\mathcal{F} := \overline{\croset{\iconj{h}{R_\alpha} \, : \, h \in \sympo{0}{\M} \, , \, \alpha \in \Q/\Z}}^{C^0}.$$

Observe that every symplectomorphism $f$ in $\mathcal{F}$ leaves invariant the measures $\Leb_\M$ and $\mu_{\pm 1}= \pi_* (\Leb_{\T \times \croset{\pm 1}} )$, with $\pm \in \croset{-,+}$. In particular we have the following inclusion:
\begin{equation}\label{eq:incl_conv}
 Conv(\Leb_\M, \mu_1, \mu_{-1}) \subset \mathcal{M}(f),
\end{equation}
where $\mathcal{M}(f)$ is the set of invariant probability measures of $f$, and $Conv$ stands for the convex hull. Moreover, we can show that this inclusion is an equality if and only if the symplectomorphism is minimally ergodic.

\begin{prop}\label{prop:minerg_inc}
A symplectomorphism $f \in \mathcal{F}$ is minimally ergodic if and only if
$$Conv(\Leb_\M, \mu_1, \mu_{-1}) = \mathcal{M}(f).$$
\end{prop}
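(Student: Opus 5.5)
The plan is to use the ergodic decomposition, in the form ``$\mathcal{M}(f)$ is a Choquet simplex whose extreme points are exactly the ergodic measures of $f$'', together with the fact that the three measures $\Leb_\M$, $\mu_1$ and $\mu_{-1}$ are pairwise mutually singular.

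\textbf{Preliminary step: mutual singularity.} The measure $\Leb_\M$ gives zero mass to $\pi(\T\times\croset{-1;1})$, which is a pair of circles, or a pair of points when $\M=\Sp$. The measures $\mu_1$ and $\mu_{-1}$ are carried by the two disjoint sets $\pi(\T\times\croset{1})$ and $\pi(\T\times\croset{-1})$. Hence the three measures are pairwise mutually singular. In particular they are distinct and affinely independent, so $Conv(\Leb_\M,\mu_1,\mu_{-1})$ is a genuine triangle with exactly these three points as extreme points. Recall also from \eqref{eq:incl_conv} that all three are $f$-invariant for $f\in\mathcal F$.

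\textbf{($\Leftarrow$).} Suppose $Conv(\Leb_\M,\mu_1,\mu_{-1})=\mathcal{M}(f)$. Ergodic measures are exactly the extreme points of $\mathcal{M}(f)$. By the preliminary step, the extreme points of this triangle are precisely $\Leb_\M$, $\mu_1$ and $\mu_{-1}$. Therefore $f$ has exactly three ergodic measures, i.e.\ it is minimally ergodic.

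\textbf{($\Rightarrow$).} Suppose $f$ has exactly three ergodic measures $e_1,e_2,e_3$. By the ergodic decomposition, $\mathcal{M}(f)=Conv(e_1,e_2,e_3)$, and distinct ergodic measures are mutually singular. By \eqref{eq:incl_conv} we can write
\[
\Leb_\M=\sum_i a_i e_i,\qquad \mu_1=\sum_i b_i e_i,\qquad \mu_{-1}=\sum_i c_i e_i,
\]
with nonnegative coefficient vectors each summing to $1$. The key point is that two of these coefficient vectors cannot share an index. If, say, $a_i>0$ and $b_i>0$, then $e_i\le a_i^{-1}\Leb_\M$ and $e_i\le b_i^{-1}\mu_1$. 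Let $E$ be a set with $\Leb_\M(E)=0$ and $\mu_1(E^c)=0$. Then $e_i(E)\le a_i^{-1}\Leb_\M(E)=0$ and $e_i(E^c)\le b_i^{-1}\mu_1(E^c)=0$, so $e_i=0$, which is absurd. The same argument applies to every other pair. So we have three nonempty, pairwise disjoint subsets of $\croset{1,2,3}$; each must be a singleton. Hence each of $\Leb_\M,\mu_1,\mu_{-1}$ equals a distinct $e_i$, and $\mathcal{M}(f)=Conv(\Leb_\M,\mu_1,\mu_{-1})$.

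\textbf{Expected obstacle.} The only delicate point is that no ergodicity of $\Leb_\M$ or $\mu_{\pm1}$ is assumed a priori. The counting argument via mutual singularity circumvents this. Beyond that, one only has to invoke the ergodic decomposition, namely that a measure in $\mathcal{M}(f)$ is a combination of ergodic measures, and with finitely many ergodic measures this is a finite convex combination.
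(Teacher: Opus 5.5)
Your proof is correct. For ($\Rightarrow$) it takes a genuinely different route from the paper's.

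\textbf{The paper's route.} It uses the specific structure of $\mathcal{F}$. The conjugating maps are the identity on $\partial\check{\M}$, so every $f\in\mathcal{F}$ acts as a rigid rotation on each component of $\pi(\T\times\croset{\pm1})$ that is a circle, and fixes it when it is a point. Minimal ergodicity forces this rotation to be irrational, so $\mu_{\pm1}$ are ergodic. The remaining ergodic measure $\nu_0$ is then identified with $\Leb_\M$ by writing $\Leb_\M\in Conv(\mu_{-1},\nu_0,\mu_1)$ and using $\Leb_\M(\supp\mu_{\pm1})=0$.

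\textbf{Your route.} You use no dynamical information at the boundary. You only need that the three measures are $f$-invariant and pairwise mutually singular, followed by the pigeonhole argument on the supports of the coefficient vectors. Incidentally, the remark that distinct ergodic measures are mutually singular is never actually used.

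\textbf{What each buys.} Your argument is more general: it works verbatim for any map preserving $k$ pairwise singular invariant probability measures and having exactly $k$ ergodic measures. It also yields the ergodicity of $\Leb_\M$ and $\mu_{\pm1}$ as an output rather than an input. The paper's route instead exhibits the concrete dynamical mechanism, namely an irrational rotation on the boundary circles.

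\textbf{Direction ($\Leftarrow$).} You argue as the paper does, via extreme points. You additionally make explicit the affine independence of $\Leb_\M,\mu_1,\mu_{-1}$. This is what guarantees that these three measures are exactly the extreme points of their convex hull, a point the paper leaves implicit.
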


\begin{proof}
First, if we have the equality, then by the ergodic decomposition theorem the $3$ ergodic measures of $f$ are the extremal points of $\mathcal{M}(f)$ which are $\Leb_\M$, $\mu_1$, and $\mu_{-1}$. Then $f$ has exactly $3$ ergodic measures and is minimally ergodic. Conversely, let $\nu_{-1}$, $\nu_0$, and $\nu_1$ be the ergodic measures of $f$. By definition of $\mathcal{F}$,
on each component $C$ of $\pi(\T \times \croset{\pm 1})$ which is a circle (otherwise it is a fixed point), the restriction $f|C$ is a rigid rotation. By minimal ergodicity this rotation must be irrational. From this we deduce that $\mu_{- 1}$ and $\mu_{ 1}$ are ergodic. Then we can assume that $\nu_{\pm1} = \mu_{\pm 1}$. Next, by the ergodic decomposition theorem, we have $\Leb_\M\in Conv(\mu_{-1}, \nu_0, \mu_1)$. Yet, as $\Leb_\M (\sup(\mu_{\pm 1})) = 0$, it follows that $\nu_0 = \Leb_\M$. 
We therefore recover by the ergodic decomposition theorem that $Conv(\Leb_\M, \mu_1, \mu_{-1}) = \mathcal{M}(f)$.
\end{proof}
\begin{remark}
We proved that   the ergodic measures of a minimally ergodic $f\in \mathcal{F}$ are $\Leb_\M, \mu_1, \mu_{-1}$.
\end{remark}
Next, by the ergodic decomposition theorem, we can quantify how far the symplectomorphism $f$ is to be minimally ergodic with the following quantity:
\newglossaryentry{3delt_merg}{sort = {3Measure}, name = {$\Delta_{merg}(f)$}, description = {Distance of a symplectomorphism $f$ of $\M$ from being minimally ergodic}}
\glsadd{3delt_merg}
\begin{equation}
\Delta_{merg}(f) := \sup_{\mu \in \mathcal{M}_{erg}(f)}  d_K \left (\mu,Conv(\Leb_\M, \mu_1, \mu_{-1})\right ),
\end{equation}
where $\mathcal{M}_{erg}(f)$ is the set of ergodic measures of $f$.\\

Then, by the latter proposition and the ergodic decomposition theorem, $f\in \mathcal{F}$ is minimally ergodic if and only if $\Delta_{merg}(f) = 0$.\\

Using the AbC$^\star$ scheme, we are going to construct a sequence $f_n = \iconj{h_n}{R_{\alpha_n}}$ which converges to a certain $f$ and so that:
$$\Delta_{merg}(f) = \lconv{n}{\infty}\Delta_{merg}(f_n) = 0.$$

As a matter of fact $f$ is minimally ergodic. Interestingly, $f_n$ is conjugate to a rational rotation, and so has infinity many ergodic invariant measures, but $\Delta_{merg}(f_n)$ is small when $n$ is large.\\

Then, to define a scheme that constructs minimally ergodic symplectomorphisms, we first use the construction of \cite[Theorem 3.3]{fayad_constructions_2004} in \cref{sec:lem_minerg} to build symplectomorphisms $g$ that pushes forward the measures $(\mu_y)_{y\in \I}$ close to the convex hull $Conv(\Leb_\M, \mu_1, \mu_{-1})$, with $\mu_y = \pi_* \Leb_{\T\times\croset{y}}$. Then, in \cref{sec:finerg_scheme}, we define an AbC$^\star$ scheme using these symplectomorphisms $g$. The scheme will ensure that the conjugacies of rational rotations with high denominator are close to being minimally ergodic according to $\Delta_{merg}$.\\

\subsection{Lemma for minimal ergodicity}\label{sec:lem_minerg}

\indent We consider $C := Conv ( \Leb_\M, \mu_1, \mu_{-1} )$.\\
Let us adapt the construction from Fayad and Katok with convex hulls, so that, when we build the scheme, we can use the quantification $\Delta_{merg}$ of the \enquote{distance} to being minimally ergodic. Moreover, in order to use this to define an AbC$^\star$ scheme in the next section, we shall use bicurves in this construction (see \cref{def:bicurve} and \cref{not:bic}).

\begin{lemma}\label{lemma:finerg}
Let $q \in \N^*$ and $\epsilon > 0$, there exist $g \in \sympc{\infty}{\M}$ and $\gamma \in \Gamma_2(\M)$ such that:
\begin{enumerate}[label = \roman*)]
\item $g$ commutes with $R_{\frac{1}{q}}$ and $\gamma$ is invariant under $R_{\frac{1}{q}}$,
\item for every $y \in [-1;1]$, $d_K(g_*\mu_y,C) < \epsilon$,
\item and $\rest{g}{O(\gamma)} = \id$ and $\sup_{y \in \I}(\mu_y(B(\gamma,\kappa))) \rconv{\kappa}{0}  0$.
\end{enumerate}
\end{lemma}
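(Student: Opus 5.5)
We adapt the construction of \cite[Theorem 3.3]{fayad_constructions_2004}, working first on the cylinder $\A$ and then transporting everything by $\pi$. The point is that $\pi$ is $1$-Lipschitz up to a constant and pushes $\mu_y$ on $\A$ to $\mu_y$ on $\M$, so estimates in $d_K$ transfer. Fix $q$ and $\epsilon$.

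\textbf{Choice of levels.} Choose a small $\delta>0$ and levels $-1<y_-<-1+\delta$, $1-\delta<y_+<1$. For $y\in[-1,y_-]\cup[y_+,1]$ the measure $\mu_y$ is within $O(\delta)$ in $d_K$ of $\mu_{-1}$ or $\mu_1$, hence of $C$. This holds because the transport along the vertical direction costs at most $\delta$, and $\pi$ is uniformly continuous. Next split the middle region $\T\times[y_-,y_+]$ into $N$ thin horizontal annuli $A_1,\dots,A_N$ of equal area, and each annulus into $q\cdot m$ small boxes of equal area. These boxes are arranged $R_{1/q}$-equivariantly, i.e.\ the box decomposition of $[0,1/q]\times A_j$ is repeated.

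\textbf{Construction of $g$ on the middle region.} Build a symplectic diffeomorphism $g_0$ of $\T\times(y_-,y_+)$ that commutes with $R_{1/q}$ and is compactly supported. It should map each horizontal circle $\T\times\{y\}$, for $y$ in $[y_-+\delta',y_+-\delta']$, to a curve that is $\epsilon$-equidistributed with respect to $\Leb$ restricted to the middle region. Concretely, take a fundamental domain $[0,1/q]\times[y_-,y_+]$ and a smooth area-preserving map, compactly supported in its interior, sending each horizontal segment to a curve visiting every small box for a proportion of length close to the box's area. This is the standard "box-shuffling" or sliding construction of Anosov–Katok and Fayad–Katok. Then extend it periodically.

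For $y$ in the transition collars near $y_\pm$, $g_*\mu_y$ is only a mixture, but we make it close to a convex combination $t\,\mu_{\pm1}+(1-t)\,\Leb$-like measure. To arrange this, let the circle at level $y$ spend a fraction $t(y)$ of its length near the level $y_\pm$ (which is $O(\delta)$-close to $\mu_{\pm1}$) and the rest equidistributed. The curve length must interpolate monotonically, as in Fayad–Katok's handling of the boundary layer. Finally, $\Leb_{\text{middle}}$ is $O(\delta)$-close to $\Leb_\M$ as well as to $C$, so ii) follows after choosing $\delta$ and the box size small enough.

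\textbf{Bicurve and property iii).} Take $\gamma$ to be the straight bicurve $\pi(\T\times\{y_-,y_+\})$ (or a slightly wound version of it). Then $O(\gamma)$ consists of the regions outside, where $g=\id$ by construction, and $\gamma$ is $R_{1/q}$-invariant. The uniform smallness $\sup_y\mu_y(B(\gamma,\kappa))\to0$ fails for the straight choice, since $\mu_{y_\pm}$ charges $\gamma$ fully. We therefore instead take $\gamma_\pm$ to be \emph{winding} $R_{1/q}$-invariant curves, for instance graphs of $\theta\mapsto y_\pm+\delta\sin(2\pi qk\theta)$ pushed so that each is transverse to every horizontal circle. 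Each horizontal circle then meets $B(\gamma,\kappa)$ in a set of length $O(\sqrt{\kappa/\delta})$ or better, uniformly in $y$, which gives iii). The supports of $g$ and the previous discussion are adjusted so that $g$ is the identity outside $A(\gamma)$. This costs only an extra $O(\delta)$ in ii), since the region between straight and wavy curves is thin.

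\textbf{Main obstacle.} The delicate step is ii) uniformly for \emph{every} $y$, especially in the collars where the curves pass from identity to equidistribution. The fix is the Fayad–Katok interpolation: the image circle at level $y$ has one portion hugging $\gamma_\pm$ and the remainder equidistributed, with the proportions varying continuously in $y$. The resulting error is bounded uniformly by $O(\delta)$ plus the box diameter.
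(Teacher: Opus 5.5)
Your overall architecture matches the paper's: equidistribute the middle region, keep $g$ the identity near the boundary, use a winding bicurve, and extend periodically for $R_{1/q}$. But the step you flag as the main obstacle is never supplied, and the way you decouple the winding bicurve from the construction of $g$ prevents it from being supplied.

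Saying that the image of $\T\times\croset{y}$ has ``one portion hugging $\gamma_\pm$ and the remainder equidistributed, with proportions varying continuously in $y$'' restates ii) on the collars; it does not construct $g$ there. With a straight-edged identity region the construction genuinely fails as set up. \cref{thm:folklo_symp} (Moser) only controls the images of the prescribed disks. Any level $y$ lying between the straight edge $\T\times\croset{y_\pm}$ and your boxes therefore has its whole circle in the region where $g$ is uncontrolled, and $g_*\mu_y$ can be far from $C$.

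The later passage to a winding $\gamma$ does not repair this.
\begin{itemize}
\item If you shrink the support of $g$ to a straight annulus inside $A(\gamma)$, then $\gamma$ plays no role in ii), and the collar problem is untouched.
\item If instead you force $g=\id$ between the straight and wavy curves, the claimed ``extra $O(\delta)$'' cost is false. That region is thin in area but not in $\mu_y$-measure for $y$ in its vertical range. For $\gamma_+$ the graph of $y_++\delta\sin(2\pi qk\theta)$ and $y=y_+-\delta/2$, it contains a third of the circle, so ii) would have to be re-proved there.
\end{itemize}

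The missing idea is to tie the winding of $\gamma$ to the geometry of the boxes, so that there is no collar at all. In the paper:
\begin{itemize}
\item $\gamma_\pm$ are graphs of $\pm(1-\delta)+\delta'\cos$ with period $1/N$, where $N=n^2$.
\item $g$ is supported in $A(\gamma)$, hence is the identity on $O(\gamma)$.
\item The source boxes $A_l$ are \emph{vertical} strips over one period $I_l$, bounded above and below by $\gamma_+$ and $\gamma_-$ (offset by $2\kappa/N$).
\item \cref{thm:folklo_symp} sends these strips onto an $n\times n$ grid of small boxes $\widetilde A_l$ filling $A(\gamma)$.
\end{itemize}
Then, for every $y$ simultaneously, $\mu_y$ splits into three parts:
\begin{itemize}
\item A part in $O^\pm$. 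It is fixed by $g$ and spread evenly in $\theta$ by periodicity, hence close to $\mu_{\pm1}$; it is nonempty only when $|y|$ is close to $1$.
\item A part in $\bigsqcup_l A_l$. By periodicity it charges every $A_l$ equally, so its image is close to $\Leb_\M$ by \cref{an:dK_unif}.
\item A remainder in the gaps between strips and in a thin band along $\gamma$. This is uniformly small in $y$ precisely because $\gamma$ winds.
\end{itemize}
So the second half of iii) is used inside the proof of ii), not merely to satisfy iii).

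Two minor points:
\begin{itemize}
\item $\pi$ is not Lipschitz on $\Sp$ or $\Di$; it is only $\tfrac12$-H\"older at the poles or center. Uniform continuity is still enough to transfer the $d_K$ estimates.
\item A periodic graph cannot be transverse to every horizontal circle. The tangencies at its extrema are harmless for iii), however.
\end{itemize}
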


\begin{figure}[h]
\centering
\includegraphics[scale=0.4]{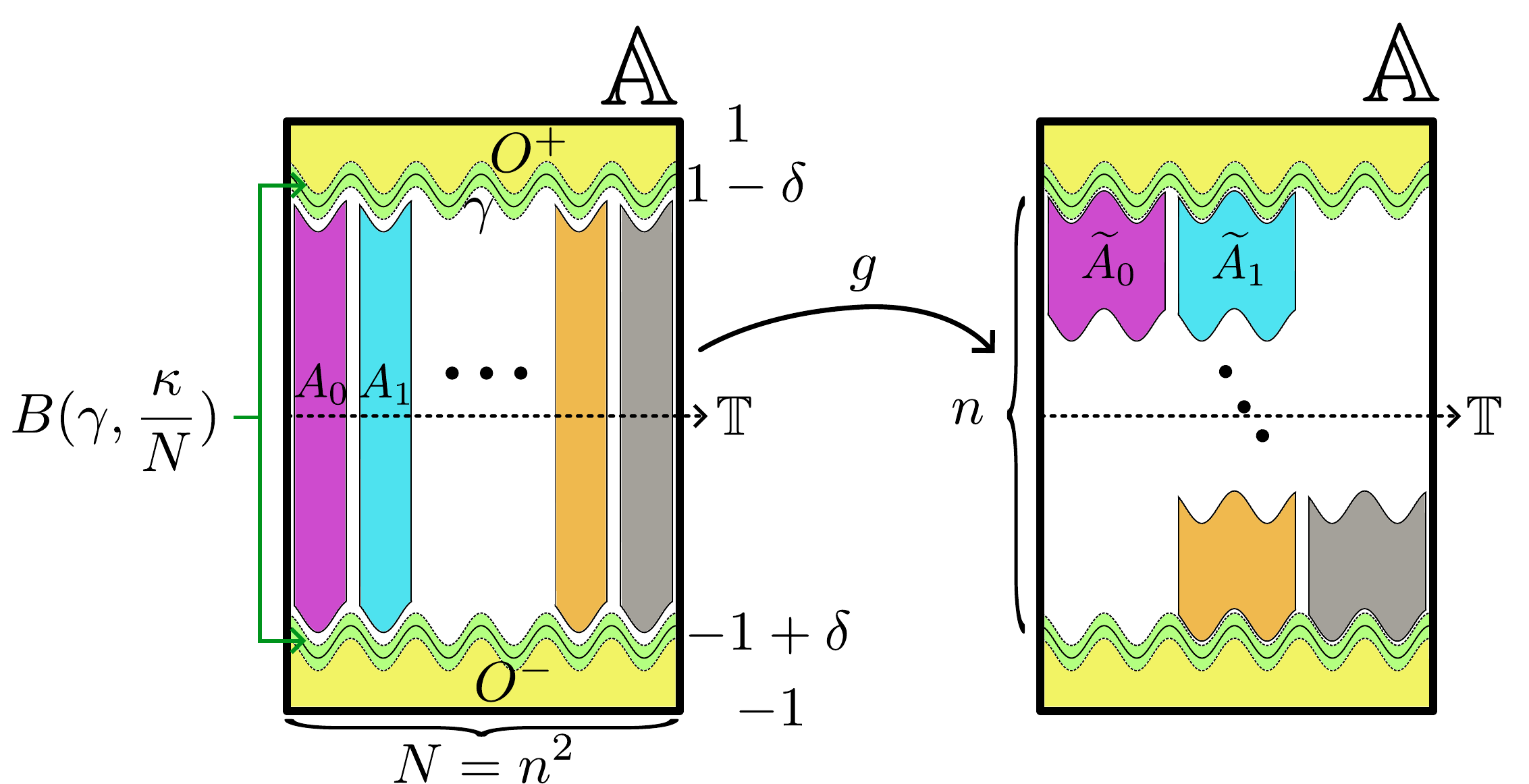}
\caption{Lemma for minimal ergodicity}
\label{fig:finerg}
\end{figure}

\begin{proof}[Sketch of proof]
To obtain i), it suffices to construct such a $g$ and $\gamma$ for $q=1$. Then, using the invariance of $\Leb_\M$, $\mu_1$ and $\mu_{-1}$ under $R_{\frac{1}{q}}$, we generalize to any $q$ up to using a finite covering.\\

Next, to obtain iii) we construct a symplectomorphism $g$ supported in $A(\gamma)$, for a bicurve $\gamma \in \Gamma_2(\M)$. As $g$ is supported in $A(\gamma)$, we obtain the first part of iii) : $\rest{g}{O(\gamma)} = \id$. Then, by defining this bicurve by the graphs of scaled cosines on the cylinder, thus providing a bicurve on any $\M \in \croset{\A, \Di, \Sp}$, every longitude $\pi(\T \times \croset{y})$ intersects the bicurve in a finite set. Hence we recover the second part of property iii) stating that the proportion of a longitude $\pi(\T \times \croset{y})$ which intersects the band $B(\gamma,\kappa)$ of width $\kappa$ tends to $0$ as $\kappa$ tends to $0$, uniformly in $y\in \I$.\\

Finally, to obtain ii), we consider $N$ vertical boxes $(A_l)_{1\leq l \leq N}$ in $A(\gamma)$ spread along the $\T$-axis, and shaped so that every longitude $\pi(\T\times \croset{y})$ has a large proportion either in $O(\gamma)$ or in the boxes $(A_l)_l$ (see \cref{fig:finerg}). Then, we aim to send these boxes by a symplectomorphism into boxes of same volume $(\widetilde{A}_l)_{1 \leq l \leq N}$ which are distributed uniformly in $A(\gamma)$. Such a symplectomorphism $g$ is provided by the following Folkloric theorem (see for instance \cite[§4.1]{berger_analytic_2022}) of symplectic geometry which is a consequence of Moser's trick:
\begin{theorem}\label{thm:folklo_symp}
Let $(D_l)_{1 \leq l\leq N}$ and $(D_l')_{1 \leq l\leq N}$ be two families of disjoint smooth disks in the interior of a symplectic surface $(S,\omega)$ of finite volume, such that for every $l$, $\Leb(D_l) = \Leb(D'_l)$. Then there exists $f \in \sympcomp{\infty}{S}$ such that $f(D_l) = D'_l$ for every $l$.
\end{theorem}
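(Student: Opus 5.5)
The statement to prove is \cref{thm:folklo_symp}. I would reduce it to two classical ingredients: (a) transitivity of compactly supported Hamiltonian isotopies on ordered tuples of disjoint embedded disks, and (b) Moser's trick to upgrade a diffeomorphism between disks of equal area to a symplectic one.

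The plan has four steps.

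\textbf{Step 1 (isotopy of disks).} Since $S$ is a connected surface, choose points $x_l\in D_l$ and $x'_l\in D'_l$. The configuration space of $N$ distinct ordered points in the interior of a connected surface is connected. Hence there is a compactly supported smooth isotopy of $S$ moving each $x_l$ to $x'_l$. Precomposing with shrinking isotopies, I would first contract each $D_l$ and each $D'_l$ into tiny round disks around $x_l$ and $x'_l$; this uses that smooth disks are isotopic to small round disks (Palais/Cerf disk theorem, or Schoenflies in dimension $2$). Concatenating gives a compactly supported diffeomorphism $\phi\in\diffeo{\infty}{S}$ with $\phi(D_l)=D'_l$ for every $l$.

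\textbf{Step 2 (correct the area inside the disks).} On each $D'_l$, the forms $\phi_*\omega$ and $\omega$ have equal total mass, because $\Leb(D_l)=\Leb(D'_l)$. By Moser's lemma on a disk with boundary (the Dacorogna--Moser version), there is a diffeomorphism $\psi_l$ of $D'_l$ with $\psi_l^*\omega=\phi_*\omega$. One can arrange $\psi_l$ to be the identity near $\partial D'_l$ after first adjusting $\phi$ so that $\phi_*\omega=\omega$ near $\partial D_l'$. This adjustment is possible by a collar argument: use a Moser isotopy supported near the boundary circles, which is fine because in dimension $2$ any two area forms agree up to a diffeomorphism locally.

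\textbf{Step 3 (correct the area outside the disks).} Consider $U:=S\setminus\bigcup_l D'_l$, or rather a compact region $K\supset \supp\phi\cup\bigcup D'_l$ with the disks removed. On $U$ the forms $\phi_*\omega$ and $\omega$ agree near $\partial K$ and near $\partial D'_l$, and they have the same total integral because both $\phi$ and the disks preserve total area. So Moser's trick on the compact surface with boundary $K\setminus\bigcup D'_l$, relative to its boundary, gives $\chi$ equal to the identity near the boundary with $\chi^*\omega=\phi_*\omega$.

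\textbf{Step 4 (conclude).} Set $f:=\chi\circ\phi$ on $S\setminus\bigcup D_l$, and $f:=\psi_l\circ\phi$ on $D_l$. Then $f$ is smooth and compactly supported, satisfies $f^*\omega=\omega$, and maps $D_l$ onto $D'_l$.

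The main obstacle is the relative Moser step when the complement region is not connected or has several boundary components. In that case Moser's trick needs the difference $\phi_*\omega-\omega$, which is compactly supported in the interior, to be exact with a compactly supported primitive. That requires equal total mass on each connected component of $K\setminus\bigcup D'_l$. I would handle this by choosing the isotopy in Step 1 so that $\phi$ preserves area on small neighbourhoods and only moves areas within one connected region. Alternatively, one could build $\phi$ directly as a composition of time-one maps of compactly supported Hamiltonian flows, which moves points symplectically from the start. In that version only Step 2 inside the disks remains, and that step is unproblematic since disks are connected.
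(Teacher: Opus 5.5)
Your argument is correct and is the standard Moser-trick proof the paper has in mind; the paper gives no proof of its own, only citing \cite[\S4.1]{berger_analytic_2022} and calling the statement a consequence of Moser's trick. The obstacle you flag does not actually arise, and your Hamiltonian fallback would presuppose the statement, since producing a symplectic $\phi$ with $\phi(D_l)=D'_l$ is exactly what is being proved. To see that there is no obstacle, choose $K$ connected: then $K\setminus\bigcup_l \operatorname{int} D'_l$ is connected, so the equality of total integrals you already noted is exactly what gives $\phi_*\omega-\omega$ a compactly supported primitive there.
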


Then, we can split most of each longitude $\pi(\T \times \croset{y})$ into two subsets (one being possibly empty):
\begin{enumerate}
\item The intersection of $\pi(\T \times \croset{y})$ with $A(\gamma)$, whose image by $g$ is close to be uniformly distributed into $A(\gamma)$.
\item The intersection of $\pi(\T \times \croset{y})$ with $O(\gamma)$, on which $g=id$ and which is close to be uniformly spread into $\pi(\T \times \croset{y})$. 
\end{enumerate}
By taking the bicurve $\gamma$ close to $\pi(\T \times \croset{-1,1})$, the equidistribution on $A(\gamma)$ is close to an equidistribution w.r.t. $\Leb_\M$, while the second set -- whenever nonempty -- is close to be  equidistributed w.r.t. $\mu_1$ or $\mu_{-1}$.
Therefore, the map $g$ pushes forward each measure $\mu_y$ to a measure close to a convex combination of $\Leb_\M$, $\mu_1$ and $\mu_{-1}$.
\end{proof}

\begin{proof}[Proof of \cref{lemma:finerg}]
To obtain i), as explained in the sketch of proof it suffices to construct such a $g$ and $\gamma$ for $q=1$.

First, let us define the bicurve. Let $n \in \N^*$ be such that $N := n^2 \gg \frac{1}{\epsilon^4}$, let $ 1 > \delta \gg \delta' >0$ be such that $\delta \ll \epsilon$ and set:
$$v: \theta \in \T \mapsto \delta' \cos (N \theta) \in~[-\delta',\delta'].$$
We define the bicurve $\gamma := \gamma_+\sqcup \gamma_-\in \Gamma_2(\M)$\footnote{Observe that $\gamma$ is indeed a bicurve by considering as diffeotopy the time one flow of the Hamiltonian $H: (\theta,y) \mapsto \chi(y)\tfrac{\delta'}{N}\sin(N\theta)$, where $\chi$ is a bump function near $1-\delta$ and $-1+\delta$.} by:
\begin{equation}\label{eq:bic}
\gamma_+ := \pi \left( \croset{(\theta,v(\theta) + 1-\delta) \; , \; \theta \in \T} \right)\quad \text{and}\quad 
\gamma_- := \pi \left( \croset{(\theta,v(\theta) - (1-\delta)) \; , \; \theta \in \T} \right)
\end{equation}
We also consider:
$$\gamma_+^v : \theta\in \T \mapsto v(\theta) + 1-\delta \quad \text{and}\quad \gamma^v_- : \theta\in \T \mapsto  v(\theta) - (1-\delta),$$
so that $\gamma_\pm = \pi ( \croset{(\theta,\gamma_\pm^v(\theta)) \, , \, \theta\in \T})$.

As $\gamma$ is defined with graphs of a scaled cosine on the cylinder, it follows that every longitude $\pi(\T \times \croset{y})$ intersects $\gamma$ in a finite set. Hence we recover the second part of iii) : 
\begin{equation}\label{eq:mes_band_0}
\sup_{y \in \I}(\mu_y(B(\gamma,\kappa))) \rconv{\kappa}{0}  0.
\end{equation}
For the first part of iii), it is a direct consequence of the fact that $g$, which is defined later, is supported in $A(\gamma)$.\\

Next, let us use the map $v$ to define the boxes $(A_l)_{0\leq l < N}$, which will be moved by $g$, and other relevant sets. For $l \in \intset{0}{N-1}$ and $\kappa>0$ such that $\kappa \ll \delta'$, we consider:
\begin{align*}
I_l &:= \left[ \frac{l+\kappa}{N},\frac{l+1-\kappa}{N} \right] \subset \T\\
A_l &:= \pi \left( \croset{(\theta,y)\in \A \, : \; \theta \in I_l, \gamma_-^v(\theta) + \frac{2\kappa}{N} \leq y\leq \gamma_+^v(\theta) - \frac{2\kappa}{N}} \right) \subset A(\gamma)\\
O^+ &:= \pi \left(\croset{(\theta,y)\in \A \, : \; \theta \in \T, y\geq \gamma_+^v(\theta) + \frac{\kappa}{N}} \right) \subset O(\gamma)\\
O^- &:= \pi \left(\croset{(\theta,y)\in \A \, : \; \theta \in \T, y\leq \gamma_-^v(\theta) - \frac{\kappa}{N}} \right) \subset O(\gamma).
\end{align*}
See \cref{fig:finerg}.\\

Now, we define the boxes $(\widetilde{A}_l)_{0\leq l < N}$ where we want to send the $(A_l)_l$ by $g$. We consider a bijection $l \in \intset{0}{N-1} \mapsto (l_1,l_2) \in \intset{0}{n-1}^2$ and we define for $l \in \intset{0}{N-1}$:
$$\widetilde{A}_l := \pi \left( \croset{(\theta,y)\in \A \, : \, \theta \in \widetilde{I}_{l_1}, \gamma_+^v(\theta) - \frac{\kappa}{N} -\frac{l_2+1}{n}H + \frac{\kappa}{n^3} \leq y \leq \gamma_+^v(\theta) - \frac{\kappa}{N} -\frac{l_2}{n}H - \frac{\kappa}{n^3} } \right) \subset A(\gamma)$$

where $\widetilde{I}_l = [\frac{l_1 + \kappa}{n}; \frac{l_1 +1 - \kappa}{n}]$ and $H = \gamma_+^v(\theta) - \gamma_-^v(\theta) - 2\kappa/N =  2(1 -\delta - \kappa/N)$.\\

Then, in order to use \cref{thm:folklo_symp} to send the boxes $(A_l)_l$ onto the boxes $(\widetilde{A}_l)_l$, we need to check that they have same area. We denote by $\Leb_\I$ the probability measure on $\I = [-1,1]$ induced by the Lebesgue measure of $\R$, i.e. for $[a,b] \subset I$ we have $\Leb_\I [a,b] = \frac{b-a}{2}$. Hence we have $\Leb_\A = \Leb_\T \otimes \Leb_\I$ and we have the following areas:
\begin{equation}\label{eq:leb_Al}
\Leb_\M A_l = \Leb_\T I_l \cdot \Leb_\I[-1+\delta + 2\frac{\kappa}{N}, 1- \delta -2\frac{\kappa}{N}] = \frac{1-2\kappa}{N}\frac{H-\frac{2\kappa}{N}}{2}
\end{equation}
and
$$\Leb_\M \widetilde{A}_l = \Leb_\T \widetilde{I}_l \cdot \frac{\frac{H}{n} - 2\frac{\kappa}{n^3}}{2} = \frac{1-2\kappa}{n}\frac{\frac{H}{n} - 2\frac{\kappa}{n^3}}{2} = \frac{1-2\kappa}{N}\frac{H - 2\frac{\kappa}{N}}{2} = \Leb(A_l).$$
Then, these boxes have same area and lie in $A(\gamma)$. Therefore, there exist disjoint disks $(D_l)_{0\leq l < N}$ and disjoint disks $(\widetilde{D}_l)_{0\leq l < N}$ in $A(\gamma)$ of the same volume such that $A_l \subset D_l$ and $\widetilde{A}_l \subset \widetilde{D}_l$ for every $l$. 
Moreover we can impose that the diameters of $\widetilde{D}_l$ are $\Theta (\frac{1}{\sqrt{n}})$ (since it is the case for the $\widetilde{A}_l$), where the square root on $n$ comes from $\pi$ being $\tfrac12$-Hölder (see Page \pageref{eq:pi} for the expression of $\pi$). 
This property will allow to obtain the equidistribution of the disks $(\widetilde{D}_l)_l$ in $A(\gamma)$ because $n \gg \tfrac{1}{\epsilon^2}$.\\
Hence, by \cref{thm:folklo_symp}, there exists $g$ in $\sympc{\infty}{\M}$ which send $D_l$ to $\widetilde{D}_l$ for every $l$ and is equal to the identity on $O(\gamma)$ (this gives iii) ).\\
See Figure \ref{fig:finerg} for the construction.\\

Now let us prove that $g$ satisfies ii) of \cref{lemma:finerg}. Let $A := \sqcup_l A_l$, we will use that most of each longitude $\pi(\T \times \croset{y})$ lies in $A \sqcup O^+ \sqcup O^-$ where we control the behaviour of $g$ to obtain that $g_*\mu_y$ is close to $Conv(\Leb_\M,\mu_{-1}, \mu_1)$.\\
We notice that $\mu_y(O^+ \sqcup A \sqcup O^-)$ is close to $1$ uniformly in $y$, since the $(A_l)_l$ are $\frac{2\kappa}{N}$ vertically spaced apart, by \cref{eq:mes_band_0} and because $\kappa \ll 1$. In particular we have $1-\sup_y \mu_y(O^+ \sqcup A \sqcup O^-) \leq \tfrac{\epsilon}{2\mathrm{diam}(\mathcal{M}(\M))}$ because $\kappa \ll \epsilon$. Then, by a result on convex combinations on measures from \cref{prop:kant_conv2}, we have for every $y$:
\begin{equation}\label{eq:munu_y}
d_K(g_*\mu_y, g_*\nu_y )\leq \frac{\epsilon}{2},
\end{equation}
where $\nu_y := \rest{\mu_y}{O^+ \cup A \cup O^-}$. Here, for $E \subset \M$ such that $\mu_y(E) \neq 0$, the measure $\rest{\mu_y}{E}$ denotes the probability measure obtained by restricting $\mu_y$ to $E$: it is defined by $\rest{\mu_y}{E}(E') = \tfrac{\mu_y(E\cap E')}{\mu_y(E)}$. For $E\subset \M$ such that $\mu_y(E) = 0$, $\rest{\mu_y}{E}$ is the null measure. Then we have the following convex combinations:

\begin{equation}\label{eq:mu_conv}
g_* \nu_y = \nu_y(A)g_* \left( \rest{\nu_y}{A}\right) + \nu_y(O^+)g_* \left(\rest{\nu_y}{O^+}\right)+ \nu_y(O^-) g_* \left( \rest{\nu_y}{O^-}\right)
\end{equation}

Let us show that $g_* \left( \rest{\nu_y}{A}\right)$, $g_* \left(\rest{\nu_y}{O^+}\right)$ and $g_* \left( \rest{\nu_y}{O^-}\right)$ are respectively $\frac{\epsilon}{2}$-close to $\Leb_\M$, $\mu_{1}$ and $\mu_{-1}$ when $\nu_y(A)$, $\nu_y(O^+)$ and $\nu_y(O^-)$ are respectively non zero.\\

Let $\pm$ be in $\croset{-,+}$ and $y$ be such that $\nu_y(O^\pm)$ is non zero. Let us show that $g_* \left( \rest{\nu_y}{O^\pm}\right)$ is $\tfrac{\epsilon}{2}$-close to $\mu_{\pm1}$. 
First, since $g$ is the identity on $O^\pm \subset O(\gamma)$, it follows that $g_* \left( \rest{\nu_y}{O^\pm}\right) = \rest{\mu_y}{O^\pm}$. 
Then, regarding the Kantorovich distance, since the map $v$ used to define $\gamma$ is $N$-periodic and $N\gg \tfrac{1}{\epsilon^4}$, the mass of $\rest{\mu_y}{O^\pm}$ is uniformly spread on $\pi(\T\times \croset{y})$ and $\rest{\mu_y}{O^\pm}$ is $\tfrac{\epsilon}{4}$-close to $\mu_y$. 
Moreover $\nu_y(O^\pm)$ is non zero if and only if $\pm y> 1-\delta - \delta'+\tfrac{\kappa}{N}$, it follows that $\rest{\mu_y}{O^\pm}$ is $\tfrac{\epsilon}{4}$-close to $\mu_{\pm 1}$ because $\delta, \delta',\kappa \ll \epsilon$. This yields:
\begin{equation}\label{eq:munu_dk}
d_K(\mu_{\pm 1}, g_* \left( \rest{\nu_y}{O^\pm}\right)) \leq \frac{\epsilon}{2} \quad \text{when} \quad \nu_y(O^\pm) \neq 0.
\end{equation}

Next, for $y$ such that $\nu_y(A) \neq 0$, let us use \cref{an:dK_unif} to obtain that $g_* \left( \rest{\nu_y}{A}\right)$ is $\tfrac{\epsilon}{2}$-close to $\Leb_\M$. 
First, observe that $g_* \left( \rest{\nu_y}{A}\right) = \tfrac{1}{N}\sum_{l=1}^{N}\rest{(g_*\mu_y)}{g(A_l)}$, we therefore work with the families of disjoints sets $(g(A_l))_l$ and of measures $(\rest{(g_*\mu_y)}{g(A_l)})_l$. 
By \cref{eq:leb_Al}, we have $\Leb_M( g(A)) \geq 1-\tfrac{\epsilon}{4\diam(\M)}$ since $\kappa,\delta,\delta' \ll \epsilon$, we also have that $\Leb_\M(g(A_l)) = \Leb_\M(g(A_k))$ for every $k,l$. 
Moreover, because the $(g(A_l))_l$ are contained in the $(D_l)$ whose have diameters of $\Theta(\tfrac{1}{\sqrt{n}})$ and $\sqrt{n} \gg \tfrac{1}{\epsilon}$, it follows that $\diam(g(A_l)) \leq \tfrac{\epsilon}{4}$ for every $l$. 
Therefore, by applying \cref{an:dK_unif} to $(\rest{g_*\mu_y}{g(A_l)})_l$ we obtain:
\begin{equation}\label{eq:lebnu_dk}
d_K(\Leb_\M, g_* \left( \rest{\nu_y}{A}\right)) \leq \frac{\epsilon}{2} \quad \text{when} \quad \nu_y(A) \neq 0.
\end{equation}

Then, from \cref{eq:mu_conv,eq:munu_dk,eq:lebnu_dk}, it follows by a result on the Kantorovich distance on convex combinations given in \cref{prop:kant_conv1}:
$$\forall y \in \I, \;d_K \left( g_*\nu_y, C\right) \leq \epsilon/2,$$
Where we recall that $C = Conv(\Leb_\M, \mu_{-1}, \mu_1)$.\\
Then from the above equation and \cref{eq:munu_y}, we conclude that the measure $g_*\mu_y$ is $\epsilon$-close to the convex hull $C$ for every $y$. \\

This gives ii) of \cref{lemma:finerg} and concludes the proof.

\end{proof}

\subsection{AbC$^\star$ scheme}\label{sec:finerg_scheme}

In this section, we use the previous construction to define the $C^0$-AbC$^\star$ which realizes minimal ergodicity.\\

Let $\alpha = \frac{p}{q} \in \Q$ and $h \in \sympo{0}{\M}$. We define
\begin{equation}
f := \iconj{h}{R_\alpha}
\end{equation}
and consider 
\begin{equation}\label{eq:def_eps}
\epsilon := \sup_{y \in \I} d_K (h_* \mu_y,C).
\end{equation}
Let us first define $U(h,\alpha) \in \mathcal{T}_\Gamma^0$.

\begin{prop}\label{prop:finerg_const}
There exist a bicurve $\gamma \in \Gamma_2(\M)$ invariant under $R_\alpha$ and an open set $U(h,\alpha) \in \mathcal{T}^0_{\gamma}$ such that $U(h,\alpha)$ contains a symplectomorphism $\hat{h}$ that satisfies $f = \iconj{\hat{h}}{R_\alpha}$ and $\rest{\hat{h}}{O(\gamma)} = \rest{h}{O(\gamma)}$. Moreover it holds that:
\begin{equation}\label{eq:H_U}
\forall H \in U(h,\alpha) \, : \, \sup_{y \in \I}d_K(H_*\mu_y,C) \leq \epsilon/2.
\end{equation}
\end{prop}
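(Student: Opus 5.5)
We plan to take $\hat h := h\circ g$, where $g\in\sympc{\infty}{\M}$ and $\gamma\in\Gamma_2(\M)$ are given by \cref{lemma:finerg} applied with the denominator $q$ of $\alpha$ and a small parameter $\epsilon'>0$ fixed below. Since $g$ commutes with $R_{1/q}$, it commutes with $R_\alpha=R_{1/q}^{p}$, so $\iconj{\hat h}{R_\alpha}=h\circ g\circ R_\alpha\circ g^{-1}\circ h^{-1}=\iconj{h}{R_\alpha}=f$. The bicurve $\gamma$ is $R_{1/q}$-invariant, hence $R_\alpha$-invariant. Since $g|_{O(\gamma)}=\id$ and $g$ preserves $O(\gamma)$, we get $\hat h|_{O(\gamma)}=h|_{O(\gamma)}$. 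Moreover $\hat h\in\sympo{0}{\M}$, because $g$ is compactly supported in $\check\M$.

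Next we estimate $\hat h_*\mu_y=h_*(g_*\mu_y)$. By \cref{lemma:finerg} ii), each $g_*\mu_y$ is $\epsilon'$-close in $d_K$ to some $a\Leb_\M+b\mu_1+c\mu_{-1}$. As $h\in\sympo{0}{\M}$, it preserves $\Leb_\M$ and fixes $\pi(\T\times\{\pm1\})$ pointwise, so $h_*$ fixes $C$ pointwise. Also $h$ is uniformly continuous on the compact $\M$, so $h_*$ is uniformly continuous on $(\mathcal M(\M),d_K)$ (weak-$\star$ topology on a compact space). Choosing $\epsilon'$ small in terms of the modulus of continuity of $h_*$, we obtain $\sup_y d_K(\hat h_*\mu_y,C)\le\epsilon/8$. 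If $\epsilon=0$, we instead aim for the bound $0$ only at the limit; see below.

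The main step is defining the open set. We take
$$U(h,\alpha):=\croset{H\in\sympo{0}{\M}\,:\, d_{C^0}(H|_{\M\setminus B(\gamma,\kappa)},\hat h|_{\M\setminus B(\gamma,\kappa)})<\rho},$$
which lies in $\mathcal T^0_\gamma$. For $H\in U(h,\alpha)$ we compare $H_*\mu_y$ with $\hat h_*\mu_y$ by the coupling $(H,\hat h)_*\mu_y$. Its cost is at most $\rho$ on $\M\setminus B(\gamma,\kappa)$ and at most $\diam(\M)\,\mu_y(B(\gamma,\kappa))$ on the band. By the second part of \cref{lemma:finerg} iii), $\sup_y\mu_y(B(\gamma,\kappa))\to0$, so choosing $\kappa$ and then $\rho$ small gives $\sup_y d_K(H_*\mu_y,\hat h_*\mu_y)\le\epsilon/8$. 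Together with the previous bound this yields \eqref{eq:H_U}.

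The uniform-in-$y$ band estimate is exactly why bicurves are needed, and we expect it to be the main obstacle. A second obstacle is the degenerate case $\epsilon=0$, where the bound $\le\epsilon/2$ would force exact membership in $C$. We would handle it by replacing $\epsilon$ with $\max(\epsilon,\epsilon_0)$ for a small fixed $\epsilon_0(q)$, for instance $1/q$. Alternatively, we could note that $\epsilon>0$ always holds for conjugates of rational rotations. Indeed, $h_*\mu_y$ is supported on an $h$-image of a circle, which has Lebesgue measure $0$ and is disjoint from the boundary circles for $y\in(-1,1)$, so it cannot lie in $C$. Hence $\epsilon>0$ and no modification is needed.
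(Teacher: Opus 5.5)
Your proposal is correct and follows essentially the paper's argument. Both take $\hat h = h\circ g$ with $(g,\gamma)$ from the lemma for minimal ergodicity, use uniform continuity of $h_*$ together with the fact that $h_*$ preserves $C$, and control $H_*\mu_y$ by splitting into the band $B(\gamma,\kappa)$, whose $\mu_y$-mass is uniformly small, and its complement, where $H$ is $C^0$-close to $\hat h$. The differences are cosmetic: you take $U(h,\alpha)$ to be a basic $\mathcal{T}^0_\gamma$-neighbourhood of $\hat h$ and estimate with the coupling $(H,\hat h)_*\mu_y$, whereas the paper proves continuity of $H\mapsto\sup_y d_K(H_*\mu_y,C)$ and takes the preimage of $(0,\epsilon/2)$. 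Your remark that $\epsilon>0$ always holds makes explicit a point the paper uses only implicitly.
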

\begin{proof}
Given $\epsilon' >0$ (to be specified later), there exist $g \in \sympo{\infty}{\M}$ and $\gamma\in \Gamma_2(\M)$ satisfying \cref{lemma:finerg} with parameters $\epsilon'$ and $q$. In particular $\gamma$ is invariant under $R_\alpha$ by \cref{lemma:finerg}.\\
We define $\hat{h} = h \circ g \in \sympo{0}{\M}$. By i) of Lemma \ref{lemma:finerg} we have $f = \iconj{\hat{h}}{R_\alpha}$. 
Next, by iii) of the lemma, we also have $\rest{\hat{h}}{O(\gamma)} = \rest{h}{O(\gamma)}$, giving the first part of \cref{prop:finerg_const}.

We now construct the open set $U(h,\alpha)$ as a neighbourhood of $\hat{h}$ in $\mathcal{T}^0_\gamma$. We require the neighbourhood to be such that, for any $H\in U(h,\alpha)$, $H_*\mu_y$ is uniformly $\epsilon/2$-close to $C$ in $y$. 
First, let us obtain this property for $\hat{h}$. 
Consider the map $P:\mu \mapsto h_*\mu$ which is uniformly continuous on the compact set $\mathcal{M}(\M)$ by \cref{coro:kanto}, and choose $\epsilon'$ sufficiently small such that the modulus of continuity $\omega$ of the map $P$ satisfies $\omega(\epsilon') \leq \epsilon/4$. Then, by ii) of \cref{lemma:finerg} and since $P$ preserves $C$, we have
\begin{equation}\label{eq:hat_C}
\sup_{y\in \I} d_K (\hat{h}_* \mu_y,C) = \sup_{y\in \I} d_K (P(g_* \mu_y),P(C)) \leq \epsilon/4.
\end{equation}

Now, let us use the following fact proved below to define $U(h,\alpha)$.

\begin{fact}\label{fact:cont_sup}
The following map is continuous:
$$\begin{array}{lcrc}
DC : &(\sympo{0}{\M},\mathcal{T}^0_\gamma) &\longrightarrow &(\R_+^*,\left| \cdot \right|)\\
&H &\longmapsto &\sup_{y\in \I} d_K(H_*\mu_y,C).
\end{array}$$
\end{fact}
By the above fact, if we define $U(h,\alpha) := DC^{-1}((0;\epsilon/2))$, then $U(h,\alpha)$ belongs to $\mathcal{T}^0_\gamma$, contains $\hat{h}$ by \cref{eq:hat_C}, and satisfies \cref{eq:H_U}. 

\end{proof}

\begin{proof}[Proof of \cref{fact:cont_sup}]
Let us prove that $DC$ is continuous by sequential characterization. Let $H$ be in $\sympo{0}{\M}$ and $(H_n)_n$ be a sequence in $\sympo{0}{\M}$ converging to $H$ for the topology $\mathcal{T}^0_\gamma$. By definition of this topology (see Page \pageref{def:bicurve}), there exists a sequence $(\kappa_n)_n$ of positive numbers which converges to $0$ such that:
$$d_{C^0}(\rest{H}{\M \setminus B(\gamma,\kappa_n)},\rest{H_n}{\M \setminus B(\gamma,\kappa_n)}) \rconv{n}{\infty} 0.$$
In particular we have by \cref{prop:dk_c0}:
\begin{equation}\label{eq:conv_Hn}
\sup_{y\in \I} d_K \left( {H_n}_* \left( \rest{\mu_y}{\M \setminus B(\gamma,\kappa_n)} \right),H_* \left( \rest{\mu_y}{\M \setminus B(\gamma,\kappa_n)} \right) \right) \rconv{n}{\infty} 0
\end{equation}
Observe then that, for every $y \in \I$, we have the following inequality by \cref{prop:kant_conv1}:
\begin{equation}\label{eq:ineq_Hn}
\begin{array}{rcl}
d_K ({H_n}_* \mu_y, H_*\mu_y) &\leq &c_{\kappa_n}(y)  d_K \left( {H_n}_* \left( \rest{\mu_y}{B(\gamma,\kappa_n)} \right),H_* \left( \rest{\mu_y}{B(\gamma,\kappa_n)} \right) \right)\\
& & +(1-c_{\kappa_n}(y) ) d_K \left( {H_n}_* \left( \rest{\mu_y}{\M \setminus B(\gamma,\kappa_n)} \right),H_* \left( \rest{\mu_y}{\M \setminus B(\gamma,\kappa_n)} \right) \right)
\end{array},
\end{equation}
with $c_{\kappa_n} := y\in \I \mapsto \mu_y (B(\gamma,\kappa_n))$.\\
Moreover, by iii) of \cref{lemma:finerg}, the functions $(c_{\kappa_n})_n$ converge uniformly to $0$. Therefore, beause $\mathcal{M}(\M)$ has bounded diameter, it follows from \cref{eq:conv_Hn,eq:ineq_Hn} that:
$$\sup_{y\in \I} d_k({H_n}_*\mu_y,H_* \mu_y) \rconv{n}{\infty}0.$$
This yields the continuity of $DC$ by continuity of the distance to the compact set $C$.
\end{proof}

Next, by defining $\nu(h,\alpha)>0$ sufficiently small, we obtain the following.

\begin{prop}\label{prop:minerg_nu}
There exists $\nu(\alpha,h)>0$ such that for every $\hat{\alpha} \in (\alpha -\nu(h,\alpha), \alpha + \nu(h,\alpha))$, $\hat{\alpha} \neq \alpha$, it holds:
\begin{samepage}
\begin{enumerate}[label = \roman*)]
\item $\Delta_{merg}(\hat{f}) = \sup_{x \in \M} d_K (e^{\hat{f}}(x), C) \leq 2\epsilon$, where $\hat{f} = \iconj{h}{R_{\hat{\alpha}}}$,
\item $\sup_{x \in \M} d_{K}(e^{f}_k(x) , e^{\hat{f}}_k(x)) \leq \epsilon/2$ for every $k$ less than the denominator $q$ of $\alpha$,
\item $d_{C^0}(\hat{f},f) \leq \epsilon/2$,
\item if $\hat{\alpha}$ is rational, then its denominator is greater than $q$.
\end{enumerate}
\end{samepage}
\end{prop}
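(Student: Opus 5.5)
Throughout, $f=\iconj{h}{R_\alpha}$, $\epsilon=\sup_y d_K(h_*\mu_y,C)$, and $\hat f=\iconj{h}{R_{\hat\alpha}}$ for $\hat\alpha$ close to $\alpha$. Items iii), iv) and ii) are continuity statements and need only $\nu$ small. Item i) is the real content and uses the definition of $\epsilon$.

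For iv): there are only finitely many rationals with denominator $\le q$, so taking $\nu$ smaller than their distance to $\alpha$ (other than $\alpha$ itself) gives it.

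For iii): $h$ is uniformly continuous on the compact surface $\M$, and $d_{C^0}(R_{\hat\alpha},R_\alpha)\to0$ as $\hat\alpha\to\alpha$. Hence $d_{C^0}(h\circ R_{\hat\alpha}\circ h^{-1},h\circ R_\alpha\circ h^{-1})\to0$, and this also covers the inverses if $d_{C^0}$ includes them.

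For ii): fix $k\le q$. We have $d_{C^0}(\hat f^j,f^j)=d_{C^0}(hR_{j\hat\alpha}h^{-1},hR_{j\alpha}h^{-1})\to0$ uniformly for $j\le q$. By the Dirac coupling, $d_K(e_k^f(x),e_k^{\hat f}(x))\le\frac1k\sum_{j\le k}d(f^j x,\hat f^j x)$. Shrinking $\nu$ then gives ii).

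For i): first, the equality $\Delta_{merg}(\hat f)=\sup_x d_K(e^{\hat f}(x),C)$ should be read via the limits of empirical measures. If $\hat\alpha$ is irrational, every ergodic measure is a limit of empirical measures of a generic point; if $\hat\alpha$ is rational, every ergodic measure is a periodic orbit measure, which equals some $e^{\hat f}_{q'}(x)$. So it suffices to bound $d_K(e^{\hat f}_n(x),C)$ for all large $n$, uniformly in $x$.

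Write $x=h(z)$ with $z\in\pi(\T\times\{y\})$. Then $e^{\hat f}_n(x)=h_*\bigl(\frac1n\sum_{j\le n}\delta_{R_{j\hat\alpha}z}\bigr)$. The key estimate is that the orbit measure $\frac1n\sum_{j\le n}\delta_{R_{j\hat\alpha}z}$ is close to $\mu_y$ for all $n\ge n_0$, uniformly in $z$, once $\hat\alpha$ is close enough to $\alpha$. To see this, decompose $\hat\alpha=\alpha+\beta$ with $\beta$ small. Over a block of $q$ steps, the points $R_{j\hat\alpha}z$ sit at the $q$-gon $\{z+j\alpha\}$ shifted by at most $q\beta$, so each block average is within $O(q\beta)$ (after applying $\pi$, which is uniformly continuous) of the measure $\frac1q\sum_{i}\delta_{R_{i/q}R_{t}z}$. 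Here $t$ drifts by $q\beta$ per block. Averaging over $m$ blocks spreads $t$ over an interval of length $mq\beta$. When $mq\beta\ge 1/q$, this gives a near-uniform average of rotations of the $q$-gon, hence is close to $\mu_y$.

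This works for $n\gtrsim 1/(q\beta)$, not for all $n$. For intermediate $n$ (between $q$ and $1/(q\beta)$), I will instead show closeness to the measure $\mu_{y,t}:=\frac1{|J|}\int_J\frac1q\sum_i\delta_{R_{i/q+s}z}\,ds$ for an interval $J$. These measures are not close to $\mu_y$, which is the obstacle. So in i) I will only treat the ergodic measures, that is, the limits $n\to\infty$ (or $n$ equal to a full period when $\hat\alpha$ is rational, the period being $>q$ and indeed $\gtrsim1/(q\beta)$). For those, the block argument gives $d_K(\cdot,\mu_y)\le\omega_\pi(q|\beta|)+O(1/m)$. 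Since $P=h_*$ is uniformly continuous (\cref{coro:kanto}), choosing $\nu$ small makes $d_K(e^{\hat f}(x),h_*\mu_y)\le\epsilon$. The bound then follows:
\[
d_K(e^{\hat f}(x),C)\le d_K(e^{\hat f}(x),h_*\mu_y)+d_K(h_*\mu_y,C)\le\epsilon+\epsilon=2\epsilon.
\]

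The main obstacle is making this equidistribution uniform in $z$ and independent of whether $\hat\alpha$ is rational or irrational. I will handle it by working directly on $\T$, where the orbit of a rotation with rotation number $\hat\alpha$ over a full period, or asymptotically, is exactly equidistributed along the circle. This gives $e^{\hat f}(x)=h_*\mu_y$ exactly in the irrational case, and in the rational case $h_*$ of a uniform $q'$-gon with $q'>q$ large. The latter is close to $h_*\mu_y$ once $\nu$ forces $q'$ large, by iv) strengthened to denominator $\ge Q(\epsilon,h)$. With this observation i) becomes simple, and the only real care needed is in this uniformity.
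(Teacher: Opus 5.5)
Your proposal is correct, and once you set aside the block-averaging detour it is the paper's argument. That detour is unnecessary because i) only concerns the limit measures $e^{\hat f}(x)$, not finite-time averages. Items ii)--iv) follow from continuity in $\hat\alpha$ and the finiteness of rationals with bounded denominator. Item i) holds because the limit empirical measure equals $h_*\mu_y$ exactly for irrational $\hat\alpha$, and for rational $\hat\alpha$ it is $h_*$ of a uniform $q'$-gon on $\pi(\T\times\{y\})$ with $q'$ made large by shrinking $\nu$; in both cases $d_K(e^{\hat f}(x),C)\le d_K(e^{\hat f}(x),h_*\mu_y)+\epsilon\le 2\epsilon$.
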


\begin{proof}
First, iv) is immediate because we take $\nu(h,\alpha)$ small. It is also the case for ii) and iii) because $f = \iconj{h}{R_\alpha}$ is close to $\hat{f} = \iconj{h}{R_{\hat{\alpha}}}$ when $\alpha$ is close to $\hat{\alpha}$.\\
For i), as $\hat{f}$ is conjugate to a rotation, every of its empirical measures is defined and ergodic. More precisely, for $x = h(\pi(\theta,y)) \in \M$, we have:
$$e^{\hat{f}}(x) = h_* \lconv{n}{\infty} \frac{1}{n}\isum{k=1}{n}\delta_{\pi(\theta+k\hat{\alpha},y)}.$$
If $\hat{\alpha}$ is irrational, then we have $e^{\hat{f}}(x) = h_*\mu_y$, so we have i) by \cref{eq:def_eps}. Otherwise, if $\hat{\alpha}$ is rational and $\nu(h,\alpha)$ is small enough, then the denominator of $\hat{\alpha}$ is large enough so that $e^{\hat{f}}(x)$ is $\epsilon$-close to $h_*\mu_y$; that is, i) holds by \cref{eq:def_eps}.

\end{proof}

\subsection{Realization of minimal ergodicity}

The previous section defines the following map:

$$\begin{array}{lccc}
(U,\nu): &\sympc{0}{\M} \times \Q/\Z &\rightarrow &\mathcal{T}_\Gamma^0 \times (0,\infty)\\
&(h,\alpha) &\mapsto &(U(h,\alpha),\nu(h,\alpha)).
\end{array}$$

We now show that the map $(U,\nu)$ defines a $C^0$-AbC$^\star$ scheme realizing minimal ergodicity. In particular, we prove \cref{prop:real_minerg}.

\begin{proof}[Proof of \cref{prop:real_minerg}]
First, observe that \cref{prop:finerg_const,prop:minerg_nu} provide a well-defined $C^0$-AbC$^\star$ scheme. Indeed, conditions a) and b) of \cref{def:abc_star} are satisfied with \cref{prop:finerg_const}: for $(h,\alpha) \in \sympc{0}{\M} \times \Q/\Z$, there exist $\hat{h} \in U(h,\alpha)$ and a bicurves $\gamma \in \Gamma^2(\M)$, which is invariant under $R_\alpha$, such that $\iconj{h}{R_\alpha} = \iconj{\hat{h}}{R_\alpha}$, $\rest{h}{O(\gamma)} = \rest{\hat{h}}{O(\gamma)}$ and $U(h,\alpha) \in \mathcal{T}^0_\gamma$.\\
For c), let us consider sequences $(h_n)_{n\in \N}$ and $(\alpha_n)_{n \in \N}$ constructed by the scheme $(U,\nu)$. That is, we have $h_0 = id$, $\alpha_0 = 0$, and for $n \in \N$ the map $h_{n+1}$ lies in $U(h_n,\alpha_n)$ and $0 < \lvert \alpha_n - \alpha_{n+1} \rvert < \nu(h_{n+1},\alpha_n)$. In addition we have:
$$f_n := \iconj{h_n}{R_{\alpha_n}} = \iconj{h_{n+1}}{R_{\alpha_n}}.$$
We write $\alpha_n = \frac{p_n}{q_n}$, with $p_n \wedge q_n =1$.\\
By considering the sequence $(\epsilon_n)_{n\in \N}$ defined by 
\begin{equation}
\epsilon_n = \sup_{y\in \I}d_K({h_n}_*\mu_y,C),
\end{equation}
we deduce from \cref{prop:finerg_const,prop:minerg_nu} that for every $n \geq 0$ by :

\begin{enumerate}[label = \Roman*)]
\item $\epsilon_{n+1} \leq \frac{\epsilon_{n}}{2}$ and so $\epsilon_{n+1} \leq \tfrac{\epsilon_0}{2^{n+1}}$,
\item $d_{C^0}(f_{n+1} , f_{n})\leq \tfrac{\epsilon_{n}}{2}$.
\end{enumerate}

Then, condition c) of \cref{def:abc_star} is satisfied since $(f_n)_n$ is a Cauchy sequence for $d_{C^0}$ by I) and II). We denote by $f$ its limit in $\symp{0}{\M}$.\\

It remains to prove that $(U,\nu)$ realizes minimal ergodicity, i.e. that $f$ satisfies $\Delta_{merg}(f) = 0$, hence is minimally ergodic. So let $\mu$ be an ergodic invariant measure of $f$. By Birkhoff's ergodic theorem, there exists $x\in \M$ such that $e^f(x) = \mu$. 
We show that $e^f(x) \in C$ by working with the $e^{f_n}(x)$. First, by i), ii), and iv) of \cref{prop:minerg_nu} applied with $0 < \left| \alpha_{n+1} - \alpha_n \right| < \nu(h_{n+1},\alpha_n)$ we have for $n\geq 0$:
\noindent \begin{samepage}
\begin{enumerate}[resume,label = \Roman*)]
\item $d_K (e^{f_{n+1}}(x), C) \leq \Delta_{merg}(f_{n+1}) \leq 2\epsilon_{n+1}$,
\item $d_{K}(e^{f_{n+1}}_k(x) , e^{f_{n}}_k(x)) \leq \tfrac{\epsilon_{n+1}}{2}$ for every $k \leq q_{n}$,
\item and $q_{n+1} > q_{n}$.
\end{enumerate}
\end{samepage}
Then, for every $m\geq n \ge 1$, V) also implies that $q_n \leq q_j$, $\forall j \in \intset{n}{m}$, hence we have by I) and IV):
$$d_{K} (e^{f_m}_{q_n}(x),e^{f_n}_{q_n}(x)) \leq \frac{\epsilon_0}{2^{n+1}}.$$

Next, letting $m \to \infty$ in this inequality and using that $f_m \to f$,  we have for every $n \in \N^*$:
$$d_{K} (e^{f}_{q_n}(x), e^{f_n}_{q_n}(x)) \leq \frac{\epsilon_0}{2^{n+1}}.$$
Yet, we have by I) and III) that:
$$d_K (e^{f_n}_{q_n}(x), C) = d_K (e^{f_n}(x), C) \leq \frac{\epsilon_0}{2^{n-1}}.$$
Then we deduce from the two previous inequalities that:
$$d_{K} (e^f_{q_n}(x),C) \leq \frac{\epsilon_0}{2^{n-2}}.$$

Finally, letting $n \to \infty$ and using that $e^f_n(x) \to e^f(x)$, we obtain that the ergodic measure $\mu = e^f(x)$ belongs to $C$. Hence $\Delta_{merg}(f) = 0$. In particular, we have the following inclusions by the ergodic decomposition theorem:
$$\mathcal{M}(f) \subset C = Conv(\Leb_\M, \mu_{-1}, \mu_1) \subset \mathcal{M}(f).$$
We conclude that $f$ is minimally ergodic by \cref{prop:minerg_inc}.\\

Moreover, minimal ergodicity is invariant under $\symp{0}{\M}$-conjugacy, which completes the proof.
\end{proof}

\section{Proof of the AbC$^\star$ principle}\label{sec:proof_princ}

In this section, we aim to prove the AbC$^\star$ principle. To this end, we adapt the proof of Berger's Principle in \cite{berger_analytic_2024}[§6]. 
We first establish an approximation theorem in \cref{an:approx}. In \cref{sec:complex} we introduce the complexifications of our surfaces and results on these complexifications used to prove the principle. 
Then, we generalize the theorem of approximation modulo deformation in \cref{sec:approx_mod}, so that it aligns with our scheme using bicurves. 
Finally we obtain the analytic realization of the scheme in  \cref{sec:real}.\\

To precise, let us recall some notions.\\

\begin{defin}
A \underline{smooth structure of symplectic surface} $(\M, \Omega)$ is a maximal $C^\infty$-atlas $\mathcal{A}^\infty = (\phi_\alpha)_\alpha$ such that each chart $\phi_\alpha : U \subset M \rightarrow V \subset \R^2$ pulls back the symplectic form $dx \wedge dy$ to $\rest{\Omega}{U}$.\\

A \underline{real analytic structure of symplectic surface} $(\M, \Omega)$ is a maximal atlas $\mathcal{A}^\omega = (\phi_\beta)_\beta$ such that the coordinates changes are real analytic and each chart $\phi_\beta : U \subset \M \rightarrow V \subset \R^2$ pulls back the sympletic form $dx \wedge dy$ to $\rest{\Omega}{U}$.\\
We say that the structure $\mathcal{A}^\omega$ is compatible with $\mathcal{A}^\infty$ if $\mathcal{A}^\omega \subset \mathcal{A}^\infty$.
\end{defin}

By \cref{prop:ext_real}, establishing the principle on the sphere — whose structure is simpler due to the absence of boundary — reduces to proving the following theorem.

\begin{theorem}\label{thm:princ_strctr}
For any AbC$^\star$ realizable $C^\infty$-property $(\mathcal{P})$ on $\symp{\infty}{\Sp}$, there exists a real analytic symplectic structure $\mathcal{A}^{\omega'}$ on $(\Sp, \Omega)$ which is compatible with the canonical $C^\infty$-structure and for which $(\mathcal{P})$ is realizable by an analytic symplectomorphism $f$. 
\end{theorem}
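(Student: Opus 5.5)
We follow Berger's proof of the AbC Principle \cite[§6]{berger_analytic_2024}, changing only the approximation step so that the no-control area is a band $B(\gamma,\kappa)$ around a bicurve rather than a neighbourhood of the poles.

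Fix a $C^\infty$-AbC$^\star$ scheme $(U,\nu)$ realizing $(\mathcal{P})$. We build inductively three sequences:
\begin{itemize}
\item maps $h_n$, obeying the scheme's rules;
\item rationals $\alpha_n$;
\item real analytic symplectic structures $\mathcal{A}^{\omega_n}$ on $(\Sp,\Omega)$ compatible with the $C^\infty$ structure, with $\mathcal{A}^{\omega_0}$ the canonical one.
\end{itemize}
We require that $f_n=\iconj{h_n}{R_{\alpha_n}}$ is analytic for $\mathcal{A}^{\omega_n}$. In fact $h_n$ will be analytic from the canonical structure to $\mathcal{A}^{\omega_n}$, so $f_n$ is conjugate to the analytic rotation $R_{\alpha_n}$.

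The structures are encoded by their complexifications: tubular complex neighbourhoods $\tilde{\Sp}_n$ of $\Sp$, carrying holomorphic symplectic forms and antiholomorphic involutions. The key induction requirement is that $\tilde{\Sp}_{n+1}$ is an $\eta_n$-small deformation of $\tilde{\Sp}_n$ on a complex neighbourhood of a fixed width, with $\sum\eta_n<\infty$. Then the structures converge to a limit analytic structure $\mathcal{A}^{\omega'}$, with a uniform complex width on which the complex extensions of the $f_n$ are defined and uniformly bounded. By Montel, the $C^\infty$ limit $f$ of the $f_n$ (given by condition c) of \cref{def:abc_star}) is then holomorphic on that width, hence analytic for $\mathcal{A}^{\omega'}$. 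Since $f$ is constructed from the scheme, it satisfies $(\mathcal{P})$.

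The inductive step runs as follows, given $h_n,\alpha_n$ and the structure.
\begin{enumerate}
\item Conditions a) and b) of \cref{def:abc_star} give $\hat h\in U(h_n,\alpha_n)$ and an $R_{\alpha_n}$-invariant bicurve $\gamma$ with $U(h_n,\alpha_n)\in\mathcal{T}^\infty_\gamma$ and $\hat h=h_n$ on $O(\gamma)$.
\item As in the proof of \cref{prop:ext_real}, $\hat h=h_n\circ g$ where $g$ commutes with $R_{\alpha_n}$ and is supported in the invariant open cylinder $A(\gamma)$. By \cref{fact:dens}, $g$ may be taken compactly supported and smooth there.
\item Pass to the quotient $A(\gamma)/R_{\alpha_n}$, which is symplectomorphic to an open cylinder (by the lemma announced as \cref{an:lemma_symp_bi}). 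Since $g$ is isotopic to the identity with compact support, it is a Hamiltonian time-one map.
\item Apply the approximation theorem of \cref{an:approx}, built from \cite[Theorem 1.8]{berger_analytic_2022}. This gives an entire, or analytic-on-a-strip, Hamiltonian map $\tilde g$ commuting with $R_{\alpha_n}$, $C^s$-close to $g$ off $B(\gamma,\kappa)$. Its no-control set is a pair of bands along $\gamma_\pm$, which is exactly what the topology $\mathcal{T}^\infty_\gamma$ tolerates.
\end{enumerate}

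The main obstacle is that $\tilde g$ is not globally defined on $\Sp$ and does not equal the identity on $O(\gamma)$. This is where the \emph{approximation modulo deformation} of \cref{sec:approx_mod} is needed. I would proceed as follows.
\begin{itemize}
\item Glue $\tilde g$ on a neighbourhood of $\overline{A(\gamma,\kappa)}$ to the identity on $O(\gamma,\kappa)$. The gluing happens in the bands, where both maps are only approximately equal, so the result is smooth but not analytic there.
\item Absorb the defect by deforming the complex and symplectic structure. Define the new structure $\mathcal{A}^{\omega_{n+1}}$ as the pushforward by the glued map, so that $h_{n+1}:=h_n\circ\tilde g$ becomes analytic from the canonical structure to $\mathcal{A}^{\omega_{n+1}}$ by construction.
\item Show this new structure is $\eta_n$-close to the previous one on a fixed complex width. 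This uses the smallness of $\tilde g$-minus-identity on the complexified bands, plus a Moser-type correction of the symplectic form that is holomorphically controlled.
\end{itemize}
Because the construction commutes with $R_{\alpha_n}$, we get $\iconj{h_{n+1}}{R_{\alpha_n}}=f_n$. Because the approximation is taken close enough, $h_{n+1}\in U(h_n,\alpha_n)$.

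Finally, choose $\alpha_{n+1}$ with $0<|\alpha_{n+1}-\alpha_n|<\nu(h_{n+1},\alpha_n)$, and so close to $\alpha_n$ that the complex extensions of $f_{n+1}$ and $f_n$ are $\eta_n$-close on the current width. This is possible because $R_\alpha$ is entire and $h_{n+1}$ is analytic on a fixed width. The delicate bookkeeping is choosing $\kappa$ and the approximation precision so that the widths do not shrink to zero, and so that the no-control bands do not interfere with the $R_{\alpha_n}$-equivariance of the gluing.
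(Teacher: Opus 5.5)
There is a genuine gap in your gluing step. \cref{an:thm_main_approx} gives you only two kinds of control on $\tilde g$:
\begin{itemize}
\item real $C^\infty$-closeness to $g$ on compact subsets of $\A\setminus\gamma$;
\item closeness to the identity on the large \emph{complex} set $K_\rho$, which lies over $\lvert\Re(y)\rvert\ge 1$, i.e.\ over the ends of the cylinder (the poles of $\Sp$).
\end{itemize}
On $B(\gamma,\kappa)$ there is no control at all: this is exactly the Runge no-control zone. So your claim that $\tilde g$ and $\id$ are ``approximately equal in the bands'' is false, and so is the ``smallness of $\tilde g-\id$ on the complexified bands''. Gluing there gives a map $G$ with $G^*J_0$ arbitrarily far from $J_0$, so $\sum\eta_n<\infty$ cannot be obtained. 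Moving the gluing into $A(\gamma,\kappa)$ near $\gamma$ does not help either. There $g=\id$ and $\tilde g\approx\id$ on the real surface, but you have no control on any complex neighbourhood of fixed width.

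Moreover, the obstacle you are trying to remove does not exist. In b) of \cref{def:abc_star}, the requirement ``$=h$ on $O(\gamma)$'' applies only to the witness $\hat h$. The map $h_{n+1}$ only has to do two things:
\begin{itemize}
\item lie in $U(h_n,\alpha_n)\in\mathcal{T}^\infty_\gamma$, which means closeness off $B(\gamma,\kappa)$, i.e.\ $\tilde g\approx\id$ on $O(\gamma,\kappa)$;
\item satisfy the conjugacy relation, which holds once $\tilde g$ commutes with $R_{\alpha_n}$.
\end{itemize}

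The paper therefore never glues near $\gamma$. The entire map stays holomorphic across the bicurve. The only interpolation, $\widehat{h_r}=\beta(\Re y)\,h_r+(1-\beta(\Re y))\,\id$, takes place in $\{1-\epsilon<\lvert\Re y\rvert<1-\eta\}$. There $h_r$ is $C^r$-close to $\id$ on the complex set $K_{r,\epsilon}$ by Cauchy estimates. This is followed by a Moser correction $\phi_r$ of the \emph{real} form only (\cref{thm:app_mod_def}, with $\eta$ fixed before the compact set $\W$). The result is that $h^*J_0$ and $h^*\Omega_0$ are close to $J_0$ and $\Omega_0$ on any prescribed $\W$, and the maps $h_n$ can be taken equal to $\pi^{-1}$ near the poles.

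Working with pulled-back structures $J_n=h_n^\#J_0$ on the fixed domain $\Sp_\rho$ also removes your worry about shrinking widths. The conclusion then comes from \cref{thm:lim_almost_comp} (the identity $f_n^*J_n=J_n$ passes to the $C^\infty$ limit) and \cref{prop:real_an_symp}, not from Montel.

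A smaller point: a compactly supported symplectomorphism of an open annulus need not be a compactly supported Hamiltonian map, because of Dehn twists and flux. This is handled by density in the compact-open topology, as in \cref{rk:dens}.
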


We prove this theorem in \cref{sec:conc_proof}.\\

However, this theorem provides the result for an arbitrary $C^\infty$-compatible structure, but we aim to recover the result for the canonical real analytic structure $\mathcal{A}^\omega$ of the sphere induced by the inclusion $\Sp \subset \R^3 \subset \C^3$. To this end, we use the following theorem from \cite{kutzschebauch_real_2000}.

\begin{theorem}\label{thm:str_sph}
Any symplectic manifold possesses a unique (up to isomorphism) real analytic symplectic structure.
\end{theorem}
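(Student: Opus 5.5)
The proof splits into existence and uniqueness, and both parts combine the classical Grauert--Whitney results on real analytic structures with Moser's trick. For this paper only the compact case ($\Sp$, and by extension $\Di$, $\A$) is really needed; I will point out below where noncompactness would cause trouble.

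\emph{Existence.} Let $(M,\Omega)$ be a symplectic manifold with its smooth structure $\mathcal{A}^\infty$. By Whitney's theorem (or Grauert's embedding theorem), $M$ carries a real analytic structure $\mathcal{A}^a$ compatible with $\mathcal{A}^\infty$. The form $\Omega$ need not be analytic in the charts of $\mathcal{A}^a$.
\begin{enumerate}
\item Approximate $\Omega$ by an $\mathcal{A}^a$-analytic closed $2$-form $\omega'$ in the same de Rham class. To do this, fix an analytic Riemannian metric. Write $\Omega = h + d\beta$, where $h$ is the harmonic representative of the class (analytic by elliptic regularity). Approximate the smooth $1$-form $\beta$ by an analytic $1$-form $\beta'$, using Grauert's approximation theorem in the $C^1$ topology, and set $\omega' := h + d\beta'$.
\item For $\beta'$ close enough to $\beta$, $\omega'$ is nondegenerate. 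The path $\Omega_t := (1-t)\Omega + t\omega'$ then stays symplectic and has constant cohomology class.
\item Moser's trick gives a smooth diffeomorphism $\phi$ with $\phi^*\omega' = \Omega$.
\item By the Darboux theorem with analytic data, $(M,\omega')$ is covered by $\mathcal{A}^a$-analytic charts $\psi_\beta$ with $\psi_\beta^*(dx\wedge dy) = \omega'$. These charts are mutually analytically compatible.
\item The charts $\psi_\beta\circ\phi$ form an analytic atlas. Each pulls $dx\wedge dy$ back to $\Omega$, and each is $C^\infty$ since $\phi$ is smooth, so the atlas lies in $\mathcal{A}^\infty$.
\end{enumerate}
This gives a real analytic symplectic structure compatible with the smooth one.

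\emph{Uniqueness.} Let $\mathcal{A}_1^\omega$ and $\mathcal{A}_2^\omega$ be two real analytic symplectic structures on $(M,\Omega)$. Viewed as analytic manifolds, they are analytically isomorphic: approximating the identity of $M$ by an analytic map $\mathcal{A}_1\to\mathcal{A}_2$ (Grauert) gives an analytic diffeomorphism $\chi$ that is $C^1$-close to $\id$. The forms $\Omega$ and $\chi^*\Omega$ are then both analytic for $\mathcal{A}_1$, cohomologous (since $\chi$ is isotopic to the identity), and $C^0$-close.

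Next I rerun Moser's argument inside the analytic category. By the Hodge argument of step 1, there is an analytic primitive $\lambda$ with $\chi^*\Omega - \Omega = d\lambda$. The Moser vector field $X_t$, defined by $\iota_{X_t}\Omega_t = -\lambda$, is then analytic in $(x,t)$. Its flow is therefore analytic, and its time-one map $\rho$ satisfies $\rho^*\chi^*\Omega = \Omega$. Hence $\chi\circ\rho$ is an analytic symplectic isomorphism $(M,\Omega,\mathcal{A}_1^\omega)\to(M,\Omega,\mathcal{A}_2^\omega)$.

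The main obstacle is the analytic Moser step in the uniqueness part, in two respects.
\begin{itemize}
\item One needs an analytic primitive $\lambda$ of size comparable to $\chi^*\Omega-\Omega$. On compact $M$ (with boundary handled by doubling, or by analytic extension to a slightly larger open manifold), this comes from elliptic regularity and the Hodge decomposition for an analytic metric.
\item The Moser flow must be complete up to time $1$. On noncompact manifolds this fails in general, and one would have to use exhaustion and approximation on compacta with decaying errors, in the spirit of Grauert--Remmert.
\end{itemize}
For the sphere used here, compactness makes both points routine, and the remaining steps only combine standard results.
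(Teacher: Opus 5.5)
The paper gives no proof of this statement. It quotes it from Kutzschebauch--Loose \cite{kutzschebauch_real_2000} and applies it only to the closed surface $\Sp$. Your Hodge-theory-plus-analytic-Moser argument is a correct, self-contained proof for closed (compact, boundaryless) manifolds, so it covers everything the paper uses; in particular your map $\chi\circ\rho$ is exactly the kind of map $\Phi$ invoked right after the statement. In that closed setting the ``main obstacle'' you flag is not one. The path $\Omega+t\,d\lambda$ is nondegenerate as soon as $\chi^*\Omega-\Omega$ is $C^0$-small; on a surface it suffices that $\chi$ preserves orientation, since positive area forms form a convex cone. The size of $\lambda$ is irrelevant, the flow is automatically complete, and $\lambda:=d^*G(\chi^*\Omega-\Omega)$ is analytic by analytic elliptic regularity. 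What the citation buys over your route is full generality, and there your sketch is genuinely incomplete. For noncompact $M$ the harmonic representative is unavailable already in your existence step, not only in uniqueness. One needs the analytic de Rham theorem (Cartan's Theorem B on a Grauert embedding) for analytic cohomology representatives, together with Whitney-fine approximation. One also needs primitives that are small in the fine topology, so that the Moser fields are bounded for a complete metric. For uniqueness, take for instance the homotopy-operator primitive of $\chi^*\Omega-\Omega$ along the short homotopy from $\mathrm{id}$ to $\chi$, corrected by a closed analytic form plus $dg'$, where $g'$ is an analytic fine approximation of a smooth function. ``Exhaustion in the spirit of Grauert--Remmert'' does not by itself supply this. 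Likewise, your aside that $\Di$ and $\A$ follow ``by extension'' or by doubling is not covered by this statement. The paper treats them through the separate equivariant \cref{thm:strct_equiv} on $\tilde{\Di}$ and $\tilde{\A}$. That would require an equivariant version of your argument: a $\tau$-invariant analytic metric, averaging so that primitives satisfy $\tau^*\beta=-\beta$, and equivariant analytic Darboux charts at the fixed points of $\tau$. It also requires the observation that the structure-switching map may descend to an anti-symplectic one.
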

 Hence, if $f$ is an analytic symplectomorphism for a structure $\mathcal{A}^{\omega'}$ provided by \cref{thm:princ_strctr}, then there exists a map $\Phi : (\Sp, \mathcal{A}^\omega ) \rightarrow (\Sp , \mathcal{A}^{\omega'})$ analytic and smooth with respect to the canonical structure, and preserving $\Omega$. 
In particular $\Phi$ belongs to $\symp{\infty}{\Sp}$. 
Consequently, $\conj{\Phi}{f}$ is a $C^\omega$-symplectomorphism that satisfies $(\mathcal{P})$, since $(\mathcal{P})$ is invariant under $\symp{\infty}{\Sp}$-conjugacy.\\

Now, for $\M = \A$ or $\Di$, we need to adjust the result for manifolds with boundary. To this end we have to work with $K$-analytic structure on this manifold where $K$ is a Lie group. In particular with $K = \Z_2$ in our cases.\\

Indeed, we can glue two copies of $\M$ along its boundary to form a boundaryless surface $\tilde{\M}$, making $\M$ the quotient of $\tilde{\M}$ by a reflection.\\

For instance $(\A,\Omega)$ is identified with the quotient of the torus $\tilde{\A} := (\T \times \R/4\Z, \Omega)$ by the involution
$$\tau : (\theta,y) \in \T \times \R/4\Z \mapsto (\theta, -y +2 \mod 4).$$

Likewise, we will introduce in \cref{sec:complex} the identification of $(\Di , \Omega)$ as a quotient of a symplectic sphere $(\tilde{\Di}, \Omega)$ by an involution, also denoted $\tau$.\\

Then, this involution induces a smooth action of the Lie group $\Z_2$ onto $\tilde{\M}$ satisfying $\tau^* \Omega = -\Omega$. So that we introduce the following definition.

\begin{defin}
A \underline{real analytic $\Z_2$-structure on a symplectic surface} $(\M , \Omega)$ is a maximal real analytic symplectic atlas $\mathcal{A}^\omega = (\phi_\beta)_\beta$ such that each chart $\phi_\alpha : U_\alpha \rightarrow \R\times \R^+$ can be lifted to an open set $\widetilde{U_\alpha} \subset \tilde{\M}$ forming a $\Z_2$-equivariant chart $\tilde{\phi}_\alpha$ of $\tilde{\M}$. i.e., with $\tau_0(x_1,x_2) := (x_1,-x_2)$, we have 
$$\tilde{\phi}_\alpha \circ \tau = \tau_0 \circ \tilde{\phi}_\alpha.$$

\end{defin}

The adaptation of \cref{thm:princ_strctr} is given by the following theorem for $\M \in \croset{\A , \Di}$.

\begin{theorem}\label{thm:princ_strctr_equi}
For any AbC$^\star$ realizable $C^\infty$-property $(\mathcal{P})$ on $\symp{\infty}{\M}$, there exists a real analytic symplectic $\Z_2$-structure $\mathcal{A}^{\omega'}$ on $(\M, \Omega)$ which is compatible with the canonical $C^\infty$-structure and for which $(\mathcal{P})$ is realizable by a $\mathcal{A}^{\omega'}$-analytic symplectomorphism $f$. 
\end{theorem}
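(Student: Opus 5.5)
I will prove \cref{thm:princ_strctr_equi} by running the same argument as for \cref{thm:princ_strctr} on the doubled surface $\tilde{\M}$ and keeping every object $\tau$-equivariant, so that everything descends to $\M = \tilde{\M}/\tau$. First I lift the data. The $\R/\Z$-action, the rotations $R_\alpha$, the bicurves $\gamma \in \Gamma_2(\M)$ and the maps of $\sympo{\infty}{\M}$ all lift to $\tau$-symmetric objects on $\tilde{\M}$. For $\tilde{\A} = \T\times\R/4\Z$ this uses the reflection $y \mapsto -y+2$. For $\tilde{\Di}$ it uses the involution to be introduced in \cref{sec:complex}. A map $h \in \sympo{\infty}{\M}$ is the identity on $\partial\check{\M}$, so its reflection-double $\tilde h$ is a homeomorphism of $\tilde{\M}$ that commutes with $\tau$ and is anti-symplectic-compatible ($\tau^*\Omega=-\Omega$). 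Moreover, a bicurve $\gamma$ of $\M$ together with $\tau(\gamma)$ splits $\tilde{\M}$ into bands whose union of "no-control" neighbourhoods is exactly $\pi^{-1}(B(\gamma,\kappa))\cup\tau(\cdot)$.

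The core step is a $\Z_2$-equivariant version of the approximation modulo deformation result of \cref{sec:approx_mod}. Fix a $C^\infty$ conjugate $\iconj{h}{R_\alpha}$ and a neighbourhood in $\mathcal{T}^\infty_\gamma$. I want an analytic symplectomorphism that is close outside $B(\gamma,\kappa)$ and that preserves slightly deformed complex and symplectic structures on a complexification of $\tilde{\M}$. In addition, these structures must commute with the antiholomorphic extension of $\tau$ composed with complex conjugation. To get equivariance I would proceed in two steps. First, I apply the non-equivariant approximation theorem (the one built from \cite[Theorem 1.8]{berger_analytic_2022} with no-control area a union of two essential bands) to the Hamiltonian generating the lifted map. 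Second, I symmetrize that Hamiltonian, $H \mapsto \tfrac12(H - H\circ\tau)$. The sign is chosen so that the time-one map commutes with $\tau$, since $\tau$ reverses $\Omega$. Symmetrization preserves entireness and closeness, because $\tau$ is real analytic and isometric. The no-control bands $\gamma$ and $\tau(\gamma)$ are both allowed, since the approximation theorem permits any pair of disjoint essential bands; on $\tilde{\A}$ a bicurve of $\A$ and its mirror give four curves, and I would group them as two essential bands.

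Then I run the scheme inductively, exactly as in the proof of \cref{thm:princ_strctr}. At step $n$, I take $h_{n+1}$ from $U(h_n,\alpha_n)$, replace it by its equivariant analytic approximation modulo deformation, and choose $\alpha_{n+1}$ within $\nu$. The conjugacies $\iconj{h_n}{R_{\alpha_n}}$ stay $\tau$-equivariant. I shrink the deformations summably, so that the complex and symplectic structures converge to a limit $\Z_2$-invariant structure preserved by the limit $f$. Condition b) of \cref{def:abc_star}, namely $\hat h = h$ on $O(\gamma)$, is what guarantees that modifications happen only inside $A(\gamma)$. There the equivariant approximation controls all orbits except those meeting the bands. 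The real analytic charts on $\tilde{\M}$ that are invariant under the limit structure and $\tau$-equivariant then descend to a $\Z_2$-structure $\mathcal{A}^{\omega'}$ on $\M$, and $f$ is $\mathcal{A}^{\omega'}$-analytic. Because $f$ is constructed from the $C^\infty$-AbC$^\star$ scheme (using \cref{prop:ext_real}), it satisfies $(\mathcal{P})$.

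I expect the main obstacle to be the equivariant approximation near the fixed set of $\tau$, which is the boundary $\partial\M$. There the lifted structure must be locally modelled on $\tau_0(x_1,x_2)=(x_1,-x_2)$, and the Runge-type approximation must not destroy this symmetry. I would first try the averaging trick at the level of the generating functions and of the deforming Moser vector fields, checking that each correction is $\tau$-odd or $\tau$-even as required. I would also check that the convergence-radius estimates survive averaging.
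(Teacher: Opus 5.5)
Your argument rests on a $\Z_2$-equivariant approximation modulo deformation on the double $\tilde{\M}$. That step is not available, and it does not follow from what you sketch.

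The paper's approximation results, \cref{an:thm_main_approx} and \cref{thm:app_mod_def}, live on $\A_\infty=\C/\Z\times\C$. They produce entire symplectic biholomorphisms that are $\rho^{-1}$-close to $\id$ on $K_\rho$. For large $\rho$, $K_\rho$ contains $4\Z$-translates of $\A(\eta)$. So such maps cannot descend to $\C/\Z\times\C/4\Z$, the complexification of $\tilde{\A}$, unless they are also close to $\id$ on $\A(\eta)$. For $\M=\Di$ the double $\tilde{\Di}$ is a sphere, where no such statement exists at all.

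The geometry does not fit either. On $\tilde{\A}$ the four curves $\gamma_\pm,\tau(\gamma_\pm)$ come in two pairs straddling $\{y=\pm1\}$. Grouping them into two essential bands therefore swallows $O(\gamma)\cup\tau(O(\gamma))$, which is exactly where $\mathcal{T}^\infty_\gamma$ (and hence minimal ergodicity) requires control.

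Finally, the symmetrization $H\mapsto\tfrac12(H-H\circ\tau)$ cannot be applied to the output of these theorems. That output is a map (a composition of entire twists), not the time-one map of one Hamiltonian, and diffeomorphisms cannot be averaged. Symmetrizing the generators fails: for $\tau(\theta,y)=(\theta,2-y)$ it annihilates every Hamiltonian depending on $\theta$ alone, so no nontrivial vertical shear survives. Symmetrizing one global entire Hamiltonian fails too: its complexified flow need not be complete. You then lose the $K_\rho$-control and the well-definedness of $f_n$ on a fixed $\M_{\rho'}$ that the induction needs (condition $(II_n)$).

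The missing idea is that equivariance never has to be built. The paper works on $\M$ itself and keeps every deformation away from $\partial\M$. The map $h$ of \cref{thm:app_mod_def} is supported in $\{\lvert\Re(y)\rvert<1-\eta\}$; this is property $(\mathcal{P}2)$, which comes from the cutoff $\beta$. Consequently, in \cref{lem:abc_real_cyl,lem:abc_real_ds}, $h_n^{-1}$ is supported in $\check{\A}_\rho$ (property $(V_n)$), and for the disk $h_n$ coincides with $\pi^{-1}$ off a compact subset of $\check{\Di}_\rho$. \cref{thm:real_sphere} then yields a complexification $(\M_\rho,J)$ and a $J$-holomorphic extension $f$ of a realization of the scheme, with $J=J_0$ and $f$ a rotation on the complement of $\check{\M}_\infty$. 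These are exactly the hypotheses of \cref{prop:real_an_symp} (Berger's Prop.~5.13 and Cor.~5.14). That result directly gives that $\rest{f}{\M}$ is a real analytic $\Z_2$-symplectomorphism for a $C^\infty$-compatible structure. So the obstacle you anticipate, equivariant Runge-type approximation near $\operatorname{Fix}(\tau)=\partial\M$, never arises. The paper's proof of the theorem is just \cref{thm:real_sphere} combined with \cref{prop:real_an_symp}.
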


We also prove this theorem in \cref{sec:conc_proof}.\\

As in the case of the sphere, we aim to recover the canonical real analytic symplectic $\Z_2$-structure induced by the inclusion 
$$\A \subset \R/\Z \times \R/4\Z \subset \C/\Z \times \C/4\Z \text{  and  } \Di \subset \R^2 \subset \C^2.$$

Thus, we want the equivalent of \cref{thm:str_sph} on the cylinder and the disk. It is given by the following adaptation of a result in \cite[§4]{kutzschebauch_real_2000}.

\begin{theorem}[\cite{berger_analytic_2024} Theorem 3.3]\label{thm:strct_equiv}
Let $K$ be a compact Lie group acting analytically on a compact surface $\widetilde{M}$. Let $\widetilde{\Omega}$ be a symplectic form on $\widetilde{M}$ and $\Gamma : K \rightarrow \croset{-1;1}$ be a Lie group morphism such that for all $g\in K$ we have $g^*\widetilde{\Omega} = \Gamma(g)\widetilde{\Omega}$. Then $(\widetilde{M},\widetilde{\Omega})$ has a unique real analytic $K$-structure up to analytic symplectomorphism.
\end{theorem}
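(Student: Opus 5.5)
We would follow the strategy of Kutzschebauch--Loose for \cref{thm:str_sph}, carried out equivariantly, in three stages: existence of a $K$-invariant analytic structure, an equivariant symplectic normalisation of the form, and an equivariant Moser argument for uniqueness.

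\emph{Existence.} Since $K$ acts analytically on $\widetilde{M}$, the analytic atlas making this action analytic already gives a real analytic $K$-manifold structure. The only missing feature is that the charts need not pull back $dx\wedge dy$ to $\widetilde\Omega$.

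\emph{Normalising the form.} We fix a $K$-invariant analytic Riemannian metric, obtained by averaging an analytic metric over the Haar measure of $K$, and let $\sigma$ be its area form. Because each $g\in K$ is an isometry, $g^*\sigma=\pm\sigma$. We then approximate $\widetilde\Omega$ by an analytic $2$-form $\Omega_1$, using a Grauert--Whitney density argument. Averaging with the twisted weight, $\Omega_1':=\int_K \Gamma(g)\,g^*\Omega_1\,dg$, keeps $\Omega_1'$ analytic and close to $\widetilde\Omega$, and gives $g^*\Omega_1'=\Gamma(g)\Omega_1'$. We then correct the cohomology class: we rescale on each connected component, or add a small $\Gamma$-equivariant analytic exact form, so that $[\Omega_1']=[\widetilde\Omega]$. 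With this, $\Omega_t:=(1-t)\widetilde\Omega+t\Omega_1'$ is nondegenerate for all $t$, since the two forms are close, and it is $\Gamma$-equivariant.

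\emph{Equivariant Moser.} We write $\Omega_1'-\widetilde\Omega=d\beta$ and average $\beta$ in the same twisted way, so that $g^*\beta=\Gamma(g)\beta$. The vector field $X_t$ defined by $\iota_{X_t}\Omega_t=-\beta$ is then $K$-invariant, because the sign $\Gamma(g)$ appears on both sides and cancels. Its flow $\phi_1$ is a smooth $K$-equivariant diffeomorphism with $\phi_1^*\Omega_1'=\widetilde\Omega$. Pulling back the analytic structure, which is Darboux for $\Omega_1'$, through $\phi_1$ gives a $K$-invariant analytic symplectic structure for $\widetilde\Omega$.

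The main obstacle is obtaining $K$-equivariant Darboux charts: near fixed points of the involutions the chart must linearise the action, for instance as $\tau_0$. We would use the analytic equivariant slice theorem (Bochner linearisation) to linearise $K$ near each orbit. Then, in these linear coordinates, we would apply the equivariant Darboux--Moser trick with analytic data: the primitive is chosen by the radial homotopy formula, which is analytic and commutes with linear maps. This keeps the chart analytic and equivariant.

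\emph{Uniqueness.} Given two such structures $\mathcal{A}_1$ and $\mathcal{A}_2$, we first invoke equivariant uniqueness of analytic structures, in the equivariant form of Grauert's theorem obtained by averaging a close smooth equivariant diffeomorphism and approximating it analytically. This yields a $K$-equivariant analytic diffeomorphism $\psi:(\widetilde M,\mathcal A_1)\to(\widetilde M,\mathcal A_2)$ that is $C^1$-close to the identity. Then $\psi^*\widetilde\Omega$ and $\widetilde\Omega$ are both $\mathcal A_1$-analytic, close to each other, cohomologous, and $\Gamma$-equivariant. We rerun the Moser argument above entirely inside $\mathcal A_1$, choosing $\beta$ analytic, for example via Hodge theory for the analytic invariant metric, which gives analytic harmonic representatives. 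The resulting flow is analytic and equivariant and corrects $\psi$ into an analytic symplectomorphism intertwining the two structures.
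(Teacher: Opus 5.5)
Your proposal is correct and follows the route of the source the paper cites for this statement (the paper gives no proof of its own, only Berger's Theorem 3.3 adapting \cite[\S4]{kutzschebauch_real_2000}): analytic approximation with $\Gamma$-twisted averaging, then an equivariant Moser isotopy whose vector field is $K$-invariant because $\Gamma(g)$ appears on both sides of $\iota_{X_t}\Omega_t=-\beta$, then equivariant Grauert uniqueness followed by an analytic Moser correction. Two small slips need fixing: adding an exact form cannot change the cohomology class, so it is the per-component rescaling that does the job (and it preserves $\Gamma$-equivariance because the component areas are $K$-invariant); and ``averaging a diffeomorphism'' only makes sense after an equivariant analytic embedding into a $K$-representation, averaging there, and composing with the equivariant nearest-point retraction.
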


Then, as for the sphere, this theorem concludes to obtain the compatibility with the canonical analytic structure.\\

\begin{remark}
Nevertheless, we must be careful with the implications of this theorem. 
In fact, the $K$-equivariant symplectomorphism $\widetilde{\Psi}$ from $\widetilde{M}$ to $\widetilde{M}$, which switches the analytic structure, does not necessarily descend to a symplectomorphism on the quotient $M := \widetilde{M}/K$ (for $\widetilde{M} = \tilde{\A} = \R/4\Z \times \T$ we have $K = \croset{\id,\tau}$ and $\tau^*\Omega = - \Omega$, the symplectomorphism $(y,\theta) \mapsto (y+2,\theta)$ on $\widetilde{\A}$ descends to the anti-symplectic map $(y,\theta) \mapsto (-y,\theta)$ on $\A$). 
However, in our case ($\widetilde{M} = \tilde{\A}$ or $\tilde{\Di}$), the projection $\widetilde{M} \rightarrow M$ is symplectic or anti-symplectic outside preimages of $\partial M$, and an element of $K$ is either symplectic or anti-symplectic. 
Thus, if $\widetilde{\Psi}$ is the symplectomorphism which switches the analytic structures, its projection $\Psi$ onto $\A$ or $\Di$ is either symplectic or anti-symplectic. 
Therefore conjugation by $\Psi$ preserves symplecticity.
\end{remark}

The remaining of this section will be devoted to proving \cref{thm:princ_strctr,thm:princ_strctr_equi}.\\
The first step of the proof is to adapt an approximation theorem to obtain an analytic approximation outside bicurves.

\subsection{Approximation theorem}\label{an:approx}

In this first subsection, we generalize the approximation theorem of Pierre Berger in \cite{berger_analytic_2022} to approximate symplectomorphisms outside bicurves on the cylinder (see \cref{def:bicurve} and \cref{not:bic}), so that it can be used to compute approximations modulo deformation.\\

In fact, Berger's Theorem provides an approximation outside a neighbourhood of the boundary of $\A$. However, we would like to shift the uncontrolled domain. 
Thus, we first directly obtain from this theorem the approximation outside two horizontal annuli. 
Then, by straightening a bicurve using symplectomorphisms, we can shift the uncontrolled domain to a neighbourhood of the bicurve. This is the general approach taken in this section.\\

First, we recall some notations that will be involved in the main theorem.
\newglossaryentry{3Symp_c_an0}{sort = {3Func Space}, name = {$\sympan{}{\A_\infty}$}, description = {Biholomorphic symplectomorphism of $\A_\infty$}}\glsadd{3Symp_c_an0}
\newglossaryentry{3Symp_c_an1_rho}{sort = {3Func Space}, name = {$\sympan{\rho}{\A_\infty}$}, description = {Biholomorphic symplectomorphism of $\A_\infty$ close to $\id$ next to $\A$}}\glsadd{3Symp_c_an1_rho}
\begin{notation}\label{not:k_ham}
First, we recall that $\symps{\infty}{\A}$ denotes the space of smooth symplectomorphism of $\A$ which are compactly supported in $\A \setminus \partial\A$.\\
Let $\sympan{}{\A_\infty}$ be the space of biholomorphism of $\A_\infty := \C/\Z \times \C$ leaving invariant $\Omega_0$ and $\T \times \R$, where $\Omega_0$ is the holomorphic extension of $\Omega = \tfrac{1}{2}d\theta\wedge dy$ to $\A_\infty$.\\
Then, we define:
$$K_\rho: = \T_\rho \times Q_\rho,$$
with $\T_\rho := \T + i [-\rho , \rho]$ and $Q_\rho := [-\rho-1 ; -1] \sqcup [1;\rho] +  i [-\rho ; \rho]$.\\
For $\rho > 1$ we define: 
$$\sympan{\rho}{\A_\infty}: = \croset{F \in \sympan{}{\A_\infty} , \; \sup_{x \in K_\rho } \lvert F(x) - x \rvert < \rho^{-1}}.$$
\\
For $\eta>0$, we consider $K_{\rho,\eta}$ and $Q_{\rho,\eta}$ the $\eta$-neighbourhoods of $K_\rho$ and $Q_\rho$:
$$K_{\rho,\eta} = \T_{\rho+\eta} \times Q_{\rho+\eta} \quad \text{and} \quad Q_{\rho,\eta} := \Big( [-\rho-1-\eta ; -1+\eta] \sqcup [1-\eta;\rho+\eta]\Big) +  i [-\rho - \eta ; \rho + \eta].$$
Finally we consider:
$$\sympan{\rho,\eta}{\A_\infty}: = \croset{F \in \sympan{}{\A_\infty} , \; \sup_{x \in K_{\rho,\eta} } \lvert F(x) - x \rvert < \rho^{-1}}.$$

\end{notation}

Here is our main approximation theorem:

\begin{theorem}[Main Approximation Theorem]\label{an:thm_main_approx}
Let $\gamma \in \Gamma_2(\A)$ and $f \in \symps{\infty}{\A}$ be such that $\rest{f}{O(\gamma)} = \id$. Then, for any $\rho >1$ and any neighbourhood $\mathcal{U}$ of $\rest{f}{\A \setminus \gamma}$ in the smooth compact-open topology of $C^\infty(\A \setminus \gamma,\T\times\R)$, there exists a map $F \in \sympan{\rho}{\A_\infty}$ such that $\rest{F}{\A \setminus \gamma}\in \mathcal{U}$.
\end{theorem}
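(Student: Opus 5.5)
We follow the strategy announced at the start of this subsection: first handle the case of a straight bicurve $\gamma^0 = \T\times\croset{y_-,y_+}$ using Berger's approximation theorem \cite[Theorem 1.8]{berger_analytic_2022}, then reduce a general bicurve to the straight case by conjugating with a symplectomorphism that straightens $\gamma$ and is itself approximable by an element of $\sympan{}{\A_\infty}$.

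\emph{Straight case.} Take $\gamma^0=\T\times\croset{y_-,y_+}$ and $f$ with $\rest{f}{O(\gamma^0)}=\id$. Then $f$ is supported in $\T\times[y_-+\eta,y_+-\eta]$ for some $\eta>0$. We apply an affine symplectic-up-to-scaling change of variable $y\mapsto ay+b$ sending $(y_-,y_+)$ onto $(-1,1)$. This map extends holomorphically to $\A_\infty$, and the rescaled form is $\Omega_0$ up to a constant, which is harmless for the invariance statement. The rescaled $f$ is then a compactly supported smooth symplectomorphism of $\check{\A}$. Berger's theorem gives an entire symplectic $F_0$, close to $f$ on compact subsets of the interior and $\rho'$-close to the identity on $K_{\rho'}$, for any prescribed $\rho'$. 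Conjugating back gives $F$ close to $f$ on compact subsets of $A(\gamma^0)$. The set $K_{\rho'}$ pulled back by the affine map contains a complex neighbourhood of the compact parts of $O(\gamma^0)$ away from $\gamma^0$ and contains $K_\rho$ once $\rho'$ is large, because the affine map expands $Q_\rho$ only by a bounded factor. On those parts $F$ is then close to $\id=f$. Since a compact subset of $\A\setminus\gamma^0$ splits into a compact piece of $A(\gamma^0)$ and a compact piece of $O(\gamma^0)$, this gives $\rest{F}{\A\setminus\gamma^0}\in\mathcal U$.

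\emph{General case.} By \cref{def:bicurve} there is a diffeotopy sending $\gamma^0$ to $\gamma$. We want a symplectic one, $\phi\in\symps{\infty}{\A}$ (or at least a symplectomorphism equal to the identity near $\partial\A$) with $\phi(\gamma^0)=\gamma$.

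\begin{itemize}
\item First we isotope $\gamma_\pm$ through embedded essential loops, so it suffices to adjust enclosed areas.
\item We choose $y_\pm$ so that the areas of the regions cut out by $\T\times\croset{y_\pm}$ match those of $\gamma$.
\item Moser's trick, or \cref{thm:folklo_symp}-type arguments applied to annuli, then produces the symplectic straightening $\phi$.
\end{itemize}

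Set $\tilde f=\phi^{-1}\circ f\circ\phi$. It is the identity on $O(\gamma^0)$ and compactly supported in the interior. Next we approximate $\phi$ itself by some $\Phi\in\sympan{\rho''}{\A_\infty}$. Since $\phi$ is compactly supported in $\check\A$, this is exactly Berger's theorem, and it gives control on all compact subsets of $\check\A$, hence of $\A\setminus\gamma$ away from $\partial\A$. Near $\partial\A$ we have $\phi=\id=f$, and closeness to the identity on $K_{\rho''}$, which contains a neighbourhood of the real boundary circles, supplies the needed control there.

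We then set $F:=\Phi\circ\tilde F\circ\Phi^{-1}$, where $\tilde F$ comes from the straight case. Compositions of elements of $\sympan{}{\A_\infty}$ remain in that space.

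\emph{Estimates and main obstacle.} On a compact set $L\subset\A\setminus\gamma$, the preimage $\phi^{-1}(L)$ is compact in $\A\setminus\gamma^0$. Uniform closeness of $\Phi^{\pm1}$ to $\phi^{\pm1}$ on neighbourhoods, together with Cauchy estimates for the derivatives, gives $C^s$-closeness of $F$ to $f$ on $L$. The main obstacle is guaranteeing $F\in\sympan{\rho}{\A_\infty}$, i.e.\ $\sup_{K_\rho}\lvert F-\id\rvert<\rho^{-1}$. We plan to first choose $\rho''\gg\rho$ so that $\Phi^{\pm1}$ map $K_\rho$ into a slight enlargement $K_{\rho,\eta}$ and are $\rho''^{-1}$-close to the identity there. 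We then choose $\tilde F\in\sympan{\rho',\eta}{\A_\infty}$ with $\rho'\gg\rho''$; this is why the neighbourhoods $K_{\rho,\eta}$ are introduced. The triangle inequality then gives $\lvert F-\id\rvert<3\rho''^{-1}<\rho^{-1}$ on $K_\rho$. One delicate point is that $\Phi^{-1}$ is entire, being a biholomorphism, but we need its closeness to the identity on $K_{\rho,\eta}$. We obtain this from the closeness of $\Phi$ itself by an inverse-function or Rouché argument on a slightly larger compact set.
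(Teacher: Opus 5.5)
Your architecture (rescaled straight case, Moser straightening $\phi$, analytic approximation of $\phi$, conjugation) is the paper's. The gluing step, however, has a real gap, and it sits exactly in the boundary control you pass over quickly.

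You get $\Phi$ from Berger's theorem. So all you know is $\Phi\in\sympan{\rho''}{\A_\infty}$, plus closeness to $\phi$ on compact subsets of the open band $\A(\eta)$. But $K_{\rho''}$ only covers $\croset{\lvert\Re(y)\rvert\ge 1}$:
\begin{itemize}
\item the circles $\T\times\croset{\pm1}$ lie on its boundary, so it contains no complex neighbourhood of $\partial\A$, contrary to what you assert;
\item $K_{\rho,\eta}\not\subset K_{\rho''}$ for every $\rho''$, since the slab $1-\eta\le\lvert\Re(y)\rvert<1$ is missing;
\item smallness on $K_{\rho''}$ does not bound derivatives on $\croset{\lvert\Re(y)\rvert=1}$ (think of $\epsilon e^{-M(y-1)}$).
\end{itemize}
Hence on a real band $\croset{1-\eta'<\lvert y\rvert\le 1}$ you have neither $C^s$-closeness of $\Phi$ to $\phi=\id$, nor closeness of $\Phi^{\pm1}$ to $\id$ on $K_{\rho,\eta}$. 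The latter is exactly what your triangle inequality and your Rouch\'e step for $\Phi^{-1}$ need. So two estimates are unsupported: the $\mathcal U$-estimate on compact sets $L\subset\A\setminus\gamma$ reaching $\partial\A$, and the bound $\sup_{K_\rho}\lvert F-\id\rvert<\rho^{-1}$. The paper avoids this by approximating $\phi$ with the rescaled straight case (\cref{an:thm_approx}). It uses the straight bicurve $\gamma^\eta=\T\times\croset{\pm(1-2\eta)}$, which separates $\supp(\phi)$ from $\partial\A$, and this gives $\Psi\in\sympan{\rho_1,\eta}{\A_\infty}$.

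Even then, every application of the approximation theorem leaves a thin band, now around $\gamma^\eta$, where $\Psi$ is uncontrolled. So your step ``uniform closeness of $\Phi^{\pm1}$ to $\phi^{\pm1}$'' near $\phi^{-1}(L)$ and $\tilde f(\phi^{-1}(L))$ cannot hold for all compact $L$. The missing idea is to conjugate $f$ by the analytic map rather than by $\phi$. The paper sets $g:=\Psi^{-1}\circ f\circ\Psi$, with $\Psi$ so close to $\phi$ near $\overline{A(\gamma^0)}$ that $\supp(f)\subset\Psi(A(\gamma^0))$ and $\Psi^{-1}(\gamma)$ is close to $\gamma^0$. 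Then:
\begin{itemize}
\item $g$ is compactly supported in the straight annulus $A(\gamma^0)$, and the straight case yields $G$;
\item $F:=\Psi\circ G\circ\Psi^{-1}$ differs from $f=\Psi\circ g\circ\Psi^{-1}$ only through $G$ versus $g$, so the bad band of $\Psi$ is irrelevant;
\item $F\in\sympan{\rho}{\A_\infty}$ then follows from \cref{lem:conj_ham}.
\end{itemize}
Within your own scheme you could instead show that $\Psi^{-1}$ maps that band into a region where $\tilde f=\id$ and $\tilde F\approx\id$. Then $F\approx\id=f$ there, with constants depending only on the already fixed $\Psi$; the proposal does not do this.

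A smaller slip: in your straight case, $\rest{f}{O(\gamma^0)}=\id$ does not give $\supp(f)\subset\T\times[y_-+\eta,y_+-\eta]$, because the support may reach $\gamma^0$. You must first widen the bicurve slightly, away from the compact set defining $\mathcal U$, as in \cref{rk:gam_bord}; this also handles bicurves meeting $\partial\A$. The same fix applies to $\tilde f$.
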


\begin{remark}\label{rk:gam_bord}
By definition of the smooth compact-open topology on $C^\infty(\A \setminus \gamma,\T\times\R)$, proving that $\rest{F}{\A \setminus \gamma}$ is in $\mathcal{U}$ reduces to proving that
$$d_{C^r}(\rest{F}{\A \setminus B(\gamma,\kappa)},\rest{f}{\A \setminus B(\gamma,\kappa)}) < \epsilon$$
for a sufficiently large and finite $r \geq 0$, and $\kappa,\epsilon >0$ small enough.

It follows that, if $\gamma'$ is a bicurve contained in $B(\gamma,\kappa)$, there exists a neighbourhood $\mathcal{U}'$ of $\rest{f}{\A \setminus \gamma'}$ in the smooth compact-open topology of $C^\infty(\A \setminus \gamma',\T\times\R)$ such that for any $F: \T\times \R \to \T \times \R$:
$$\rest{F}{\A \setminus \gamma'} \in \mathcal{U}' \Longrightarrow \rest{F}{\A \setminus \gamma}\in\mathcal{U}.$$

Moreover, since $f$ is compactly supported in $\A \setminus \partial \A$ and $\supp(f) \subset \overline{A(\gamma)}$, the set 
$$B(\gamma,\kappa) \setminus (\partial\A \cup \supp(f))$$ is non-empty.

Choosing $\gamma'$ inside this set so that $\gamma' \subset \A \setminus \partial \A$ and $f$ is compactly supported in $A(\gamma')$ allows to reduce the proof of the theorem to bicurves of this form. 
\end{remark}

\begin{remark}
Observing that the lift of $\C/\frac{1}{q}\Z$ to $\C/\Z$ is holomorphic, if in the latter theorem $f$ commutes with $R_{1/q}$ and if $\gamma$ is $R_{1/q}$-invariant, we can assume that the approximation also commutes with $R_{1/q}$.
\end{remark}

In view of \cref{rk:gam_bord}, we restrict our attention to the case $\gamma \subset \A \setminus \partial \A$ and $\supp(f) \subset A(\gamma)$.\\

We begin by recalling Berger's approximation theorem. The central idea of this result is to decompose the Hamiltonian map into a sequence of Hamiltonian maps. Each of these maps is then approximated by compositions of twists via Fourier-series expansions. Finally, these twists are approximated analytically by means of the Runge Theorem, at the expense of introducing a region near the boundary where the approximation cannot be controlled.
 
\begin{theorem}[\cite{berger_analytic_2022} Thm. 1.8]\label{thm:approx_berger}
Let $0 < \eta < 1$, $f$ in $\symps{\infty}{\A}$ and let $\mathcal{U}$ be a neighbourhood of the restriction $\rest{f}{\A(\eta)}$ in the smooth compact-open topology of $C^\infty(\A(\eta),\A)$. Then, for any $\rho > 1$, there exists $F \in \sympan{\rho}{\A_\infty}$ such that the restriction $\rest{F}{\A(\eta)}$ is in $\mathcal{U}$.
\end{theorem}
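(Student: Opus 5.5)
This statement is \cite[Theorem 1.8]{berger_analytic_2022}, and I would follow the strategy recalled just before it: factor $f$ into small pieces, write each piece as a composition of elementary shears, and make those shears entire with Runge's theorem.

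\textbf{Step 1: reduce to near-identity Hamiltonian maps.} Take $f\in\symps{\infty}{\A}$, compactly supported in $\A(\eta')$ for some $\eta'>0$, and assume $\eta<\eta'$. This costs nothing, since shrinking $\eta$ only enlarges $\A(\eta)$ and strengthens the conclusion. Then $f$ is isotopic to the identity through compactly supported symplectomorphisms. Its flux can be cancelled by composing with a twist $(\theta,y)\mapsto(\theta+\psi_0(y),y)$ with $\psi_0$ supported in $\I(\eta')$; such a twist will be treated in Step 3 like the other twists. So we may assume $f$ is the time-one map of a compactly supported Hamiltonian $H_t$. Cutting $[0,1]$ into $N$ pieces writes $f=g_N\circ\dots\circ g_1$, with each $g_j$ $C^r$-close to the identity. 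Since composition is continuous in the compact-open topology, it suffices to approximate each $g_j$ on $\A(\eta)$ by an element of $\sympan{\rho'}{\A_\infty}$, with $\rho'\gg\rho$ so that the errors on $K_\rho$ sum to less than $\rho^{-1}$.

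\textbf{Step 2: decompose into shears.} Expand each small Hamiltonian in Fourier series in $\theta$ and truncate. This gives a finite sum of terms $a_k(y)\cos(2\pi k\theta)$ and $a_k(y)\sin(2\pi k\theta)$ with smooth $a_k$ supported in $\I(\eta')$. A Trotter splitting then replaces the flow by a product of the flows of these single terms. Each such flow, up to $o(\text{step}^2)$, is a commutator
\[
T_\psi^{-1}\circ V_\phi\circ T_\psi\circ V_\phi^{-1},
\]
where $T_\psi(\theta,y)=(\theta+\psi(y),y)$ is a horizontal twist and $V_\phi(\theta,y)=(\theta,y+\phi(\theta))$ is a vertical shear with $\phi$ a small trigonometric polynomial. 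The commutator's generator is the Poisson bracket $\psi'(y)\,\phi(\theta)$-type term, so choosing $\psi$ with $\psi'$ proportional to $a_k$ produces the required mode. Both $T_\psi$ and $V_\phi$ are symplectic and preserve $\T\times\R$.

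\textbf{Step 3: make the twists entire (the main obstacle).} $V_\phi$ already extends to an entire map, but it is not close to the identity on $K_{\rho'}$. It is the commutator structure that saves us. Apply Runge's theorem (or Mergelyan, after symmetrizing so that values on $\R$ stay real) to the compact set $L=[-1+\eta,1-\eta]\cup Q_{\rho'}\subset\C$. This set has connected complement, and the approximation problem is to find an entire $\tilde\psi$ that is $C^r$-close to $\psi$ on $[-1+\eta,1-\eta]$ and close to $0$ on $Q_{\rho'}$. Such $\tilde\psi$ exists because the two pieces are disjoint, so $\psi$ on the first piece and $0$ on the second form one function holomorphic near $L$; the gaps $(-1,-1+\eta)$ and $(1-\eta,1)$ are exactly the no-control region.

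On $\T_{\rho'}\times Q_{\rho'}$ the entire twist $T_{\tilde\psi}$ is then close to the identity. If $\phi$ is small on the strip $\T_{\rho'+1}$, which holds for a trigonometric polynomial once we take it small enough, the orbit of a point of $K_{\rho'}$ under $V_\phi^{-1}$, then $T_{\tilde\psi}$, then $V_\phi$ stays in a slight enlargement of $K_{\rho'}$. On that enlargement all maps are near the identity or cancel, so the commutator lies in $\sympan{\rho'}{\A_\infty}$. On $\A(\eta)$, meanwhile, the commutator is $C^r$-close to the smooth one from Step 2. The delicate points, which I would verify first, are these:
\begin{itemize}
\item $Q_{\rho'}$ must be enlarged slightly, using $Q_{\rho,\eta}$ from \cref{not:k_ham}, to absorb the small vertical displacement caused by $V_\phi$;
\item the orbits of points of $\A(\eta)$ under these maps must not enter the no-control gaps. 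This holds because $\phi$ is small and the $a_k$ are supported in $\I(\eta')$ with $\eta'>\eta$.
\end{itemize}
Composing all these entire maps and choosing $\rho'$ large gives the required $F$.
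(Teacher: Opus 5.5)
\textbf{Main gap: Step 1 does not reduce general $f$ to a Hamiltonian map.} The paper does not reprove this statement. It cites Berger's theorem, which is stated for $\hams{\infty}{\A}$, and reaches $\symps{\infty}{\A}$ only through the density observation in \cref{rk:dens}. Your Steps 2--3 follow the scheme Berger uses: Fourier modes, products of twists and shears, and Runge with a no-control gap near $\partial\A$. However, Step 1 rests on a false claim.

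An element of $\symps{\infty}{\A}$ need not be isotopic to the identity through compactly supported symplectomorphisms. The Dehn twist $(\theta,y)\mapsto(\theta+\psi(y),y)$, with $\psi=0$ near $y=-1$ and $\psi=1$ near $y=1$, is symplectic and equals the identity near $\partial\A$. Yet it generates the compactly supported mapping class group $\Z$ of the annulus. Composing with a twist whose profile is supported in $\I(\eta')$ changes only the flux, never this integer. So your reduction to a Hamiltonian time-one map fails for every $f$ in a nontrivial class.

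There are two ways to repair this.
\begin{enumerate}
\item Follow the paper's route. $\mathcal U$ only constrains $f$ near a compact $K\subset\A(\eta)$. Compose $f$ with a twist supported in a thin collar at $\partial\A$ whose support is disjoint from $f(K)$. Take its profile to go from $0$ to $-k$, with integral chosen to cancel the flux. The result lies in $\hams{\infty}{\A}$ and agrees with $f$ on a neighbourhood of $K$.
\item Stay inside your own argument. Allow the twist profile $\psi_0$ to equal $k\in\Z$ near $y=1$. Then ask the Runge approximant to be close to $k$, not $0$, on the component $[1,\rho']+i[-\rho',\rho']$ of $Q_{\rho'}$. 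Since $\theta\in\C/\Z$, the entire twist is still close to the identity on $K_{\rho'}$.
\end{enumerate}

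\textbf{Further steps that need adjusting.}
\begin{enumerate}
\item \emph{Composition in Step 1.} The errors do not add up just because $\rho'\gg\rho$. The sets $K_\rho$ and $K_{\rho'}$ share the faces $\Re y=\pm1$. A factor that is $1/\rho'$-close to the identity on $K_{\rho'}$ can therefore push a point of $K_\rho$ out of $K_{\rho'}$, where the next factor is uncontrolled. Each factor must be close to the identity on an inward enlargement such as $K_{\rho,\eta/2}$; this is exactly Berger's composition lemma, \cref{lem:conj_ham}. Your Runge set should accordingly contain $Q_{\rho',\eta/2}$, with the real interval stopping short of it.
\item \emph{The commutator in Step 2.} To leading order, $T_\psi^{-1}\circ V_\phi\circ T_\psi\circ V_\phi^{-1}$ moves $(\theta,y)$ by $(-\psi'(y)\phi(\theta),\,\psi(y)\phi'(\theta))$. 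This is the Hamiltonian field of a multiple of $\psi(y)\phi(\theta)$, not of $\psi'(y)\phi(\theta)$. So you should take $\psi\propto a_k$, which keeps it supported in $\I(\eta')$. Taking $\psi'\propto a_k$ instead would generate the wrong mode, and the commutator would not even be compactly supported unless $\int a_k=0$.
\item \emph{Applying Runge.} $\psi$ is only smooth on the interval, so it is not one function holomorphic near $L$. You first need a polynomial $C^r$-approximation of $\psi$ on the interval. Then Cauchy estimates on a slightly larger compact set turn Runge's uniform closeness into $C^r$ closeness.
\end{enumerate}
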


\begin{remark}\label{rk:dens}
Berger originally states this theorem for a symplectomorphism $f \in \hams{\infty}{\A}$, where $\hams{\infty}{\A}$ denotes the space of smooth symplectomorphisms of $\A$ which are compactly supported in $\check{\A}=\A \setminus \partial\A$ and isotopic to $\id$ via an isotopy compactly supported in $\A \setminus \partial\A$. Yet, this space is dense in the space $\symps{\infty}{\A}$ for the smooth compact-open topology of $C^\infty(\check \A, \T\times \R )$. Therefore we can assume that $f$ belongs to $\symps{\infty}{\A}$ in the theorem.
\end{remark}

Observe that this theorem provides the case $\gamma = \partial \A$ of the main approximation theorem.\\

Then, up to conjugacy by analytic diffeomorphisms, we deduce from the following corollary from \cref{thm:approx_berger}.

\begin{coro}\label{an:thm_approx}
Let $-1 < y_- < y_+ < 1$ and $\gamma = \T \times \croset{y_-,y_+}$, and let $f$ be in $\symps{\infty}{\A}$ such that $f$ is compactly supported in $A(\gamma)$. Let $\mathcal{U}$ be a neighbourhood of $\rest{f}{\A\setminus \gamma}$ in the smooth compact-open topology of $C^\infty(\A\setminus \gamma,\T \times \R)$ and let $\eta \in (0,d(\gamma,\partial \A))$. Then, for any $\rho >1$, there exists $F \in \sympan{\rho,\eta}{\A_\infty}$ such that $\rest{F}{\A \setminus \gamma}\in \mathcal{U}$.
\end{coro}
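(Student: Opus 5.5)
The idea is to reduce \cref{an:thm_approx} to \cref{thm:approx_berger} by conjugating with an entire affine change of the $y$-coordinate. This change maps the band $\T\times(y_-,y_+)$ onto the whole interior of $\A$, while the complementary region, where $f=\id$, is handled by the closeness to the identity that membership in $\sympan{\rho}{\A_\infty}$ already provides on the complex neighbourhood $K_\rho$.

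\textbf{Step 1: normalize the band.} Set $c=\tfrac{y_++y_-}{2}$ and $\lambda=\tfrac{y_+-y_-}{2}\in(0,1)$. Consider the affine map
$$L(\theta,y)=\Big(\theta,\tfrac{y-c}{\lambda}\Big)$$
on $\A_\infty=\C/\Z\times\C$. It sends $\T\times[y_-,y_+]$ onto $\A$ and multiplies $\Omega_0$ by $\lambda^{-1}$, so it is conformally symplectic. To make it symplectic, compose with the $\T$-scaling trick: pass to the cover $\C/\lambda^{-1}\Z$, or equivalently use $\Omega_0$ rescaled by $\lambda^{-1}$. Conjugating a symplectomorphism by a conformally symplectic map still gives a symplectomorphism, so no true scaling of $\theta$ is needed. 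Put $g:=L\circ f\circ L^{-1}$. Since $f$ is compactly supported in $A(\gamma)$, $g$ lies in $\symps{\infty}{\A}$. Choose $\eta'>0$ so small that $L^{-1}(\A(\eta'))$ contains the support of $f$ together with a prescribed compact $K\subset A(\gamma)$, which is dictated by $\mathcal U$ as in \cref{rk:gam_bord}.

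\textbf{Step 2: approximate and conjugate back.} Apply \cref{thm:approx_berger} to $g$ with $\eta'$, a neighbourhood $\mathcal U'$ of $\rest{g}{\A(\eta')}$ corresponding to $\mathcal U$ on $K$ under $L$, and a large parameter $\rho'\gg\rho$. This gives $G\in\sympan{\rho'}{\A_\infty}$, and we set $F:=L^{-1}\circ G\circ L$. Then $F$ is a biholomorphism of $\A_\infty$ that preserves $\Omega_0$ and the real cylinder $\T\times\R$, and it is $C^r$-close to $f$ on $K$. The closeness degrades only by constants depending on $\lambda$, which we absorb by choosing $\mathcal U'$ small.

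\textbf{Step 3: closeness to the identity, the main point.} We must show that $|F-\id|<\rho^{-1}$ on $K_{\rho,\eta}$, and also that $F$ approximates $f=\id$ on the compact parts of $O(\gamma)$. The relevant region is $\T_{\rho+\eta}\times Q_{\rho+\eta}$ together with $\T\times(\I\setminus[y_--\kappa,y_++\kappa])$. Its image under $L$ lies in $\T_{\rho+\eta}\times(\text{real part outside }(-1-\kappa/\lambda,1+\kappa/\lambda))$, a compact set that is contained in $K_{\rho'}$ once $\rho'\geq(\rho+\eta+2)/\lambda$. The condition $\eta<d(\gamma,\partial\A)$ is what lets the real strip $[1-\eta,\rho+\eta]$ of $Q_{\rho,\eta}$ be pushed outside $[y_-,y_+]$, landing in $[1,\cdot]$ after $L$. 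The one delicate spot is a thin neighbourhood of $\gamma$ itself, where no control is required. On $K_{\rho'}$ we have $|G-\id|<\rho'^{-1}$, hence
$$|F-\id|\leq \lambda\,\rho'^{-1}<\rho^{-1}.$$
Cauchy estimates on the slightly larger set then upgrade this to $C^r$ closeness to $\id$ on the real parts of $O(\gamma)$ away from $\gamma$.

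I expect the main obstacle to be this bookkeeping. Concretely, one must check that the scaled compact set $L(K_{\rho,\eta})$, together with the real region of $O(\gamma)$ outside the $\kappa$-band of $\gamma$, actually sits inside Berger's controlled region $K_{\rho'}$ (plus $\A(\eta')$), and not in its uncontrolled collar $\{1>|{\rm Re}\,y|\geq 1-\eta'\}$. The collar maps back to a neighbourhood of $\gamma$ of width $O(\lambda\eta')$, and this is allowed. The approach will therefore be to pick $\eta'$ and then $\rho'$ in that order.
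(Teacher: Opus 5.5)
Your proposal is correct and is essentially the paper's argument. You conjugate by the affine map sending $\gamma$ to $\partial\A$, apply Berger's approximation theorem with a large parameter, use Cauchy estimates on $K_{\tilde\rho}$ to get $C^\infty$-closeness to $\id$ on the compact parts of $O(\gamma)$, and use $\eta<d(\gamma,\partial\A)$ to place the image of $K_{\rho,\eta}$ inside $K_{\tilde\rho}$. One small slip: $L$ does not rescale the $\theta$-coordinate, so you only get $|F-\id|\le\rho'^{-1}$ rather than $\lambda\rho'^{-1}$, which is still $<\rho^{-1}$.
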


\begin{proof}
Let $g: \A_\infty \to \A_\infty$ be an affine map which sends the bicurve $\gamma$ to $\partial \A$, in particular it sends $A(\gamma)$ to $\A \setminus \partial \A$. 
Hence, the map $\tilde{f} := \iconj{g}{f}$ belongs to  $\symps{\infty}{\A}$ as $f$ is compactly supported in $A(\gamma)$. 
Then, observe that conjugation by $g$ establishes a bijection between the smooth compact-open topologies of $C^\infty(\A\setminus \gamma,\T \times \R)$ and $C^\infty(\check{\A} \cup g(O(\gamma)),\T \times \R)$, thus we consider $\widetilde{\mathcal{U}}$ to be the neighbourhood of $\rest{\tilde{f}}{\check{\A} \cup g(O(\gamma))}$ obtained by conjugating $\mathcal{U}$ by $g$.

Next, since $g(O(\gamma))$ is contained in $K_{\tilde \rho}$ for a large $\tilde \rho >1$ and $\rest{\tilde{f}}{g(O(\gamma))}=\id$, establishing that a biholomorphic map $\widetilde{F}$ is $C^\infty$-close to $\tilde{f}$ on $g(O(\gamma))$ reduces to  establishing that $\widetilde{F}$ is $C^0$-close to $\id$ on $K_{\tilde \rho}$ by Cauchy's estimates. 
Hence, let $\delta \in (0,1)$, $\mathcal{V}$  be a neighbourhood of $\rest{\tilde f}{\A(\delta)}$ in the smooth compact-open topology of $C^\infty(\A(\delta),\A)$, and let $\tilde{\rho} >1$ be sufficiently large such that the following holds: 
\begin{equation}\label{eq:V_coro}
\forall \widetilde{F}\in \sympan{\tilde \rho}{\A_\infty} \, , \, \rest{\widetilde{F}}{\A(\delta)} \in \mathcal{V} \Longrightarrow  \rest{\widetilde{F}}{\check{\A} \cup g(O(\gamma))} \in \widetilde{\mathcal{U}}.
\end{equation}

Now, by \cref{thm:approx_berger}, there exists $\widetilde{F} \in \sympan{\tilde \rho}{\A_\infty}$ such that the restriction $\rest{\widetilde{F}}{\A(\delta)} \in \mathcal{V}$, in particular the restriction $\rest{\widetilde{F}}{\check{\A} \cup g(O(\gamma))}$ is in $\widetilde{\mathcal{U}}$ by \cref{eq:V_coro} and the map $F:= \conj{g}{\widetilde{F}}$ restricted to $\A \setminus \gamma$ belongs to $\mathcal{U}$ since $\widetilde{\mathcal{U}} = g \circ \mathcal{U} \circ g^{-1}$. 
It remains to show that $F$ belongs to $\sympan{\rho,\eta}{\A_\infty}$. 
First, $F$ is a biholomorphism in $\sympan{}{\A_\infty}$ since $\widetilde{F}$ is one and $g$ is an affine map.

Next, since $\eta$ belongs to $(0,d(\gamma,\partial\A))$, observe that $\gamma$ belongs to $\A(\eta)$. Hence, since $g$ sends $\gamma$ to $\partial \A$ and $\tilde{\rho}$ is large, it sends $K_{\rho,\eta}$ into $K_{\tilde \rho}$ where $\widetilde{F}$ is $1/\tilde{\rho}$-close to $\id$. Thus, $F = \conj{g}{\widetilde{F}}$ is $1/\rho$-close to $\id$ on $K_{\rho,\eta}$ and belongs to $\sympan{\rho,\eta}{\A_\infty}$.
\end{proof}

The next step to establish \cref{an:thm_main_approx} is to straighten a bicurve so as to make the preceding corollary applicable. This is provided by the following proposition, which constructs a symplectomorphism that maps a horizontal bicurve to a prescribed bicurve $\gamma$. This symplectomorphism is obtained by applying Moser's trick to an appropriately chosen diffeotopy of the cylinder.

\begin{prop}\label{an:lemma_symp_bi}
Let $\gamma$ be in $\Gamma_2(\A)$, there exist a bicurve $\gamma' := \T \times \croset{y_-,y_+}$ and a symplectomorphism $\phi\in\mathrm{Symp}_c^\infty(\A,\partial\A)$ such that
$$\phi \left (\gamma'\right ) = \gamma.$$
\end{prop}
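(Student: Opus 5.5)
We start from the diffeotopy $(f_t)_{t\in[0,1]}$ in $\diffeo{\infty}{\A}$ given by \cref{def:bicurve}. It satisfies $f_1(\T\times\croset{y_\pm^0}) = \gamma_\pm$ for some straight bicurve $\T\times\croset{y_-^0,y_+^0}$. The map $f_1$ need not preserve $\Omega$, so the plan has two parts: first choose the heights $y_\pm$ of $\gamma'$ so that the areas match, then apply Moser's trick to correct $f_1$ into a symplectomorphism, keeping the image of the straight bicurve equal to $\gamma$.

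\textbf{Choosing the heights.} The complement $\A\setminus\gamma$ has three components: the lower part of $O(\gamma)$ below $\gamma_-$, the annulus $A(\gamma)$, and the upper part of $O(\gamma)$ above $\gamma_+$. Their areas are $a_-, a_0, a_+$, with $a_-+a_0+a_+=1$. We pick $y_-<y_+$ so that $\Leb(\T\times[-1,y_-]) = a_-$ and $\Leb(\T\times[y_+,1]) = a_+$. We then precompose $f_1$ with a diffeomorphism of the form $(\theta,y)\mapsto(\theta,\chi(y))$, where $\chi$ sends $y_\pm$ to $y_\pm^0$ and equals the identity near $\pm1$. The result is a diffeomorphism $\psi$, isotopic to the identity, with $\psi(\T\times\croset{y_\pm})=\gamma_\pm$. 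By the choice of $y_\pm$, $\psi$ preserves the area of each of the three components of $\A\setminus\gamma'$.

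Degenerate cases need a short separate treatment. If $y_-^0=-1$ or $y_+^0=1$, a curve may lie on $\partial\A$; then the corresponding area is $0$, we keep $y_\pm=\pm1$, and the isotopy restricted to the boundary is just a diffeomorphism of the circle. If one insists that $\phi$ be the identity on $\partial\A$, one first composes with a boundary rotation isotopy; this is possible because $\mathrm{Diff}^+(\T)$ is connected.

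\textbf{Moser's trick.} Set $\omega_t := (1-t)\Omega + t\,\psi^*\Omega$. Since $\psi$ is isotopic to the identity and orientation preserving, each $\omega_t$ is a positive area form. Because $\omega_1-\omega_0$ has zero integral over each component of $\A\setminus\gamma'$, we can write $\omega_1-\omega_0 = d\lambda$ with $\lambda$ vanishing on $\gamma'$. Concretely, on each straight band we take $\lambda = \big(\int_{y_-}^{y}\cdots\big)\,d\theta$; this vanishes at both ends precisely because the integrals agree. Modifying $\lambda$ by closed forms near $\partial\A$, we can also make it vanish near $\partial\A$ wherever $\psi$ equals the identity there.

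The Moser vector field $X_t$, defined by $\iota_{X_t}\omega_t = -\lambda$, then vanishes on $\gamma'$, so its flow $\varphi_t$ fixes $\gamma'$ pointwise. Moreover $\varphi_1^*\psi^*\Omega=\Omega$. Hence $\phi := \psi\circ\varphi_1$ is a symplectomorphism with $\phi(\gamma') = \gamma$.

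\textbf{Main obstacle.} The key difficulty is choosing the primitive $\lambda$ so that it vanishes both on $\gamma'$ and near $\partial\A$ (to get the support condition on $\phi$). This forces the area matching of each band separately. On the annuli $\T\times[a,b]$ we have $H^1\neq 0$, so we must check that the $d\theta$-period can be killed simultaneously at both boundary curves of each band. This is exactly what the equality of areas provides: the flux condition reduces to the vanishing of the integral of $\omega_1-\omega_0$ over each band.
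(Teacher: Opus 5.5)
Your overall strategy works, but the explicit primitive is wrong as stated. On a band $\T\times[a,b]$, write $\psi^*\Omega-\Omega=g\,d\theta\wedge dy$. The form $\lambda_0=-\big(\int_a^y g(\theta,s)\,ds\big)\,d\theta$ vanishes at $y=a$. At $y=b$, however, it equals $-c(\theta)\,d\theta$ with $c(\theta)=\int_a^b g(\theta,s)\,ds$. Area matching only gives $\int_\T c=0$, not $c\equiv 0$, so $\lambda_0$ does not vanish on the upper curve.

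The fix is an exact correction, not just adjusting a period. Since $\int_\T c=0$, you can write $c=C'$ with $C$ periodic. Then set $\lambda=\lambda_0+d\big(\rho(y)C(\theta)\big)$, with $\rho\equiv 0$ near $a$ and $\rho\equiv 1$ near $b$. This $\lambda$ vanishes as a full covector on both boundary curves; the $dy$-term $\rho'C$ also vanishes there, which you need for $X_t=0$ on $\gamma'$. Near each curve of $\gamma'$, the primitives from the two adjacent bands both equal $-\big(\int_{y_\pm}^{y}g(\theta,s)\,ds\big)\,d\theta$, so they glue smoothly. On the outer bands $g=0$ near $\partial\A$, so $\lambda$ vanishes there too, and the rest of your Moser argument goes through. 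Under the paper's definition the diffeotopy lies in $\mathrm{Diff}_c$, supported away from $\partial\A$, so $\psi$ is already the identity near $\partial\A$. Your boundary-rotation remark is therefore unnecessary, and ``identity on $\partial\A$'' alone would not suffice for $\mathrm{Symp}^\infty_c(\A,\partial\A)$ anyway.

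Once repaired, your route differs genuinely from the paper's. The paper chooses $\gamma'$ by the same area-matching fiberwise reparametrization, but then works component by component. It first deforms $f_t$ to be symplectic near $\partial f_t(N)$. It then takes the path $\Omega_t=A_t\,f_t^*\Omega$ along the whole diffeotopy, with a rescaling $A_t$ keeping $\int_N\Omega_t$ constant and an extra fact ensuring $A_t>0$. Relative Moser (McDuff--Salamon Ex.~3.2.6) gives $\Psi_t=\id$ near $\partial N$, and the $\Psi_1$'s are glued. The boundary cases are reduced to the interior case by pushing a boundary circle slightly inward.

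Your version uses only the endpoint $\psi$ and the straight-line path $(1-t)\Omega+t\psi^*\Omega$, where positivity is automatic in dimension $2$. It also replaces ``the forms agree near $\gamma'$'' by the weaker ``the primitive vanishes on $\gamma'$''. As a result, you need no preliminary normalization of $f_t$ near the curves (a step the paper only sketches), no rescaling, and no gluing of separate isotopies, and the degenerate cases fit in as zero-area bands. The price is that you must build the relative primitive by hand, whereas the paper outsources this to a standard relative Moser statement.
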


This proposition is proven in \cref{an:sec_bic}. We now prove the main approximation theorem, and we start by giving a sketch of proof.

\begin{proof}[Sketch of proof of Theorem \ref{an:thm_main_approx}]
Let $\gamma \in \Gamma_2(\A)$ and $f \in \symps{\infty}{\A}$ be such that $\gamma \subset \A \setminus \partial\A$ and $\supp(f) \subset A(\gamma)$. 
Let $\mathcal{U}$ be a neighbourhood of $\rest{f}{\A \setminus \gamma}$ in the smooth compact-open topology of  $C^\infty(\A \setminus \gamma,\T\times \R)$.

The first step is to consider a symplectomorphism $\phi$ that sends a bicurve $\gamma' := \T \times \croset{y_-,y_+}$ to $\gamma$, as provided by \cref{an:lemma_symp_bi} (see \cref{fig:conjugacy}). 
We then use \cref{an:thm_approx} to approximate $\phi$ outside a bicurve $\gamma^\eta$ by a biholomorphism $\Psi$, and we consider the conjugate $g := \conj{\Psi}{f}$ (see \cref{fig:approx_phi}). 
Next, we again use \cref{an:thm_approx} to approximate $g$ outside the bicurve $\gamma'$ by a biholomorphism $G$, and we define the desired biholomorphism $F := \iconj{\Psi}{G} \in \sympan{}{\A_\infty}$ (see \cref{fig:an_ap_conj}). 
It remains to prove that $\rest{F}{\A \setminus \gamma}$ belongs to $\mathcal{U}$. This follows from the approximations of $g$ and $\phi$. 
It also remains to prove that $F$ belongs to $\sympan{\rho}{\A_\infty}$. This is achieved using the following result \cite[Lemma 3.15]{berger_analytic_2022} on the composition of biholomorphisms:
\begin{lemma}\label{lem:conj_ham}
For any $\rho >1$ and $\eta >0$, let $\rho_k' := 2^k \max \croset{2/\eta , \rho}$ for $k \geq 1$. Then, for any sequence $(F_k)_{k\geq 1}$ of maps $F_k \in \symp{\omega}{\A_\infty}$ such that $\sup_{x \in K_{\rho, \eta/2}} d(F_k(x),x) < 1/\rho_k'$, the following holds:
$$F_1 \circ \cdots \circ F_n \in \sympan{\rho}{\A_\infty} \; \text{for every} \; n\geq1.$$
\end{lemma}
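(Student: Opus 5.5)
The plan is to track the orbit of a point of $K_\rho$ under the successive maps and check that it never leaves the enlarged compact set $K_{\rho,\eta/2}$, where the maps $F_k$ are controlled. Membership of $F_1\circ\cdots\circ F_n$ in $\sympan{}{\A_\infty}$ is automatic: it is a composition of biholomorphisms of $\A_\infty$ preserving $\Omega_0$ and $\T\times\R$, and such maps form a group. So only the estimate $\sup_{x\in K_\rho}\lvert F_1\circ\cdots\circ F_n(x)-x\rvert<\rho^{-1}$ needs proof.

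Fix $n\ge 1$ and $x\in K_\rho$, and set $m:=\max\croset{2/\eta,\rho}$, so that $\rho'_k=2^k m$. Define backwards
$$x_n:=x \quad\text{and}\quad x_{k-1}:=F_k(x_k) \text{ for } k=n,\dots,1,$$
so that $x_0=F_1\circ\cdots\circ F_n(x)$. The key step is to show by descending induction on $k$ the pair of statements
$$x_k\in K_{\rho,\eta/2} \quad\text{and}\quad \lvert x_k-x\rvert\le \sum_{j=k+1}^{n} \frac{1}{\rho'_j}.$$
The case $k=n$ is trivial. For the inductive step, assume both statements hold for $x_k$. Since $x_k\in K_{\rho,\eta/2}$, the hypothesis on $F_k$ gives $\lvert x_{k-1}-x_k\rvert<1/\rho'_k$. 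The triangle inequality then yields
$$\lvert x_{k-1}-x\rvert<\sum_{j=k}^{n}\frac{1}{\rho'_j}\le \frac1m\sum_{j\ge1}2^{-j}=\frac1m\le \frac{\eta}{2}.$$
Because $K_{\rho,\eta/2}$ contains the $\eta/2$-neighbourhood of $K_\rho$ (by its product description $\T_{\rho+\eta/2}\times Q_{\rho+\eta/2}$, using a sup-type distance on the product, or after harmlessly shrinking constants), and $x\in K_\rho$, we get $x_{k-1}\in K_{\rho,\eta/2}$. This closes the induction; the sum is only over the indices actually applied, so no statement is needed beyond $x_0$.

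At $k=0$ we obtain $\lvert F_1\circ\cdots\circ F_n(x)-x\rvert<1/m\le 1/\rho$, uniformly in $x\in K_\rho$. Since $K_\rho$ is compact, the supremum of this continuous function is attained and is therefore also strictly less than $\rho^{-1}$, which gives the claim. The only point requiring care is the geometric fact that a point within distance $\eta/2$ of $K_\rho$ lies in $K_{\rho,\eta/2}$ for the chosen metric on $\C/\Z\times\C$. I expect this to be the sole (mild) obstacle, and it is settled directly from the explicit rectangles defining $\T_{\rho}$ and $Q_\rho$.
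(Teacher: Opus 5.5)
Your argument is correct and complete. The paper gives no proof of its own here, only a citation of Berger's Lemma 3.15, and your descending induction is the standard proof of that statement: the partial orbit of $x\in K_\rho$ stays within $\sum_{j=1}^{n} 1/\rho'_j<1/\max\{2/\eta,\rho\}\le\eta/2$ of $x$, hence inside $K_{\rho,\eta/2}$ where each $F_k$ is controlled, and the same bound gives $<1/\rho$ at the end.

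One correction to your parenthetical. The containment of the $\eta/2$-neighbourhood of $K_\rho$ holds for $\T_{\rho+\eta/2}\times Q_{\rho,\eta/2}$, which is what the paper means by $K_{\rho,\eta/2}$ (it calls it the $\eta/2$-neighbourhood). It fails for the literally printed $\T_{\rho+\eta/2}\times Q_{\rho+\eta/2}$, whose inner edges stay at $\Re(y)=\pm1$, so points of $K_\rho$ pushed slightly inward would escape it. Appeal to the $\eta/2$-neighbourhood description, not to that formula.
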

To use this result, we first choose a suitable $\eta>0$ after defining $\phi$. 
Then, when we apply \cref{an:thm_approx}, we consider $\Psi \in \sympan{\rho_1,\eta}{\A_\infty}$ and $G \in \sympan{\rho_2,\eta}{\A_\infty}$ with $\rho_1$ and $\rho_2$ sufficiently large to apply the lemma.

This proves the case $\gamma \subset \A \setminus \partial \A$ and $\supp(f) \subset A(\gamma)$. The other cases, are consequences of \cref{rk:gam_bord}.
\end{proof}

\begin{proof}[Proof of Theorem \ref{an:thm_main_approx}]
\textbf{\underline{Case $\gamma \subset \A \setminus \partial \A$ and $\supp(f) \subset A(\gamma)$:}}

Let $\gamma \in \Gamma_2(\A)$ and $f \in \symps{\infty}{\A}$ be such that $\gamma \subset \A \setminus \partial\A$ and $\supp(f) \subset A(\gamma)$. 
As $f$ is compactly supported in $\A \setminus \partial\A$, we also denote its canonical extension to $\T \times \R$ by $f$.

Let $\mathcal{U}$ be a neighbourhood of $\rest{f}{\A \setminus \gamma}$ in the smooth compact-open topology of  $C^\infty(\A \setminus \gamma,\T\times \R)$ and let $\rho>1$. Let us build the desired biholomorphism $F$.

First, we conjugate $f$ by a map $\phi$ that straightens $\gamma$. By \cref{an:lemma_symp_bi}, we choose $\phi \in \symps{\infty}{\A}$ and $\gamma' := \T \times \croset{y_-,y_+} \in \Gamma_2(\A)$ such that $\phi(\gamma') = \gamma$. Let $\eta>0$ be such that $\phi$ is compactly supported in $\A(3\eta)$ and $3\eta < d(\gamma,\partial\A)$, in particular both $\gamma$ and $\gamma'$ are contained in $\A(3\eta)$. See \cref{fig:conjugacy}.\\

\begin{figure}[htb]
    \centering
    \includegraphics[width = 0.55\textwidth]{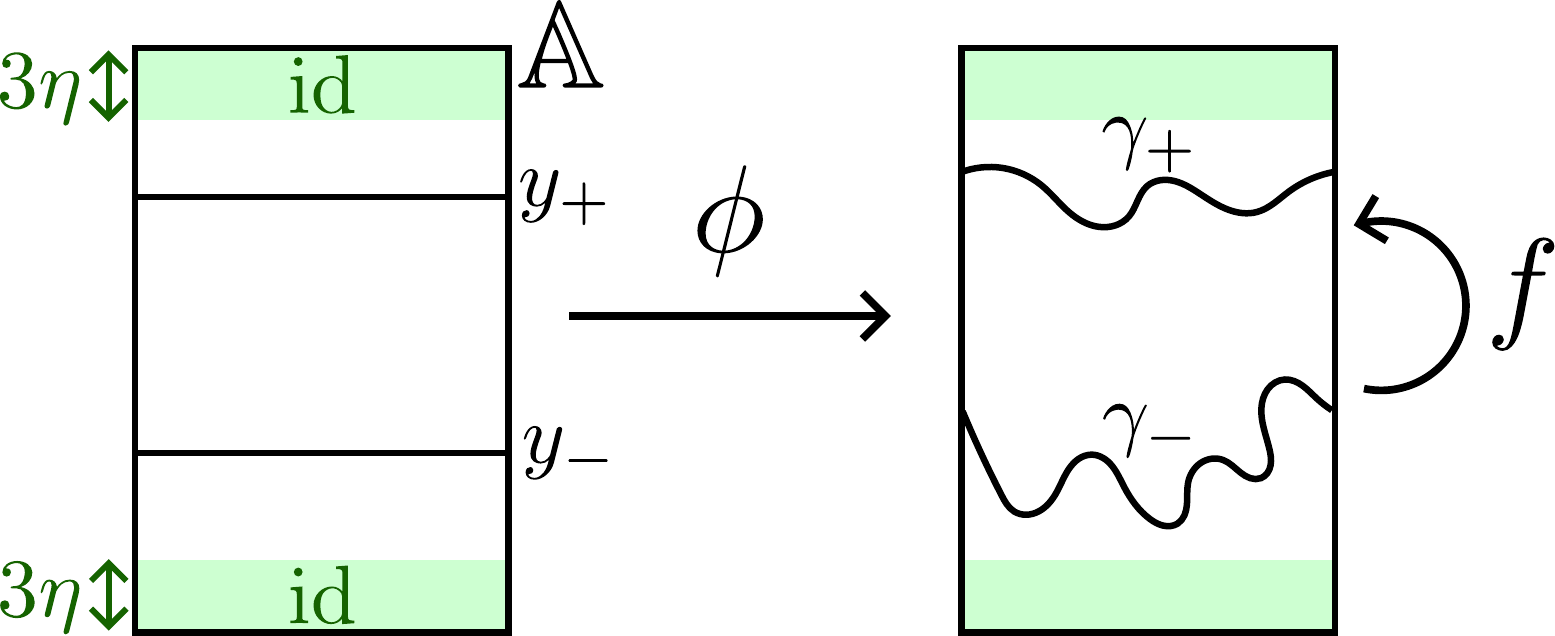}
    \caption{Symplectomorphism $\phi$ straightening the bicurve.}
    \label{fig:conjugacy}
   \end{figure}
Next, let us approximate $\phi$ by an analytic map. Define $\gamma^\eta := \T \times \croset{-1+2\eta,1-2\eta}$ and observe that $\phi$ is compactly supported in $A(\gamma^\eta)$ and $\eta < d(\gamma^\eta,\partial\A)$. Then, for any $\mathcal{V}$ small neighbourhood of $\rest{\phi}{\A\setminus \gamma^\eta}$ in the smooth compact-open topology of $C^\infty(\A \setminus \gamma^\eta,\T\times \R)$, and any $\rho_1 > 1$ large, by \cref{an:thm_approx} there exists a biholomorphism $\Psi \in \sympan{\rho_1,\eta}{\A_\infty}$ such that $\rest{\Psi}{\A\setminus \gamma^\eta}$ belongs to $\mathcal{V}$. Let us define the conjugate
$$g := \conj{\Psi}{f}.$$
Now, we would like to apply \cref{an:thm_approx} on $g$. To this end we need $g$ to be compactly supported in $A(\gamma')$, and we know that $f$ is compactly supported in $A(\gamma)$. Therefore, because $\gamma' \subset \A \setminus \gamma^\eta$ and $\phi(A(\gamma')) = A(\gamma)$, we choose the neighbourhoods $\mathcal{V}$ of $\rest{\phi}{\A\setminus \gamma^\eta}$ to be sufficiently small so that $\supp(f) \subset \Psi(A(\gamma'))$. Hence $g$ is compactly supported in $A(\gamma')$. See \cref{fig:approx_phi}.

\begin{figure}
\begin{subfigure}{0.48\textwidth}
    \centering
    \includegraphics[width = \linewidth]{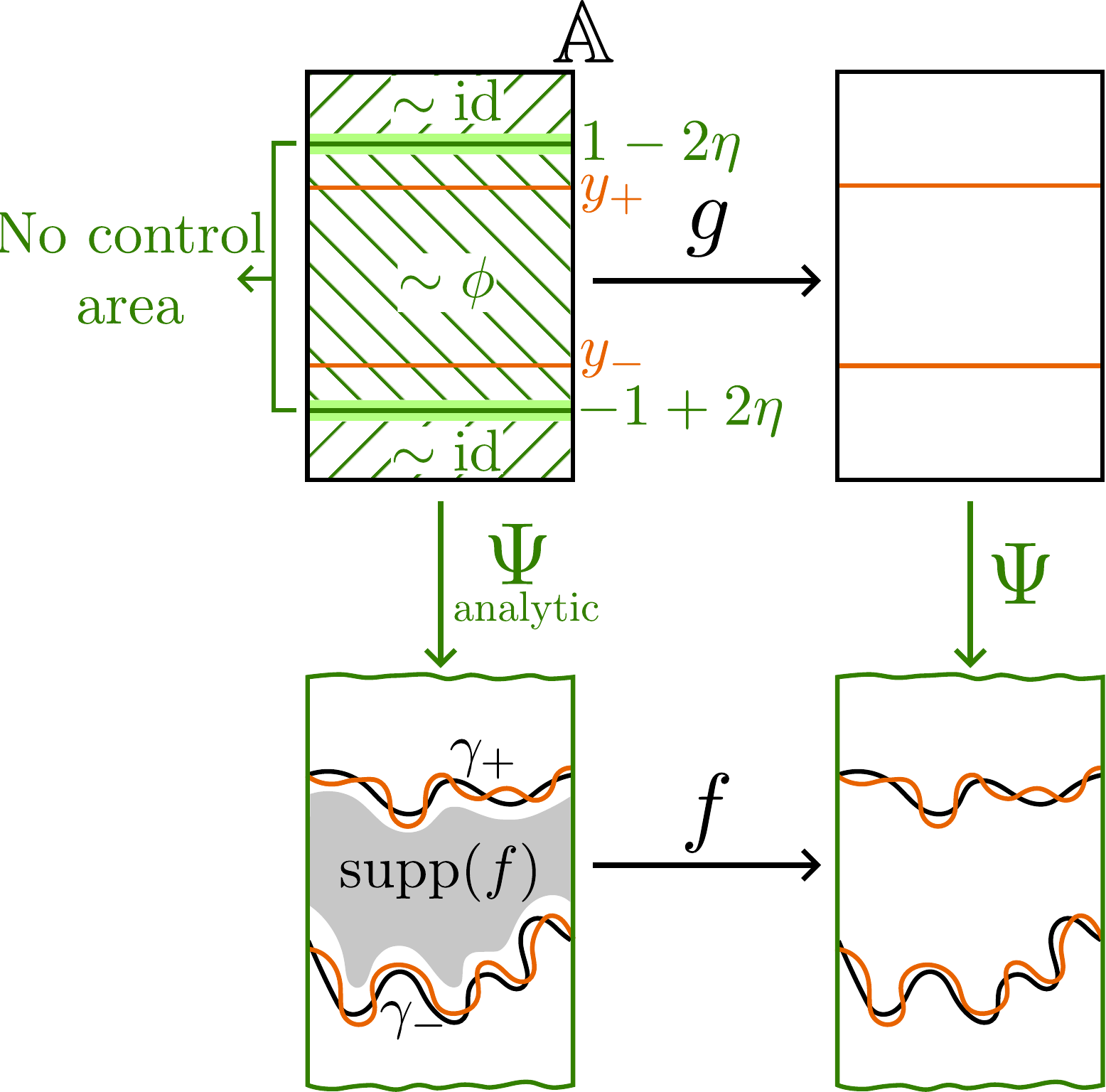}
    \caption{Analytic approximation of $\phi$ by $\Psi$.}
    \label{fig:approx_phi}
\end{subfigure}
 \begin{subfigure}{0.48\textwidth}
    \centering
    \includegraphics[width = \linewidth]{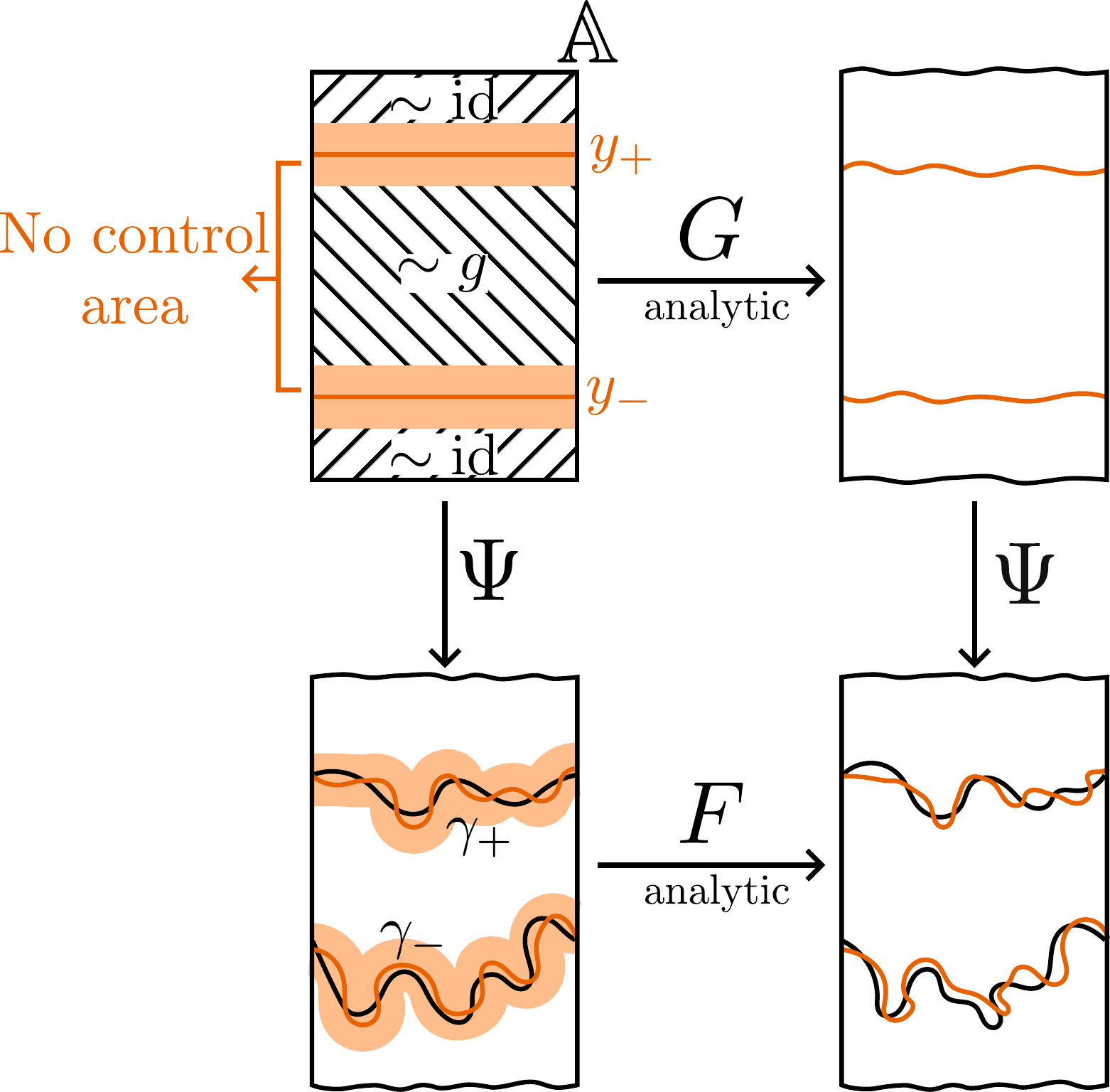}
    \caption{Analytic approximation of $g$ by $G$.}
    \label{fig:an_ap_conj}
\end{subfigure}
\end{figure}

Next, let us define the neighbourhood $\mathcal{W}$ of $\rest{g}{\A \setminus \gamma'}$ to apply \cref{an:thm_approx}.

Up to reducing the neighbourhood $\mathcal{V}$, we can assume that $\gamma'$ is close enough to $\Psi^{-1}(\gamma)$ so that there exists a neighbourhood $\mathcal{U}'$ of $\rest{g}{\T \times \R \setminus \gamma'}$ in the smooth compact-open topology of $C^\infty(\T \times \R \setminus \gamma',\T\times \R)$ such that the following holds:
\begin{equation}\label{eq:U'_thm}
\forall G: \T \times \R \to \T \times \R \, , \, \rest{G}{\T \times \R \setminus \gamma'} \in \mathcal{U}' \Longrightarrow  \rest{\iconj{\Psi}{G}}{\A \setminus \gamma} \in \mathcal{U}.
\end{equation}
Moreover, as $\rest{g}{\T \times \R \setminus \A} = \id$, there exists a neighbourhood $\mathcal{W}$ of $\rest{g}{\A \setminus \gamma'}$ in the smooth compact-open topology of $C^\infty(\A \setminus \gamma',\T\times \R)$ such that, for any $\rho_2>1$ sufficiently large, the following holds by Cauchy's estimates in $K_{\rho_2}$:
\begin{equation}\label{eq:W_thm}
\forall G\in \sympan{\rho_2}{\A_\infty} \, , \, \rest{G}{\A \setminus \gamma'} \in \mathcal{W} \Longrightarrow  \rest{G}{\T \times \R \setminus \gamma'} \in \mathcal{U}'.
\end{equation}

Now, because $g$ is compactly supported in $A(\gamma')$ and $\eta < d(\gamma',\partial\A)$, \cref{an:thm_approx} yields, for any $\rho_2 >1$ sufficiently large, a biholomorphism $G \in \sympan{\rho_2,\eta}{\A_\infty}$ such that $\rest{G}{\A\setminus \gamma^\eta}$ belongs to $\mathcal{W}$. Let us consider the biholomorphism
\begin{equation}\label{eq:def_F}
F := \iconj{\Psi}{G} \in \sympan{}{\A_\infty}.
\end{equation}
See \cref{fig:an_ap_conj}.\\

Since $\rest{G}{\A \setminus \gamma'}$ belongs to $\mathcal{W}$ and $G\in \sympan{\rho_2}{\A_\infty}$, $\rest{G}{\T \times \R \setminus \gamma'}$ belongs to $\mathcal{U}'$ by \cref{eq:W_thm}. Thus, by \cref{eq:U'_thm,eq:def_F}, $\rest{F}{\A \setminus \gamma}$ belongs to $\mathcal{U}$. It now remains to show that $F$ belongs to $\sympan{\rho}{\A_\infty}$ by using \cref{lem:conj_ham}.

Let $\rho_k' := 2^k \max \croset{2/\eta , \rho}$ for $k \in \croset{1,2,3}$. First, we choose $\rho_1$ sufficiently large such that $\Psi \in \sympan{\rho_1,\eta}{\A_\infty}$ implies that $\sup_{x \in K_{\rho, \eta/2}} d(\Psi(x),x) < 1/\rho_1'$ and $\sup_{x \in K_{\rho, \eta/2}} d(\Psi^{-1}(x),x) < 1/\rho_3'$ by continuity of the inversion. Next, we choose $\rho_2$  sufficiently large so that $G \in \sympan{\rho_2,\eta}{\A_\infty}$ implies that $\sup_{x \in K_{\rho, \eta/2}} d(G(x),x) < 1/\rho_2'$. Therefore, by \cref{lem:conj_ham}, the composition $F = \iconj{\Psi}{G}$ belongs to $\sympan{\rho}{\A_\infty}$. This concludes the proof in the case $\gamma \subset \A \setminus \partial \A$.\\

\textbf{\underline{Case $\gamma \cap \partial \A \neq \emptyset$ or $\supp(f) \subset A(\gamma)$:}}\\

In view of \cref{rk:gam_bord}, there exist $\kappa>0$ and a bicurve $\gamma'$ inside $B(\gamma,\kappa) \setminus (\partial\A \cup \supp(f))$ so that $\gamma' \subset \A \setminus \partial \A$, in particular $\supp(f) \subset A(\gamma')$. 
Moreover, there exists a neighbourhood $\mathcal{U}'$ of $\rest{f}{\A \setminus \gamma'}$ in the smooth compact-open topology of $C^\infty(\A \setminus \gamma',\T\times\R)$ such that the following holds:
$$\forall F : \T\times \R \to \T \times \R \, , \, \rest{F}{\A \setminus \gamma'} \in \mathcal{U}' \Longrightarrow \rest{F}{\A \setminus \gamma}\in\mathcal{U}.$$
We therefore apply the theorem to $\gamma'$ and this neighbourhood $\mathcal{U}'$ which yields the desired biholomorphism $F \in \sympan{\rho}{\A_\infty}$ such that $\rest{F}{\A \setminus \gamma}\in \mathcal{U}$.

\end{proof}

\subsection{Complexification and structures of $\M$}\label{sec:complex}

Before applying this approximation theorem to deform complex structures, we first introduce the complexification of our surfaces and clarify the notion of complex structure used in this setting.

\subsubsection{Complexification of $\M \in \croset{\A ,\Di, \Sp}$}

\paragraph{Complexification of the cylinder $\A$.}

We consider the following complexifications for $\rho >1$:
\newglossaryentry{07A1}{sort = {1Space}, name = {$\A_\rho$, $\A_\infty$, $\check{\A}_\rho$}, description = {Complexifications of $\A$ and $\check{\A}$}}
\glsadd{07A1}
\newglossaryentry{07A2}{sort = {1Space}, name = {$\tilde{\A}$}, description = {2-torus with a $\Z_2$-structure}}
\glsadd{07A2}
\newglossaryentry{07A3}{sort = {1Space}, name = {$\tilde{\A}_\rho$}, description = {Complexifications of $\tilde{\A}$}}
\glsadd{07A3}
$$ \A_\infty := \C/\Z \times \C, \hspace{1cm} \A_\rho := \croset{(\theta,y) \in \A_\infty, \lvert e^{i\theta} \rvert^2 + \lvert e^{-i\theta} \rvert^2 + \lvert y \rvert ^2 \leq 3\rho},$$
$$\check{\A}_\rho := \croset{(\theta,y) \in \A_\rho , \lvert \Re(y) \rvert < 1}.$$

Then, we recall that $\A$ is the quotient of $\tilde{\A} := \T \times \R/4\Z$ by the involution 
$$\tau : (\theta,y) \mapsto (\theta,2-y).$$
This involution extends holomorphically to $\C/\Z \times \C/4\Z$ and we define the complexification $\tilde{\A}_\rho := p(\A_\rho) \cup \tau \circ p(\A_\rho)$ where $p$ is the projection $\A_\infty \rightarrow \C/\Z \times \C/4\Z$. We notice that $\tilde{\A}_\rho$ is invariant under $\tau$.\\

Then, we endow $\A$ and $\tilde{\A}$ with the symplectic form $\Omega := \frac{1}{2}d\theta \wedge dy$. It extends holomorphically to a holomorphic form $\Omega_0$ on $\A_\infty$ and $\C/\Z \times \C/4\Z$. We observe that $\tau^* \Omega_0 = -\Omega_0$, 
therefore if we fix the group homomorphism $\Gamma : \tau^i \mapsto (-1)^i$, then for every $i$ we have $(\tau^i)^*\Omega_0 = \Gamma(\tau^i)\Omega_0$. This endows $(\tilde{\A}_\rho,\Omega_0)$ with a $\Z_2$-structure.\\

\paragraph{Complexification of the sphere $\Sp$.}

We consider the following complexifications for $\rho >1$:
\newglossaryentry{07Sp}{sort = {1Space}, name = {$\Sp_\rho$, $\Sp_\infty$, $\check{\Sp}_\rho$}, description = {Complexifications of $\Sp$ and $\check{\Sp}$}}
\glsadd{07Sp}
$$ \Sp_\infty := \croset{z \in \C^3, \isum{i=1}{3}z_i^2 =1}, \hspace{1cm} \Sp_\rho := \croset{z \in \Sp_\infty, \isum{i=1}{3}\lvert z_i \rvert \leq \rho},$$
$$\check{\Sp}_\rho := \croset{z \in \Sp_\rho , \lvert \Re(z_3) \rvert < 1}.$$

By analytically extending the square root to $\C\setminus \R^-$, we extend the axial projection of the cylinder onto the sphere into the following biholomorphism:
$$\begin{array}{rrcl}
\pi : &\check{\A}_\infty &\rightarrow &\check{\Sp}_\infty\\
&(\theta,y) &\mapsto &\left( \sqrt{1-y^2}\cos(2\pi \theta) , \sqrt{1-y^2}\sin (2\pi \theta ),y \right)
\end{array},$$
with inverse $z \mapsto \left( \frac{1}{2i\pi} \log \left( \frac{z_1 +iz_2}{\sqrt{1-z^2_3}} \right),z_3 \right)$.\\

However, since $\rest{\pi}{\check{\A}}$ is a symplectomorphism and the symplectic form of $\check{\A}$ extends holomorphically to a symplectic form $\Omega_0$ on $\check{\A}_\infty$, then $\pi$ extends the symplectic form of $\Sp$ to $\check{\Sp}_\infty$ holomorphically. By symmetry, it also extends holomorphically to any image of $\check{\Sp}_\infty$ by an element of $SO_3$. As $SO_3(\check{\Sp}_\infty)  = \Sp_\infty$  and $\Sp_\infty$ is simply connected, being diffeomorphic to $T\Sp$, the symplectic form $\Omega$ of $\Sp$ extends holomorphically to $\Sp_\infty$ as a holomorphic $2$-form denoted $\Omega_0$.\\

\paragraph{Complexification of the disk $\Di$.} 

We consider the following complexifications of the disk for $\rho>1$.
\newglossaryentry{08B}{sort = {1Space}, name = {$\B_\rho$}, description = {Complex ball in $\C^3$}}
\glsadd{08B}
\newglossaryentry{08D}{sort = {1Space}, name = {$\check{\Di}_\rho$}, description = {Complexifications of $\check{\Di}$}}
\glsadd{08D}
$$\B_\rho := \croset{ z \in \C^2 : \isum{i=1}{2} \lvert z_i \rvert^2 \leq \rho} \quad\text{and}\quad \check{\Di}_\rho := \croset{ z \in \B_\rho \ : \ 0 < \Re(z_1^2 + z_2^2 ) < 1}.$$

On these domains, the canonical symplectic form $\Omega := \frac{1}{\pi} dx_1 \wedge dx_2$ of $\Di$ extends holomorphically into $\Omega_0 := \frac{1}{\pi}dz_1 \wedge dz_2$.\\

We also holomorphically extend the polar coordinates as follows:

$$\begin{array}{rrcl}
\pi : &\croset{ (\theta,y) \in \A_\infty \, : \, \Re(y) > -1} &\rightarrow &\croset{ z \in \C^2 \, : \, \Re(z_1^2 + z_2^2) > 0}\\
&(\theta,y) &\mapsto & \sqrt{\frac{y+1}{2}}\left( \cos(2\pi \theta) , \sin (2\pi \theta ) \right)
\end{array}.$$

Whose inverse is the map $z \mapsto \left( \frac{1}{2i\pi} \log \left( \frac{z_1 + iz_2}{\sqrt{z_1^2 + z_2^2}} \right)	 , 2(z_1^2 + z_2^2) -1 \right)$.
In particular $\pi$ sends $\check{\A}_\infty$ onto $\check{\Di}_\infty$.\\

We now describe the $\Z_2$-structure on $\Di$. Let $U$ be the neighbourhood of $\partial \Di$ defined by:

$$U := \croset{x \in \R^2 \, : \, 0 < x_1^2 + x_2^2 < 2}.$$

We observe that the inverse image of $U$ by $\pi$ is $\T \times (-1,3)$, which is left invariant by the involution $\tau$. Thus conjugating $\tau$ by $\pi$ yields the following involution on $U$:

$$\psi := x \in U \mapsto \sqrt{2 - \Vert x \Vert^2}\frac{x}{\Vert x \Vert}.$$

Moreover, as $\pi$ is symplectic and $\tau$ anti-symplectic, it also satisfies $\psi^* \Omega = -\Omega$. Observe also that $\psi$ leaves $\partial \Di$ invariant.\\

Then, the real analytic symplectic structure on $\Di \cup U$ induces canonically a real analytic symplectic structure on the following quotient (see \cite[Cor. 2.4]{berger_coexistence_2023}):
\newglossaryentry{09D}{sort = {1Space}, name = {$\tilde{\Di}$}, description = {Symplectic sphere with a $\Z_2$-structure}}
\glsadd{09D}

$$\tilde{\Di} := (\Di \cup U) \times \Z_2 / \sim,$$

where $\sim$ is an equivalence relation defined such that for $x$ in $U$ we have $(x,i) \sim (\psi(x),i+1)$. We observe that $\tilde{\Di}$ is a symplectic sphere whose quotient by the involution $\tau : (x,i) \mapsto (x,i+1)$ is the disk $\Di$ (we use the same notation $\tau$ for the cylinder and the disk). As for the cylinder, we observe that $\tau^*\Omega = -\Omega$, this provides a $\Z_2$-structure to $(\tilde{\Di},\Omega)$.\\

Likewise, we propose a complexification of $\tilde{\Di}$. Once again we analytically extend $\sqrt{\cdot}$ to $\C \setminus \R_-$ in order to extend $\psi$ analytically as follows:
$$\Psi : z \in \croset{(z_1,z_2) \in \C^2 \, : \, 0 < \Re( z_1^2 + z_2^2) < 2} \mapsto z \frac{\sqrt{2 - z_1^2 -z_2^2}}{\sqrt{z_1^2 + z_2^2}}.$$

This map is still an involution. We define
\newglossaryentry{10D}{sort = {1Space}, name = {$\Di_\rho$, $\tilde{\Di}_\rho$}, description = {Complexifications of $\Di$ and $\tilde{\Di}$}}
\glsadd{10D}
$$\Di_\rho = \croset{z \in \B_\rho \, : \, \Re(z_1^2 + z_2^2) <2} \quad \text{and} \quad \U_\rho := \croset{z \in \B_\rho \, : \, 0<\Re(z_1^2 + z_2^2) <2}.$$

We obtain the following complex extension of $\tilde{\Di}$:
$$\tilde{\Di}_\rho := \Di_\rho \times \Z_2 / \sim \; \text{ with} \; (z,i) \sim (\Psi(z),i+1) \text{ when } z\in \U_\rho.$$
The involution $\tau$ also extends to $\tilde{\Di}_\rho$ and satisfies $\tau^* \Omega_0 = -\Omega_0$. Hence we have a $\Z_2$-structure on $(\tilde{\Di}_\rho , \Omega_0)$.

\subsubsection{Complex and symplectic structure}

In this section, we give definitions and examples of complexifications and structures on a complex manifold. Then, we state results from \cite[Section 5]{berger_analytic_2024} that are used to prove the AbC$^\star$ principle.\\

\newglossaryentry{01W}{sort = {5complex}, name = {$\V$, $\W$}, description = {Complex manifolds}}
\glsadd{01W}
Let $\V$ be a complex manifold, we denote by $T\V$ its complex tangent bundle and $T^\R \V$ its underlying real tangent bundle. 
The multiplication by $i$ on $T\V$ induces an automorphism $J_0$ from $T\V$ to itself satisfying $J_0^2 =-1$, this defines a \underline{\emph{complex structure}} of $\V$.

\begin{remark}
$\C^k$, $\A_\infty$, $\tilde{\A}_\infty$, $\Di_\infty$, $\tilde{\Di}_\infty$ and $\Sp_\infty$ have canonical structures of complex manifolds denoted indistinctly by $J_0$.
\end{remark}
\vspace{-0.6cm}
\newglossaryentry{02J}{sort = {5complex}, name = {$J$}, description = {Complex structure of a complex manifold}}
\glsadd{02J}
\begin{defin}
A smooth section $J$ of $T^\R\V^* \otimes T^\R\V$ such that $J^2 = -1$, as a linear map from $T^\R \V$ to itself, is called \underline{an almost complex structure} on $\V$.\\
Such a structure is called \underline{integrable} if there is a $C^\infty$ atlas $(\phi_\alpha)_\alpha$ of $\V$ such that $D\phi_\alpha \circ J = i D\phi_\alpha$. In this case the coordinate changes are holomorphic.
\end{defin}

\begin{remark}
Given an integrable complex structure on $\V$, we deduce a complex structure on $\V$ and vice versa. Therefore we identify the integrable complex structures with the complex structures.
\end{remark}

\begin{example}
For every embedding $h : \W \hookrightarrow \V$, the section $h^*J_0 = Dh^{-1} \circ J_0 \circ Dh$ is a complex structure on $\W$.
\end{example}

We endow the space of complex structures on $\V$ with the topology of smooth sections of $T^\R \V^* \otimes T^\R \V$.\\

We then define the notion of real analytic structure and complexification.
\newglossaryentry{03M_bul}{sort = {5complex}, name = {$(\M_\bullet ,J)$}, description = {Complexification of a real manifolds $\M$}}
\glsadd{03M_bul}
\newglossaryentry{04sigma}{sort = {5complex}, name = {$\sigma$}, description = {Real structure of a complex manifold $(\M_\bullet,J)$}}
\glsadd{04sigma}
\begin{defin}
A \underline{real analytic structure} on a manifold is a maximal atlas $\mathcal{A}^\omega = (\phi_\beta)_\beta	$ where coordinate changes are real analytic.\\

A \underline{complexification} of a real analytic manifold $\M$ is a complex manifold $(\M_\bullet ,J)$ such that $\M \subset \M_\bullet$ and for all $z$ in $\M$, there exists a $J$-holomorphic chart $\psi_\alpha$ of a neighbourhood $V_\alpha$ of $z$ in $\M_\bullet$ such that the restriction of $\psi_\alpha$ to $V_\alpha \cap \M$ is a real analytic chart for $\M$.\\

A \underline{real structure} on a complex manifold $(\M_\bullet , J)$ is an involution $\sigma$ from $\M_\bullet$ to itself which is anti-holomorphic with respect to $J$, i.e. $\sigma^* J = -J$.
\end{defin}

In our situation, we endow $\M_\bullet \in \croset{\A_\rho , \Sp_\rho, ...}$ with the canonical real structure:
$$\sigma : z =(z_j)_j \mapsto \bar{z} = (\bar{z_j})_j.$$

For $J$ a complex structure on $\M_\rho$ such that $\sigma$ is anti-holomorphic with respect to $J$, there exists a real analytic structure on $\M$ for which $(\M_\rho , J)$ is a complexification (see \cite[Proposition 5.4]{berger_analytic_2024} for a statement of this result). Hence we can define the following.

\begin{defin}\label{def:comp}
A complex structure $J$ on $\M_\rho$ such that $\sigma^*J = -J$ is called a \underline{ complexification of} \underline{a real analytic structure} on $\M$.
\end{defin}

We have the notable following example used for the principle.

\begin{example}
If $h : \M_\rho \hookrightarrow \M_\infty$ is a smooth embedding that commutes with $\sigma$, then $(\M_\rho, h^*J_0)$ is a complexification of a real analytic structure of $\M$.
\end{example}

We now incorporate symplectic forms.

\begin{defin}
Let $\M$ be a real surface endowed with a smooth symplectic form $\Omega$ and $(\M_\bullet,J)$ be a complexification of $\M$. We say that $\Omega$ \underline{extends $J$-holomorphically} to $\M_\bullet$ if there is a $J$-holomorphic extension $\Omega_\bullet$ of $\Omega$ to $\M_\bullet$. That is $\rest{\Omega_\bullet}{T\M} = \Omega$ and in any $J$-holomorphic charts, the form $\Omega_\bullet$ is of the form $\Psi(z_1,z_2) dz_1 \wedge dz_2$ for a holomorphic function $\Psi$.
\end{defin}

As discussed earlier in this section, when $\M \in \croset{\A , \Di , \tilde{\A}, \tilde{\Di} ,\Sp}$ we denote by $\Omega_0$ the canonical $J_0$-holomorphic extension to $\M_\infty$ of the symplectic form $\Omega$ on $\M$.\\

We endow the space of symplectic $2$-forms on a compact manifold with the smooth topology.

We can finally extend the last example by adding the symplectic form.

\begin{example}\label{ex:holo_pull}
If $h : \M_\rho \hookrightarrow \M_\infty$ is a smooth embedding commuting with $\sigma$ whose restriction to $\M$ is symplectic, then $\Omega$ extends holomorphically with respect to $h^*J_0$ to $\M_\rho$.
\end{example}

Now, let us introduce the main results that we will use. The first one is on the convergence of complex structures and symplectic forms and comes from \cite[Theorem 5.3 and 5.10]{berger_analytic_2024}.

\begin{theorem}\label{thm:lim_almost_comp}
Let $(\M, \Omega)$ be a symplectic compact real surface, and $(\M_\rho,J_n)_n$ be a sequence of complexification of $\M$ such that $(J_n)_{n\in \N}$ converges to an almost complex structure $J$. Then $J$ is integrable and $(\M_\rho ,J)$ is a complexification of $\M$.\\
Assume that $\Omega$ has a $J_n$-holomorphic extension $\Omega_n$ on $\M_\rho$ such that $(\Omega_n)_n$ converges to a 2-form $\Omega'$. Then $\Omega'$ is a $J$-holomorphic extension of $\Omega$ to $\M_\rho$.  
\end{theorem}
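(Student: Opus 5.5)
The plan is to argue locally, since integrability is a local property and $\M_\rho$ is compact. The two claims are handled separately: integrability of $J$ (and the fact that $\sigma$ is $J$-antiholomorphic, so that $J$ is a complexification in the sense of \cref{def:comp}), and then holomorphy of $\Omega'$.

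\textbf{Step 1: $J$ is integrable.} Each $J_n$ is integrable, so its Nijenhuis tensor vanishes:
$$N_{J_n}(X,Y)=[J_nX,J_nY]-J_n[J_nX,Y]-J_n[X,J_nY]-[X,Y]=0 .$$
The Nijenhuis tensor depends only on $J$ and its first derivatives. Since $J_n\to J$ in the smooth topology, $N_J=\lim_n N_{J_n}=0$, and $J^2=-1$ also passes to the limit. By the Newlander--Nirenberg theorem, $J$ is integrable. The relation $\sigma^*J_n=-J_n$ is closed under smooth limits, so $\sigma^*J=-J$. It then follows, as recalled before \cref{def:comp} via \cite[Proposition 5.4]{berger_analytic_2024}, that $(\M_\rho,J)$ is a complexification of a real analytic structure on $\M$.

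This step needs one caveat: $\M_\rho$ has boundary. I therefore work on the interior, which contains $\M$, so nothing is lost. Newlander--Nirenberg is local, so holomorphic charts exist near each point of the interior.

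\textbf{Step 2: $\Omega'$ is $J$-holomorphic.} In complex dimension two, a $2$-form $\beta$ is a holomorphic $(2,0)$-form for $J$ exactly when two conditions hold:
\begin{enumerate}
\item it is of type $(2,0)$, i.e. $\beta(JX,Y)=i\,\beta(X,Y)$ for all $X,Y$, a pointwise algebraic condition;
\item $d\beta=0$.
\end{enumerate}
Condition 2 suffices for the second requirement because, for a $(2,0)$-form on a complex surface, $d\beta=\bar\partial\beta$. Both conditions are closed under smooth convergence of the pair $(J_n,\Omega_n)$, so $\Omega'$ satisfies them for $J$. Finally, $\rest{\Omega_n}{T\M}=\Omega$ for all $n$ passes to the limit, giving $\rest{\Omega'}{T\M}=\Omega$.

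\textbf{Main obstacle.} I expect the hardest point to be the appeal to Newlander--Nirenberg and its compatibility with the real structure. The Nijenhuis and $(2,0)$ computations are routine; the delicate part is producing $J$-holomorphic charts near $\M$ whose restrictions to $\M$ are real analytic charts. To get this, I will first use $\sigma^*J=-J$ to choose holomorphic coordinates $\psi$ near each $z\in\M$ with $\psi\circ\sigma=\overline{\psi}$. Concretely, I average a holomorphic chart $\phi$ with $\overline{\phi\circ\sigma}$ and apply the inverse function theorem, using that $T_z\M$ is totally real for $J$; this total reality is itself a closed condition, inherited from the $J_n$ in the limit. The restriction of such a $\psi$ to $\M=\mathrm{Fix}(\sigma)$ is then a real chart, and the transition maps are real analytic.
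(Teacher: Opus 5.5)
The paper does not prove this statement itself: it quotes it from \cite[Theorems 5.3 and 5.10]{berger_analytic_2024}, and relies on \cite[Proposition 5.4]{berger_analytic_2024} to pass from a $\sigma$-antiholomorphic complex structure to a real analytic structure on $\M$. Your proposal is a correct, self-contained proof along the standard lines, and it establishes the statement.
\begin{itemize}
\item The conditions $N_J=0$ and $\sigma^*J=-J$ are closed under $C^1$ convergence, so Newlander--Nirenberg gives integrability on the interior of $\M_\rho$.
\item In complex dimension two, a holomorphic $2$-form is exactly a closed form of type $(2,0)$, since $\partial\beta$ is a $(3,0)$-form and vanishes.
\item Hence the identity $\Omega_n(J_nX,Y)=i\,\Omega_n(X,Y)$, the equation $d\Omega_n=0$ and the constraint $\rest{\Omega_n}{T\M}=\Omega$ all pass to the limit.
\end{itemize}
Compared with the citation, your argument makes explicit that Newlander--Nirenberg is the only nontrivial input; everything else is a closed condition preserved by the limit.

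One justification in your last paragraph should be corrected. Total reality of $T_z\M$ is an \emph{open} condition, not a closed one, so in general it is not inherited from the $J_n$ in the limit. It does hold here, but for a different reason: $J_z$ anticommutes with $D\sigma_z$, so it exchanges the $\pm1$ eigenspaces of $D\sigma_z$, and $T_z\M$ is the $+1$ eigenspace.

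Likewise, in the averaging step you must first compose $\phi$ with a complex linear map so that $D\phi_z(T_z\M)=\R^2$. With that normalization $D\psi_z=D\phi_z$. Without it, $\psi=\tfrac12(\phi+\overline{\phi\circ\sigma})$ can be degenerate: if $D\phi_z(T_z\M)=i\R^2$, then $D\psi_z$ vanishes on $T_z\M$, hence everywhere by $\C$-linearity.

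Finally, under the paper's definition of complexification (an integrable $J$ with $\sigma^*J=-J$), this chart construction is not needed beyond the citation of Proposition 5.4 that you already make.
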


We finish this section by a compatibility result between $J$-holomorphy, symplecticity, and real-analytic symplectic structures.\\

We consider $\rho > \rho' >1$ such that $(\M_{\rho'},J)$ is a smaller complexification.

\begin{defin}
A smooth function $f$ from $\M_{\rho'}$ to $\M_{\rho}$ is said to be a \underline{$J$-holomorphic extension} of a real map if $f(\M) \subset \M$ and $f$ is $J$-holomorphic, i.e. $Df\circ J = J \circ Df$.
\end{defin}

Here comes the result from \cite[Proposition 5.13 and Corollary 5.14]{berger_analytic_2024}.

\begin{theorem}\label{prop:real_an_symp}
Let $\rho > \rho' > 1$, $(\M_\rho , J)$ be a complexification of $\M  \in \croset{\A, \Di, \Sp}$. Assume that $\Omega$ extends $J$-holomorphically to $\M_\rho$. Let $f\in C^\infty (\M_{\rho'},\M_\rho)$ be a $J$-holomorphic extension of a real symplectomorphism of $\M$.

If $\M = \Sp$, then $\rest{f}{\M}$ is a real analytic symplectomorphism for a $C^\infty$-compatible real analytic symplectic structure.

If $\M = \A$ or $\Di$, assume that $J$ coincides with $J_0$ on $\M_\rho \setminus \M_{\rho'}$ and $f$ coincides with a rotation $R_\alpha$ on $\M_\rho \setminus \M_{\rho'}$. Then $\rest{f}{\M}$ is a real analytic $\Z_2$-symplectomorphism for a $C^\infty$-compatible structure.
\end{theorem}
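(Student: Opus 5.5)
Note that the statement is quoted from \cite[Proposition 5.13 and Corollary 5.14]{berger_analytic_2024}; the plan below is the route I would take to reprove it.

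The plan is to build an explicit real analytic symplectic atlas out of the $J$-holomorphic charts, and then check that $f$ is analytic in these charts. Since $(\M_\rho,J)$ is a complexification, each $z\in\M$ has a $J$-holomorphic chart $\psi_\alpha$ on a neighbourhood $V_\alpha\subset\M_\rho$ whose restriction to $V_\alpha\cap\M$ is real analytic. I first normalise these charts so they become symplectic. In a chart $\psi_\alpha$, the extension $\Omega_\bullet$ reads $\Psi(z_1,z_2)\,dz_1\wedge dz_2$ with $\Psi$ holomorphic. Because $\sigma$ is anti-holomorphic and fixes $\M$, we may take $\psi_\alpha$ to commute with complex conjugation. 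Then $\Psi$ is real on real points, and nonvanishing there since $\Omega$ is symplectic. I then apply a holomorphic Darboux–Moser argument: choose a primitive $\Psi_1$ with $\partial_{z_1}\Psi_1=\Psi$, real on real points, and set $\tilde\psi_\alpha:=(\Psi_1,z_2)\circ\psi_\alpha$. This new chart is $J$-holomorphic and real on $\M$, and it pulls $dz_1\wedge dz_2$ back to $\Omega_\bullet$. Restricted to $\M$, the $\tilde\psi_\alpha$ pull back $dx\wedge dy$ to $\Omega$. Their transition maps are restrictions of biholomorphisms that preserve $\R^2$, hence real analytic. This gives a real analytic symplectic atlas. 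It is $C^\infty$-compatible because $J$ is a smooth almost complex structure, so the charts are smooth diffeomorphisms.

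Next I check that $\rest{f}{\M}$ is analytic. In charts $\tilde\psi_\beta\circ f\circ\tilde\psi_\alpha^{-1}$, the map is a $J$-holomorphic map between open sets of $\C^2$ that sends real points to real points. Its restriction to $\R^2$ is therefore real analytic. Since $f$ is symplectic on $\M$, it preserves $\Omega$. This settles the case $\M=\Sp$, because $\Sp$ is closed and $\M_{\rho'}$ is a neighbourhood of $\Sp$ on which $f$ is defined.

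For $\M\in\{\A,\Di\}$ the atlas must also be a $\Z_2$-structure, i.e.\ charts near $\partial\M$ must lift to $\tau$-equivariant charts of $\tilde\M$. Here I use the hypotheses that $J=J_0$ and $f=R_\alpha$ on $\M_\rho\setminus\M_{\rho'}$. Near the boundary I keep the canonical $J_0$-structure coming from $\tilde\A_\rho$ or $\tilde\Di_\rho$. There $\tau$ extends holomorphically, and the standard $\Z_2$-equivariant charts $(\theta,y)\mapsto(\theta,y-1)$ (respectively via $\pi$ and $\psi$ for the disk) are already analytic and symplectic with $\tau_0$-equivariance. In the interior I use the charts from the first step. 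The two families agree on overlaps because $J=J_0$ there, so the transition maps are holomorphic and real. Finally, $f$ equals the rotation $R_\alpha$ near the boundary, and $R_\alpha$ commutes with $\tau$ and is analytic in the equivariant charts. So $\rest{f}{\M}$ is a $\Z_2$-symplectomorphism.

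The main obstacle I expect is the first step. One must check that the symplectic normalisation of the charts can be done holomorphically and compatibly with $\sigma$, so that $\Psi_1$ is real on $\M$. One must also check that the resulting charts cover $\M$ with domains on which $f$ is defined, which needs $\rho'$ large enough that $f(\M_{\rho'})$ stays in the chart domains near $\M$. I would handle this by shrinking to small neighbourhoods of real points and using continuity of $f$ together with $f(\M)\subset\M$.
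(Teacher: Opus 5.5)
Your argument for $\M=\Sp$ is sound and is the standard route. The paper does not prove this statement itself; it quotes Berger. The steps are: take $\sigma$-compatible $J$-holomorphic charts, apply the holomorphic Darboux normalisation $(z_1,z_2)\mapsto(\Psi_1,z_2)$ with $\Psi_1$ real on real points, and observe that $f$ is then holomorphic and real in these charts.

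The gap is in the case $\M\in\croset{\A,\Di}$. Your boundary step needs $J=J_0$ and $f=R_\alpha$ on an \emph{open neighbourhood} of $\partial\M$. Only then are the canonical charts $(\theta,y)\mapsto(\theta,y-1)$ $J$-holomorphic and glued holomorphically to your interior charts. The hypothesis does not give this.

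Read literally, $\M_\rho\setminus\M_{\rho'}$ is the outer shell of the complexification. It is disjoint from a neighbourhood of $\M$, and $f$ is not even defined there.

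In the form actually produced by \cref{thm:real_sphere} and used in the paper, $J=J_0$ and $f$ is a rotation only on $\M_\rho\setminus\check{\M}_\infty$. In cylinder coordinates this is the closed region $\croset{|\Re y|\ge 1}$, and $\partial\M$ lies on its frontier. This is not a technicality. The successive deformations are supported in $\croset{|\Re y|<1-\eta_n}$ with $\eta_n$ only decreasing. For schemes whose bicurves approach $\partial\M$, such as the one of \cref{lemma:finerg}, these supports accumulate on $\partial\M$.

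So the limit $J$ differs from $J_0$ at points of $\croset{|\Re y|<1}$ arbitrarily close to $\partial\M$, and $\rest{f}{\M}$ is a rotation only on $\partial\M$ itself. Consequently:
\begin{itemize}
\item your canonical boundary charts are not $J$-holomorphic;
\item their transitions with the interior $J$-charts are smooth but in general not analytic;
\item $f$ need not be analytic in them.
\end{itemize}
Your step-one charts centred at boundary points do give an analytic structure on $\M$. But nothing makes them lift to $\tau$-equivariant charts that are smooth on the double $\tilde{\M}$, since that imposes parity conditions on all normal derivatives along $\partial\M$.

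The missing idea is to reflect, and then run your step one equivariantly on the double. $J-J_0$ and $f-R_\alpha$ vanish on a closed set that is the closure of its interior and has $\croset{|\Re y|=1}$ in its frontier, so they vanish to infinite order there. This gives the doubled objects:
\begin{itemize}
\item $\tilde J:=J$ on $\croset{|\Re y|\le 1}$ and $\tau_*J$ on the reflected half is a smooth, $\tau$-invariant complex structure near the real double. It is integrable because the Nijenhuis tensor vanishes by continuity.
\item $\tilde f:=f$ on the first half, resp.\ $\tau\circ f\circ\tau$ on the reflected half, is $\tilde J$-holomorphic and commutes with $\tau$ and $\sigma$.
\item $\Omega_\bullet$ doubles to $\tilde\Omega_\bullet$ with $\tau^*\tilde\Omega_\bullet=-\tilde\Omega_\bullet$. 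This uses $\Omega_\bullet=\Omega_0$ on that region, which holds in the application because $\Omega_n=\Omega_0$ wherever $h_n=\id$.
\end{itemize}

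Now fix $p\in\partial\M$, a common fixed point of $\tau$ and $\sigma$. Take a $\tilde J$-holomorphic chart $\phi$ with $\phi\circ\sigma=\bar\phi$ and $D\phi_p\, D\tau_p\, D\phi_p^{-1}=\tau_0$. Replace it by $\tfrac12(\phi+\tau_0\circ\phi\circ\tau)$, which has the same derivative at $p$ and satisfies $\phi\circ\tau=\tau_0\circ\phi$. The coefficient $\Psi$ of $\tilde\Omega_\bullet$ is then even in $z_2$. Hence your primitive $\Psi_1=\int_0^{z_1}\Psi(s,z_2)\,ds$ keeps the Darboux chart $\tau_0$-equivariant and real. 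These are the required $\Z_2$-charts, and they are smooth on $\tilde{\M}$. In them $\tilde f$ is holomorphic, real and $\tau_0$-equivariant, which gives the conclusion.
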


\subsection{Approximation modulo deformation}\label{sec:approx_mod}

Now that we have an approximation theorem outside bicurves and the notion of complex structures, we can use this theorem to deform the both of them.\\
Here is the Approximation Modulo Deformation Theorem used to obtain the principle.

\begin{theorem}\label{thm:app_mod_def}
Let $\gamma \in \Gamma_2(\A)$, let $h_0$ be in $\sympo{\infty}{\A}$ such that $\rest{h_0}{O(\gamma)} = \id$, and let $\mathcal{U}$ be a neighbourhood of $\rest{h_0}{\A\setminus \gamma}$ in the smooth compact-open topology of $C^\infty(\A \setminus \gamma,\T\times \R)$. There exists $\eta > 0$ such that for every $\sigma$-invariant compact set $\W \subset \A_\infty$  with smooth boundary, any $C^\infty$ neighbourhood $\mathcal{W}_J$ of $\rest{J_0}{\W}$ and $\mathcal{W}_\Omega$ of $\rest{\Omega_0}{\W}$, there exists $h : \W \rightarrow \A_\infty$, a smooth diffeomorphism onto its image, that satisfies:
\begin{itemize}
\item[($\mathcal{P}1$)] $h(\A) = \A$, $\det \rest{Dh}{\A} = 1$, and $\rest{h^{-1}}{\A \setminus \gamma} \in \mathcal{U}$.
\item[($\mathcal{P}2$)] $h$ is supported in $\croset{(\theta,y) \in \W \, : \, \lvert \Re(y) \rvert < 1-\eta }$.
\item[($\mathcal{P}3$)] $h^* J_0$ belongs to $\mathcal{W}_J$ and $h^* \Omega_0$ belongs to $\mathcal{W}_\Omega$.
\item[($\mathcal{P}4$)] $h$ commutes with $\sigma$.
\end{itemize}
\end{theorem}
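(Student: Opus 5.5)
The plan is to follow Berger's proof of the approximation modulo deformation in \cite[§6]{berger_analytic_2024}, using \cref{an:thm_main_approx} in place of \cref{thm:approx_berger}. First I reduce to compactly supported maps. Since $h_0$ is the identity on $O(\gamma)\cup\partial\A$, I use \cref{fact:dens} on the open cylinder $A(\gamma)$ (symplectomorphic to a straight open cylinder by \cref{an:lemma_symp_bi}). This yields $f\in\symps{\infty}{\A}$, compactly supported in $A(\gamma)\cap\A(2\eta)$ for some $\eta>0$, with $\rest{f^{-1}}{\A\setminus\gamma}$ as close as we wish to $\rest{h_0^{-1}}{\A\setminus\gamma}$ on compacts of $\A\setminus\gamma$. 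Shrinking gives a neighbourhood $\mathcal{U}_1$ of $\rest{f}{\A\setminus\gamma}$ such that $\rest{G}{\A\setminus\gamma}\in\mathcal{U}_1$ implies $\rest{G^{-1}}{\A\setminus\gamma}\in\mathcal{U}$. This works because inversion is continuous for diffeomorphisms close to $f$ on compacts, and $f$ is the identity near $\gamma$ and near $\partial\A$. This $\eta$ is the one announced in the statement.

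Given $\W$, $\mathcal{W}_J$ and $\mathcal{W}_\Omega$, I pick $\rho$ large so that $\W\subset K_\rho\cup\A_\rho$ roughly, meaning the complex part of $\W$ away from the strip $\lvert\Re y\rvert<1-\eta$ is contained in the region where elements of $\sympan{\rho}{\A_\infty}$ are $\rho^{-1}$-close to the identity. I apply \cref{an:thm_main_approx} to $f$ (with the remark allowing symmetrisation under $\sigma$, e.g. replacing $F$ by a $\sigma$-real approximant: Berger's construction via real Fourier twists already commutes with $\sigma$). This gives $F\in\sympan{\rho}{\A_\infty}$, holomorphic, preserving $\Omega_0$ and $\T\times\R$, with $\rest{F}{\A\setminus\gamma}\in\mathcal{U}_1$, and $C^0$ (hence by Cauchy estimates $C^k$) close to the identity on $\W\cap\{\lvert\Re y\rvert\ge 1-\eta\}$ and near $\{\lvert\Re y\rvert\ge 1-2\eta\}$.

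Then I glue: I define $h^{-1}$ to equal $F$ on $\{\lvert \Re y\rvert\le 1-2\eta\}\cap\W$ and the identity on $\{\lvert\Re y\rvert\ge1-\eta\}$. On the transition band, $F$ is $C^k$-close to $\id$, so I interpolate through a generating-function or Hamiltonian interpolation, so that on the real part $\A$ it stays area-preserving and maps $\A$ to $\A$. Concretely, on the real transition band $F$ is a near-identity real symplectomorphism of the annulus, isotopic to $\id$ up to a flux-free correction. I write $F=\phi^1_H$ with $H$ small there and cut off $H$ by a bump in $\Re y$. I then extend to the complex transition region by the same formula, complexifying the bump as a function of $\Re y$ only. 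This gives ($\mathcal{P}1$), ($\mathcal{P}2$) and ($\mathcal{P}4$) because everything is $\sigma$-equivariant. For ($\mathcal{P}3$), note that $h$ is holomorphic and $\Omega_0$-preserving outside the gluing band, so $h^*J_0=J_0$ and $h^*\Omega_0=\Omega_0$ there. On the band, $h$ is $C^k$-close to $\id$, so $h^*J_0$ and $h^*\Omega_0$ are $C^k$-close to $J_0$ and $\Omega_0$, once $\rho$ is large enough for the given neighbourhoods.

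The main obstacle is the gluing step. The real part must remain exactly symplectic with $h(\A)=\A$, and the flux of $\rest{F}{\A\cap\text{band}}$ need not vanish. I would handle this as Berger does, by first composing $F$ with a rotation/twist correction, or by arguing that $F$ is the identity on $\partial\A$-adjacent real annuli up to $\rho^{-1}$ and that its rotation number defect there vanishes since $f$ is compactly supported and $F$ is Hamiltonian-isotopic to it. A further point is that $\gamma$ may approach $\partial\A$. This is dispensed with by \cref{rk:gam_bord}, which lets us replace $\gamma$ by a bicurve inside $\A(\eta)$ before choosing $\eta$.
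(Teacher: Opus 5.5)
The gluing step, which you flag as the main obstacle, fails as designed, and neither of your remedies repairs it. Suppose $h^{-1}$ equals $F$ on $\croset{\lvert y\rvert\le 1-2\eta}$, equals $\id$ on $\croset{\lvert y\rvert\ge 1-\eta}$, and preserves area on $\A$. Compare the area of the strip between $C_1=\T\times\croset{1-2\eta}$ and $C_2=\T\times\croset{1-\eta}$ with the area of its image, which is bounded by $F(C_1)$ and $C_2$. This forces the flux of $F$, i.e.\ the signed area between $F(C_1)$ and $C_1$, to vanish exactly. Nothing in \cref{an:thm_main_approx} gives this. Composing an approximant with $(\theta,y)\mapsto(\theta,y+c)$, for tiny $c\neq 0$, keeps it in $\sympan{\rho}{\A_\infty}$ and in $\mathcal{U}_1$ but shifts its flux. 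Closeness to $f$ only bounds the flux by $O(\rho^{-1})$, and there is no controlled isotopy from $f$ to $F$ that would make it zero. Rotations and twists $(\theta,y)\mapsto(\theta+\tau(y),y)$ leave every horizontal circle invariant, so they carry no flux and cannot correct it. Within your scheme the fix would be to compose $F$ with the vertical translation by the opposite amount (holomorphic, $\Omega_0$-preserving, real, $C^\infty$-small). After that a generating function exists on the band and can be cut off.

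The paper avoids the issue altogether. It interpolates linearly, $\widehat{h_r}=\beta(\Re y)\,h_r+(1-\beta(\Re y))\,\id$, which is not symplectic. It then precomposes with $\phi_r^{-1}$, where $\phi_r$ (\cref{lem:phi_r}, i.e.\ Berger's Lemma 6.13) is $C^\infty$-close to the inclusion, commutes with $\sigma$, is supported in the \emph{whole} region $\croset{\lvert\Re y\rvert<1-\eta}$, and satisfies $\phi_r^*\rest{\Omega_0}{T\A}=\widehat{h_r}^*\rest{\Omega_0}{T\A}$. The only cohomological constraint for this Moser step is equality of total areas, which is automatic since $\widehat{h_r}(\A)=\A$. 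The correction transports the flux through the middle of the strip. This costs nothing, because ($\mathcal{P}3$) only asks that $h^*J_0$ and $h^*\Omega_0$ be close to $J_0$ and $\Omega_0$, not equal.

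The second gap is the control of $F$ off the real slice. Since $\W$ is arbitrary, ($\mathcal{P}2$) and ($\mathcal{P}3$) require your transition map to be $C^k$-close to $\id$ on all of $\W\cap\croset{1-2\eta\le\lvert\Re y\rvert\le1-\eta}$, including points with large $\lvert\Im y\rvert$ or $\lvert\Im\theta\rvert$. But \cref{an:thm_main_approx} only makes $F$ $\rho^{-1}$-close to $\id$ on $K_\rho$, where $\lvert\Re y\rvert\ge1$, and close to $f$ on compact subsets of the real $\A\setminus\gamma$. Cauchy estimates from the real slice do not reach large imaginary parts, so ``$\W\subset K_\rho\cup\A_\rho$ roughly'' is exactly where the argument breaks.

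The missing step is the rescaling done in the corollary opening \cref{an:approx_def}. With $\supp f\subset\A(2\epsilon)$, apply the theorem to $g\circ f\circ g^{-1}$ for $g(\theta,y)=(\theta,y/(1-2\epsilon))$, and use $g(K_{\rho',2\epsilon})\subset K_\rho$ with $\rho'=(1-2\epsilon)\rho-4\epsilon$. The resulting $h_r=g^{-1}\circ F_r^{-1}\circ g$ is $C^r$-close to $\id$ on $K_{r,\epsilon}$, which contains $\W\cap\croset{\lvert\Re y\rvert\ge1-\epsilon}$ for $r$ large. So any $\eta\in(0,\epsilon)$ can be fixed before $\W$ is given.

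With both repairs, your generating-function cut-off is a legitimate alternative to the paper's interpolation-plus-Moser route. Your preliminary reduction to compact support via \cref{fact:dens} is fine; the paper passes over this point. Also, $\sigma$-equivariance of $F$ needs no appeal to Berger's construction: a holomorphic map preserving the totally real slice $\T\times\R$ commutes with $\sigma$ by the identity principle.
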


We will prove this theorem in the Appendix \ref{an:approx_def} by adapting Berger's proof with our approximation theorem.

The definition below follows from this theorem.

\begin{defin}
Such an $h_0$ is said to be \underline{approximated modulo structural deformation}.
\end{defin}

Now, using a lifting $(\theta,y) \mapsto (q\cdot \theta ,y)$, the following corollary is an immediate consequence of \cref{thm:app_mod_def}.

\begin{coro}\label{coro:app_mod_def}
If $h_0$ commutes with a rotation $R_{p/q}$ and the bicurve $\gamma$ is invariant under $R_{p/q}$, then the approximation $h$ modulo structural deformation can be chosen commuting with $R_{p/q}$.
\end{coro}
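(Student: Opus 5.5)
The plan is to reduce to \cref{thm:app_mod_def} on the $q$-fold quotient cylinder and lift the result back. Since $p\wedge q=1$, commuting with $R_{p/q}$ is the same as commuting with the cyclic group generated by $R_{1/q}$. So $h_0$ and $\gamma$ are invariant under the deck group of the covering
$$P:(\theta,y)\in \A_\infty \mapsto (q\theta,y)\in\A_\infty .$$
This map is holomorphic, commutes with $\sigma$, and maps $\A$ onto $\A$. It multiplies $\Omega_0=\tfrac12 d\theta\wedge dy$ by $q$. To fix this, I will use the affine rescaling $L:(\theta,y)\mapsto(\theta,y/q)$ on the base, or equivalently renormalise the form. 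Then $\Omega_0$ pulls back to a constant multiple of itself, and constant multiples are harmless for properties ($\mathcal{P}1$)–($\mathcal{P}4$).

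\textbf{Step 1: descend the data.} Because $h_0$ commutes with $R_{1/q}$ and $\gamma$ is $R_{1/q}$-invariant, $h_0$ descends to a map $\bar h_0$ of the quotient cylinder with $\bar h_0\circ P=P\circ h_0$, and $\gamma$ descends to $\bar\gamma=P(\gamma)$. I need to check three things:
\begin{itemize}
\item $\bar\gamma$ is a bicurve, since each invariant loop $\gamma_\pm$ maps to a single embedded essential loop;
\item $\bar h_0$ is area preserving, since $P$ is a local diffeomorphism scaling area by a constant;
\item $\bar h_0$ equals $\id$ on $O(\bar\gamma)=P(O(\gamma))$ and on $\partial\check{\A}$.
\end{itemize}
The neighbourhood $\mathcal U$ can be shrunk to a $R_{1/q}$-invariant basic neighbourhood defined by $C^s$-bounds on $\A\setminus B(\gamma,\kappa)$. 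Its image is then a neighbourhood $\bar{\mathcal U}$ of $\bar h_0$ on $\A\setminus\bar\gamma$, because $P$ is a local isometry up to constants and $P(B(\gamma,\kappa))$ is comparable to $B(\bar\gamma,\kappa')$.

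\textbf{Step 2: apply the theorem and lift.} \cref{thm:app_mod_def} applied to $(\bar\gamma,\bar h_0,\bar{\mathcal U})$ gives an $\eta$. Given $\W$, $\mathcal W_J$ and $\mathcal W_\Omega$, I replace $\W$ by the finite union of its $R_{1/q}$-translates; this set is still $\sigma$-invariant with (after smoothing) smooth boundary. I then apply the theorem on $\bar\W=P(\W)$ with neighbourhoods $\bar{\mathcal W}_J,\bar{\mathcal W}_\Omega$ chosen so that pullback by $P$ lands in $\mathcal W_J,\mathcal W_\Omega$. This choice is possible because pullback by the fixed local biholomorphism $P$ is continuous for the smooth topology on compact sets.

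This produces $\bar h$. I lift it to the unique map $h$ with $P\circ h=\bar h\circ P$ and $h=\id$ near $|\Re y|\ge1-\eta$. The lift is well defined because $\bar h$ is supported in $\{|\Re y|<1-\eta\}$, hence isotopic to the identity through compactly supported maps, so lifting the isotopy starting at $\id$ gives a canonical lift. This lift commutes with the deck transformations $R_{k/q}$, hence with $R_{p/q}$.

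\textbf{Step 3: verify the properties.} Since $P$ is holomorphic and commutes with $\sigma$, we get $h^*J_0=P^*\bar h^*J_0$ and $h^*\Omega_0=P^*\bar h^*\Omega_0$, which gives ($\mathcal P3$). Property ($\mathcal P4$) holds since $\sigma$ commutes with $P$ and with $\bar h$, and $h$ is the lift fixing the boundary region. Property ($\mathcal P2$) holds because $P$ preserves $y$. For ($\mathcal P1$), $\det Dh=\det D\bar h=1$, $h(\A)=\A$, and $h^{-1}|_{\A\setminus\gamma}$ is the lift of $\bar h^{-1}|_{\A\setminus\bar\gamma}\in\bar{\mathcal U}$, hence lies in $\mathcal U$.

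\textbf{Main obstacle.} I expect the only delicate point to be the lift: choosing the branch equivariant with the deck group. Support away from $|\Re y|\ge1-\eta$ together with connectedness of $\W$'s relevant region should settle it. If $\W$ has components not meeting that region, I would define the lift on each component of $P^{-1}(\bar\W)$ by continuity from the identity along the isotopy furnished in the proof of \cref{thm:app_mod_def}.
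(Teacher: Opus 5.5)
Your strategy is the one the paper intends: it only says the corollary follows from \cref{thm:app_mod_def} by lifting through $(\theta,y)\mapsto(q\theta,y)$. Step~1 and your checks of ($\mathcal P2$)--($\mathcal P4$) are fine once a good lift exists. Drop the rescaling $L$: it does not preserve $\A$, and it is not needed, because $P^*\Omega_0=q\,\Omega_0$ gives $h^*\Omega_0=q^{-1}P^*\bar h^*\Omega_0$.

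The gap is exactly at the step you flag, and the justification you give is false. A map supported in $\croset{\lvert\Re y\rvert<1-\eta}$ need not be isotopic to the identity through such maps; a Dehn twist of the annulus is the standard counterexample. Moreover, \cref{thm:app_mod_def} gives no information on $\bar h$ inside the band $B(\bar\gamma,\kappa)$, so $\rest{\bar h}{\A}$ may turn a whole number of times across each component of that band. A full turn downstairs is a $1/q$-turn upstairs. Hence, if you normalise the lift to be the identity near $\Re y\ge 1-\eta$, then on $A(\gamma)\setminus B(\gamma,\kappa)$ the map $h^{-1}$ is close to $R_{k/q}\circ h_0$ rather than to $h_0$, and near $\Re y\le-(1-\eta)$ it equals $R_{k'/q}$. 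Here $k,k'$ are integers determined by these turning numbers modulo $q$. So the lift need not lie in $\mathcal U$, and a lift equal to the identity at both ends need not exist. The paper's one-line argument does not address this either.

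This does occur. Take $q=2$, $h_0=\id$ and $\gamma=\T\times\croset{\pm\tfrac12}$.
\begin{itemize}
\item By Runge's theorem there is an entire $\tau$, real on $\R$, which is $C^\infty$-close to $1$ on $\croset{\lvert\Re y\rvert\le\tfrac12-\kappa,\ \lvert\Im y\rvert\le R}$ and to $0$ on $\croset{\tfrac12+\kappa\le\lvert\Re y\rvert\le R,\ \lvert\Im y\rvert\le R}$. Take $R$ large and $\kappa$ small.
\item The shear $(\theta,y)\mapsto(\theta+\tau(y),y)$ of $\C/\Z\times\C$ preserves $J_0$ and $\Omega_0$ exactly, commutes with $\sigma$, and is close to the identity off the two bands.
\item Replace $\tau$ by $\beta(\Re y)\tau(y)$, where the cut-off $\beta$ equals $1$ on $\lvert\Re y\rvert\le 1-2\eta$ and $0$ near $\lvert\Re y\rvert\ge 1-\eta$; there $\tau\approx 0$, so the map stays close to the identity. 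This gives an admissible output $\bar h$ of \cref{thm:app_mod_def} for $\bar h_0=\id$; on $\A$ it is still an area-preserving shear.
\item Its lift equal to the identity at both ends is $(\theta,y)\mapsto(\theta+\tfrac12\beta(\Re y)\tau(y),y)$. This is close to $R_{1/2}$ on $A(\gamma)$, hence not in $\mathcal U$.
\end{itemize}

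So no argument using only the stated conclusions of \cref{thm:app_mod_def} can work. You must control, modulo $q$, the number of turns of $\bar h$ across each band, and this requires opening the construction. One option: compose with one of finitely many entire shears of the above kind, conjugated by a biholomorphic straightening of $\gamma$ as in the proof of \cref{an:thm_main_approx}, chosen to cancel these residues. Another: run the approximation with $R_{1/q}$-equivariant building blocks directly on the cover and check closeness to $h_0$ there.
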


\subsection{AbC$^\star$ realization on deformed real analytic surfaces}\label{sec:real}

The main goal of this section is to prove the following theorem from which the AbC$^\star$ principle will follow.

\begin{theorem}\label{thm:real_sphere}
Let $\rho > \rho' > 1$. For every $C^\infty$-AbC$^\star$ scheme $(U,\nu)$ on $\M \in \croset{\A, \Di, \Sp}$, there exists a complexification $(\M_\rho,J)$ on which $\Omega$ extends $J$-holomorphically and there exists a $J$-holomorphic diffeomorphism $f : \M_{\rho'} \hookrightarrow \M_\rho$ which extends a real symplectomorphism of $(\M, \Omega)$ realizing $(U,\nu)$.\\
Moreover, on the complement of $\check{\M}_\infty$, the map $f$ equals a rotation and $J=J_0$.
\end{theorem}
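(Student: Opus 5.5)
We follow Berger's inductive scheme for the AbC principle, replacing his approximation modulo deformation by \cref{thm:app_mod_def} and \cref{coro:app_mod_def}. The plan is to build sequences $(h_n)_n$ of smooth embeddings of $\M_\rho$ into $\M_\infty$ commuting with $\sigma$, rationals $(\alpha_n)_n$, and maps $\tilde h_n := \rest{h_n}{\M}$ satisfying the AbC$^\star$ rules of $(U,\nu)$. Set $J_n := h_n^*J_0$ and $\Omega_n := h_n^*\Omega_0$. Then $f_n := h_n^{-1}\circ R_{\alpha_n}\circ h_n$ is a $J_n$-holomorphic extension of the real map $\iconj{\tilde h_n^{-1}}{R_{\alpha_n}}$, which is conjugate, through the real symplectomorphism $\tilde h_n$, to the realization $\iconj{\tilde h_n}{R_{\alpha_n}}$ of the scheme. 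The point is that at every step we only deform the structure slightly, so that $J_n\to J$, $\Omega_n\to \Omega'$ and $f_n\to f$ smoothly on $\M_{\rho'}$, while the real parts still follow the scheme. We first work on $\A$ through $\pi$ on $\check{\M}_\infty$. On $\Sp$ and $\Di$ this is legitimate because \cref{thm:app_mod_def} gives maps supported in $\lvert \Re y\rvert<1-\eta$, which descend via the biholomorphism $\pi$ and extend by the identity near the poles or the center and boundary. This also yields the last claim: $h_n=\id$ off $\check{\M}_\infty$, so there $f_n=R_{\alpha_n}$ and $J_n=J_0$.

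The inductive step goes as follows. Given $h_n$ and $\alpha_n=p/q$, apply condition b) of \cref{def:abc_star} to $(\tilde h_n^{-1},\alpha_n)$ (we use inverses because ($\mathcal{P}1$) controls $h^{-1}$). This gives a bicurve $\gamma$ invariant under $R_{\alpha_n}$, an open set $U\in\mathcal{T}^\infty_\gamma$, and $\hat h\in U$ with $\rest{\hat h}{O(\gamma)}=\rest{\tilde h_n^{-1}}{O(\gamma)}$ and $\hat h\circ R_{\alpha_n}\circ\hat h^{-1}=\tilde h_n^{-1}\circ R_{\alpha_n}\circ\tilde h_n$. Set $k_0:=\tilde h_n\circ\hat h$. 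This map is in $\sympo{\infty}{\A}$, commutes with $R_{\alpha_n}$, and equals $\id$ on $O(\gamma)$. Pulled back to $\A$ via $\pi$, we take the neighbourhood of $k_0$ in the compact-open topology off $\gamma$ corresponding to $U$; this is valid since $\tilde h_n$ is a fixed diffeomorphism. \cref{coro:app_mod_def} then gives $k$, commuting with $R_{\alpha_n}$, with $k^{-1}$ approximating $k_0$ off $\gamma$. The deformations $k^*J_0$ and $k^*\Omega_0$ are arbitrarily close to $J_0$ and $\Omega_0$ on the compact set $\W := h_n(\M_\rho)$. We put $h_{n+1}:=k\circ h_n$. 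Then $\tilde h_{n+1}^{-1}=\tilde h_n^{-1}\circ \rest{k^{-1}}{\M}$ lies in $U$, and $h_{n+1}^{-1}R_{\alpha_n}h_{n+1}=f_n$. Moreover $J_{n+1}=h_n^*(k^*J_0)$ is as close to $J_n$ as we like, since $h_n$ is fixed, and the same holds for $\Omega_{n+1}$. Finally we pick $\alpha_{n+1}$ with $0<\lvert\alpha_{n+1}-\alpha_n\rvert<\nu(\tilde h_{n+1}^{-1},\alpha_n)$, close enough that $f_{n+1}$ is $2^{-n}$-close to $f_n$ in $C^n(\M_{\rho'})$. Here we need $f_n(\M_{\rho'})\subset\M_\rho$ to remain true with margin, so the distances are chosen summable with total less than that margin.

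In the limit, $(J_n)$ and $(\Omega_n)$ converge by the summable choices, and \cref{thm:lim_almost_comp} shows that $J$ is a complexification and that $\Omega'$ is a $J$-holomorphic extension of $\Omega$. The maps $f_n$ converge to a $J$-holomorphic $f$, since $Df_n J_n=J_nDf_n$ passes to the limit. Its real part is the limit of the real parts, which by condition c) is the map realizing $(U,\nu)$ up to conjugacy. Control of $h_n$ itself is not needed; only $f_n$, $J_n$ and $\Omega_n$ must converge.

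The main obstacle is the bookkeeping of the inverse in ($\mathcal{P}1$) against the scheme's neighbourhoods. The issue is whether $\tilde h_{n+1}^{-1}$, rather than $\tilde h_{n+1}$, is the right object to feed the scheme. Equivalently, the question is whether composing on the correct side keeps the approximation inside the open set $U\in\mathcal{T}^\infty_\gamma$ relative to the moved bicurve. We handle this by transporting $\gamma$ through the fixed map $\tilde h_n$ and using \cref{rk:gam_bord}. A second delicate point is that $f_{n+1}$ must be defined on $\M_{\rho'}$, since $h_{n+1}(\M_{\rho'})$ must be invariant enough under $R_{\alpha_{n+1}}$. This holds because $h_{n+1}$ is the identity outside a compact subset of $\check{\M}_\infty$ and $R_\alpha$ preserves $\M_\rho$.
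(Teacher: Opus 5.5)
Your cylinder induction is the paper's argument written with the inverse notation. Your $h_n$ is the paper's $h_n^{-1}$, and the neighbourhood $\tilde h_n\circ U$ of $k_0$, the update $h_{n+1}=k\circ h_n$, and the use of ($\mathcal{P}0$) to get $h_{n+1}^{-1}\circ R_{\alpha_n}\circ h_{n+1}=f_n$ all match. Your worry about moving $\gamma$ is unfounded: post-composition with the fixed $\tilde h_n$ is continuous for the compact-open topology on $\M\setminus\gamma$, so the bicurve never changes.

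The genuine gap is your claim that on $\Sp$ and $\Di$ the approximation ``descends via the biholomorphism $\pi$ and extends by the identity''. \cref{thm:app_mod_def} controls the \emph{support} of $h$, which lies in $\croset{\lvert\Re y\rvert<1-\eta}$, but not its \emph{image}. On complex points of that slab with large imaginary part, $h$ is essentially the Runge-type biholomorphism $h_r$. That map is controlled only on $\A\setminus\gamma$ and on $K_{r,\epsilon}\subset\croset{\lvert\Re y\rvert\ge 1-\epsilon}$. So $h(\W\cap\croset{\lvert\Re y\rvert<1-\eta})$ may meet $\croset{\lvert\Re y\rvert\ge 1}$ and leave $\A_\rho$. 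But $\pi$ is a biholomorphism only from $\check{\A}_\infty$ onto $\check{\M}_\infty$. Beyond it, $\pi$ is either undefined (branch locus of the square roots) or lands outside $\check{\M}_\infty$, exactly where your $k$ is declared to be the identity. The map equal to $\pi\circ h\circ\pi^{-1}$ on $\check{\M}_\infty$ and to $\id$ elsewhere is therefore in general neither defined nor injective. So you cannot keep the $h_n$ as embeddings $\M_\rho\hookrightarrow\M_\infty$ with $J_n=h_n^*J_0$. The paper flags precisely this obstruction before treating the sphere and disk.

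The missing idea is never to push forward to $\M$. Take $h_n\in\dilo{c}{\infty}{\check{\M}_\rho}{\A_\infty}$, that is, local diffeomorphisms into $\A_\infty$ equal to $\pi^{-1}$ off a compact subset of $\check{\M}_\rho$. Set $J_n=h_n^\# J_0$ and $\Omega_n=h_n^\#\Omega_0$: the pull-back on $\check{\M}_\rho$, and $J_0$, $\Omega_0$ elsewhere. These are smooth because $h_n=\pi^{-1}$ near the frontier of $\check{\M}_\rho$ in $\M_\rho$. The update is $h_{n+1}=h\circ h_n$ on $\check{\M}_\rho(\eta)$ and $\pi^{-1}$ elsewhere.

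Since $h_{n+1}$ need not be injective, $f_{n+1}$ cannot be defined as $h_{n+1}^{-1}\circ R_{\alpha_{n+1}}\circ h_{n+1}$. Instead, one first checks $h_{n+1}\circ f_n=R_{\alpha_n}\circ h_{n+1}$ on $\check{\M}_{\rho'}$. On $\check{\M}_{\rho'}(\eta)$ this follows from ($\mathcal{P}0$); outside it, $h_{n+1}=\pi^{-1}$ and $f_n=R_{\alpha_n}$. Then one sets $f_{n+1}=g_\iota\circ f_n$ with $\iota=\alpha_{n+1}-\alpha_n$. Here $g_\iota$ is the unique $C^\infty$-small perturbation of the identity on $f_n(\check{\M}_{\rho'}(\eta))$ solving $h_{n+1}\circ g_\iota=R_\iota\circ h_{n+1}$, extended by $R_\iota$ elsewhere.

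You also skip a step: to feed the scheme's map $h_\M$ to \cref{thm:app_mod_def} you must lift it to $\A$. This requires first replacing it, via \cref{fact:dens}, by a map compactly supported in $A(\gamma_\M)$. Otherwise, when $\gamma_\M$ passes through a pole or the center, the blown-up map need not be smooth up to $\partial\A$, nor the identity there.
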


Firstly, we prove this theorem in the case of the cylinder. It is the simplest case since we can directly apply the Approximation modulo deformation which is stated on this surface.

\paragraph{AbC$^\star$ realizations on deformed real analytic symplectic cylinder}
\ \\

First, we recall that the space of smooth embeddings $ \emb{\infty}{\A_\rho,\A_\infty}$ is metrizable by a distance denoted $d$. So does the space of complex differentiable $2$-forms on $\A_\rho$ and the space of linear automorphisms of $T^\R \A_\rho$ by distances also denoted $d$.\\

Then, we will obtain the theorem by constructing the symplectomorphism $f$ with an induction. To that extent, we apply the Approximation modulo deformation at every step of the scheme. Therefore, the limit $f$ obtained is holomorphic for a complex structure, which is the limit of a sequence of complex structures that we have deformed at each step. In addition, $f$ is symplectic for a holomorphic symplectic form also obtained by deforming at each step our original form.\\

\newglossaryentry{4EmbA}{sort = {3Func Space}, name = {$\emb{\infty}{\A_\rho,\A_\infty}$}, description = {Space of smooth embedding from $\A_\rho$ to $\A_\infty$}}\glsadd{4EmbA}
Therefore, let us show the existence of sequences $\alpha_n \in \Q/\Z$, $h_n$ with $h_n^{-1} \in \emb{\infty}{\A_\rho,\A_\infty}$ and $f_n \in \emb{\infty}{\A_{\rho'},\A_\rho}$ -- where $\rho' < \rho$ -- such that:

$$\alpha_0 =0 \; , \; h_0 = \rest{\id}{\A_\rho} \quad\text{and}\quad f_0 = \rest{\id}{\A_{\rho'}}.$$
We also require that, for $n\geq 1$, $\alpha_n$, $h_n$ and $f_n$ satisfy:
\begin{enumerate}
\item[$(I_n)$] $h_n(\check{\A}) = \check{\A}$, $h_n$ is symplectic on $\check{\A}$ and $(\rest{h_n}{\A},\alpha_n)$ respects the AbC$^\star$ scheme $(U,\nu)$:
$$\rest{h_n}{\A} \in U(\rest{h_{n-1}}{\A},\alpha_{n-1}) \quad , \quad \iconj{\rest{h_{n}}{\A}}{R_{\alpha_{n-1}}} = \iconj{\rest{h_{n-1}}{\A}}{R_{\alpha_{n-1}}},$$
$$ \text{and} \quad 0 < \lvert \alpha_n - \alpha_{n-1} \rvert < \nu(\rest{h_n}{\A},\alpha_{n-1}).$$ 
\item[$(II_n)$] $R_{\alpha_n} \circ h_n^{-1} (\A_{\rho'}) \subset \operatorname{int} \left( h_n^{-1}(\A_\rho ) \right)$, so the composition $f_n := \rest{\iconj{h_n}{R_{\alpha_n}}}{\A_{\rho'}}$ is well defined. Furthermore, $d(f_n,f_{n-1}) < 2^{-n}$.
\item[$(III_n)$] $J_n = {h_n^{-1}}^* J_0$ and $\Omega_n = {h_n^{-1}}^* \Omega_0$ satisfy:
$$d(J_n,J_{n-1}) < 2^{-n} \quad\text{and}\quad d(\Omega_n,\Omega_{n-1}) < 2^{-n}.$$
\item[$(IV_n)$] $h_n$ commutes with $\sigma$.
\item[$(V_n)$] The support of $h_n^{-1}$ is included in $\check{\A}_\rho$.
\end{enumerate}

By applying the Approximation Modulo Deformation on a map $\widetilde{h}$ related to $h_n$ by the AbC$^\star$ scheme at a given step $n$, we obtain the following lemma which provides the existence of such sequences.

\begin{lemma}\label{lem:abc_real_cyl}
There exist sequences $(\alpha_n)_n$, $(h_n)_n$, and $(f_n)_n$ such that $(I_n \cdots V_n)$ is satisfied for every $n \geq 1$.
\end{lemma}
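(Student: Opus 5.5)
The sequences are built by induction on $n$. The case $n=0$ is given, since $h_0=\id$, $f_0=\id$ and $\alpha_0=0$. Assume $\alpha_{n-1}$, $h_{n-1}$, $f_{n-1}$ have been built, and write $\alpha_{n-1}=p/q$.

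\textbf{Step 1: apply the scheme.} Apply conditions a) and b) of \cref{def:abc_star} to the pair $(\rest{h_{n-1}}{\A},\alpha_{n-1})$. This gives a bicurve $\gamma$ invariant under $R_{p/q}$, with $U:=U(\rest{h_{n-1}}{\A},\alpha_{n-1})\in\mathcal{T}^\infty_\gamma$, and a map $\hat h\in U$ such that
\[
\iconj{\hat h}{R_{p/q}}=\iconj{\rest{h_{n-1}}{\A}}{R_{p/q}}
\quad\text{and}\quad
\rest{\hat h}{O(\gamma)}=\rest{h_{n-1}}{O(\gamma)}.
\]
Set $\tilde h:=\rest{h_{n-1}^{-1}}{\A}\circ\hat h$. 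By construction, $\tilde h$ commutes with $R_{p/q}$ and equals $\id$ on $O(\gamma)$.

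Work on $\A$ with the pulled-back data: $\gamma$ is replaced by its preimage under $\pi$, and for $\M=\A$ nothing changes. By \cref{fact:dens}, applied on the cylinder $A(\gamma)$ (which is legitimate via \cref{an:lemma_symp_bi}), we may replace $\tilde h$ by a smooth symplectomorphism compactly supported in $A(\gamma)$, still commuting with $R_{p/q}$. The replacement is chosen so close that $h_{n-1}\circ\tilde h$ still lies in $U$; this is possible because $U$ is open in $\mathcal{T}_\gamma$.

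\textbf{Step 2: approximate modulo deformation.} Take a neighbourhood $\mathcal{U}$ of $\rest{\tilde h}{\A\setminus\gamma}$ small enough that $\rest{g^{-1}}{\A\setminus\gamma}\in\mathcal U$ forces $h_{n-1}\circ\rest{g^{-1}}{\A}\in U$. Then apply \cref{thm:app_mod_def} together with \cref{coro:app_mod_def}, with $\W:=h_{n-1}^{-1}(\A_\rho)$ enlarged slightly so that it is $\sigma$-invariant with smooth boundary. The neighbourhoods $\mathcal W_J$, $\mathcal W_\Omega$ are chosen so that pulling back by the fixed embedding $h_{n-1}^{-1}$ perturbs $J_{n-1}$ and $\Omega_{n-1}$ by less than $2^{-n}$. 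This yields a map $k$ commuting with $R_{p/q}$ and with $\sigma$. Set $h_n:=h_{n-1}\circ k^{-1}$, so that $h_n^{-1}=k\circ h_{n-1}^{-1}$.

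The conditions are then checked as follows.
\begin{itemize}
\item $(III_n)$ follows from ($\mathcal P3$), since $J_n={h_{n-1}^{-1}}^*k^*J_0$ and likewise for $\Omega_n$.
\item $(IV_n)$ follows from ($\mathcal P4$) and $(IV_{n-1})$.
\item $(V_n)$ follows from ($\mathcal P2$) and $(V_{n-1})$.
\item $(I_n)$: by ($\mathcal P1$), $h_n(\check\A)=\check\A$ and $h_n$ is symplectic there. Membership of $\rest{h_n}{\A}$ in $U$ holds by the choice of $\mathcal U$. The conjugacy identity holds because $k$ commutes with $R_{\alpha_{n-1}}$.
\end{itemize}

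\textbf{Step 3: choose $\alpha_n$.} At this stage $f'=\iconj{h_n}{R_{\alpha_{n-1}}}=f_{n-1}$ on $\A_{\rho'}$, which is well defined by $(II_{n-1})$. The inclusion $R_{\alpha}\circ h_n^{-1}(\A_{\rho'})\subset\operatorname{int}(h_n^{-1}(\A_\rho))$ is an open condition in $\alpha$. The same holds for $d(\iconj{h_n}{R_\alpha},f_{n-1})<2^{-n}$ in the smooth topology, since everything depends continuously on $\alpha$. So any rational $\alpha_n\neq\alpha_{n-1}$ sufficiently close to $\alpha_{n-1}$, and within $\nu(\rest{h_n}{\A},\alpha_{n-1})$ of it, satisfies $(II_n)$ and completes $(I_n)$.

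\textbf{Main obstacle.} The delicate point is $(II_n)$ at the new stage, before $\alpha_n$ is chosen. We need $h_n^{-1}$ to be defined on a neighbourhood of $R_{\alpha_{n-1}}h_n^{-1}(\A_{\rho'})$ and to keep the inclusion, even though $k$ is only defined on $\W$. I would handle this by taking $\W$ slightly larger than $h_{n-1}^{-1}(\A_\rho)$. Because $k$ is $C^0$-close to the identity off $\A$ and commutes with $R_{\alpha_{n-1}}$, the strict inclusion from $(II_{n-1})$ persists under the small perturbation.

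A second subtlety is making the neighbourhood condition on $h^{-1}$ in ($\mathcal P1$) match membership in the $\mathcal{T}_\gamma$-open set $U$. This requires that $\gamma$ be the same bicurve in both places, which is ensured by condition b) of \cref{def:abc_star}.
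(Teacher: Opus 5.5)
Your proposal is correct and follows essentially the paper's proof, with the index shifted by one: you apply a)--b) of the scheme, take $\mathcal U$ to be the pull-back of $U$ by $h_{n-1}$, approximate $\tilde h$ modulo deformation commuting with $R_{\alpha_{n-1}}$, set $h_n^{-1}=k\circ h_{n-1}^{-1}$, and choose $\alpha_n$ close to $\alpha_{n-1}$; your extra use of Fact~\ref{fact:dens} to make $\tilde h$ compactly supported in $A(\gamma)$ is legitimate, and is what the paper does in the disk/sphere case. One correction to your ``main obstacle'' paragraph: Theorem~\ref{thm:app_mod_def} gives no $C^0$-closeness of $k$ to the identity (on $\A$ it approximates $\tilde h^{-1}$), and none is needed, because commutation transports the inclusion of $(II_{n-1})$ exactly, $R_{\alpha_{n-1}}\circ h_n^{-1}(\A_{\rho'})=k\bigl(R_{\alpha_{n-1}}\circ h_{n-1}^{-1}(\A_{\rho'})\bigr)\subset k\bigl(\operatorname{int}h_{n-1}^{-1}(\A_\rho)\bigr)=\operatorname{int}h_n^{-1}(\A_\rho)$, which is how the paper argues (it takes $\W=\A_R$, which is rotation-invariant, so the commutation makes sense on all of $\W$).
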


We provide a sketch of proof of this lemma below and prove it in the Appendix \ref{an:abc_real}.

\begin{proof}[Sketch of proof]
Let $\alpha_n$, $h_n$ and $f_n$ be such that they satisfy $(I_n \cdots V_n)$. 
Let us define $\alpha_{n+1}$, $h_{n+1}$ and $f_{n+1}$ so that they satisfy $(I_{n+1} \cdots V_{n+1})$. 
First, by \cref{def:abc_star} of an AbC$^\star$ scheme, there exists a symplectomorphism $\widetilde{h} \in \sympo{\infty}{\A}$ commuting with $R_{\alpha_n}$ and such that $h_n \circ \widetilde{h}$ belongs to $U(\rest{h_n}{\A},\alpha_n)$. 
Moreover, $U(\rest{h_n}{\A},\alpha_n)$ belongs to $\mathcal{T}^\infty_\gamma$ for a bicurve $\gamma \in \Gamma_2(\A)$ invariant under $R_{\alpha_n}$ and $\widetilde{h}$ satisfies $\rest{\widetilde{h}}{O(\gamma)} = \id$. 
Then, we approximate $\widetilde{h}$ modulo structural deformation by a map $h$ by applying \cref{thm:app_mod_def} with the bicurve $\gamma$, and well chosen neighbourhood $\mathcal{U}$, $\sigma$-invariant compact set $\W$ and neighbourhoods $\mathcal{W}_J$ and $\mathcal{W}_\Omega$.  
We now define $h_{n+1} := h_n \circ h^{-1}$, $\alpha_{n+1}$ close to $\alpha_n$, and $f_{n+1} = \rest{\iconj{h_{n+1}}{R_{\alpha_{n+1}}}}{\A_{\rho'}}$. 
Therefore, with  properties $(I_n)$, $(II_n)$, $(IV_n)$ and $(V_n)$ and with the properties satisfied by the approximation modulo deformation $h$, we obtain properties $(I_{n+1} \cdots V_{n+1})$ for $\alpha_{n+1}$, $h_{n+1}$ and $f_{n+1}$.
\end{proof}

Now we can obtain \cref{thm:real_sphere} on the cylinder.

\begin{proof}[Proof of \cref{thm:real_sphere} on $\A$]
Let us consider the sequence $(f_n)_n$ given by the latter lemma. By $(II_n)$, they converge toward a smooth embedding $f$ from $\A_{\rho'}$ to $\A_\rho$. Moreover, by $(I_n)$ its restriction to $\A$ is a symplectomorphism of $\A$ which is a realization of the AbC$^\star$ scheme $(U,\nu)$.

By $(III_n)$ and \cref{thm:lim_almost_comp}, the sequence $(J_n)_n$ converges to a complex structure $J$. Moreover, by symplecticity of the $h_n$, the forms $\Omega_n$ are $J_n$-holomorphic extensions of $\Omega$ to $\A_\rho$. Then, by $(III_n)$ and \cref{thm:lim_almost_comp}, the sequence $(\Omega_n)_n$ converges toward $\Omega'$, a $J$-holomorphic extension of $\Omega$ to $\A_\rho$.

Also, by $J_0$-holomorphy of rotations, the following holds on $\check{\A}_{\rho'}$:
$$f_n^* J_n = (h_n^{-1} \circ f_n)^*J_0 =(R_{\alpha_n} \circ h_n^{-1} )^* J_0 = (h_n^{-1})^* J_0 = J_n.$$

Hence, at the limit, $f^*J = J$ on $\A_{\rho'}$, since on $\A_{\rho'} \setminus \check{\A}_{\rho'}$ we have $J = J_0$ and $f = R_\alpha$ (where $R_\alpha$ is the limit of $(R_{\alpha_n})_n$ on $\A_{\rho'} \setminus \check{\A}_{\rho'}$). As a result, $f$ is a $J$-holomorphic extension to $\A_{\rho'}$ of the symplectomorphism $\rest{f}{\A}$.
\end{proof}

We now turn to the harder cases of the disk and the sphere.

\paragraph{AbC$^\star$ realizations on deformed real analytic symplectic sphere and disk}
\ \\

Let $\M = \Di$ or $\Sp$. We recall that we consider $\rho > \rho' >1$.\\

Here, we can not apply directly the Approximation Modulo Deformation and then conjugate with $\pi$ to transpose the previous demonstration to $\M$. 
Indeed, even if the approximation theorem ensures that the map $h: \A_\rho \to \A_\infty$ obtained is compactly supported in $\check{\A}_\rho$, it does not prevent its image from intersecting the complement of $\check{\A}_\infty$ and leave $\A_\rho$. 
Hence preventing us from conjugating $h$ by $\pi$ and obtain the approximation on $\M_\rho$.\\

Thus, the idea is to work with an embedding $h$ from $\check{\M}_\rho$ to $\A_\infty$ which coincides with $\pi^{-1}$ outside a compact subset of $\check{\M}_\rho$. 
Moreover, if such an $h$ satisfies $h(\check{\M}) = \check{\A}$ and is symplectic on $\check{\M}$, it provides a compactly supported symplectomorphism $\widehat{h}$ of $\M$. 
This yields the sequence $(\widehat{h_n})_n$ for the scheme. 
Then, the $f_n: \M_{\rho'} \rightarrow \M_\rho$ are defined by $R_{\alpha_n}$ outside a compact subset of $\check{\M}_{\rho'}$ and by $\conj{h_n}{R_{\alpha_n}}$ otherwise.\\

This seems to work since, given such a $h_n: \check{\M}_\rho \to \A_\infty$, we have by the scheme and the approximation theorem the existence of a map $\tilde{h} : \A_R \rightarrow \A_\infty$ such that $h_{n+1} := \tilde{h} \circ h_n$ is well defined by taking $R$ large enough. 
Moreover it coincides with $\pi^{-1}$ outside of a compact subset of $\check{\M}_\rho$ and provides a suitable $\widehat{h}_n$ on $\M$ respecting the scheme.\\

Besides this aspect, the overall idea is the same as before, we slightly deform the complex and symplectic structure on $\M_\rho$ at each step. 
Therefore, the limit $f$ constructed is holomorphic and symplectic for the limit structures.\\

\newglossaryentry{5Dilo}{sort = {3Func Space}, name = {$\dilo{c}{\infty}{\check{\M}_\rho}{\A_\infty}$}, description = {Space of local diffeomorphisms from $\check{\M}_\rho$ to $\A_\infty$ \enquote{compactly supported}}}\glsadd{5Dilo}
We now define $\dilo{c}{\infty}{\check{\M}_\rho}{\A_\infty}$ to be the space of local diffeomorphisms from $\check{\M}_\rho$ to $\A_\infty$ which coincide with $\pi^{-1}$ on the complement of a compact subset of $\check{\M}_\rho$. Such a diffeomorphism $g$ induces the following complex structure and $2$-form on $\M_\rho$:
$$g^\# J_0 :=\left\lbrace 
\begin{array}{ccc}
g^*J_0 &\text{on} &\check{\M}_\rho\\
J_0 &\text{on} &\M_\rho \setminus \check{\M}_\rho
\end{array} \right. ,$$
and
$$\label{g_dies}g^\# \Omega_0 :=\left\lbrace 
\begin{array}{ccc}
g^*\Omega_0 &\text{on} &\check{\M}_\rho\\
\Omega_0 &\text{on} &\M_\rho \setminus \check{\M}_\rho
\end{array} \right. .$$

From \cref{ex:holo_pull} we deduce the following:

\begin{fact}
The symplectic form $g^\# \Omega_0$ is $g^\#J_0$-holomorphic
\end{fact}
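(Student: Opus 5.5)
The plan is to work separately on the two pieces of $\M_\rho$ that appear in the definitions of $g^\# J_0$ and $g^\#\Omega_0$, and then glue. The pieces are the open set $\check{\M}_\rho$ and the complement $\M_\rho\setminus\check{\M}_\rho$.

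\emph{On $\check{\M}_\rho$.} Here $g^\#J_0 = g^*J_0$ and $g^\#\Omega_0 = g^*\Omega_0$. Since $g$ is a local diffeomorphism, $g^*J_0$ is an integrable complex structure. Its holomorphic charts are the compositions of $J_0$-holomorphic charts of $\A_\infty$ with local inverses of $g$. In such a chart $\phi\circ g$, the form $g^*\Omega_0$ reads exactly as $\Omega_0$ reads in the chart $\phi$ of $\A_\infty$. That expression is $\Psi(z_1,z_2)\,dz_1\wedge dz_2$ with $\Psi$ holomorphic, because $\Omega_0$ is the holomorphic extension of $\frac12 d\theta\wedge dy$. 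This is the local content of \cref{ex:holo_pull}, and it needs neither the commutation with $\sigma$ nor the symplecticity on the real part. Those two hypotheses only matter for being a complexification and an extension of $\Omega$, which is not what is claimed here. So $g^*\Omega_0$ is $g^*J_0$-holomorphic on $\check{\M}_\rho$.

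\emph{Near $\M_\rho\setminus\check{\M}_\rho$.} By definition of $\dilo{c}{\infty}{\check{\M}_\rho}{\A_\infty}$ there is a compact $K\subset\check{\M}_\rho$ with $g=\pi^{-1}$ on $\check{\M}_\rho\setminus K$. Set $V:=\M_\rho\setminus K$; it is an open neighbourhood of $\M_\rho\setminus\check{\M}_\rho$. On $V\cap\check{\M}_\rho$ we have $g^*J_0=(\pi^{-1})^*J_0=J_0$, because $\pi$ is a biholomorphism onto $\check{\M}_\infty$ (as recalled in the complexification paragraphs). Likewise $g^*\Omega_0=(\pi^{-1})^*\Omega_0=\Omega_0$ there: $\pi$ is symplectic on the real part, both forms are holomorphic, so they agree by the identity principle. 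This is precisely how $\Omega_0$ on $\M_\infty$ was defined through $\pi$. Hence on all of $V$ we get $g^\#J_0=J_0$ and $g^\#\Omega_0=\Omega_0$, and $\Omega_0$ is $J_0$-holomorphic by construction.

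\emph{Gluing.} The two open sets $\check{\M}_\rho$ and $V$ cover $\M_\rho$. On their overlap the two definitions coincide, so $g^\#J_0$ is a smooth complex structure and $g^\#\Omega_0$ a smooth form. Holomorphy is a local property, so the fact follows.

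The only delicate point is the identity $(\pi^{-1})^*\Omega_0=\Omega_0$ near the poles or boundary circle, for $\M=\Sp$ and $\Di$. I would handle it by noting that both sides are $J_0$-holomorphic $2$-forms on the connected domain $\check{\M}_\rho\setminus K$, and that they agree on its real trace because $\pi$ is symplectic there. Since the real trace is totally real of maximal dimension, the identity principle gives equality everywhere.
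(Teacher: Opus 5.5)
Your proof is correct and follows the paper's route. The paper deduces the fact directly from \cref{ex:holo_pull}: the pull-back of $\Omega_0$ by a local diffeomorphism is holomorphic for the pulled-back structure, and since $g=\pi^{-1}$ near $\M_\rho\setminus\check{\M}_\rho$, where $\pi$ is a biholomorphism matching the two forms $\Omega_0$, the two pieces glue. One small correction: $\check{\M}_\rho\setminus K$ is in general not connected (typically one piece near each end), so apply the identity principle instead on the connected domain $\check{\A}_\infty$, where $\pi^*\Omega_0$ and $\Omega_0$ are holomorphic and agree on $\check{\A}$, and then restrict.
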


Suppose now that $g(\check{\M}) = \check{\A}$ and $\rest{g}{\check{\M}}$ is symplectic. We then define a symplectomorphism $\widehat{g} \in \symp{\infty}{\M}$ by:
$$\label{g_hat}\widehat{g} := \left\lbrace 
\begin{array}{ccc}
g^{-1}\circ \pi^{-1} &\text{on} &\check{\M}\\
\id &\text{on} &\M \setminus \check{\M}
\end{array} \right. .$$

Since the maps $h_n$ will belong to $\dilo{c}{\infty}{\check{\M}_\rho}{\A_\infty}$, they will coincide with $\pi^{-1}$ on the complement of a compact subset of $\check{\M}_\rho$. 
Hence, the associated maps $f_n$ will coincide with a rotation on the complement of a compact set. 
We therefore consider $\diro{c}{\infty}{\M_{\rho'}}{\M_\rho}$ to be the space of embeddings from $\M_{\rho'}$ to $\M_\rho$ which coincide with a rotation on the complement of a compact subset of $\check{\M}_{\rho'}$.\\

Let us show the existence of sequences $\alpha_n \in \Q/\Z$, $h_n \in \dilo{c}{\infty}{\check{\M}_\rho}{\A_\infty}$ and $f_n \in \diro{c}{\infty}{\M_{\rho'}}{\M_\rho}$ such that:
$$\alpha_0 =0 \; , \; h_0 = \rest{\pi^{-1}}{\check{\M}_\rho} \quad\text{and}\quad f_0 = \rest{\id}{\check{\M}_{\rho'}}.$$
We also require that, for $n\geq 1$, $\alpha_n$, $h_n$ and $f_n$ satisfy
\begin{enumerate}[label = $(\Roman*_n)$]
\item $h_n(\check{\M}) = \check{\A}$, $h_n$ is symplectic on $\check{\M}$ and $(\widehat{h_n},\alpha_n)_n$ respects the AbC$^\star$ scheme $(U,\nu)$.
\item $f_n (\check{\M}_{\rho'}) \subset \operatorname{int} \left( \check{\M}_\rho \right)$, $h_n \circ f_n = R_{\alpha_n} \circ \rest{h_n}{\check{\M}_{\rho'}}$, and $d(f_n,f_{n-1}) < 2^{-n}$ (in this case $d$ denotes a distance which metrizes $\emb{\infty}{\M_{\rho'},\M_\rho}$.
\item $J_n = h_n^\# J_0$ and $\Omega_n = h_n^\# \Omega_0$ satisfy:
$$d(J_n,J_{n-1}) < 2^{-n} \quad\text{and}\quad d(\Omega_n,\Omega_{n-1}) < 2^{-n}.$$
\item $h_n$ commutes with $\sigma$.
\end{enumerate}

Here is the lemma providing the sequences.

\begin{lemma}\label{lem:abc_real_ds}
There exist sequences $(\alpha_n)_n$, $(h_n)_n$, and $(f_n)_n$ such that $(I_n \cdots V_n)$ is satisfied for every $n \geq 1$.
\end{lemma}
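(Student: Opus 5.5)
We build the sequences by induction on $n$, mimicking the proof of \cref{lem:abc_real_cyl} but working in the chart $\pi^{-1}$. Throughout, the maps $h_n$ live in $\dilo{c}{\infty}{\check{\M}_\rho}{\A_\infty}$. The initial data $h_0=\pi^{-1}$, $f_0=\id$, $\alpha_0=0$ trivially satisfy the requirements, since $\widehat{h_0}=\id$.

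\textbf{Inductive step.} Assume $\alpha_n,h_n,f_n$ satisfy $(I_n\cdots IV_n)$.
\begin{enumerate}
\item Since $(\widehat{h_n},\alpha_n)$ follows the scheme, \cref{def:abc_star} a), b) gives a bicurve $\gamma\in\Gamma_2(\M)$ invariant under $R_{\alpha_n}$ and a map $\hat h\in U(\widehat{h_n},\alpha_n)$. This map satisfies $\iconj{\hat h}{R_{\alpha_n}}=\iconj{\widehat{h_n}}{R_{\alpha_n}}$ and coincides with $\widehat{h_n}$ on $O(\gamma)$.
\item Set $\widetilde h:=\widehat{h_n}^{-1}\circ\hat h$. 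It commutes with $R_{\alpha_n}$ and is the identity on $O(\gamma)$.
\item By \cref{fact:dens}, applied on $S=A(\gamma)$, we may replace $\widetilde h$ by a nearby compactly supported smooth map with the same two properties, keeping $\widehat{h_n}\circ\widetilde h\in U$.
\item Lift to the cylinder: $\widetilde h_\A:=\pi^{-1}\circ\widetilde h\circ\pi$ lies in $\sympo{\infty}{\A}$, is supported in $A(\pi^{-1}\gamma)$ away from $\partial\A$, and is the identity on $O(\pi^{-1}\gamma)$.
\item Apply \cref{coro:app_mod_def} to $\widetilde h_\A$ with the bicurve $\pi^{-1}\gamma$. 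The compact set is a large $\sigma$-invariant $\W\supset h_n(K)$, where $K\subset\check{\M}_\rho$ is a compact set outside which $h_n=\pi^{-1}$. Choose $\mathcal U$, $\mathcal W_J$, $\mathcal W_\Omega$ small. This yields $h$ commuting with $R_{\alpha_n}$ and $\sigma$.
\item Define $h_{n+1}:=h^{-1}\circ h_n$ on $\check{\M}_\rho$.
\end{enumerate}

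\textbf{Checking $h_{n+1}\in\dilo{c}{\infty}{\check{\M}_\rho}{\A_\infty}$.} By $(\mathcal P2)$, $h$ is supported in $\{|\Re y|<1-\eta\}$. Hence $h_{n+1}=\pi^{-1}$ outside a compact subset of $\check{\M}_\rho$, provided $h^{-1}\circ\pi^{-1}$ is still $\pi^{-1}$ near the ends. This holds because $h$ is the identity outside its support, and the support stays away from $|\Re y|=1$. We take $\W$ containing $\pi^{-1}$ of a neighbourhood of the relevant compact set, so that $h_{n+1}$ is defined on all of $\check{\M}_\rho$.

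\textbf{The remaining properties.}
\begin{itemize}
\item[$(I_{n+1})$] By $(\mathcal P1)$, $h_{n+1}(\check{\M})=\check{\A}$ and $h_{n+1}$ is symplectic there. Also $\widehat{h_{n+1}}=\widehat{h_n}\circ\pi\circ h\circ\pi^{-1}$ is close to $\widehat{h_n}\circ\widetilde h$ in $\mathcal T^\infty_\gamma$, so it lies in $U$. Commutation with $R_{\alpha_n}$ gives the conjugacy identity.
\item[$(III_{n+1})$] Since $h_{n+1}^\#J_0=h_n^*(h^{-1})^*J_0$ on $\check{\M}_\rho$, and $(h^{-1})^*J_0$ is close to $J_0$ by $(\mathcal P3)$ together with continuity of inversion, pulling back by the fixed $h_n$ gives closeness on a compact set. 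Outside it, both structures equal $J_0$. The same argument applies to $\Omega$.
\item[$(IV_{n+1})$] This follows from $(\mathcal P4)$.
\item[$\alpha_{n+1}$] Finally choose $\alpha_{n+1}$ close to $\alpha_n$ with $0<|\alpha_{n+1}-\alpha_n|<\nu(\widehat{h_{n+1}},\alpha_n)$.
\end{itemize}

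\textbf{Main obstacle: $(II_{n+1})$.} We define $f_{n+1}:=h_{n+1}^{-1}\circ R_{\alpha_{n+1}}\circ h_{n+1}$ on the compact part, and $f_{n+1}:=R_{\alpha_{n+1}}$ near $\M_{\rho'}\setminus\check{\M}_{\rho'}$. These agree on the overlap because $\pi$ conjugates rotations. Two things must be verified:
\begin{itemize}
\item $h_{n+1}^{-1}$ is defined on $R_{\alpha_{n+1}}h_{n+1}(\check{\M}_{\rho'})$, so that $f_{n+1}$ lands in $\operatorname{int}\check{\M}_\rho$;
\item $d(f_{n+1},f_n)<2^{-n-1}$.
\end{itemize}
At $\alpha_{n+1}=\alpha_n$ the map $R_{\alpha_n}$ commutes with $h$, so the conjugate equals $f_n$. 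Since $f_n(\check{\M}_{\rho'})\subset\operatorname{int}\check{\M}_\rho$ has compact margin, both conditions persist for $\alpha_{n+1}$ close enough, by continuity in $\alpha$. The danger is the noncompact ends: there $h_{n+1}=\pi^{-1}$, so $f_{n+1}$ is exactly a rotation, and rotations preserve $\M_{\rho'}$ up to the $\rho$-margin. Only a compact region needs the continuity argument. We shrink $\nu$ accordingly, which is permitted since we only need some admissible $\alpha_{n+1}$.
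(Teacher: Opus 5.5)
Your outline matches the paper: pull back $\hat h$ to $\widetilde h=\widehat{h_n}^{-1}\circ\hat h$, make it compactly supported in $A(\gamma)$ with \cref{fact:dens}, blow it up to $\widetilde h_\A$, approximate modulo deformation, and take $\alpha_{n+1}$ close to $\alpha_n$. But step 6 composes with the wrong map, so the checks of $(I_{n+1})$ and $(III_{n+1})$ fail as written.

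In \cref{thm:app_mod_def}, property $(\mathcal P1)$ puts $h^{-1}$, not $h$, in the neighbourhood $\mathcal U$ of $\widetilde h_\A$. With $h_{n+1}=h^{-1}\circ h_n$ you correctly get $\widehat{h_{n+1}}=\widehat{h_n}\circ\pi\circ h\circ\pi^{-1}$. However, this is close to $\widehat{h_n}\circ\widetilde h^{-1}$, not to $\widehat{h_n}\circ\widetilde h$, and nothing places $\widehat{h_n}\circ\widetilde h^{-1}$ in $U(\widehat{h_n},\alpha_n)$.

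Composing with $h^{-1}$ causes two further problems, which applying the theorem to $\widetilde h_\A^{-1}$ would not cure:
\begin{itemize}
\item $h^{-1}$ is only defined on $h(\W)$. Enlarging $\W$ does not help: you would need $h_n(\check{\M}_\rho(\eta))\subset h(\W)$, and the theorem gives no such control because $h$ is produced after $\W$ is fixed.
\item $(\mathcal P3)$ controls $h^*J_0$ and $h^*\Omega_0$, whereas you need $(h^{-1})^*J_0$ close to $J_0$. Since $(h^{-1})^*J_0-J_0=(h^{-1})^*(J_0-h^*J_0)$ involves the derivatives of $h$ on the complex domain, which are not bounded, \enquote{continuity of inversion} does not supply this.
\end{itemize}

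The repair is the paper's choice.
\begin{itemize}
\item The $\eta$ of \cref{thm:app_mod_def} is fixed before $\W$, and may be taken $<\eta_0$.
\item Choose $\W=\A_R\supset h_n(\check{\M}_\rho(\eta))$.
\item Set $h_{n+1}=h\circ h_n$ on $\check{\M}_\rho(\eta)$ and $h_{n+1}=\pi^{-1}$ elsewhere. This is smooth, since near the edge of $\check{\M}_\rho(\eta)$ one has $h_n=\pi^{-1}$ and $h=\id$.
\end{itemize}
Then $\widehat{h_{n+1}}=\widehat{h_n}\circ\pi\circ h^{-1}\circ\pi^{-1}$ lies in $U(\widehat{h_n},\alpha_n)$ directly by $(\mathcal P1)$. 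Moreover, $h$ is only evaluated on its domain $\W$. Finally, $h_{n+1}^\# J_0=h_n^*(h^*J_0)$ is $2^{-n-1}$-close to $J_n$ once $\mathcal W_J$ is chosen in terms of the fixed $h_n$, and likewise for $\Omega$.

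A smaller point concerns $f_{n+1}$. Since $h_{n+1}$ is only a local diffeomorphism, $h_{n+1}^{-1}\circ R_{\alpha_{n+1}}\circ h_{n+1}$ is not globally meaningful. Instead, with $\iota=\alpha_{n+1}-\alpha_n$:
\begin{itemize}
\item lift $R_\iota$ through $h_{n+1}$ to a map $g_\iota$ close to the identity on $f_n(\check{\M}_{\rho'}(\eta))$;
\item extend $g_\iota$ by $R_\iota$ near the ends;
\item set $f_{n+1}=g_\iota\circ f_n$.
\end{itemize}
Your continuity argument at $\alpha_{n+1}=\alpha_n$ is exactly what guarantees this lift exists.
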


\begin{proof}[Sketch of proof]
Let $\alpha_n$, $h_n$ and $f_n$ be such that they satisfy $(I_n \cdots IV_n)$. 
Let us define $\alpha_{n+1}$, $h_{n+1}$ and $f_{n+1}$ so that they satisfy $(I_{n+1} \cdots IV_{n+1})$. 
First, by \cref{def:abc_star} of an AbC$^\star$ scheme, there exists a symplectomorphism $h_\M \in \sympo{\infty}{\M}$ commuting with $R_{\alpha_n}$ and such that $\widehat{h_n} \circ h_\M$ belongs to $U(\widehat{h_n},\alpha_n)$. 
Moreover, $U(\widehat{h_n},\alpha_n)$ belongs to $\mathcal{T}^\infty_\gamma$ for a bicurve $\gamma_\M = \pi(\gamma_\A) \in \Gamma_2(\M)$ invariant under $R_{\alpha_n}$, and $h_\M$ satisfies $\rest{h_\M}{O(\gamma)} = \id$. 
We then blow up $h_\M$ on $\partial \check{\M}$ to obtain a symplectomorphism $h_\A \in \sympo{\infty}{\A}$ which satisfies $\pi \circ h_\A = h_\M \circ \pi$. 
We can now approximate $h_\A$ modulo structural deformation by a map $h$ by applying \cref{thm:app_mod_def} with the bicurve $\gamma_\A$, and well chosen neighbourhood $\mathcal{U}$, $\sigma$-invariant compact set $\W$ and neighbourhoods $\mathcal{W}_J$ and $\mathcal{W}_\Omega$. 
We therefore consider $h_{n+1}$ to be equal to $ h \circ h_n$ on a compact subset of $\check{\M}_\rho$ and $\pi^{-1}$ elsewhere, this defines a map in $\dilo{c}{\infty}{\check{\M}_\rho}{\A_\infty}$. 
We also consider $\alpha_{n+1}$ close to $\alpha_n$, and $f_{n+1}$ to be such that $h_{n+1} \circ f_{n+1} = R_{\alpha_{n+1}} \circ \rest{h_{n+1}}{\check{\M}_{\rho'}}$ and $f_{n+1}$ coincides with $R_{\alpha_{n+1}}$ outside $\check{\M}_{\rho'}$. 
Thus, with  properties $(I_n)$, $(II_n)$, and $(IV_n)$ and with the properties satisfied by the approximation modulo deformation $h$, we obtain properties $(I_{n+1} \cdots IV_{n+1})$ for $\alpha_{n+1}$, $h_{n+1}$ and $f_{n+1}$.
\end{proof}

A complete proof of this Lemma is given in Appendix \ref{an:abc_real}.

We can then finish the proof of \cref{thm:real_sphere} thanks to the lemma.

\begin{proof}[Proof of \cref{thm:real_sphere} on $\Di$ and $\Sp$ (The end)]
Let us consider the sequence $(f_n)_n$ given by the latter lemma. 
By $(II_n)$ it converges toward a smooth embedding $f$ from $\M_{\rho'}$ to $\M_\rho$. 
Moreover, by $(I_n)$ its restriction to $\M$ is a symplectomorphism of $\M$ which realizes the AbC$^\star$ scheme.

By $(III_n)$ and \cref{thm:lim_almost_comp}, the sequence $(J_n)_n$ converges to a complex structure $J$. 
Moreover, by symplecticity of the $h_n$ on $\check{\M}$, the forms $\Omega_n$ are $J_n$-holomorphic extensions of $\Omega$ to $\M_\rho$. 
Then, by $(III_n)$ and \cref{thm:lim_almost_comp}, the sequence $(\Omega_n)_n$ converges toward $\Omega'$, a $J$-holomorphic extension of $\Omega$ to $\M_\rho$.

Also, by $J_0$-holomorphy of rotations, the following holds on $\check{\M}_{\rho'}$:
$$f_n^* J_n = (h_n \circ f_n)^*J_0 = (R_{\alpha_n} \circ h_n )^* J_0 = h_n^* J_0 = J_n.$$

Hence, at the limit, $f^*J = J$ on $\M_{\rho'}$, since on $\M_{\rho'} \setminus \check{\M}_{\rho'}$ we have $J = J_0$ and $f = R_\alpha$ (where $R_\alpha$ is the limit of $(R_{\alpha_n})_n$ on $\M_{\rho'} \setminus \check{\M}_{\rho'}$). 
As a result, $f$ is a $J$-holomorphic extension to $\M_{\rho'}$ of the symplectomorphism $\rest{f}{\M}$.
\end{proof}

\subsection{Conclusion on the principle}\label{sec:conc_proof}

Finally, we can prove \cref{thm:princ_strctr,thm:princ_strctr_equi}, concluding the proof of the principle as stated at the beginning of the section.

\begin{proof}[Proof of \cref{thm:princ_strctr}]
By \cref{thm:real_sphere}, there exist $\rho > \rho' >1$ and a complexification $(\Sp_\rho,J)$ on which $\Omega$ extends $J$-holomorphically. Moreover, there exists a $J$-holomorphic diffeomorphism $f : \Sp_{\rho'} \hookrightarrow \Sp_\rho$ which extends a real symplectomorphism of $(\Sp , \Omega)$ constructed by the AbC$^\star$ scheme $(U,\nu)$. Then, by \cref{prop:real_an_symp} on the sphere, $\rest{f}{\Sp}$ is real analytic and symplectic for a $C^\infty$-compatible analytic symplectic structure $(\mathcal{A}^{\omega'})$, which concludes the proof.
\end{proof}

Now \cref{thm:princ_strctr_equi} with the $\Z_2$-action.

\begin{proof}[Proof of \cref{thm:princ_strctr_equi}]
Let $\M = \A$ or $\Di$.\\
By \cref{thm:real_sphere} we have $\rho > \rho' >1$ and a complexification $(\M_\rho,J)$ on which $\Omega$ extends $J$-holomorphically. Moreover, there exists a $J$-holomorphic diffeomorphism $f : \M_{\rho'} \hookrightarrow \M_\rho$ which extends a real symplectomorphism of $(\M , \Omega)$ constructed by the AbC$^\star$ scheme $(U,\nu)$. \cref{thm:real_sphere} also implies that $J$ coincides with $J_0$ on $\M_\rho\setminus \M_{\rho'}$ and $f$ coincides with a rotation on $\M_\rho\setminus \M_{\rho'}$. Then, by \cref{prop:real_an_symp} on the cylinder or the disk, $\rest{f}{\M}$ is a real analytic $\Z_2$-symplectomorphism for a $C^\infty$-compatible structure $(\mathcal{A}^{\omega'})$, concluding the proof.
\end{proof}

\appendix

\section{Results on the Kantorovich distance}\label{an:kanto}
In this appendix, we present several results on the Kantorovich distance which are used in \cref{sec:finerg_cyl}.\\
First, we have the following result from \cite[Prop. 1.3]{berger_analytic_2022} on the push forward of measures by $C^0$ maps.
\begin{prop}\label{prop:dk_c0}
Let $X$ and $Y$ be measurable compact metric spaces. For $f_1,f_2 \in C^0(X,Y)$ and $\mu \in \mathcal{M}(X)$, the Kantorovich distance satisfies:
$$d_K({f_1}_*\mu,{f_2}_*\mu) \leq d_{C^0}(f_1,f_2).$$
\end{prop}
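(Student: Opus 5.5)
The natural route is the coupling definition of $d_K$, using the canonical coupling supplied by the pair of maps $(f_1,f_2)$. The dual Kantorovich--Rubinstein formula stated in the Notations would work equally well. We want to bound
$$d_K({f_1}_*\mu,{f_2}_*\mu)$$
by exhibiting a single admissible coupling whose cost is at most $d_{C^0}(f_1,f_2)$. We take $d_{C^0}(f_1,f_2)=\sup_{x\in X} d_Y(f_1(x),f_2(x))$, which is the meaning of $d_{C^0}$ used throughout the paper.

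First, consider the continuous map $\Phi := (f_1,f_2) : X \to Y\times Y$ and set $\lambda := \Phi_*\mu \in \mathcal{M}(Y^2)$. Continuity of $f_1,f_2$ makes $\Phi$ Borel measurable, so $\lambda$ is a probability measure. With $p_i : Y^2\to Y$ the projections, $p_i\circ\Phi = f_i$, hence ${p_i}_*\lambda = {f_i}_*\mu$ for $i\in\croset{1;2}$. So $\lambda$ is admissible in the infimum defining $d_K$.

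Second, compute its cost by the change of variables formula:
$$\inint{Y^2}{} d(y_1,y_2)\, d\lambda = \inint{X}{} d(f_1(x),f_2(x))\, d\mu(x) \leq \sup_{x\in X} d(f_1(x),f_2(x)) = d_{C^0}(f_1,f_2),$$
where we use that $\mu$ is a probability measure. Taking the infimum over couplings gives the claim.

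Alternatively, one can argue dually. For $\phi$ $1$-Lipschitz,
$$\inint{}{}\phi\, d({f_1}_*\mu-{f_2}_*\mu) = \inint{X}{}\big(\phi\circ f_1-\phi\circ f_2\big)d\mu \leq \inint{X}{} d(f_1,f_2)\,d\mu \leq d_{C^0}(f_1,f_2),$$
and one then takes the supremum over $\phi$.

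There is no real obstacle. The only points to check are measurability of $\Phi$, which follows from continuity, and that the definition of $d_K$ given for $\M$ applies verbatim to a compact metric space $Y$. If $d_{C^0}$ is instead a distance merely equivalent to the uniform one, the inequality holds for the uniform sup-distance, which is the normalization the paper uses in its applications.
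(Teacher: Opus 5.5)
Your proof is correct, in both the coupling and the dual version. The paper does not prove this statement itself; it imports it from \cite[Prop.~1.3]{berger_analytic_2022}, so there is no in-paper argument to compare yours against.

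Your dual computation bounds $\int (\phi\circ f_1-\phi\circ f_2)\,d\mu$ pointwise by $d(f_1,f_2)$. This is the same Kantorovich--Rubinstein style the paper uses for the neighbouring estimates \cref{prop:kanto} and \cref{prop:kant_conv1}. The coupling version via $(f_1,f_2)_*\mu$ is slightly more elementary, since it works directly with the primal definition from the Notations and needs no duality theorem.

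Your remark on normalization is well taken. The paper only says $d_{C^r}$ is \emph{a} distance inducing the $C^r$ topology, but the inequality with constant $1$ needs the uniform metric $\sup_x d_Y(f_1(x),f_2(x))$. That is exactly how the statement is used in \cref{fact:cont_sup} and \cref{coro:kanto}.
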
 

Next, let us present a result on the Kantorovich distance for bi-Lipschitz maps.

\begin{prop}\label{prop:kanto}
Let $(X,d_X)$ and $(Y,d_Y)$ be measurable metric spaces and let $\phi : X \rightarrow Y$ be an invertible $Q$-bi-Lipschitz map, that is $\phi$ and $\phi^{-1}$ are $Q$-Lipschitz. Then for $\mu_1$ and $\mu_2$ two measures on $X$ we have:
$$\frac{1}{Q} d_K(\mu_1,\mu_2) \leq d_K(\phi_*\mu_1,\phi_*\mu_2) \leq Q d_K(\mu_1,\mu_2).$$
\end{prop}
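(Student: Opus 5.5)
The plan is to use the dual (Kantorovich--Rubinstein) formulation of $d_K$ recalled in the Notations, together with the elementary fact that composing a Lipschitz function with a Lipschitz map rescales its Lipschitz constant. Since $\phi^{-1}$ is $Q$-Lipschitz and $\phi$ is $Q$-Lipschitz, each inequality should follow from transporting test functions through $\phi$ or $\phi^{-1}$. I will assume, as in the intended use, that $X$ and $Y$ are compact (or at least that the measures have finite first moment), so that the duality formula applies on both spaces.

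For the upper bound, I take a $1$-Lipschitz function $u : Y \to \R$. By the change of variables formula,
$$\inint{Y}{} u \, d(\phi_*\mu_1 - \phi_*\mu_2) = \inint{X}{} u\circ\phi \, d(\mu_1-\mu_2).$$
The function $u\circ\phi$ is $Q$-Lipschitz, so $\tfrac1Q\, u\circ\phi$ is $1$-Lipschitz, and the right-hand side is therefore at most $Q\, d_K(\mu_1,\mu_2)$. Taking the supremum over $u$ gives $d_K(\phi_*\mu_1,\phi_*\mu_2)\le Q\, d_K(\mu_1,\mu_2)$.

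For the lower bound, I apply the same inequality to the $Q$-Lipschitz map $\phi^{-1}$ and the measures $\phi_*\mu_i$. Since $(\phi^{-1})_*\phi_*\mu_i=\mu_i$, this gives $d_K(\mu_1,\mu_2)\le Q\, d_K(\phi_*\mu_1,\phi_*\mu_2)$, which is exactly the left inequality.

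An alternative route avoids duality: push a coupling $\mu$ of $(\mu_1,\mu_2)$ forward by $\phi\times\phi$. This gives a coupling of $(\phi_*\mu_1,\phi_*\mu_2)$ whose cost is at most $Q$ times the original cost, and then one takes the infimum. The only point needing care, and hence the main (mild) obstacle, is justifying that duality or the coupling definition is valid in the generality stated. The coupling argument sidesteps this, since it uses only the primal definition of $d_K$, so I would present that version to be safe.
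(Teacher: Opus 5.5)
Your duality argument is exactly the paper's proof: rescale $u\circ\phi$ by $1/Q$ to get the upper bound, then apply that bound to $\phi^{-1}$ and the measures $\phi_*\mu_i$ to get the lower bound. The coupling variant you prefer is also correct, since pushing a coupling forward by $\phi\times\phi$ gives an admissible coupling of $(\phi_*\mu_1,\phi_*\mu_2)$ with cost at most $Q$ times the original; its small advantage is that it works from the primal definition of $d_K$ alone, without invoking Kantorovich--Rubinstein duality.
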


\begin{proof}
To obtain the right-hand inequality, we consider $f:Y \rightarrow \R$ a $1$-Lipschitz map, then $\tfrac{1}{Q}f\circ \phi$ is also $1$-Lipschitz and we have by Kantorovich's theorem:
$$\int_Yfd(\phi_*\mu_1-\phi_*\mu_2) = \int_X f\circ \phi d(\mu_1-\mu_2) = Q\int_X\frac{1}{Q}f\circ \phi d(\mu_1-\mu_2) \leq Q d_K(\mu_1,\mu_2).$$
This provides the right-hand inequality by Kantorovich's theorem. Then, we obtain the left-hand inequality with the right one by replacing $\phi$ by $\phi^{-1}$ and $\mu_i$ by $\phi_*\mu_i$ for $i\in \croset{1,2}$.
\end{proof}

From these two propositions we deduce the following result.

\begin{coro}\label{coro:kanto}
Let $h$ be in $\symp{0}{\M}$, then the map $\mu \in \mathcal{M}(\M) \mapsto h_*\mu$ is continuous.
\end{coro}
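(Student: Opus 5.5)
The plan is to obtain \cref{coro:kanto} by combining \cref{prop:dk_c0} with the compactness of $\M$ and of $\mathcal{M}(\M)$; \cref{prop:kanto} cannot be used directly, because a symplectic homeomorphism $h\in\symp{0}{\M}$ need not be bi-Lipschitz. The idea is to approximate $h$ in $C^0$ by maps for which pushforward is Lipschitz in $d_K$, and to pass to the limit using the uniform estimate of \cref{prop:dk_c0}.

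First, fix $\epsilon>0$. Since $\symp{0}{\M}$ is by definition the $C^0$-closure of $\symp{1}{\M}$, there is $h_\epsilon\in\symp{1}{\M}$ with $d_{C^0}(h,h_\epsilon)<\epsilon$. Because $\M$ is compact and $h_\epsilon$ is $C^1$ with $C^1$ inverse, $h_\epsilon$ is $Q_\epsilon$-bi-Lipschitz for some $Q_\epsilon\geq 1$. At the possible singular points (poles of $\Sp$, centre of $\Di$), $h_\epsilon$ is a genuine $C^1$ map of the smooth surface $\M\subset\R^3$ or $\R^2$, so the Lipschitz constant is still finite. Then \cref{prop:kanto} gives $d_K({h_\epsilon}_*\mu_1,{h_\epsilon}_*\mu_2)\leq Q_\epsilon d_K(\mu_1,\mu_2)$.

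Next, for $\mu_1,\mu_2\in\mathcal{M}(\M)$, apply the triangle inequality together with \cref{prop:dk_c0} twice:
$$d_K(h_*\mu_1,h_*\mu_2)\leq d_K(h_*\mu_1,{h_\epsilon}_*\mu_1)+d_K({h_\epsilon}_*\mu_1,{h_\epsilon}_*\mu_2)+d_K({h_\epsilon}_*\mu_2,h_*\mu_2)\leq 2\epsilon+Q_\epsilon d_K(\mu_1,\mu_2).$$
Hence $d_K(\mu_1,\mu_2)<\epsilon/Q_\epsilon$ implies $d_K(h_*\mu_1,h_*\mu_2)<3\epsilon$. This gives continuity, and in fact uniform continuity, which is the form used in the proof of \cref{prop:finerg_const}. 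Here the $C^0$ distance is taken to be the uniform distance associated with the metric $d$ used in $d_K$.

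I expect the main obstacle to be only technical: checking that $\symp{1}{\M}$ elements are bi-Lipschitz for the chosen metric on $\M$ near the non-smooth points of $\pi$. This holds because $\M$ is a compact smooth submanifold and the maps are $C^1$ diffeomorphisms of it. Alternatively, one could avoid Lipschitz estimates entirely. Weak-$\star$ continuity of $\mu\mapsto h_*\mu$ follows from $\int f\,dh_*\mu=\int f\circ h\,d\mu$ with $f\circ h$ continuous, and $d_K$ metrizes the weak-$\star$ topology. Since $\mathcal{M}(\M)$ is compact, Heine's theorem then upgrades this to uniform continuity. I would present this second argument as the main proof, since it is cleaner, and mention the first as the quantitative route.
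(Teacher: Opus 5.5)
Your first, quantitative route is the paper's proof. The paper approximates $h$ in $C^0$ by a $C^1$, hence bi-Lipschitz, symplectomorphism $g$, then combines \cref{prop:dk_c0} and \cref{prop:kanto} through the triangle inequality to get $d_K(h_*\mu_n,h_*\mu)\le 2\epsilon+Q\,d_K(\mu_n,\mu)$. The only difference is that the paper writes this along a sequence $\mu_n\to\mu$, while you write it for an arbitrary pair, which gives uniform continuity directly.

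Your preferred second route is a genuinely different argument, and a more elementary one. It uses only that $f\circ h$ is continuous whenever $f$ is, so weak-$\star$ convergence is preserved by $h_*$. It then uses the fact, recalled in the paper's notations, that $d_K$ metrizes the weak-$\star$ topology. Finally, Heine's theorem on the compact space $\mathcal{M}(\M)$ gives the uniform continuity that \cref{prop:finerg_const} actually invokes. This argument needs neither the density of $\symp{1}{\M}$ in $\symp{0}{\M}$ nor any Lipschitz estimate, and it holds for any continuous self-map of a compact metric space.

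The paper's route stays inside the Kantorovich/Lipschitz toolkit of the appendix and produces an estimate that is uniform in the pair of measures without appealing to compactness of $\mathcal{M}(\M)$. However, the constant $Q_\epsilon$ is uncontrolled, so that modulus of continuity is no more explicit than the one compactness provides. Since \cref{prop:finerg_const} only needs the existence of some modulus $\omega$ with $\omega(\epsilon')\le\epsilon/4$, your soft argument loses nothing.

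Your worry about the poles and the centre is unnecessary: those are ordinary points of the smooth surface $\M$, and only the parametrization $\pi$ is singular there.
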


\begin{proof}
Let us show the continuity of the map by sequential characterization. Let $(\mu_n)_n$ be a sequence of probability measures on $\M$ converging to $\mu \in \mathcal{M}(\M)$. Let $\epsilon>0$, by density of $\symp{1}{\M}$ in $\symp{0}{\M}$ there exists a map $g\in \symp{1}{\M}$ which is $\epsilon$-$C^0$ close to $h$. Let $Q>0$ be such that $g$ is $Q$-bi-Lipschitz. For $n\in \N^*$ such that $d_K(\mu_n,\mu)\leq \tfrac{\epsilon}{Q}$, we have by triangle inequality and the two latter propositions:
$$d_K(h_*\mu_n,h_*\mu) \leq d_K(h_*\mu_n,g_*\mu_n) + d_K(g_*\mu_n,g_*\mu) + d_K(h_*\mu,g_*\mu) \leq \epsilon + Qd_K(\mu_n,\mu) + \epsilon \leq 3\epsilon.$$
This yields the continuity of the map $\mu \mapsto h_*\mu$.
\end{proof}

Next observe that the Kantorovich distance satisfies nice properties on convex combinations.

\begin{prop}\label{prop:kant_conv1}
Let $X$ be a compact metric space. Let $\nu_1,\nu_2, \mu_1$, and $\mu_2$ be probability measures in $\mathcal{M}(X)$ and $\alpha \in [0,1]$. We have:
$$d_K(\alpha \nu_1 + (1-\alpha)\nu_2,\alpha\mu_1 +(1-\alpha)\mu_2) \leq \alpha d_K(\nu_1,\mu_1) + (1-\alpha)d_K(\nu_2,\mu_2).$$
\end{prop}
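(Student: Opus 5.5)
The plan is to use the coupling definition of $d_K$; the dual Kantorovich--Rubinstein formulation would work just as well. I will glue near-optimal couplings by convex combination.

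Fix $\eta>0$. For $i\in\croset{1;2}$, choose a coupling $\pi_i\in\mathcal{M}(X^2)$ with marginals ${p_1}_*\pi_i=\nu_i$ and ${p_2}_*\pi_i=\mu_i$, and with $\int_{X^2} d\,d\pi_i\le d_K(\nu_i,\mu_i)+\eta$. Such couplings exist by the definition of the infimum. By compactness of $X$ the infimum is in fact attained, so one could even take $\eta=0$, but this is not needed.

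Next, set $\pi:=\alpha\pi_1+(1-\alpha)\pi_2$. This is a probability measure on $X^2$. Since pushforward by the projections $p_1,p_2$ is linear, its marginals are exactly $\alpha\nu_1+(1-\alpha)\nu_2$ and $\alpha\mu_1+(1-\alpha)\mu_2$. Therefore $\pi$ is admissible in the infimum defining the left-hand side. By linearity of the integral,
$$d_K\big(\alpha\nu_1+(1-\alpha)\nu_2,\alpha\mu_1+(1-\alpha)\mu_2\big)\le\int_{X^2} d\,d\pi=\alpha\int d\,d\pi_1+(1-\alpha)\int d\,d\pi_2\le \alpha d_K(\nu_1,\mu_1)+(1-\alpha)d_K(\nu_2,\mu_2)+\eta.$$
Letting $\eta\to0$ concludes the proof.

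There is no real obstacle here. The only points to verify are that convex combinations of couplings remain couplings, which follows from linearity of pushforward, and that the edge cases $\alpha\in\croset{0;1}$ are trivial. Alternatively, the dual formulation gives the result in one line: for a $1$-Lipschitz $f$, the quantity $\int f\,d(\cdot-\cdot)$ splits linearly into the two terms, each bounded by the corresponding $d_K$, and one then takes the supremum.
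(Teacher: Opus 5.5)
Your proof is correct, but your main argument takes a different route from the paper's. The paper argues on the dual side: for a $1$-Lipschitz $f$ it writes $\int_X f\, d(\alpha\nu_1+(1-\alpha)\nu_2-\alpha\mu_1-(1-\alpha)\mu_2)=\alpha\int_X f\,d(\nu_1-\mu_1)+(1-\alpha)\int_X f\,d(\nu_2-\mu_2)$, bounds each term by the corresponding $d_K$, and takes the supremum via Kantorovich--Rubinstein duality. That is exactly the one-line alternative you sketch at the end.

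Your primary route instead glues near-optimal couplings into $\alpha\pi_1+(1-\alpha)\pi_2$. It therefore works directly from the primal definition of $d_K$ (the infimum over couplings) given in the paper's notation section. This buys two things: you need neither the duality theorem nor attainment of the infimum, and the argument extends verbatim to any cost function (for instance it gives joint convexity of $W_p^p$, where there is no equally simple $1$-Lipschitz dual). The dual argument, on the other hand, is shorter in this paper's context, since the paper already relies on the duality theorem for Propositions~\ref{prop:kanto} and~\ref{prop:kant_conv2}.
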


\begin{proof} Let $f: X \rightarrow \R$ be a $1$-Lipschitz map, we have:
$$\int_X fd(\alpha\nu_1 + (1-\alpha)\nu_2 - \alpha\mu_1 -(1-\alpha)\mu_2) = \alpha\int_X fd(\nu_1 - \mu_1) + (1-\alpha)\int_X fd(\nu_2 - \mu_2).$$
Hence the result follows by Kantorovich's duality theorem.

\end{proof}

Likewise, using Kantorovich's duality theorem, we obtain the following proposition on convex combinations.
\begin{prop}\label{prop:kant_conv2}
Let $X$ be a compact metric space. Let $\mu_1$ and $\mu_2$ be probability measures in $\mathcal{M}(X)$ and $\alpha \in [0,1]$. We have:
$$d_K(\alpha \mu_1 + (1-\alpha)\mu_2,\mu_1) =(1-\alpha) d_K(\mu_1,\mu_2).$$
\end{prop}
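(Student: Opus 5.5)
The plan is to use the dual (Kantorovich--Rubinstein) formulation of $d_K$ recalled in the Notations section, exactly as in the proof of \cref{prop:kant_conv1}. The key observation is that the signed measure in question factors: we have
$$\alpha \mu_1 + (1-\alpha)\mu_2 - \mu_1 = (1-\alpha)(\mu_2 - \mu_1).$$
So for every $1$-Lipschitz $f: X \to \R$ we get
$$\int_X f\, d\big(\alpha\mu_1 + (1-\alpha)\mu_2 - \mu_1\big) = (1-\alpha)\int_X f\, d(\mu_2-\mu_1).$$

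Next I take the supremum over all $1$-Lipschitz $f$ on both sides. Since $1-\alpha \geq 0$, the factor $(1-\alpha)$ comes out of the supremum, with no inequality lost. This gives
$$d_K(\alpha\mu_1 + (1-\alpha)\mu_2,\mu_1) = (1-\alpha)\, d_K(\mu_2,\mu_1).$$
Symmetry of $d_K$ then yields the stated identity. The fact that both sides are suprema over the same class is what makes this an equality rather than just the upper bound one would get from \cref{prop:kant_conv1} with $\nu_1=\mu_1$, $\nu_2=\mu_2$, $\mu_2:=\mu_1$.

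The only point needing care is the applicability of the duality theorem. It requires both arguments to be probability measures on the compact metric space $X$. This holds here because $\alpha\mu_1+(1-\alpha)\mu_2$ is a convex combination of probability measures. The edge case $\alpha = 1$ is trivial, since both sides vanish. There is no real obstacle; the argument is a direct one-line computation, in the same spirit as \cref{prop:kant_conv1}.
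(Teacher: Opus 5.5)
Your proposal is correct and is essentially the paper's argument: the paper only says the identity follows from Kantorovich--Rubinstein duality, as in \cref{prop:kant_conv1}. Your factorization $\alpha\mu_1+(1-\alpha)\mu_2-\mu_1=(1-\alpha)(\mu_2-\mu_1)$, followed by pulling the nonnegative factor $1-\alpha$ out of the supremum over $1$-Lipschitz functions, is precisely the computation the paper leaves implicit.
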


Now, to obtain that a measure is close to the Lebesgue measure of a surface, we have the following proposition:

\begin{prop}\label{an:dK_unif}
Let $S$ be a compact surface, let $\epsilon>0$ and let $(D_1, \cdots, D_n)$ be disjoints subsets of $S$ such that:
\begin{enumerate}
\item $\Leb_S(\sqcup_i D_i) \geq 1-\frac{\epsilon}{\diam(S)}$,
\item $\Leb_S(D_i) = \Leb_S(D_j)$ for every $i,j$,
\item and $\diam(D_i) \leq \epsilon$ for every $i$.
\end{enumerate}
Then, for every family $(\mu_i)_{1 \le i \le N}$ such that $\mu_i \in \mathcal{M}(D_i)$, we have:
$$d_K(\Leb_S, \frac{1}{N} \isum{i=1}{N} \mu_i) \leq 2 \epsilon.$$
\end{prop}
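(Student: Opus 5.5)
We build an explicit coupling between $\Leb_S$ and $\nu := \frac{1}{N}\sum_{i=1}^N \mu_i$ and bound its transport cost, using the definition of $d_K$ as an infimum over couplings. Write $D := \sqcup_i D_i$ and $m := \Leb_S(D)$, so that $m \geq 1-\frac{\epsilon}{\diam(S)}$. By hypothesis 2, each $D_i$ has mass $\Leb_S(D_i) = m/N$. We first use \cref{prop:kant_conv2} to reduce to the conditioned measure $\rest{\Leb_S}{D}$. Indeed, $\Leb_S = m\,\rest{\Leb_S}{D} + (1-m)\,\rest{\Leb_S}{S\setminus D}$, so
$$d_K(\Leb_S,\rest{\Leb_S}{D}) = (1-m)\, d_K(\rest{\Leb_S}{S\setminus D},\rest{\Leb_S}{D}) \leq (1-m)\diam(S) \leq \epsilon,$$
since the Kantorovich distance between any two probability measures is at most $\diam(S)$. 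If $m=1$ this step is trivial.

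Next we observe that $\rest{\Leb_S}{D} = \frac{1}{N}\sum_i \rest{\Leb_S}{D_i}$, because the $D_i$ are disjoint and all have the same mass $m/N$. Applying \cref{prop:kant_conv1} iteratively to this convex combination, which has equal weights $\frac1N$, gives
$$d_K\Big(\rest{\Leb_S}{D},\nu\Big) \leq \frac{1}{N}\sum_{i=1}^N d_K(\rest{\Leb_S}{D_i},\mu_i).$$
Both $\rest{\Leb_S}{D_i}$ and $\mu_i$ are supported in $D_i$, so coupling them by the product measure $\rest{\Leb_S}{D_i}\otimes\mu_i$ has cost at most $\diam(D_i) \leq \epsilon$ by hypothesis 3. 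Each term is therefore at most $\epsilon$, and so is the average. The triangle inequality then yields $d_K(\Leb_S,\nu)\leq 2\epsilon$.

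The only delicate point is the equal-mass hypothesis 2. It is what makes the weights of $\rest{\Leb_S}{D}$ on the $D_i$ coincide with the uniform weights $\frac1N$ of $\nu$. Without it, the mixture-comparison step would fail, since \cref{prop:kant_conv1} requires the same convex coefficients on both sides. We also note, as a mild notational point, that the number of sets $n$ in the statement is identified with $N$.
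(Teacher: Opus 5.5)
Your proposal is correct and follows the paper's proof essentially step by step. The paper also reduces to $\rest{\Leb_S}{D}$ via \cref{prop:kant_conv2} and hypothesis 1, uses hypothesis 2 to write $\rest{\Leb_S}{D}=\frac1N\sum_i\rest{\Leb_S}{D_i}$ and applies \cref{prop:kant_conv1}, bounds each term by $\diam(D_i)\le\epsilon$ (your product coupling is exactly its remark that $\mathcal{M}(D_i)$ has diameter $\diam(D_i)$), and concludes with the triangle inequality.
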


\begin{proof}
First, because we have the decomposition $\Leb_S = \Leb_S(D) \rest{\Leb_S}{D} + (1-\Leb_S(D)) \rest{\Leb_S}{S\setminus D}$ where $D = \sqcup_i D_i$, it follows from \cref{prop:kant_conv2} and 1) that
\begin{equation}\label{eq:dk_SD}
d_K( \Leb_S, \rest{\Leb_S}{D}) \leq \frac{\epsilon}{\diam(S)} d_K(\rest{\Leb_S}{S \setminus D}, \rest{\Leb_S}{D}) \leq \epsilon.
\end{equation}
Then, by 2) we have $\rest{\Leb_S}{D} = \tfrac{1}{N}\isum{i=1}{N}\rest{\Leb_S}{D_i}$, i.e. by \cref{prop:kant_conv1} it follows:
$$d_K(\rest{\Leb_S}{D},\frac{1}{N} \isum{i=1}{N} \mu_i) \leq \frac{1}{N} \isum{i=1}{N} d_K(\rest{\Leb_S}{D_i},\mu_i).$$
Moreover, because $\diam(\mathcal{M}(D_i)) = \diam(D_i)$, it follows with the latter equation and 3) that:
$$d_K(\rest{\Leb_S}{D},\frac{1}{N} \isum{i=1}{N} \mu_i) \leq \epsilon.$$
This yields the results by using the triangular inequality and \cref{eq:dk_SD}.

\end{proof}

\section{Symplectomorphism between bicurves}\label{an:sec_bic}

In this appendix, we prove \cref{an:lemma_symp_bi} which, for a given bicurve $\gamma \subset \A \setminus \partial\A$, provides a symplectomorphism that sends a horizontal bicurve to $\gamma$. We recall that, by \cref{def:bicurve}, there exists a diffeotopy of the cylinder from a horizontal bicurve to $\gamma$. Therefore, by an application of Moser's trick and a slight modification of the horizontal bicurve, one obtains the symplectomorphism from the given diffeotopy. We recall the proposition.

\begin{prop-no}[\ref{an:lemma_symp_bi}]
Let $\gamma$ be in $\Gamma_2(\A)$, there exist a bicurve $\gamma' := \T \times \croset{y_-,y_+}$ and a symplectomorphism $\phi\in\mathrm{Symp}_c^\infty(\A,\partial\A)$ such that
$$\phi \left (\gamma'\right ) = \gamma.$$
\end{prop-no}

\begin{proof} 
First, let us prove the proposition in the case $\gamma \subset \A \setminus \partial \A$. Let us choose the appropriate bicurve $\gamma'$ with the following sub-lemma that we prove below.

\begin{lemma}\label{subl:diffeotopy}
There exists a bicurve $\gamma' := \T \times \croset{y_-,y_+}$ diffeotopic to $\gamma$ via a diffeotopy $(f_t)_{t\in [0,1]}$ compactly supported in $\A \setminus \partial\A$ such that $f_1$ preserves the areas of the components of $\A \setminus \gamma'$.
\end{lemma}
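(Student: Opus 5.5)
We are in the case $\gamma \subset \A \setminus \partial\A$. By \cref{def:bicurve} there is a diffeotopy $(g_t)_t$ in $\diffeo{\infty}{\A}$ with $g_0=\id$ and $g_1(\T\times\croset{z_\pm}) = \gamma_\pm$ for some $-1\le z_-<z_+\le 1$. The plan has three steps: first make the diffeotopy compactly supported in the interior, then move the straight bicurve vertically so that the three areas are correct, and finally glue the two isotopies.

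\textbf{Step 1: compact support.} Since $\gamma\subset\A\setminus\partial\A$, I expect that $g_1$ can be assumed to preserve each boundary circle, because a diffeotopy starting at $\id$ preserves each boundary component. If $z_\pm$ lies on $\partial\A$, I first push the horizontal bicurve slightly inward by a vertical isotopy $\T\times\croset{z_\pm}\to\T\times\croset{z'_\pm}$ with $z'_\pm$ in the interior. Then $g_1$ restricted to a collar of $\partial\A$ is a diffeomorphism isotopic to the identity that fixes the collar's boundary circle setwise. Using the isotopy extension theorem, and the fact that $\mathrm{Diff}_0$ of an annulus relative to one boundary component retracts onto rotations, I modify $(g_t)$ near $\partial\A$. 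The result is a diffeotopy $(\tilde g_t)$ that is compactly supported in $\A\setminus\partial\A$ and still sends $\T\times\croset{z'_-,z'_+}$ onto $\gamma$. If the collar causes trouble, an alternative is to compose with a rotation and a vertical isotopy near the boundary, so that $g_1=\id$ on a collar, and then correct the isotopy by a loop argument.

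\textbf{Step 2: area adjustment.} The three components of $\A\setminus\gamma$ are the region below $\gamma_-$, the annulus $A(\gamma)$, and the region above $\gamma_+$. Call their areas $a_-,a_0,a_+$; they are positive and sum to $1$. I choose $y_-<y_+$ in $(-1,1)$ so that $\Leb(\T\times[-1,y_-])=a_-$ and $\Leb(\T\times[y_+,1])=a_+$, which is possible since $\Leb_\A=\Leb_\T\otimes\Leb_\I$. Then I take a compactly supported diffeotopy $k_t$ of $\A\setminus\partial\A$ with $k_1(\T\times\croset{y_\pm})=\T\times\croset{z'_\pm}$. Concretely, $k_t(\theta,y)=(\theta,\psi_t(y))$, where $\psi_t$ is an isotopy of $\I$ that is compactly supported in the interior and sends $y_\pm$ to $z'_\pm$; it exists because the order of the points is preserved.

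\textbf{Step 3: concatenation.} I set $f_t$ to be the concatenation of $(k_t)$ followed by $(\tilde g_t\circ k_1)$, reparametrized smoothly in $t$. This diffeotopy is compactly supported in $\A\setminus\partial\A$, and $f_1=\tilde g_1\circ k_1$ maps $\T\times\croset{y_\pm}$ onto $\gamma_\pm$. Hence $f_1$ maps each component of $\A\setminus\gamma'$ onto the corresponding component of $\A\setminus\gamma$, matching them correctly (bottom to bottom, annulus to annulus, top to top). Those components have, by the choice of $y_\pm$, exactly the areas of the components of $\A\setminus\gamma'$, which is the required area preservation.

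The main obstacle is Step 1, namely obtaining compact support in the interior while keeping $f_1(\gamma')=\gamma$. The area matching is routine once the supports are controlled.
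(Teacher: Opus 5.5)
Your Steps 2 and 3 are the paper's argument. You choose $y_\pm$ so that $\T\times[-1,y_-]$ and $\T\times[y_+,1]$ have the areas of the two outer components of $\A\setminus\gamma$, move $\T\times\croset{y_\pm}$ onto the straight bicurve by a vertical isotopy $(\theta,y)\mapsto(\theta,\psi_t(y))$ compactly supported in the interior, and concatenate with the diffeotopy from the definition. Your Step 1, which you flag as the main obstacle, is not needed. The paper reads the diffeotopy in \cref{def:bicurve} (taken in $\diffeo{\infty}{\A}$) as compactly supported in $\A\setminus\partial\A$, so it fixes $\partial\A$; since $\gamma\subset\A\setminus\partial\A$, this already forces $z_\pm\in(-1,1)$, and your sketched collar-modification argument can simply be dropped.
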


We therefore consider such a diffeotopy $(f_t)_{t\in [0,1]}$ compactly supported in $\A \setminus \partial\A$ between a bicurve $\gamma' := \T \times \croset{y_-,y_+}$ and $\gamma$. For any component $N$ of $\A \setminus \gamma'$, the diffeotopy satisfies:

\begin{equation}\label{eq:vol_N}
\Leb(f_1(N) ) = \Leb (N) \quad\text{and}\quad f_1 (\gamma'_\pm) = f_1( \T \times \croset{y_\pm} ) = \gamma_\pm.
\end{equation}

Now, let us use Moser's trick to compose $(f_t)_t$ with a smooth isotopy and obtain the desired symplectomorphism.
First, for any component $N$ of $\A \setminus \gamma'$, we can assume that $f_t$ preserves the canonical symplectic form  $\Omega$ near the boundary of $f_t(N)$ in $\A$. Indeed, we can smoothly change the size of the normal component of the differential of the $f_t$ near a tubular open neighbourhood of the boundary of $f_t(N)$ in $\A$. This is achieved by smoothly deforming $f_t$ on this neighbourhood.\\
  
Let $N$ be a component of $\A \setminus \gamma'$, and let $V$ be a tubular neighbourhood of $\partial N$ in $N$ such that $\rest{f_t}{V}$ is symplectic.

Let us define the family of symplectic forms $(\Omega_t)_t$ in order to apply Moser's trick.
First, by shrinking the neighbourhood $V$, we can assume that 
\begin{equation}\label{eq:VleN}
\Leb(V) < \Leb(N).
\end{equation}
Next, let $U$ be a neighbourhood of $\partial N$ relatively compact in $V$ and let $\chi : N \rightarrow [0,1]$ be a smooth function such that $\rest{\chi}{N\setminus V} \equiv 1$ and $\rest{\chi}{U} \equiv 0$. In particular, for every $t$, we have $\int_N \chi f_t^*\Omega \geq  \Leb(f_t(N\setminus V)) >0$ since $f_t$ is a diffeomorphism and $\Leb(N\setminus V) >0$ by \cref{eq:VleN}. Thus, we define for $t \in [0,1]$: 
$$\Omega_t := A_t\cdot\rest{(f_t^*\Omega)}{N}\quad \text{with} \quad A_t := 1 + \frac{\Leb(N)-\Leb(f_t(N))}{\int_N\chi f_t^*\Omega}\chi .$$

Note that $\int_N \Omega_t = \Leb(N)$. Moreover, since $\Leb(N) = \Leb(f_1(N))$ by \eqref{eq:vol_N}, it follows that $A_1 \equiv 1$ and so:
\begin{equation}\label{eq:f1_omega}
\rest{(f_1^*\Omega)}{N} = \Omega_1.
\end{equation}
In addition, observe that the $\Omega_t$ are symplectic forms by the following fact.
\begin{fact}
For every $t \in [0,1]$, $A_t$ is positive and $\Omega_t$ is a symplectic form.
\end{fact}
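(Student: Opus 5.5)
We must show that for every $t\in[0,1]$ the function $A_t$ is positive on $N$. Then $\Omega_t=A_t\,f_t^*\Omega$ is a positive multiple of the nondegenerate $2$-form $f_t^*\Omega$, hence nondegenerate. It is closed because $N$ is a surface and every $2$-form on it is closed. So $\Omega_t$ is symplectic.

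For positivity, we use that $\chi$ takes values in $[0,1]$. This makes $A_t$ an affine function of $\chi(x)$, so $A_t$ lies between its values at $\chi=0$ and $\chi=1$. The value at $\chi=0$ is $1>0$. It therefore suffices to check the value at $\chi=1$, namely
$$1+\frac{\Leb(N)-\Leb(f_t(N))}{\int_N\chi f_t^*\Omega}>0.$$
Since the denominator is positive (as noted before the fact, it is at least $\Leb(f_t(N\setminus V))>0$), this is equivalent to
$$\int_N \chi f_t^*\Omega+\Leb(N)-\Leb(f_t(N))>0.$$
We have $\Leb(f_t(N))=\int_N f_t^*\Omega$, since $f_t$ is an orientation-preserving diffeomorphism isotopic to the identity. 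The left-hand side therefore equals
$$\Leb(N)-\int_N(1-\chi)\,f_t^*\Omega .$$
Because $1-\chi$ vanishes outside $V$ and takes values in $[0,1]$, we get
$$\int_N(1-\chi)\,f_t^*\Omega\le \int_V f_t^*\Omega=\Leb(f_t(V)).$$
The main point is now to compare $\Leb(f_t(V))$ with $\Leb(N)$. Here we use that $f_t$ is symplectic on $V$, which was arranged before the fact. This gives $\Leb(f_t(V))=\Leb(V)$, and by \eqref{eq:VleN} we have $\Leb(V)<\Leb(N)$. Hence the left-hand side is strictly positive, so $A_t>0$ everywhere on $N$.

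The only delicate step is this last one. It needs the symplecticity of $f_t$ on all of $V$, not merely near $\partial N$. If only the weaker version were available, I would shrink $V$ to the region where $f_t$ is symplectic (keeping $U\Subset V$) before choosing $\chi$. Alternatively, I would choose $V$ small enough that $\sup_t\Leb(f_t(V))<\Leb(N)$, which is possible by compactness of $[0,1]$ and continuity of $f_t$. Both modifications preserve the positivity argument above.
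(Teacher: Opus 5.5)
Your argument is correct and essentially the paper's: the paper reduces positivity of $A_t$ to $\Leb(N)-\Leb(f_t(N)) > -\int_N \chi f_t^*\Omega$ and proves it via $\int_N\chi f_t^*\Omega \ge \int_{N\setminus V} f_t^*\Omega = \Leb(f_t(N)) - \Leb(f_t(V))$ together with $\Leb(f_t(V)) = \Leb(V) < \Leb(N)$, which is your bound on $\int_N(1-\chi)f_t^*\Omega$ written the other way around. Your closing caveat is not needed, because the paper chooses $V$ so that $f_t$ is symplectic on all of $V$, and shrinking $V$ to get $\Leb(V)<\Leb(N)$ keeps this property.
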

\begin{proof}[Proof of the Fact]
To prove that $A_t$ is positive, it suffices to show that $\frac{\Leb(N)-\Leb(f_t(N))}{\int_N\chi f_t^*\Omega}\chi > -1$. Then, since $\chi$ takes its values in $[0,1]$ and $\int_N\chi f_t^*\Omega$ is positive, it reduces to prove that:
$$\Leb(N)-\Leb(f_t(N)) > - \int_N\chi f_t^*\Omega $$
Now, since $f_t$ is symplectic on $V$, we have $\Leb(f_t(V)) = \Leb(V) < \Leb(N)$ by \cref{eq:VleN}. Therefore, we have: 
$$\int_N\chi f_t^*\Omega \geq \int_{N\setminus V} f_t^*\Omega = \Leb(f_t(N)) - \Leb(f_t(V)) > \Leb(f_t(N)) - \Leb(N).$$
This proves the positivity of $A_t$. Hence $\Omega_t$ is indeed a symplectic form.
\end{proof}

Next, as $\int_N \Omega_t = \Leb(N)$ for every $t$, the $(\Omega_t)_t$ are cohomologous symplectic forms with constant restriction to $U$ (because $\rest{f_t}{U}$ is symplectic and $\rest{A_t}{U} \equiv 1$). As a result, by the variant \cite[Ex. 3.2.6]{mcduff_introduction_2017} of the relative Darboux theorem using Moser's trick, there exists an isotopy $(\Psi_t)_t$ of smooth diffeomorphisms of $N$ such that $\Psi_t^*\Omega_t = \Omega_0 = \rest{\Omega}{N}$ for every $t$ and $\rest{\Psi_t}{U} \equiv \id$. Furthermore, \eqref{eq:f1_omega} leads to
$$(f_1 \circ \Psi_1)^*\Omega = \Psi_1^* \Omega_1 = \rest{\Omega}{N}.$$

Moreover, since $\Psi_1$ is the identity on $U$, we can smoothly glue together the maps $\Psi_1$ obtained from the different components of $\A \setminus \gamma'$ into a diffeomorphism $\widehat{\Psi}$ of $\A$ compactly supported in $\check{\A} \setminus \gamma'$. Therefore we can consider the smooth symplectomorphism $\phi := f_1 \circ \widehat{\Psi}$ compactly supported in $\check{\A}$.

Furthermore, because $\widehat{\Psi}$ is compactly supported in $\check \A \setminus \gamma'$ it is equal to the identity on a neighbourhood of $\gamma'$. Hence, the symplectomorphism $\phi$ satisfies:
$$\phi(\gamma') = f_1(\gamma') = \gamma.$$

This sorts the case $\gamma \subset \A \setminus \partial \A$.\\

Now, we consider $\gamma$ such that $\gamma \cap \partial \A \neq \emptyset$. If $\gamma = \partial \A$, the proposition is obvious by taking $\phi = \id$.
Otherwise, we have either $\gamma_+ = \T \times \croset{1}$ and $\gamma_- \subset \A \setminus \partial \A$, or $\gamma_- = \T \times \croset{-1}$ and $\gamma_+ \subset \A \setminus \partial \A$, and the latter case reduces to the first one upon conjugating $\phi$ by the involution $(\theta,y) \mapsto (\theta,-y)$ and applying the same transformation to $\gamma$. Now, by definition of a bicurve there exists $\tilde{\gamma} := \T \times \croset{\tilde y_-,1}$ that is diffeotopic to $\gamma$ via a diffeotopy $(h_t)_t$ compactly supported in $\A \setminus \partial \A$. 
Since the diffeotopy is compactly supported and $\tilde y_- <1$, there exists $\eta >0$ such that $\supp(h_t) \subset \A(2\eta)$ and $\tilde y_- < 1-\eta$. By considering $\bar{\gamma} := \gamma_- \sqcup \T \times \croset{1-\eta} \subset \A \setminus \partial \A$, it defines a bicurve since it is diffeotopic to $\T \times \croset{\tilde{y}_-,1-\eta}$ via $(h_t)_t$. 
Hence, by applying \cref{an:lemma_symp_bi} to $\bar{\gamma} \subset \A \setminus \partial \A$, it yields a symplectomorphism $\phi \in\mathrm{Symp}_c^\infty(\A,\partial\A)$ which sends a bicurve $\T \times \croset{\bar{y}_-,\bar{y}_+}$ to $\bar{\gamma}$, in particular it sends $\T \times \croset{\bar{y}_-,1}$ to $\gamma$ since it is compactly supported in $\A \setminus \partial \A$. 
This gives the proposition in the case $\gamma \not\subset \A \setminus \partial \A$.\\
\end{proof}

\begin{proof}[Proof of \cref{subl:diffeotopy}]
By the definition of a bicurve $\gamma$, there exists a bicurve $\tilde{\gamma} := \T \times \croset{\tilde y_-,\tilde y_+}$ that is diffeotopic to $\gamma$ via a diffeotopy $(h_t)_t$, compactly supported in $\A \setminus \partial \A$. Let us construct a diffeotopy $(g_t)_t$ from a bicurve $\gamma'$ to $\tilde{\gamma}$, then concatenate $(h_t)_t$ and $(g_t)_t$ to obtain a diffeotopy $(f_t)_t$. With an appropriate choice of $\gamma'$, the diffeotopy $(f_t)_t$ satisfies the property that $f_1$ preserves the areas of the components of $\A \setminus \gamma'$.

Let $O_+$ and $O_-$ be the components of $O(\gamma)$ such that $\gamma_\pm$ lies in the boundary of $O_\pm$. Then, we choose the bicurve $\gamma' := \T \times \croset{y_-,y_+}$ so that 
\begin{equation}\label{eq:y_O}
\Leb(O_+)= \tfrac{(1-y_+)}{2} \quad \text{and} \quad \Leb(O_-)= \tfrac{(y_- + 1)}{2},
\end{equation}
note that, since $\Leb(O(\gamma))<1$ as $A(\gamma)$ is never empty, we have $y_+ > y_-$.

Assume that we have constructed a diffeotopy $(g_t)_t$ between $\gamma'$ and $\tilde \gamma$ and we denote by $(f_t)_t$ its concatenation with $(h_t)_t$. Therefore, $f_1$ sends the components of $O(\gamma')$ to $O_+$ and $O_-$, hence it preserves their areas by \cref{eq:y_O}. Moreover, $f_1$ also preserves the area of $A(\gamma')$, which is sent to $A(\gamma)$, because 
$$\Leb(\A) = \Leb(A(\gamma')) + \Leb(O(\gamma')) = \Leb(A(\gamma)) + \Leb(O(\gamma)).$$
Thus, such a $(f_t)_t$ is the desired diffeotopy between $\gamma'$ and $\gamma$.

It remains to construct this diffeotopy $(g_t)_t$ to conclude. Let us construct $(g_t)_t$ by using the following standard fact.

\begin{fact}
Let $y_{+}^1$, $y_{-}^1$, $y_{+}^2$, and $y_{-}^2$ be points of $(-1,1)$ such that $y_{+}^i > y_{-}^i$. There exists a diffeomorphism $G$ of $[-1,1]$ compactly supported in $(-1,1)$ such that $G(y_+^1) = y_+^2$ and $G(y_-^1)=y_-^2$.
\end{fact}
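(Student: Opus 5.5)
The idea is to build $G$ as the time-one map of the flow of a vector field on $[-1,1]$ that vanishes near $\pm1$. More concretely, we produce an increasing smooth function equal to the identity near the endpoints and interpolating the two prescribed values.

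\emph{Step 1: choose a safe interval.} Fix $a<b$ in $(-1,1)$ such that
$$-1<a<\min\{y_-^1,y_-^2\} \quad\text{and}\quad \max\{y_+^1,y_+^2\}<b<1.$$

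\emph{Step 2: prescribe the values.} On $[-1,1]$ we want an increasing map $G$ with
$$G=\id \text{ on } [-1,a]\cup[b,1],\qquad G(y_-^1)=y_-^2,\qquad G(y_+^1)=y_+^2.$$
The points $a<y_-^1<y_+^1<b$ are sent to $a<y_-^2<y_+^2<b$, so the required assignment preserves the order. This order preservation is exactly what makes an increasing smooth interpolation possible.

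\emph{Step 3: build the derivative.} The cleanest route is to construct $G'$ as a smooth positive function $g$ with $g\equiv1$ near $[-1,a]\cup[b,1]$ and with prescribed integrals over the three intervals:
$$\int_a^{y_-^1}g=y_-^2-a,\qquad \int_{y_-^1}^{y_+^1}g=y_+^2-y_-^2,\qquad \int_{y_+^1}^{b}g=b-y_+^2.$$
All three targets are positive. On each interval $[c,d]$ with target $L>0$ we take $g=1+\lambda\psi$, where $\psi\ge0$ is a fixed bump compactly supported in $(c,d)$ with $\int\psi=1$ and $\lambda=L-(d-c)$. If $\lambda<0$, we choose $\psi$ so that $1+\lambda\psi>0$ still holds. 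This is possible by taking $g$ close to the constant $L/(d-c)$ on most of $[c,d]$, with smooth transitions to $1$ near $c$ and $d$; alternatively one can interpolate $g = (1-\chi)+\chi\, c_0$ with $c_0>0$ tuned by continuity. The resulting $g$ is smooth and positive.

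\emph{Step 4: integrate.} Set $G(y)=a+\int_a^y g$ for $y\ge a$ and $G=\id$ on $[-1,a]$. Then $G$ is smooth and strictly increasing, hence a diffeomorphism. It equals the identity on a neighbourhood of $\pm1$, so it is compactly supported in $(-1,1)$, and it hits the prescribed values by construction.

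The only delicate point is Step 3 when the target length is smaller than the original one: positivity of $g$ then has to be maintained. The intermediate value argument above (continuity in the plateau height $c_0$, from near $0$ to large) handles this.

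For the application to the sub-lemma, we then need a diffeotopy $g_t$ of $\A$. We take $g_t(\theta,y)=(\theta,G_t(y))$ with $G_t=(1-t)\id+tG$. Each $G_t$ is increasing, since both $\id$ and $G$ are, so it is a diffeomorphism compactly supported in $(-1,1)$.
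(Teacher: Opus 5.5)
Your proof is correct. The paper does not prove this statement; it invokes it as a ``standard fact'' and uses it only to build the isotopy $G_t=tG+(1-t)\id$. Your construction therefore supplies the missing justification. It is also in the same spirit as the paper's own description of $G_t$ as $-1+\int_{-1}^y \bigl(tG'(u)+(1-t)\bigr)\,du$, which treats diffeomorphisms of $[-1,1]$ as integrals of positive derivatives.

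The essential points are all present:
\begin{itemize}
\item The three target lengths are positive and sum to $b-a$, so $G(b)=b$.
\item Taking $g\equiv 1$ near $a,y_-^1,y_+^1,b$ makes $g$ smooth and gives $G=\id$ near $\pm1$.
\item In the shrinking case, positivity is achievable because $|\lambda|=(d-c)-L<d-c$. Hence a bump $\psi$ with $\int\psi=1$ and $\sup\psi<1/|\lambda|$ exists.
\end{itemize}

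One small precision concerns the ``$c_0$ by continuity'' variant. Letting $c_0$ range over $(0,\infty)$ only reaches integrals in $\bigl((d-c)-\int\chi,\infty\bigr)$. You must first take the transition zones short enough that $\int\chi>(d-c)-L$, which is exactly what your ``close to $L/(d-c)$ on most of $[c,d]$'' provides. Finally, the opening remark about a time-one flow is never used and can be dropped.
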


Now, because $\gamma \subset \A\setminus \partial\A$ and $(h_t)_t$ is a diffeotopy from $\tilde \gamma$ to $\gamma$, it follows that the points $\tilde{y}_+$ and $\tilde y_-$ belong to $(-1,1)$. Moreover, $\gamma \subset \A\setminus \partial\A$ also implies that $\Leb(O_+)$ and $\Leb(O_-)$ are nonzero, hence that $y_+$ and $y_-$ also belongs to $(-1,1)$ by \cref{eq:y_O}. Thus, we can apply the above fact to the points $y_+$, $y_-$, $\tilde{y}_+$, and $\tilde y_-$. This yields a diffeomorphism $G$ of $[-1,1]$, compactly supported in $(-1,1)$, such that $G(y_+) = \tilde y_+$ and $G(y_-)=\tilde y_-$. Now, as $G$ is a diffeomorphism, we consider the path 
$$G_t : y\in \I \mapsto tG(y) + (1-t)y = -1 + \int_{-1}^y tG'(u) +(1-t)du,$$
defined for $t \in [0,1]$. It is an isotopy between $\id$ and $G$ in the space of diffeomorphisms of $[-1,1]$. Since $G$ is compactly supported in $(-1,1)$, the isotopy $(G_t)_t$ is aswell.
We finally define our diffeotopy between $\gamma'$ and $\tilde{\gamma}$ compactly supported in $\A \setminus \partial\A$ by 
$$g_t : (\theta,y) \in \A \mapsto (\theta,G_t(y)) \in \A,$$
for $t\in [0,1]$.\\
Concatenating $(h_t)_t$ and $(g_t)_t$ yields the desired diffeotopy $(f_t)_t$.
\end{proof}

\section{Proof of the Theorem of approximation modulo deformation}\label{an:approx_def}

In this Appendix we prove \cref{thm:app_mod_def} from \cref{sec:approx_mod}, following the idea of Berger's proof.\\

Before we prove the theorem, we recall the main approximation theorem from the Subsection \ref{an:approx} and derive a corollary.

\begin{theorem-no}[\ref{an:thm_main_approx}]
Let $\gamma \in \Gamma_2(\A)$ and $f \in \symps{\infty}{\A}$ be such that $\rest{f}{O(\gamma)} = \id$. Then, for any $\rho >1$ and any neighbourhood $\mathcal{U}$ of $\rest{f}{\A \setminus \gamma}$in the smooth compact-open topology of $C^\infty(\A \setminus \gamma,\T\times\R)$, there exists a map $F \in \sympan{\rho}{\A_\infty}$ such that $\rest{F}{\A \setminus \gamma}\in \mathcal{U}$.
\end{theorem-no}

\begin{coro}
Let $h_0$ be in $\symps{\infty}{\A}$, $\gamma$ in $\Gamma_2(\A)$ such that $\rest{h_0}{O(\gamma)} = id$ and $\mathcal{U}$ be a neighbourhood of $\rest{h_0}{\A \setminus \gamma}$ in the smooth compact-open topology of $C^\infty(\A\setminus\gamma,\T\times\R)$.

There exist $\epsilon >0$ and a family of biholomorphism $(h_r)_{r \geq 1}$ in $\sympan{}{\A_\infty}$ such that, for every $r$, $\rest{h_r^{-1}}{\A\setminus\gamma}$ belongs to $\mathcal{U}$, commutes with $\sigma$, and satisfies
$$\Vert \rest{(h_r -\id)}{K_{r,\epsilon}} \Vert_{C^r} \leq r^{-1}.$$
\end{coro}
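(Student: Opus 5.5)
The plan is to apply the Main Approximation Theorem \ref{an:thm_main_approx} to $h_0^{-1}$ rather than $h_0$, once for each $r\geq 1$. This produces maps $F_r\in\sympan{r'}{\A_\infty}$ approximating $h_0^{-1}$ outside $\gamma$, and we then set $h_r:=F_r^{-1}$. Note that $h_0^{-1}\in\symps{\infty}{\A}$ and $\rest{h_0^{-1}}{O(\gamma)}=\id$, so the theorem applies to it.

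\textbf{Target neighbourhood.} Inversion is continuous for the relevant topologies. Since $h_0$ preserves $A(\gamma)$ and $O(\gamma)$, any compact $K\subset \A\setminus\gamma$ is mapped by $h_0$ onto a compact of $\A\setminus\gamma$. Hence we can fix a neighbourhood $\mathcal{U}_0$ of $\rest{h_0^{-1}}{\A\setminus\gamma}$ with the following property: whenever $F\in\sympan{}{\A_\infty}$ is $C^0$-close to the identity on a large compact set and $\rest{F}{\A\setminus\gamma}\in\mathcal{U}_0$, then $\rest{F^{-1}}{\A\setminus\gamma}\in\mathcal U$. To get this, one controls $F$ on a slightly larger compact $\A\setminus B(\gamma,\kappa/2)$. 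The $C^r$ closeness of the inverse then follows from the inverse function theorem.

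\textbf{Construction for each $r$.} Fix $\epsilon\in(0,1/2)$. For each $r$, apply Theorem \ref{an:thm_main_approx} with $\mathcal U_0$ and a very large radius $r'=r'(r)$, obtaining $F_r\in\sympan{r'}{\A_\infty}$ with $\rest{F_r}{\A\setminus\gamma}\in\mathcal U_0$. Then $F_r$ is $1/r'$-close to $\id$ on $K_{r'}$. Since $K_{r'}$ contains a complex neighbourhood of $K_{r,\epsilon}$ of fixed width (the inner boundaries $\pm1$ of $Q_{r'}$ are only $\epsilon$-thickened in $K_{r,\epsilon}$), Cauchy estimates give $\Vert F_r-\id\Vert_{C^{r}}\lesssim C_r/r'$ on a neighbourhood of $K_{r,\epsilon}$.

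\textbf{Inverse estimate.} Taking $r'$ large, $F_r$ is $C^1$-close to the identity. The inverse $h_r=F_r^{-1}$ is then defined near $K_{r,\epsilon}$ and satisfies a comparable estimate, obtained by inverting $\id+u$ with $u$ small together with a Faà di Bruno bound on derivatives. Choosing $r'$ so that this estimate is at most $r^{-1}$ gives the required inequality. The maps $h_r$ lie in $\sympan{}{\A_\infty}$ because that space is a group.

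\textbf{Commutation with $\sigma$.} Here $F_r$ preserves $\T\times\R$. A holomorphic map preserving the real locus satisfies $\sigma\circ F\circ\sigma=F$, since both sides are holomorphic and agree on a totally real submanifold of maximal dimension. The same then holds for $h_r$.

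The main obstacle is the quantitative step: ensuring that $K_{r,\epsilon}$ sits inside the region where Theorem \ref{an:thm_main_approx} gives control, with uniform room for the Cauchy estimates. In particular, $K_{r,\epsilon}$ contains the strips around $\Re y\in[1-\epsilon,1]$, which touch $\A$ near $\partial\A$, where $\gamma$ might lie. If that conflicts with the $\mathcal U$-control, I would first shrink $\epsilon$ below $d(\gamma,\partial\A)$ after reducing to $\gamma\subset\A\setminus\partial\A$ via Remark \ref{rk:gam_bord}. I would then use the $\sympan{\rho,\eta}{\A_\infty}$ version obtained in the proof of Theorem \ref{an:thm_main_approx} with $\eta=2\epsilon$.
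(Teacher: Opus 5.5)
There is a genuine gap, and also a slip in the direction of the inverse.

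\textbf{The inverse is reversed.} The statement asks for $\rest{h_r^{-1}}{\A\setminus\gamma}\in\mathcal{U}$, that is, $h_r^{-1}\approx h_0$ off $\gamma$. You approximate $h_0^{-1}$ by $F_r$ and set $h_r:=F_r^{-1}$. Your target-neighbourhood step is then built to give $\rest{F_r^{-1}}{\A\setminus\gamma}=\rest{h_r}{\A\setminus\gamma}\in\mathcal U$, which controls the wrong map. The fix is easy: approximate $h_0$ itself and put $h_r:=F_r^{-1}$. Then $h_r^{-1}=F_r\in\mathcal U$ directly, and no inversion argument is needed for this part.

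\textbf{The quantitative step fails.} Your claim that $K_{r'}$ contains a neighbourhood of $K_{r,\epsilon}$ is false. The set $K_{r'}=\T_{r'}\times Q_{r'}$ only contains points with $|\Re y|\ge 1$. By contrast, $K_{r,\epsilon}$ contains the strip $1-\epsilon\le|\Re y|\le 1$, in particular the real annuli $\T\times([-1,-1+\epsilon]\cup[1-\epsilon,1])\subset\A$. Used as a black box, Theorem~\ref{an:thm_main_approx} gives no complex control of $F_r$ there. Even on $K_{r,\epsilon}\cap K_{r'}$, the points with $|\Re y|=1$ lie on the boundary of $K_{r'}$, so Cauchy estimates have no room. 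Hence neither the $C^r$ bound nor the inverse estimate on $K_{r,\epsilon}$ follows.

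\textbf{Your fallback is not established.} You correctly flag this obstacle, but the proof of Theorem~\ref{an:thm_main_approx} only places the intermediate maps $\Psi$ and $G$ in $\sympan{\rho_i,\eta}{\A_\infty}$. Lemma~\ref{lem:conj_ham} then yields only $F\in\sympan{\rho}{\A_\infty}$. To use this route you would have to redo the composition estimate on thickened compacts. Also, $\eta$ is dictated by the straightening map $\phi$, so $\epsilon$ must be chosen after $\eta$, not the other way round.

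\textbf{What the paper does instead.} Its missing ingredient is a rescaling that keeps the theorem a black box:
\begin{itemize}
\item Reduce via Remark~\ref{rk:gam_bord} to $\gamma\subset\A\setminus\partial\A$ and $\supp h_0\subset A(\gamma)$, and choose $\epsilon$ with $\gamma\subset\A(2\epsilon)$.
\item Let $g(\theta,y)=(\theta,y/(1-2\epsilon))$, and approximate $f=\iconj{g}{h_0}$ outside $g(\gamma)$ by $F_r\in\sympan{\rho_r}{\A_\infty}$.
\item Set $h_r:=\conj{g}{F_r^{-1}}$. Then $h_r^{-1}=\conj{g}{F_r}\approx h_0$ off $\gamma$.
\item Since $g(K_{\rho'_r,2\epsilon})\subset K_{\rho_r}$ with $\rho'_r=(1-2\epsilon)\rho_r-4\epsilon$, the map $h_r^{-1}$ is $C^0$-close to $\id$ on $K_{\rho'_r,2\epsilon}$. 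This set contains $K_{r,\epsilon}$ with an $\epsilon$-margin, so Cauchy estimates and continuity of inversion give the $C^r$ bound.
\end{itemize}
The price is that $F_r$ must be controlled on $g(\A)\supsetneq\A$. The paper obtains this from an auxiliary neighbourhood together with Cauchy estimates in $K_{\rho_r}$.
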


\begin{proof}
First, by \cref{rk:gam_bord}, we can assume that $\gamma \subset \A \setminus \partial \A$ and $\supp(h_0) \subset A(\gamma)$.

Then, as $\gamma \cap \partial\A = \emptyset$, there exists $\epsilon>0$ such that $\gamma$ lies in $\A_{2\epsilon} = \T \times (-1+2\epsilon,1-2\epsilon)$, in particular $h_0$ is compactly supported in $\A_{2\epsilon}$. 
We now consider the affine map:
$$g : (\theta,y) \in \A_\infty \mapsto (\theta,\frac{y}{1-2\epsilon}) \in \A_\infty.$$
This map sends $\A_{2\epsilon}$ onto $\A_0$. Therefore the symplectomorphism $f:=\iconj{g}{h_0}$ belongs to $\symps{\infty}{\A}$ and equals the identity on $O(g(\gamma))$. 
Moreover, $g$ conjugates the neighbourhood $\mathcal{U}$ onto a neighbourhood $\mathcal{V}$ of $\rest{f}{\T \times \R \setminus g(\gamma)}$ in the smooth compact-open topology of $C^\infty(\T \times \R \setminus g(\gamma),\T\times\R)$. 
Yet, as $\rest{f}{\T \times \R \setminus A(g(\gamma))} = \id$, there exists a neighbourhood $\mathcal{U}'$ in the smooth compact-open topology of $C^\infty(\A\setminus g(\gamma),\T\times\R)$ such that the following holds by Cauchy's estimates in $K_{\rho_r}$:
\begin{equation}\label{eq:U'_an}
\forall F \in \sympan{\rho_r}{\A_\infty} \, , \, \rest{F}{\A \setminus g(\gamma)} \in \mathcal{U}' \Longrightarrow  \rest{F}{\T \times \R \setminus g(\gamma)} \in \mathcal{V}.
\end{equation}

Thus, we apply \cref{an:thm_main_approx} to $f=\iconj{g}{h_0}$, the bicurve $g(\gamma)$, the neighbourhood $\mathcal{U}'$ and a large $\rho_r >1$ to obtain a biholomorphism $F_r \in \sympan{\rho_r}{\A_\infty}$ such that $\rest{F_r}{\A \setminus g(\gamma)}$ belongs to $\mathcal{U}'$. 
In particular, by \cref{eq:U'_an} and since $\mathcal{V}$ is the conjugacy of $\mathcal{U}$ by $g$, the restriction $\rest{\conj{g}{F_r}}{\A \setminus \gamma}$ belongs to $\mathcal{U}$.\\

Next, let us consider the biholomorphism $h_r := \conj{g}{F_r^{-1}} \in \sympan{}{\A_\infty}$. 
It follows that $\rest{h_r^{-1}}{\A\setminus\gamma} = \rest{\conj{g}{F_r}}{\A\setminus\gamma}$ belongs to $\mathcal{U}$ as desired. By definition of $\sympan{}{\A_\infty}$, the map $h_r$ restricts to a real symplectomorphism of $\T\times \R$, therefore it commutes with $\sigma$.

Finally, observe that with $\rho'_r = (1-2\epsilon)\rho_r-4\epsilon$ we have $g(K_{\rho'_r,2\epsilon}) \subset K_{\rho_r}$. 
Therefore $h_r^{-1} = \conj{g}{F_r}$ is $\rho^{-1}$-close to the identity on $K_{\rho'_r,2\epsilon}$ for $\rho = \rho_r/(1-2\epsilon)$. Thus, as $\rho_r$ is large, the continuity of the inversion on compact sets together with Cauchy's inequality yields the result:
$$\Vert \rest{(h_r -\id)}{K_{r,\epsilon}} \Vert_{C^r} \leq r^{-1}.$$
\end{proof}

Now let us prove the theorem of approximation modulo deformation.

\begin{proof}[Proof of Theorem \ref{thm:app_mod_def}]
Firstly, by the latter corollary, there exists $\epsilon >0$ and a family $(h_r)_{r\geq 1}$ of biholomorphisms of $\A_\infty$ preserving $\Omega_0$, commuting with $\sigma$, such that $\rest{h_r^{-1}}{\A\setminus\gamma} \in \mathcal{U}$, and satisfying:
\begin{align}\label{eq:r_conv}
 \Vert \rest{(h_r -\id)}{K_{r,\epsilon}} \Vert_{C^r} \rconv{r}{+\infty} 0. 
\end{align}

Let $\eta \in (0,\epsilon)$. We fix $\beta \in C^\infty(\R,\R_+)$ supported in $(-1+\eta,1-\eta)$ such that $\beta \equiv 1$ on $(-1+\epsilon, 1-\epsilon)$. 
For $\W'$ a compact neighbourhood of $\W$ with smooth boundary we define:
$$\widehat{h_r} : (\theta,y) \in \W' \mapsto \beta(\Re(y)) h_r(\theta,y) + \left( 1 - \beta(\Re(y))\right) (\theta,y).$$

Then, since $\widehat{h}_r$ is either close to $\id$ or equal to $h_r$, and $h_r$ is a biholomorphism, $\widehat{h}_r$ becomes a smooth diffeomorphism onto its image for $r$ sufficiently large (see \cite[Fact 6.11]{berger_analytic_2024}). 
Moreover, $\widehat{h_r}$ satisfies the following properties:
\begin{fact}\label{fact:hr_hat}
When $r$ is large it holds:
\begin{itemize}
\item[($\mathcal{P}'1$)] $\widehat{h_r}$ leaves $\A$ invariant and $\rest{\widehat{h_r}^{-1}}{\A\setminus\gamma}$ belongs to $\mathcal{U}$,
\item[($\mathcal{P}'2$)] $\widehat{h_r}$ is supported in $\W' \setminus K_{r,\eta}$,
\item[($\mathcal{P}'3$)] $\widehat{h_r}^* J_0 - J_0$ and $\widehat{h_r}^*\Omega_0 - \Omega_0$ are $C^\infty$-small when $r$ is large with support in $K_{r,\epsilon} \setminus K_{r,\eta}$,
\item[($\mathcal{P}'4$)] $\widehat{h_r}$ commutes with $\sigma$.
\end{itemize}
\end{fact}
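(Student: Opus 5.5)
We verify the four properties of \cref{fact:hr_hat} separately, for $r$ large, using only that $\widehat{h_r}$ is a convex interpolation (in the real part of the $y$-variable) between $h_r$ and $\id$. Set $\Delta_r := h_r - \id$, understood in the universal cover $\C\times\C$ of $\A_\infty$. Then
$$\widehat{h_r} = \id + \beta(\Re(y))\,\Delta_r .$$
The key input is \cref{eq:r_conv}: $\Delta_r$ tends to $0$ in $C^r$ on $K_{r,\epsilon}$, and $r\to\infty$ forces every fixed $C^s$ norm to tend to $0$ there.

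For ($\mathcal{P}'4$), $\beta(\Re(y))$ is real and $\sigma$-invariant, since $\sigma$ fixes $\Re(y)$. Both $h_r$ and $\id$ commute with $\sigma$, so their real convex combination does as well. For ($\mathcal{P}'1$), $h_r$ preserves $\T\times\R$, and on real points the combination stays real. Moreover $\widehat{h_r}$ preserves each horizontal line $\{\theta\}\times\R$ up to the $\theta$-component, and it equals $\id$ for $|y|\geq 1-\eta$. Hence $\widehat{h_r}$ maps $\A$ into $\A$. As $\widehat{h_r}$ is a diffeomorphism onto its image fixing $\partial\A$ (cite \cite[Fact 6.11]{berger_analytic_2024}), a degree argument gives $\widehat{h_r}(\A)=\A$. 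For the neighbourhood condition, observe that on $\A\cap\{|y|\le 1-\epsilon\}$ we have $\widehat{h_r}=h_r$. On $\A\cap\{1-\epsilon\le|y|\le1\}$, $h_r$ is $C^r$-close to $\id$ by \cref{eq:r_conv}, because this region lies in $K_{r,\epsilon}$, so $\widehat{h_r}$ is $C^s$-close to $\id=h_r$ there up to small error. Since $\mathcal{U}$ is open in the compact-open topology and $\rest{h_r^{-1}}{\A\setminus\gamma}\in\mathcal{U}$, I would first shrink $\mathcal{U}$ to a basic neighbourhood with some margin. Inverting the $C^s$-small discrepancy $\widehat{h_r}^{-1}-h_r^{-1}$ then keeps us in $\mathcal{U}$. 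Here $\gamma\subset\A_{2\epsilon}$, so the discrepancy is supported away from the region where the control is delicate.

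For ($\mathcal{P}'2$), the set $K_{r,\eta}$ has real $y$-parts in $[-r-1-\eta,-1+\eta]\cup[1-\eta,r+\eta]$, where $\beta=0$, so $\widehat{h_r}=\id$ there. (I would read ``supported in $\W'\setminus K_{r,\eta}$'' as meaning that $\widehat{h_r}$ differs from $\id$ only off $K_{r,\eta}$.) For ($\mathcal{P}'3$), where $\beta\equiv1$ we have $\widehat{h_r}=h_r$, which is a biholomorphism preserving $\Omega_0$, so the pullbacks equal $J_0$ and $\Omega_0$. Where $\beta\equiv 0$ they are also unchanged. The differences are therefore supported in $\{1-\epsilon\le|\Re y|\le 1-\eta\}\cap\W'$, which for $r$ large (with $\W'$ fixed and compact) lies inside $K_{r,\epsilon}\setminus K_{r,\eta}$. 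On this region
$$D\widehat{h_r}-I = \beta\, D\Delta_r + \Delta_r\otimes d\beta ,$$
and derivatives of this expression are bounded by $C_s\|\Delta_r\|_{C^{s+1}(K_{r,\epsilon})}\to0$, with $C_s$ depending only on $\beta$. Since $J_0$ and $\Omega_0$ are constant in these coordinates, the pullbacks are smooth functions of $D\widehat{h_r}$. Hence $\widehat{h_r}^*J_0-J_0$ and $\widehat{h_r}^*\Omega_0-\Omega_0$ are $C^s$-small for every fixed $s$, which is the required $C^\infty$-smallness.

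The main obstacle is ($\mathcal{P}'1$). We need the equality $\widehat{h_r}(\A)=\A$, not only an inclusion, and we must transfer membership in $\mathcal{U}$ from $h_r^{-1}$ to $\widehat{h_r}^{-1}$ in a topology on the non-compact set $\A\setminus\gamma$. I would handle this by choosing $\epsilon$ small relative to $d(\gamma,\partial\A)$ and by using a basic neighbourhood of $\mathcal{U}$ that is defined on $\A\setminus B(\gamma,\kappa)$.
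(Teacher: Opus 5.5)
Your proposal is correct and is essentially the paper's proof, which makes the same four observations more tersely: $\widehat{h_r}=\id$ where $\beta=0$, $\widehat{h_r}=h_r$ where $\beta\equiv 1$, closeness to both on the transition region by \cref{eq:r_conv}, and $\sigma$-equivariance of a real convex combination. The one line to repair is the remark about $\widehat{h_r}$ preserving the lines $\{\theta\}\times\R$, which proves nothing; the inclusion $\widehat{h_r}(\A)\subset\A$ instead follows, as in the paper, from injectivity of $\widehat{h_r}$ together with $\widehat{h_r}=\id$ on $\{\lvert\Re(y)\rvert\ge 1-\eta\}$.
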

\begin{proof}[Proof of the fact]
First, since $\beta$ is supported in $(-1+\eta,1-\eta)$ and $\widehat{h_r}$ is a diffeomorphism onto its image, it follows that $\widehat{h_r}$ leaves $\A$ invariant. 
Moreover, since $\beta \equiv 1$ on $(-1+\epsilon, 1-\epsilon)$, then $\widehat{h_r}$ is $C^\infty$-close to $h_r$ by \cref{eq:r_conv} when $r$ is large, hence $\rest{\widehat{h_r}^{-1}}{\A\setminus\gamma}$ belongs to $\mathcal{U}$ and property ($\mathcal{P}'1$) follows.
Next, property ($\mathcal{P}'2$) is a direct consequence of $\beta$ being supported in $(-1+\eta,1-\eta)$. 
Property ($\mathcal{P}'3$) follows from the facts that $\widehat{h_r}$ is equal to the symplectic biholomorphism $h_r$ outside $K_{r,\epsilon}$, it is equal to the identity in $K_{r,\eta}$, and it is $C^\infty$-close to the inclusion in $K_{r,\epsilon} \setminus K_{r,\eta}$. 
Property ($\mathcal{P}'4$) follows from the fact that $h_r$ commutes with $\sigma$.
\end{proof}

Therefore $\widehat{h_r}$ satisfies the properties of \cref{thm:app_mod_def} for $r$ large, except the property of being symplectic on $\A$.

Hence, the last step of the proof is to obtain a symplectic diffeomorphism on $\A$ by composing $\widehat{h_r}$ with the inverse of a diffeomorphism $\phi_r$. This diffeomorphism is provided by the following result from \cite[Lemma 6.13]{berger_analytic_2024}.

\begin{lemma}\label{lem:phi_r}
When $r$ is large, there exists a diffeomorphism $\phi_r : \W' \to \A_\infty$ that is $C^\infty$-close to the canonical inclusion, leaves $\A$ invariant, commutes with $\sigma$, with support in $\W' \setminus K_{r,\eta}\subset \check{\A}_\infty \cap \W'$ and that satisfies:
$${\phi_r}^* \rest{\Omega_0}{T\A} = \widehat{h_r}^* \rest{\Omega_0}{T\A}.$$
\end{lemma}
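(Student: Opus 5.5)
We will build $\phi_r$ by a Moser-type argument on the real cylinder, then extend it to the complex domain. Set $\omega_r := \widehat{h_r}^* \rest{\Omega_0}{T\A}$, a real $2$-form on $\A$. By ($\mathcal{P}'3$), $\omega_r - \Omega$ is $C^\infty$-small and supported in $\A \cap (K_{r,\epsilon}\setminus K_{r,\eta})$, i.e. in the two real strips $\{1-\epsilon\le |y| \le 1-\eta\}$. On $\A$ the form $\omega_r$ is nondegenerate, since $\widehat{h_r}$ is a diffeomorphism onto its image preserving $\A$.

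\textbf{Step 1: total area.} We check that $\int_S \omega_r = \int_S \Omega$ on each strip $S$. Near the boundary of each strip $\widehat{h_r}$ equals either $h_r$ (symplectic) or $\id$. Moreover $\widehat{h_r}$ preserves $\A$ and fixes $\T\times\{\pm(1-\eta)\}$ pointwise. Since $h_r$ is symplectic, $\int_S\omega_r$ equals the area enclosed between $\T\times\{\pm(1-\eta)\}$ and $h_r(\T\times\{\pm(1-\epsilon)\})$.

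This is the main obstacle: the areas must match exactly, which is a flux condition. We would try to secure it in the corollary by choosing $h_r$ with zero flux, i.e. Hamiltonian on the relevant annuli. Zero flux holds because $h_r = \conj{g}{F_r^{-1}}$ where $F_r$ comes from Berger's approximation by compositions of Hamiltonian twists. It also follows because $h_r^{-1}$ is close to $h_0$, which is the identity near $O(\gamma)$ and hence preserves the area below each horizontal circle there. If a small area defect $c_r$ remains, we would absorb it by modifying $\omega_r$ with a bump term $c_r\,\chi\,\Omega$ supported in $S$. This is the same device as the factor $A_t$ in the proof of \cref{an:lemma_symp_bi}, and it keeps smallness since $c_r\to0$.

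\textbf{Step 2: Moser's trick.} Consider $\omega_t := \Omega + t(\omega_r-\Omega)$. It is symplectic for $t\in[0,1]$ by smallness. The difference $\omega_r-\Omega$ is exact with a compactly supported primitive $\lambda_r$ in each strip, because it has zero integral there. We can take $\lambda_r$ $C^\infty$-small and invariant under the real reflection, by an explicit fibre-integration formula in $y$. The Moser vector field $X_t$, defined by $\iota_{X_t}\omega_t = -\lambda_r$, is small and compactly supported in the strips. Its time-one flow $\psi_r$ is a diffeomorphism of $\A$, $C^\infty$-close to $\id$ and supported away from $K_{r,\eta}\cap\A$, with $\psi_r^*\omega_r=\Omega$. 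Then $\phi_r$ restricted to $\A$ should be $\psi_r^{-1}$ composed appropriately, so that $\phi_r^*\Omega_0|_{T\A}=\omega_r$.

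\textbf{Step 3: extension to $\W'$.} We extend $\phi_r|_\A$ to $\W'$ by the formula $\phi_r(\theta,y) := (\theta,y) + \beta'(\Re y)\,\Phi_r(\Re\theta,\Re y)$, where $\Phi_r = \phi_r|_\A - \id$. Here $\beta'$ is a cutoff adapted to the support. This extension is constant in the imaginary directions and, if needed, damped by a cutoff in $|\Im|$.

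The resulting map has the required properties:
\begin{itemize}
\item it commutes with $\sigma$, because $\Phi_r$ is real-valued;
\item it restricts to the given map on $\A$ and leaves $\A$ invariant;
\item it is $C^\infty$-close to the inclusion;
\item it is supported in $\W'\setminus K_{r,\eta}$.
\end{itemize}
Injectivity for $r$ large follows as in \cite[Fact 6.11]{berger_analytic_2024}.
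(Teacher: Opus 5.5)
\textbf{Verdict: genuine gap in Step 1.} Your Steps 2 and 3 are otherwise sound. The real-valued extension that is constant in the imaginary directions does commute with $\sigma$, preserves $\A$, is $C^\infty$-close to the inclusion, and is supported where $|\Re y|<1-\eta$.

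\textbf{Why Step 1 fails.} Asking that $\int_S\omega_r=\int_S\Omega$ hold on each strip $S$ separately is equivalent to the vanishing of the flux of $\rest{h_r}{\T\times\R}$, i.e.\ the signed area between $h_r(\T\times\croset{c})$ and $\T\times\croset{c}$. None of your justifications gives this.
\begin{itemize}
\item Nothing in the stated properties of $h_r$ forces zero flux. The translations $(\theta,y)\mapsto(\theta,y+c)$ with $c\in\R$ small lie in $\sympan{}{\A_\infty}$. Post-composing $h_r$ with one of them, with $c=c_r\to 0$ fast, keeps $\rest{h_r^{-1}}{\A\setminus\gamma}\in\mathcal{U}$, the convergence $h_r\to\id$ on $K_{r,\epsilon}$, and commutation with $\sigma$, yet it changes the flux.
\item Zero flux could therefore only come from reopening Berger's twist construction and the composition $\Psi^{-1}\circ G\circ\Psi$ in the Main Approximation Theorem. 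The black-box statements do not provide this.
\item Closeness of $h_r^{-1}$ to $h_0$ only makes the flux small, not zero.
\item The fallback of adding a bump $c_r\chi\Omega$ changes the target form. Moser would then give $\phi_r^*\Omega=\omega_r+c_r\chi\Omega$ rather than the identity the lemma requires. The factor $A_t$ in the proof of \cref{an:lemma_symp_bi} is harmless only because $A_1\equiv 1$ there, i.e.\ the areas match exactly at the endpoint.
\end{itemize}

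\textbf{How to repair it.} Do not localize to the strips. For $r$ large the lemma allows $\phi_r$ to be supported in all of $\W'\setminus K_{r,\eta}=\croset{z\in\W' : |\Re y|<1-\eta}$, including the middle region where $\omega_r=\Omega$.
\begin{itemize}
\item Set $N:=\T\times(-1+\eta,1-\eta)$. Since $\rest{\widehat{h_r}}{\A}$ leaves $\A$ invariant, is injective, and is the identity near $\partial N$, it maps $N$ diffeomorphically onto itself. Hence $\int_N\omega_r=\int_N\Omega$ automatically.
\item Since $H^2_c(N)\cong\R$ via integration, $\omega_r-\Omega=d\lambda_r$ with $\lambda_r$ compactly supported in $N$ and $C^\infty$-small. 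One explicit choice is fibre integration in $y$ starting from $-1+\eta$, corrected by $d(\chi(y)g(\theta))$ near the top.
\item On the middle region $\lambda_r$ is closed, of the form $c_r\,d\theta+df$ with $c_r$ proportional to the flux. So the Moser flow there is essentially a small vertical drift, and this is exactly what transports the area defect from one strip to the other.
\item Moser's trick on $N$ then yields $\psi_r$ with $\psi_r^*\omega_r=\Omega$. Your Step 3 applied to $\phi_r:=\psi_r^{-1}$ completes the proof.
\end{itemize}

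Note that the paper does not prove this lemma itself; it imports it from \cite[Lemma 6.13]{berger_analytic_2024}.
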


We therefore consider such a $\phi_r$. Since $r$ is large and $\W'$ is a neighbourhood of $\W$, $\phi_r$ is close enough to the inclusion so that its range contains $\W$. Hence, we can define the following map:
$$h := \widehat{h_r} \circ \phi_r^{-1} : \W \rightarrow \A_\infty.$$
Let us show that $h$ is the desired smooth diffeomorphism when $r$ is large. Since $\phi_r$ leaves $\A$ invariant and is $C^\infty$ close to the inclusion when $r$ is large, $h$ also satisfies property ($\mathcal{P}'1$) from \cref{fact:hr_hat}. 
It remains to prove that $h$ is symplectic on $\A$ to obtain property ($\mathcal{P}1$) for $h$. Observe that by \cref{lem:phi_r} we have
$$h^*\rest{\Omega_0}{T\A} = (\phi_r^{-1})^* ( \widehat{h_r}^* \rest{\Omega_0}{T\A}) = \rest{\Omega_0}{T\A}.$$
This yields property ($\mathcal{P}1$). Property ($\mathcal{P}2$) follows from property ($\mathcal{P}'2$) and the fact that $\phi_r$ is supported in $\W' \setminus K_{r,\eta}$ which is equal to $\croset{(\theta,y) \in \W': \left| \Re(y) \right| <1-\eta}$ when $r$ is large. 
Property ($\mathcal{P}3$) is a consequence of property ($\mathcal{P}'3$) and the fact that $\phi_r$ is $C^\infty$-close to the inclusion when $r$ is large. 
Finally, property ($\mathcal{P}4$) follows from property ($\mathcal{P}'4$) and the fact that $\phi_r$ also commutes with $\sigma$.

Therefore, when $r$ is large, the diffeomorphism $h$ satisfies \cref{thm:app_mod_def}. This concludes the proof.
\end{proof}

\section{Proof of the AbC$^\star$ realization modulo deformations}\label{an:abc_real}

\paragraph{Proof of \cref{lem:abc_real_cyl} on the cylinder}

In this first part we prove the lemma used to obtain the AbC$^\star$ realization on the cylinder. This is done by using the Approximation modulo deformation Theorem in an AbC$^\star$ scheme.

We recall that proving the lemma is to prove the existence of sequences $\alpha_n \in \Q/\Z$, $h_n$ with $h_n^{-1} \in \emb{\infty}{\A_\rho,\A_\infty}$ and $f_n \in \emb{\infty}{\A_{\rho'},\A_\rho}$ -- where $\rho' < \rho$ -- such that:

$$\alpha_0 =0 \; , \; h_0 = \rest{\id}{\A_\rho} \quad\text{and}\quad f_0 = \rest{\id}{\A_{\rho'}}.$$
It is also required that, for $n\geq 1$, $\alpha_n$, $h_n$ and $f_n$ satisfy:
\begin{enumerate}
\item[$(I_n)$] $h_n(\check{\A}) = \check{\A}$, $h_n$ is symplectic on $\check{\A}$ and $(\rest{h_n}{\A},\alpha_n)$ respects the AbC$^\star$ scheme $(U,\nu)$:
$$\rest{h_n}{\A} \in U(\rest{h_{n-1}}{\A},\alpha_{n-1}) \quad , \quad \iconj{\rest{h_{n}}{\A}}{R_{\alpha_{n-1}}} = \iconj{\rest{h_{n-1}}{\A}}{R_{\alpha_{n-1}}},$$
$$ \text{and} \quad 0 < \lvert \alpha_n - \alpha_{n-1} \rvert < \nu(\rest{h_n}{\A},\alpha_{n-1}).$$ 
\item[$(II_n)$] $R_{\alpha_n} \circ h_n^{-1} (\A_{\rho'}) \subset \operatorname{int} \left( h_n^{-1}(\A_\rho ) \right)$, so the composition $f_n := \rest{\iconj{h_n}{R_{\alpha_n}}}{\A_{\rho'}}$ is well defined. Furthermore, $d(f_n,f_{n-1}) < 2^{-n}$.
\item[$(III_n)$] $J_n = {h_n^{-1}}^* J_0$ and $\Omega_n = {h_n^{-1}}^* \Omega_0$ satisfy:
$$d(J_n,J_{n-1}) < 2^{-n} \quad\text{and}\quad d(\Omega_n,\Omega_{n-1}) < 2^{-n}.$$
\item[$(IV_n)$] $h_n$ commutes with $\sigma$.
\item[$(V_n)$] The support of $h_n^{-1}$ is included in $\check{\A}_\rho$.
\end{enumerate}

\begin{proof}[Proof of \cref{lem:abc_real_cyl}]

Let $n\geq 0$ be such that $h_n$, $\alpha_n$ and $f_n$ are defined and satisfy $(I_n \cdots V_n)$\footnote{If $n=0$, properties $(I_n \cdots V_n)$ are not defined. However, the only properties used to obtain the induction in the proof are the first parts of properties $(I_n)$ and $(II_n)$ and properties $(IV_n)$ and $(V_n)$, which are obviously satisfied for $n=0$. The induction is therefore naturally initiated at $n=0$.}. Let us build $h_{n+1}$, $\alpha_{n+1}$ and $f_{n+1}$ such that they satisfy $(I_{n+1} \cdots V_{n+1})$. To this end, we will approximate modulo deformation a symplectomorphism $\widetilde{h}$ for neighbourhoods $\mathcal{U}$, $\mathcal{W}_J$, and $\mathcal{W}_\Omega$, and a $\sigma$-invariant compact set $\W$. 

First we define this symplectomorphism and the neighbourhood $\mathcal{U}$. Since $(U,\nu)$ is an AbC$^\star$ scheme, by a) of \cref{def:abc_star}, there exists a symplectomorphism $\widetilde{h} \in \sympo{\infty}{\A}$ commuting with $R_{\alpha_n}$ and such that $h_n \circ \widetilde{h}$ belongs to $U(\rest{h_n}{\A},\alpha_n)$. Moreover, by b) of \cref{def:abc_star}, $U(\rest{h_n}{\A},\alpha_n)$ belongs to $\mathcal{T}^\infty_\gamma$ for a bicurve $\gamma \in \Gamma_2(\A)$ invariant under $R_{\alpha_n}$ and $\widetilde{h}$ satisfies $\rest{\widetilde{h}}{O(\gamma)} = \id$. Hence, if we consider 
\begin{equation}\label{eq:def_U0}
\mathcal{U} : =h_n^{-1} \circ U(\rest{h_n}{\A},\alpha_n),
\end{equation}
it defines a neighbourhood of $\rest{\widetilde{h}}{\A \setminus \gamma}$ in the smooth compact-open topology of $C^\infty(\A \setminus \gamma, \T \times \R)$. 

We now define the $\sigma$-invariant compact set by $\W := \A_R$, with $R$ large enough such that $\W$ contains $h_n^{-1}(\A_\rho)$. Then, since $\W$ is invariant under $R_{\alpha_n}$, we have by $(II_n)$:
\begin{equation}\label{eq:inc_W}
h_n^{-1}(\A_\rho) \Subset \W \quad \text{and} \quad R_{\alpha_n} \circ h_n^{-1}(\A_\rho) \Subset \W,
\end{equation}
this will be useful later.

Then, we chose the $C^\infty$ neighbourhoods $\mathcal{W}_J$ of $\rest{J_0}{\W}$ and $\mathcal{W}_\Omega$ of $\rest{\Omega}{\W}$ such that for all $J'$ in $\mathcal{W}_J$ the structure ${h_n^{-1}}^* J'$ is at a distance $< 2^{-n-1}$ from $J_n$, and for all $\Omega'$ in $\mathcal{W}_\Omega$  the $2$-form ${h_n^{-1}}^* \Omega'$ is at a distance $< 2^{-n-1}$  from $\Omega_n$.\\

Let $\eta_0>0$ be such that the support of $h_n$ is included in $\A_\infty(\eta_0) = \croset{ (\theta,y) \in \A_\infty \; ; \; \lvert \Re(y) \rvert < 1-\eta_0}$. By \cref{thm:app_mod_def}, $\widetilde{h}$ is approximable modulo structural deformation. Hence there exist $\eta \in (0, \eta_0)$ and a smooth diffeomorphism $h : \W \hookrightarrow \A_\infty$ such that:
\begin{itemize}
\item[($\mathcal{P}0$)] $h$ commutes with $R_{\alpha_n}$ (see \cref{coro:app_mod_def}).
\item[($\mathcal{P}1$)] $h(\A) = \A$, $\det \rest{Dh}{\A} =1$ and $\rest{h^{-1}}{\A \setminus \gamma} \in \mathcal{U}$.
\item[($\mathcal{P}2$)] $h$ is supported in $\croset{(\theta,y) \in \W \, : \, \lvert \Re(y) \rvert < 1-\eta }$.
\item[($\mathcal{P}3$)] $h^* J_0$ belongs to $\mathcal{W}_J$ and $h^* \Omega_0$ belongs to $\mathcal{W}_\Omega$.
\item[($\mathcal{P}4$)] $h$ commutes with $\sigma$.
\end{itemize}

We now define the wished diffeomorphism $h_{n+1}$ through its inverse:

$$h_{n+1}^{-1} := h \circ h_n^{-1} \in \emb{\infty}{\A_\rho, \A_\infty}.$$

Observe that $h_{n+1}^{-1}$ is well-defined on $\A_\rho$ because $h_n^{-1}$ belongs to $\emb{\infty}{\A_\rho, \A_\infty}$ and $h$ is defined on $\W$ which contains $h_n^{-1}(\A_\rho)$ by \cref{eq:inc_W}. Then $h_{n+1}^{-1}$ defines indeed an element of $\emb{\infty}{\A_\rho, \A_\infty}$ and is compactly supported in $\check{\A}_\rho$ by poperties $(\mathcal{P}2)$ and $(V_n)$, giving property $\underline{(V_{n+1})}$. Also property $\underline{(III_{n+1})}$ is satisfied by property $(\mathcal{P}3)$ and construction of $\mathcal{W}_J$ and $\mathcal{W}_\Omega$. Property $\underline{(IV_{n+1})}$ follows from properties $(IV_n)$ and $(\mathcal{P}4)$.\\

It remains to define $\alpha_{n+1}$ to obtain properties $(I_{n+1})$ and $(II_{n+1})$. Let $\alpha_{n+1}$ be close to $\alpha_n$.

First, by properties $(I_n)$ and $(\mathcal{P}1)$, $\rest{h_{n+1}}{\check{\A}}$ is symplectic on $\check{\A}$ and leaves $\check{\A}$ invariant. Then, by property $(\mathcal{P}1)$, and \cref{eq:def_U0}, $h_{n+1}$ belongs to $U(\rest{h_n}{\A},\alpha_n)$. Moreover, by choosing $\alpha_{n+1}$ sufficiently close to $\alpha_n$ and by property $(\mathcal{P}0)$, $(\rest{h_{n+1}}{\A},\alpha_{n+1})$ satisfies the AbC$^\star$ scheme $(U,\nu)$ and property \underline{$(I_{n+1})$} is satisfied.

Next, let us show property $(II_{n+1})$. Since $h$ commutes with $R_{\alpha_n}$ by property $(\mathcal{P}0)$, we obtain:

$$ R_{\alpha_n} \circ h_{n+1}^{-1} (\A_\rho) = h \circ R_{\alpha_n} \circ h_n^{-1}(\A_\rho).$$

Therefore, up to taking $\alpha_{n+1}$ closer to $\alpha_n$, the compact set $R_{\alpha_{n+1}} \circ h_{n+1}^{-1} (\A_{\rho'})$ is close to $h \circ R_{\alpha_n} \circ h_n^{-1}(\A_{\rho'})$ in the Hausdorff metric on compact subsets of $\A_\infty$. 

By property $(II_n)$, the compact set $h \circ R_{\alpha_n} \circ h_n^{-1}(\A_{\rho'})$ is included in $h\left(\operatorname{int}(h_{n}^{-1}(\A_\rho))\right) = \operatorname{int}(h_{n+1}^{-1}(\A_\rho)) $. 
Then, because $R_{\alpha_{n+1}} \circ h_{n+1}^{-1} (\A_{\rho'})$ is close to $h \circ R_{\alpha_n} \circ h_n^{-1}(\A_{\rho'})$, we obtain $R_{\alpha_{n+1}} \circ h_{n+1}^{-1}(\A_{\rho'})\subset h_{n+1}^{-1}(\A_{\rho})$. 
This prove the first requirement of property $\underline{(II_{n+1})}$. To obtain the remaining estimate (that $f_{n+1}$ is $2^{-(n+1)}$-close to $f_n$), note that commutation of $h$ with $R_{\alpha_n}$ guarantees that the conjugacy relation defining $f_{n+1}$ depends continuously on $\alpha_{n+1}$. 
Thus, by taking $\alpha_{n+1}$ sufficiently close to $\alpha_n$, we do have the condition $d(f_n,f_{n+1}) < 2^{-n-1}$.

\end{proof}

\paragraph{Proof of \cref{lem:abc_real_ds} on the disk and sphere}

In this part, we prove the lemma for the AbC$^\star$ realization on the sphere and the disk using the idea presented in the dedicated paragraph from \cref{sec:real} and the Approximation modulo deformation Theorem.\\

Recall that $g^\#$ and $\widehat{g} \in \symp{\infty}{\M}$ were defined on page \pageref{g_dies} for $g \in \dilo{c}{\infty}{\check{\M}_\rho}{\A_\infty}$. We aim to prove the existences of sequences $\alpha_n \in \Q/\Z$, $h_n \in \dilo{c}{\infty}{\check{\M}_\rho}{\A_\infty}$ and $f_n \in \diro{c}{\infty}{\M_{\rho'}}{\M_\rho}$ such that:
$$\alpha_0 =0 \; , \; h_0 = \rest{\pi^{-1}}{\check{\M}_\rho} \quad\text{and}\quad f_0 = \rest{\id}{\check{\M}_{\rho'}}.$$
It is also required that, for $n\geq 1$, $\alpha_n$, $h_n$ and $f_n$ satisfy
\begin{enumerate}[label = $(\Roman*_n)$]
\item $h_n(\check{\M}) = \check{\A}$, $h_n$ is symplectic on $\check{\M}$ and $(\widehat{h_n},\alpha_n)_n$ respects the AbC$^\star$ scheme $(U,\nu)$.
\item $f_n (\check{\M}_{\rho'}) \subset \operatorname{int} \left( \check{\M}_\rho \right)$ and $h_n \circ f_n = R_{\alpha_n} \circ h_n$ on $\check{\M}_{\rho'}$. Furthermore we have $d(f_n,f_{n-1}) < 2^{-n}$.
\item $J_n = h_n^\# J_0$ and $\Omega_n = h_n^\# \Omega_0$ satisfy:
$$d(J_n,J_{n-1}) < 2^{-n} \quad\text{and}\quad d(\Omega_n,\Omega_{n-1}) < 2^{-n}.$$
\item $h_n$ commutes with $\sigma$.
\end{enumerate}

\begin{proof}[Proof of Lemma \ref{lem:abc_real_ds}]

Let $n\geq 0$ be such that $h_n$, $\alpha_n$ and $f_n$ are defined and satisfy $(I_n \cdots IV_n)$\footnote{Once again, if $n=0$, properties $(I_n \cdots IV_n)$ are not defined. 
However, the only properties used to obtain the induction in the proof are the first parts of properties $(I_n)$ and $(II_n)$ and property $(IV_n)$, which are obviously satisfied for $n=0$. The induction is therefore initiated at $n=0$ again.}. 
Let us build $h_{n+1}$, $\alpha_{n+1}$ and $f_{n+1}$ such that they satisfy $(I_{n+1} \cdots IV_{n+1})$. 
To this end, likewise the previous proof, we will approximate modulo deformation a symplectomorphism $h_\A$ for neighbourhoods $\mathcal{U}$, $\mathcal{W}_J$, and $\mathcal{W}_\Omega$, and a $\sigma$-invariant compact set $\W$.\\

First we define this symplectomorphism $h_\A$ and the neighbourhood $\mathcal{U}$. 
Since $(U,\nu)$ is an AbC$^\star$ scheme, by a) of \cref{def:abc_star}, there exists a map $h_\M \in \sympo{\infty}{\M}$ commuting with $R_{\alpha_n}$ and such that $\widehat{h_n} \circ h_\M$ belongs to $U(\widehat{h_n},\alpha_n)$. 
Moreover, by b) of \cref{def:abc_star}, $U(\widehat{h_n},\alpha_n)$ belongs to $\mathcal{T}^\infty_{\gamma_\M}$ for a bicurve $\gamma_\M = \pi(\gamma_\A) \in \Gamma_2(\M)$ invariant under $R_{\alpha_n}$ and $h_\M$ satisfies $\rest{h_\M}{O(\gamma_\M)} = \id$.
In addition, because $A(\gamma_\M)$ is an open cylinder invariant under $R_{\alpha_n}$, we can assume by \cref{fact:dens} that $h_\M$ is compaclty supported in $A(\gamma_\M)$, in particular that $h_\M$ belongs to $\sympc{\infty}{\M}$.
We now consider the symplectomorphism $h_\A$ in $\sympc{\infty}{\A}$ obtained from $h_\M$ by blowing up the isolated points of $\partial\check{\M}$. 
Observe that $h_\M \circ \pi = \pi \circ h_\A$ on $\check{\A}$, hence $h_\A$ commutes with $R_{\alpha_n}$ and $\rest{h_\A}{O(\gamma_\A)} = \id$. 
Moreover this yields $\widehat{h_n} \circ h_\M = \widehat{h_n} \circ \pi \circ h_\A \circ \pi^{-1}$ on $\check{\M}$.

Now, by definition of the topology $\mathcal{T}^\infty_\gamma$, we consider $\mathcal{U}$ to be a $C^\infty$ neighbourhood of $\rest{h_\A}{\A \setminus \gamma_\A}$ in the smooth compact-open topology of $C^\infty(\A\setminus\gamma_\A,\T\times\R)$ such that the following holds: 
\begin{equation}\label{eq:def_U}
\forall \tilde{h} \in \sympc{\infty}{\A} \, , \, \rest{\tilde{h}}{\A\setminus\gamma_\A} \in \mathcal{U} \Longrightarrow  \widehat{h_n} \circ \pi \circ \tilde{h} \circ \pi^{-1} \in U(\widehat{h_n},\alpha_n).
\end{equation}
Now, let us define the $\sigma$-invariant compact set $\W$. First, for any $\eta \in (0,1)$ and $\tilde{\rho} >1$ we consider the set 
$$\check{\M}_{\tilde{\rho}}(\eta) := \check{\M}_{\tilde{\rho}} \cap \pi(\A_\infty(\eta)),$$
where $\A_\infty(\eta) := \croset{(\theta,y) \in \A_\infty, \, \left| \Re(y) \right| <1-\eta}$. Then, let $\eta_0>0$ be small enough so that:
\begin{equation}\label{eq:eta0_supp}
\croset{z \in \check{\M}_\rho \, : \, h_n(z) \neq \pi^{-1}(z)} \subset \check{\M}_\rho(\eta_0).
\end{equation}
 Since $\check{\M}_\rho(\eta_0)$ is a relatively compact subset of $\check{\M}_\rho$, there exists $R>1$ such that $h_n(\check{\M}_\rho(\eta_0)) \subset \operatorname{int}(\A_R)$. We therefore fix the $\sigma$-invariant compact set:
$$\W := \A_R.$$

We now choose the $C^\infty$-neighbourhoods $\mathcal{W}_J$ of $\rest{J_0}{\W}$ and $\mathcal{W}_\Omega$ of $\rest{\Omega_0}{\W}$ so that, for every $J \in \mathcal{W}_J$ equals $J_0$ outside $\A_\infty(\eta)$, $h_n^\# J$ is at distance $< 2^{-n-1}$ from $J_n = h_n^\# J_0$; and for every $\Omega\in \mathcal{W}_\Omega$ equals $\Omega_0$ outside $\A_\infty(\eta)$, $h_n^\# \Omega$ is at distance $< 2^{-n-1}$  from $\Omega_n = h_n^\# \Omega_0$.\\

By \cref{thm:app_mod_def}, $h_\A$ is approximable modulo structural deformation. Hence there exist $\eta \in (0, \eta_0)$ and a smooth diffeomorphism $h : \W \hookrightarrow \A_\infty$ such that:
\begin{enumerate}[label = ($\mathcal{P}$\arabic*), start = 0]
\item $h$ commutes with $R_{\alpha_n}$ (see \cref{coro:app_mod_def}).
\item $h(\A) = \A$, $\det \rest{Dh}{\A} =1$, and $\rest{h^{-1}}{\A\setminus\gamma_\A}$ belongs to $\mathcal{U}$
\item $h$ is supported in $\croset{(\theta,y) \in \W \, : \, \lvert \Re(y) \rvert < 1-\eta }$.
\item $h^* J_0$ belongs to $\mathcal{W}_J$ and $h^* \Omega_0$ belongs to $\mathcal{W}_\Omega$.
\item $h$ commutes with $\sigma$.
\end{enumerate}

We can now define the desired map $h_{n+1}$:

$$h_{n+1} : z\in \check{\M}_\rho \mapsto \left\lbrace
\begin{array}{clc}
h\circ h_n (z) &\text{if} &z \in \check{\M}_\rho(\eta)\\
\pi^{-1}(z) &\text{otherwise} &
\end{array}\right. .$$

Observe that $h_{n+1}$ is well defined and belongs to $\dilo{c}{\infty}{\check{\M}_\rho}{\A_\infty}$. Indeed, $\W$ contains $h_n(\check{\M}_\rho(\eta))$ by construction and $h_n$ equals to $\pi^{-1}$ outside $\check{\M}_\rho(\eta_0)$ which is included in $\check{\M}_\rho(\eta)$. 
Hence, by property $(\mathcal{P}2)$, $h_{n+1}$ is defined and belongs to $\dilo{c}{\infty}{\check{\M}_\rho}{\A_\infty}$. In addition, since $h$ leaves $\A$ invariant and $\rest{h}{\A}$ is compactly supported in $\check{\A}$, observe that $\widehat{h_{n+1}}$ is defined and satisfies:
\begin{equation}\label{eq:wd_n+1}
\widehat{h_{n+1}}= \widehat{h_n} \circ \iconj{\pi}{h^{-1}}.
\end{equation}

Let $\alpha_{n+1}$ be close to $\alpha_n$ and let us show that the properties $(I_{n+1} \cdots IV_{n+1})$ are satisfied.

First, by definition of $\mathcal{W}_J$ and $\mathcal{W}_\Omega$, property \underline{$(III_{n+1})$} comes immediately from properties $(\mathcal{P}2)$ and $(\mathcal{P}3)$. And by properties $(\mathcal{P}4)$ and $(IV_n)$ the property \underline{$(IV_{n+1})$} is satisfied.

Then, by properties $(I_n)$ and $(\mathcal{P}1)$, $\rest{h_{n+1}}{\check{\M}}$ is symplectic on $\check{\M}$ and leaves $\M$ invariant. Then, by property $(\mathcal{P}1)$, and \cref{eq:def_U,eq:wd_n+1}, $\widehat{h_{n+1}}$ belongs to $U(\widehat{h_n},\alpha_n)$. Moreover, since $\alpha_{n+1}$ is close to $\alpha_n$ and by property $(\mathcal{P}0)$, $(\widehat{h_{n+1}},\alpha_{n+1})$ satisfies the AbC$^\star$ scheme $(U,\nu)$ and property \underline{$(I_{n+1})$} is satisfied.\\

It remains to defines $f_{n+1}$ and obtain property $(II_{n+1})$. 
First, because $h$ commutes with $R_{\alpha_n}$ and $f_n$ satisfies property $(II_n)$, observe that we have on $\check{\M}_{\rho'}(\eta) = \check{\M}_{\rho'} \cap \pi(\A_\infty(\eta))$:
$$R_{\alpha_n} \circ h_{n+1} = h\circ R_{\alpha_n} \circ h_n = h_{n+1} \circ f_n,$$
and on $\check{\M}_{\rho'} \setminus \check{\M}_{\rho'}(\eta)$, where $h_{n+1}$ is equal to $\pi^{-1}$ and $f_n$ is equal to $R_{\alpha_n}$, we have:
$$R_{\alpha_n} \circ h_{n+1} = R_{\alpha_n} \circ \pi^{-1} = \pi^{-1} \circ R_{\alpha_n} =  h_{n+1} \circ f_n.$$
Hence we have on $\check{\M}_{\rho'}$:
\begin{equation}\label{eq:hf_n}
R_{\alpha_n} \circ h_{n+1} = h_{n+1} \circ f_n.
\end{equation}
Now, by property $(II_n)$, $f_n$ sends $\check{\M}_{\rho'}$ into the interior of $\check{\M}_\rho$, hence $f_n(\check{\M}_{\rho'}(\eta))$ is relatively compact in the interior of $\check{\M}_\rho$. The map $h_{n+1}$ is therefore defined on a neighbourhood of $f_n(\check{\M}_{\rho'}(\eta))$. Thus, for any small angle $\iota \in \T$, there exists a unique diffeomorphism $g_\iota : f_n(\check{\M}_{\rho'}(\eta)) \rightarrow \M_\infty$ which is $C^\infty$-close to the identity on $f_n(\check{\M}_{\rho'}(\eta))$ and satisfies:
$$h_{n+1} \circ g_\iota = R_\iota \circ \rest{h_{n+1}}{f_n(\check{\M}_{\rho'}(\eta))}.$$

Then, observe that $f_n(\check{\M}_{\rho'} \setminus \check{\M}_{\rho'}(\eta)) = \check{\M}_{\rho'} \setminus \check{\M}_{\rho'}(\eta)$. 
Indeed, by \cref{eq:eta0_supp} and since $\eta \in (0,\eta_0)$, we have that $h_n$ coincides with $\pi^{-1}$ on $\check{\M}_{\rho'} \setminus \check{\M}_{\rho'}(\eta)$. Therefore, by $(II_n)$,  $f_n$ is equal to $R_{\alpha_n}$ on $\check{\M}_{\rho'} \setminus \check{\M}_{\rho'}(\eta)$ which is left invariant by rotations.

Next, by uniqueness of $g_\iota$ and since $h_{n+1}$ is equal to $\pi^{-1}$ on $f_n(\check{\M}_{\rho'} \setminus \check{\M}_{\rho'}(\eta)) = \check{\M}_{\rho'} \setminus \check{\M}_{\rho'}(\eta)$, we can extend $g_\iota$ smoothly by setting it equals to the rotation $R_\iota$ on $f_n( \M_{\rho'} \setminus \check{\M}_{\rho'}(\eta))$. Then we have on $f_n(\check{\M}_{\rho'})$:
\begin{equation}\label{eq:iota}
h_{n+1} \circ g_\iota = R_\iota \circ h_{n+1}.
\end{equation} 

We now fix $\iota = \alpha_{n+1} - \alpha_n$, which is small because $\alpha_{n+1}$ is close to $\alpha_n$, and we define $f_{n+1} = g_\iota \circ f_n$. First, as $g_\iota$ and $f_n$ are rotations on the complement of compact subsets of $\check{\M}_{\rho'}$, $f_{n+1}$ is an element of $\diro{c}{\infty}{\M_{\rho'}}{\check{\M}_\rho}$. Next, because $\iota$ is small $g_\iota$ is $C^\infty$-close to the identity, then $d(f_{n+1},f_n) < 2^{-n-1}$. It also implies with property $(II_n)$ that $f_{n+1}(\check{\M}_{\rho'}) \subset \operatorname{int}(\check{\M}_\rho)$. Finally, it holds on $\check{\M}_{\rho'}$ by \cref{eq:hf_n,eq:iota}:
$$h_{n+1} \circ f_{n+1} = h_{n+1} \circ g_\iota \circ f_n = R_\iota \circ h_{n+1} \circ f_n = R_\iota \circ R_{\alpha_n} \circ h_{n+1} = R_{\alpha_{n+1}} \circ h_{n+1}.$$

This yields \underline{$(II_{n+1})$} and concludes the proof.
\end{proof}

\newpage
\label{index}
\begin{multicols}{2}
\printglossary[title = {Index of notations}]
\end{multicols}
\bibliographystyle{alpha} 
\bibliography{biblio}
\addcontentsline{toc}{section}{\bfseries{Bibliography}}
\end{document}